\documentclass[10pt,reqno]{amsart}

\usepackage[T1]{fontenc}
\usepackage{lmodern}
\usepackage{amsmath,amssymb,mathtools}
\usepackage{microtype}
\usepackage[letterpaper,textwidth=5.7in,textheight=8.55in,centering]{geometry}
\usepackage{enumitem}
\usepackage{xcolor}
\usepackage[numbers,square]{natbib}
\usepackage[colorlinks=true,linkcolor=blue!55!black,citecolor=blue!55!black,urlcolor=blue!55!black]{hyperref}
\usepackage{comment}
\hypersetup{
pdftitle={Finiteness and exponential growth of full graph p-spectra},
pdfauthor={Matthew J. Colbrook},
pdfsubject={Full spectra of finite graph p-Laplacians}
}

\setlist[itemize]{leftmargin=2em,itemsep=0.25em,topsep=0.4em}
\setlist[enumerate]{leftmargin=2em,itemsep=0.25em,topsep=0.4em}

\newtheorem{theorem}{Theorem}[section]
\newtheorem{proposition}[theorem]{Proposition}
\newtheorem{corollary}[theorem]{Corollary}
\newtheorem{lemma}[theorem]{Lemma}
\theoremstyle{definition}
\newtheorem{example}[theorem]{Example}
\theoremstyle{remark}
\newtheorem{remark}[theorem]{Remark}

\newcommand{\R}{\mathbb{R}}
\newcommand{\Sph}{\mathbb{S}}
\newcommand{\Crit}{\operatorname{Crit}}
\newcommand{\Spec}{\operatorname{Spec}}
\newcommand{\sgn}{\operatorname{sgn}}
\newcommand{\od}{\mathbin{\odot}}
\newcommand{\cE}{\mathcal{E}}
\newcommand{\cM}{\mathcal{M}}
\newcommand{\cQ}{\mathcal{Q}}
\newcommand{\inner}[2]{\left\langle #1,#2\right\rangle}
\newcommand{\one}{\mathbf 1}
\newcommand{\Cau}{\operatorname{Cau}}
\newcommand{\Maxw}{\operatorname{Maxw}}
\newcommand{\dist}{\operatorname{dist}}
\newcommand{\indm}{\operatorname{ind}}
\newcommand{\Pj}{\mathbb P}
\newcommand{\Cn}{\mathcal C_n}
\newcommand{\codim}{\operatorname{codim}}

\title[Finiteness and growth of full graph $p$-spectra]
{Finiteness and exponential growth of\\
full graph $p$-spectra}

\author{Matthew J. Colbrook}
\address{Department of Applied Mathematics and Theoretical Physics,
University of Cambridge, Wilberforce Road, Cambridge CB3 0WA, United Kingdom}
\email{m.colbrook@damtp.cam.ac.uk}

\subjclass[2020]{05C50, 47J10, 47J30, 58C40}
\keywords{Graph $p$-Laplacian, full spectrum, nonlinear eigenvalue problem,
o-minimal geometry, Pfaffian functions, Morse theory, critical groups,
exponential spectral growth}

\begin{document}

\begin{abstract}
We prove that the full spectrum of the graph $p$-Laplacian is finite for every finite graph and every real $p>1$. This resolves the problem of finiteness of the full graph spectrum posed by Amghibech (2003). More generally, for signed weighted graph $p$-Schr\"odinger operators with arbitrary real potentials, we obtain bounds in terms of the numbers of vertices and positive-weight edges, uniform in all coefficients and in $p$. On $n$ vertices with $m$ positive-weight edges, the logarithms of the spectral cardinality and of the total number of connected components of the normalised eigenvector sets are $O((n+m)^2)$; the sum over the full spectrum of their rational Betti numbers is at most $\exp(C(n+m)^2)$ for an absolute constant $C$. These topological bounds extend to homogeneous eigenproblems built from arbitrary finite families of linear forms, including generalised $p$-eigenvalue problems for matrix pairs.

For the uniformly weighted $K_n$ and $p\ne2$, we identify the nonconstant eigenlines with the barycentres of the coordinate-hyperplane cell decomposition of $\mathbb RP^{n-2}$ and determine their local Morse data on either side of $p=2$. A renormalised logarithmic limit describes the transition at $p=2$. Positive integer weights then separate these critical values, giving at least $(3^n-2^{n+1}+3)/2$ distinct spectral values for every fixed $p\ne2$ and every $n\geq2$. By contrast, the maximal spectral cardinality at $p=2$ is $n$; at $p=4$, the maximal spectral cardinalities in the unsigned and general classes both have exponential growth rate exactly $3$. For $n\geq3$ the same examples resolve Amghibech's extremal question. The proof combines o-minimal and Pfaffian geometry with projective $L^p$-duality, Morse theory, critical groups, and tensor eigenvalue bounds.
\end{abstract}

\maketitle
\vspace{-10mm}
\setcounter{tocdepth}{1}
\tableofcontents

\linespread{0.97}

\section{Introduction}

For a linear operator on an $n$-dimensional space, finiteness of the spectrum follows from the characteristic polynomial and the number of distinct eigenvalues is at most $n$. The full spectrum of a graph $p$-Laplacian is a different object: it is the entire critical-value set of a Rayleigh quotient, nonquadratic when $p\ne2$, and can contain values outside the $n$-term sequence produced by the Krasnosel'skii genus min--max construction. Its critical locus may be singular or positive-dimensional. Finite dimensionality and bounds for isolated roots therefore do not decide whether the full spectrum is finite.

The graph-specific counting problem was posed explicitly by Amghibech: the number of $p$-Laplacian eigenvalues was unknown for a general graph \cite[p.~299]{Amghibech2003}. The problem has since been repeatedly highlighted as a fundamental open question in nonlinear spectral graph theory \cite[Introduction]{Zhang2025}, \cite[pp.~6--7]{DeiddaThesis2023}, \cite[Introduction]{GeQin2025}, and \cite[Section~3.1]{DeiddaTudiscoZhang2025}. Theorem~A resolves it for every finite graph and every real $p>1$ and gives explicit bounds uniform in the graph coefficients and in $p$. Amghibech also asked whether an $n$-vertex graph can have more positive eigenvalues than $K_n$ \cite[Question~3, p.~300]{Amghibech2003}. For every $p>1$, $p\ne2$, and every $n\geq3$, Theorem~C answers this in the stronger form that the underlying graph can be complete and all its edge weights can be positive integers.

Theorems~B and~C reveal a sharp change of scale at $p=2$. The maximal cardinality of the full spectrum on $n$ vertices is $n$ at $p=2$, but for every fixed $p\ne2$ there are complete graphs with positive integer edge weights whose full spectra contain exponentially many distinct values; at $p=4$ this exponential rate is exactly $3$, both in the unsigned class and in the full signed weighted class.

The proofs bring together o-minimal and Pfaffian geometry, projective $L^p$-duality, the topology of a projectivised hyperplane arrangement, Morse theory and critical groups, and symmetric-tensor eigenvalue bounds.

\subsection{Main results}

We now state the three principal results. Let $n\geq1$ and let
$$
 a_{ij}=a_{ji}\geq0,\qquad
 \varepsilon_{ij}=\varepsilon_{ji}\in\{-1,1\}\quad(i\ne j),
 \qquad b_i\in\R,\qquad \nu_i>0,
$$
where the signature on a pair with $a_{ij}=0$ is immaterial. We call these choices admissible. Set $\Phi_p(t)=|t|^{p-2}t$, with $\Phi_p(0)=0$. Define
\begin{equation}\label{eq:graph-p-lap-intro}
 (\Delta_{p,a,\varepsilon,b}x)_i
 =\sum_{j\neq i}a_{ij}\Phi_p(x_i-\varepsilon_{ij}x_j)
+b_i\Phi_p(x_i).
\end{equation}
The generalised eigenproblem is
\begin{equation}\label{eq:eigen-intro}
 \Delta_{p,a,\varepsilon,b}x
 =\lambda\bigl(\nu_i\Phi_p(x_i)\bigr)_{i=1}^n,
 \qquad x\in\R^n\setminus\{0\},
\end{equation}
and $\sigma_p(a,\varepsilon,b,\nu)$ denotes the set of all real $\lambda$ for which it has a solution. Set
$$
 d=(a,\varepsilon,b,\nu),
 \qquad \beta(d)=\min_i\frac{b_i}{\nu_i},
$$
and for $\lambda\in\sigma_p(d)$, let
$$
 E(d,p,\lambda)=
 \left\{x\in\Sph^{n-1}:
 \Delta_{p,a,\varepsilon,b}x
 =\lambda\bigl(\nu_i\Phi_p(x_i)\bigr)_{i=1}^n\right\}.
$$
Let $\overline E(d,p,\lambda)$ be its image in $\mathbb RP^{n-1}$. We set $b(X;\mathbb Q)=\sum_{j\geq0}b_j(X;\mathbb Q)$. For $K\geq1$ set
$$
 \mathfrak B(K)
 =\sum_{q=1}^{K}\binom{K}{q}2^q
2^{2q^2+q+2}(4q+3)^{2q}.
$$
Also set
$
 N_n=\frac{n(n+1)}2$ and $\widehat{\mathfrak B}_n=\mathfrak B(N_n)$.

\begin{theorem}[Theorem A: full-spectrum finiteness and uniform quantitative bounds]
\label{thm:intro-main}
For every $n\geq1$ the following hold.
\begin{enumerate}[label=\textup{(\roman*)}]
\item\label{part:intro-finite} For every $p>1$ and every admissible choice of $a,\varepsilon,b,\nu$, the spectrum $\sigma_p(a,\varepsilon,b,\nu)$ is finite.

\item\label{part:intro-uniform} There exists $\mathfrak B_n^{\mathrm{unif}}<\infty$ such that
$$
 \#\sigma_p(a,\varepsilon,b,\nu)\leq\mathfrak B_n^{\mathrm{unif}}
$$
simultaneously for every $p>1$ and all admissible graph data.

\item\label{part:intro-effective} One may take $\mathfrak B_n^{\mathrm{unif}}=\widehat{\mathfrak B}_n$, and
\begin{equation}\label{eq:explicit-Bn}
 \#\sigma_p(a,\varepsilon,b,\nu)\leq\widehat{\mathfrak B}_n
 \leq3^{N_n}2^{2N_n^2+N_n+2}(4N_n+3)^{2N_n}.
\end{equation}
In particular, the displayed choice satisfies $\log\widehat{\mathfrak B}_n=O(n^4)$, and
$$
 \sum_{\lambda\in\sigma_p(d)}b_0(E(d,p,\lambda))
 \leq\widehat{\mathfrak B}_n.
$$
\end{enumerate}
\end{theorem}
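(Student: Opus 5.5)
The plan is to recast the eigenproblem \eqref{eq:eigen-intro} as a critical-point problem and bound the number of critical values by the number of connected components of a Pfaffian set. The bound is Khovanskii-type, so it is uniform in the coefficients and in $p$.

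\emph{Step 1: Lagrange reduction.} Put
$$
\cE(x)=\frac1p\sum_{i<j}a_{ij}|x_i-\varepsilon_{ij}x_j|^p+\frac1p\sum_i b_i|x_i|^p,
\qquad
\cM(x)=\frac1p\sum_i\nu_i|x_i|^p .
$$
Both are $C^1$ for $p>1$. Equation \eqref{eq:eigen-intro} says exactly that $\nabla\cE=\lambda\nabla\cM$, and Euler's identity gives $\lambda=\cE(x)/\cM(x)$. So $\sigma_p(d)$ is the set of critical values of the Rayleigh quotient $R=\cE/\cM$ on $\Sph^{n-1}$. On each connected component of the critical set $\Crit(R)$, $R$ is constant. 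The reason is that $\Crit(R)$ is definable in the o-minimal structure $\R_{\exp}$, because $|t|^p=\exp(p\log|t|)$ on $t\ne0$. Definable $C^1$ functions are constant on connected components of their critical sets, by the definable Sard lemma together with curve selection. Hence $\#\sigma_p(d)\le b_0(\Crit R)=\sum_\lambda b_0(E(d,p,\lambda))$. This already proves part (i), since definable sets have finitely many components.

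\emph{Step 2: linear forms and sign cells.} Introduce the $N_n=n(n-1)/2+n$ linear forms $\ell_k(x)$, namely $x_i-\varepsilon_{ij}x_j$ and $x_i$. (Only the $m$ forms with $a_{ij}>0$ matter, but the count $N_n$ suffices.) Fix a support set $Q$ of forms that are nonzero, with $q=\#Q$, and a sign pattern on $Q$; this accounts for the factors $\binom{K}{q}2^q$ in $\mathfrak B(K)$. On such a cell, introduce new variables $y_k=|\ell_k(x)|^{p-1}$ for $k\in Q$. These satisfy the Pfaffian relation $\ell_k\,dy_k=(p-1)y_k\,d\ell_k$. Then the eigen-equations become polynomial of degree at most $2$ in $(x,y,\lambda)$, and the vanishing forms give linear equations. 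Note that $\Phi_p$ vanishes at $0$, so at a vanishing form the equation is simply polynomial.

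\emph{Step 3: the Pfaffian bound.} Apply Khovanskii's bound, in the Zell/Gabrielov–Vorobjov form for connected components (or Betti numbers) of a semi-Pfaffian set. The relevant data are a Pfaffian chain of length $q$ and degree $(1,1)$ in $x$ and the chain variables, polynomial degree at most $2$, and ambient dimension $O(q)$. This yields $2^{2q^2+q+2}(4q+3)^{2q}$ components per cell. The bound is independent of $a,\varepsilon,b,\nu,p$, because these enter only as coefficients and through the exponent $p-1$, which the Pfaffian degree does not see. Summing over $Q$ and the sign patterns gives $\widehat{\mathfrak B}_n$. The elementary estimate $\sum_q\binom{K}{q}2^q=3^K$, with each term bounded by the $q=K$ term, gives \eqref{eq:explicit-Bn}. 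Each component of $\Crit R$ meets finitely many cells, and the components of the cell pieces cover it. So $b_0(\Crit R)$ is at most the sum of the cell-wise component counts, which gives part (iii) and hence part (ii).

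\emph{Main obstacle.} The difficult step is making Step 3 rigorous. The critical locus is singular along $\{\ell_k=0\}$, where $R$ fails to be $C^2$ once $p<2$. Pfaffian bounds also require the chain to be defined on a domain where the triangularity conditions hold, and the sign cells are open semialgebraic, which is acceptable. The resolution I would use is to count components of the semi-Pfaffian set on each cell, with the strict inequalities $\pm\ell_k>0$ included, using the Gabrielov–Vorobjov bound for sets defined by sign conditions. I would also check that the eigenvalue $\lambda$, which is eliminated by $\lambda=\cE/\cM$ or carried as an extra variable, does not raise the degree. Constancy of $R$ on components is needed only for the global critical set, and Step 1 provides it.
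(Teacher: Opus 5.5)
Your Step~1 is the paper's argument for (i). Deriving (ii) from an explicit bound is also legitimate; the paper instead applies o-minimal uniform finiteness to the joint $(d,p)$-family.

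\textbf{The gap is in Steps~2--3}, which must produce the specific constant $\widehat{\mathfrak B}_n=\mathfrak B(N_n)$.

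\emph{Not a Pfaffian chain.} The functions $y_k=|\ell_k(x)|^{p-1}$ do not form a Pfaffian chain in the sense required by the Gabrielov--Vorobjov formula \eqref{eq:GV-component}. On the cell, $\partial y_k/\partial x_j=(p-1)\,y_k\,(\ell_k)_j/\ell_k(x)$ is rational, not polynomial, in $(x,y)$. The relation $\ell_k\,dy_k=(p-1)y_k\,d\ell_k$ is an implicit Pfaffian equation, not a triangular chain. To fit the chain format you must adjoin $g_k=1/\ell_k$, which makes the chain length $2q$ and its degree $2$.

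\emph{Wrong domain.} These functions are defined only on the open polyhedral cell $\{\sigma_k\ell_k>0\}$ inside the subspace cut out by the vanishing forms. The formula \eqref{eq:GV-component} is a bound on all of $\mathbb R^d$. Treating the cell through the general sign-condition bound for semi-Pfaffian sets, as you propose, brings in further factors.

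So the per-cell count $2^{2q^2+q+2}(4q+3)^{2q}$ does not follow from your set-up. A repaired version would still give some bound of the form $\exp(O(K^2))$, which is enough for (ii). It does not give the displayed inequalities with $\widehat{\mathfrak B}_n$, including the component bound with that constant.

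\textbf{The missing idea} is to change coordinates on each sign stratum, rather than adjoining power functions of $x$. For a pattern $\sigma$ with support $S$, $|S|=q$:
\begin{itemize}
\item Put $y_r=\log|L_r(x)|$ for $r\in S$, and set $z_r(y)=\sigma_re^{y_r}$ for $r\in S$ and $z_r=0$ otherwise.
\item The joint form map $\mathcal L$ is injective, so fix a left inverse $T$ and recover $x=Tz(y)$.
\item The normalised eigenpair stratum is then homeomorphic to a common zero set in all of $\mathbb R^{q+1}$, with variables $(y,\lambda)$. The equations are: $\mathcal LTz-z=0$, which says that $z$ is the vector of form values of $Tz$ with the prescribed support and signs; $\|Tz\|_2^2-1=0$; and the eigen-equation with $\Phi_p(L_r(x))=\sigma_re^{(p-1)y_r}$.
\item These equations have degrees $1,2,2$ in the chain $e^{y_r},e^{(p-1)y_r}$ ($r\in S$). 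This chain has length $2q$ and degree $1$ and is defined on the whole of $\mathbb R^{q+1}$.
\end{itemize}
Substituting ambient dimension $q+1$, chain length $2q$, $\alpha=1$, $\beta=2$ into \eqref{eq:GV-component} gives exactly $2^{2q^2+q+2}(4q+3)^{2q}$. Summing over the $\binom Kq2^q$ patterns gives $\mathfrak B(K)$. The logarithmic coordinates remove both the non-polynomial derivatives and the domain problem you flagged as the main obstacle.
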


Bounds in terms of the support graph of the edge weights, namely the graph consisting of the positive-weight edges, sharpen the universal estimate~\eqref{eq:explicit-Bn}.

\begin{corollary}[Quantitative and topological refinements]
\label{thm:intro-effective-refinements}
Let $d=(a,\varepsilon,b,\nu)$ be admissible, with $n$ vertices and $m$ positive-weight edges, and set $K=n+m$. For every real $p>1$, each of
$
 \#\sigma_p(d)$ and $
 \sum_{\lambda\in\sigma_p(d)}b_0(E(d,p,\lambda))
$
is at most $\mathfrak B(K)$; in particular, the logarithm of either quantity is $O(K^2)$. There is an absolute constant $C$ such that
$$
 \sum_{\lambda\in\sigma_p(d)}
 b(E(d,p,\lambda);\mathbb Q)\leq \exp(CK^2),
$$
and the same bounds hold with $E(d,p,\lambda)$ replaced by $\overline E(d,p,\lambda)$. If $p=A/B>1$, where $A>B\geq1$ are coprime integers, and $D=2\max\{B,A-B,1\}$, then
$$
 \#\sigma_p(d)\leq
 \sum_{\lambda\in\sigma_p(d)}
 b(E(d,p,\lambda);\mathbb Q)
 \leq (CK^2D)^{3K+1}.
$$
These estimates include singular and positive-dimensional eigenvector sets.
\end{corollary}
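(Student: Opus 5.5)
The plan is to rewrite the eigenproblem in terms of the $K=n+m$ linear forms that actually occur, namely the coordinates $x_i$ and the edge differences $x_i-\varepsilon_{ij}x_j$ with $a_{ij}>0$, and then to count connected components and Betti numbers of a Pfaffian (or, for rational $p$, semialgebraic) lifted system. For each form $\ell_k$ introduce $y_k=\ell_k(x)$. On each open sign region $\{\sgn y_k=s_k\}$, with $s\in\{-1,0,1\}^K$, set $u_k=|y_k|^{p-1}$. The function $t\mapsto t^{p-1}$ on $(0,\infty)$ is Pfaffian of chain length one, since $u'=(p-1)u/t$. The eigenequation then becomes $\sum_k c_{k}\,s_k u_k\,\partial\ell_k/\partial x=\lambda\,\nu\odot(s u)$, together with $|x|^2=1$. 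Strata with $s_k=0$ simply drop the term and impose $\ell_k=0$. The sign pattern is what produces the factor $\sum_q\binom Kq2^q$ in $\mathfrak B(K)$: we choose $q$ forms that are nonzero and their signs, and the rest vanish.

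Finiteness and the count of $\#\sigma_p$ rest on the fact that the Rayleigh quotient $R=\sum a|\ell|^p/\sum\nu|x_i|^p$ restricted to each stratum is smooth and constant on connected components of its critical set. This is because eigenvectors on a fixed stratum are exactly the critical points of $R$ restricted to that linear stratum, so $\lambda=R(x)$ and $R$ is locally constant on connected critical sets; a semianalytic or Pfaffian Sard-type argument does the job. Hence $\#\sigma_p(d)\le\sum_\lambda b_0(E(d,p,\lambda))\le\sum_{\text{strata}}b_0(\text{critical set on stratum})$. Each stratum's critical set is a Pfaffian variety in $\R^n\times\R^q$ with $q$ chain functions, all of degree at most $2$ in the polynomial variables. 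Khovanskii–Zell bounds of the form $2^{q(q-1)/2}\,O(\text{deg})^{n+q}$ give a bound of the shape $2^{2q^2+q+2}(4q+3)^{2q}$ after reducing to at most $q$ effective variables. That reduction uses $L^p$-duality: we work with the $y$-coordinates restricted to the image subspace and eliminate $x$.

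For the sum of all Betti numbers we replace the $b_0$ estimate by Zell's (or Gabrielov–Vorobjov's) bound on the total Betti numbers of Pfaffian sets. The eigenvector set on each sign stratum is not closed, so it has to be glued across strata. To handle this, we use the Mayer–Vietoris/spectral-sequence inequality for a finite cover by locally closed definable pieces; passing to closures of the strata keeps all degrees bounded. The total is then at most $3^K$ times $\exp(O(K^2))$ per piece, which is $\exp(CK^2)$. The map $E\to\overline E$ is a double cover, and transfer gives $b(\overline E;\mathbb Q)\le b(E;\mathbb Q)$.

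For rational $p=A/B$ we substitute $y_k=s_kt_k^{B}$ with $t_k\ge0$, so that $|y_k|^{p-1}=t_k^{A-B}$. The system then becomes semialgebraic in $n+K$ variables with degrees at most $D$, and the $\lambda$-elimination adds one variable. The Milnor–Thom–Basu–Pollack–Roy bound gives $(CK^2D)^{3K+1}$ for the total Betti numbers. The first inequality holds because each $\lambda$ contributes at least $b_0\ge1$.

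The main obstacle is the Betti-number gluing across sign strata. Eigenvectors can sit on the boundaries $\ell_k=0$, where $\Phi_p$ fails to be smooth (for $p<2$), so $E$ is not a union of manifold pieces. My first attempt would be a closed-cover approach: for each $s$, the closed set $\{s_k\ell_k\ge0\}$ intersected with $E$ is described uniformly by $t_k\ge0$, which gives a closed cover with $3^K$ members. Every intersection of members is again of the same type, so the Mayer–Vietoris spectral sequence bounds $b(E)$ by the sum over all intersections.
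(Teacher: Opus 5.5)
\textbf{Verdict: genuine gap.} Your component count follows the paper's route in outline: sign strata of the $K$ active forms, a Pfaffian lift of each stratum, constancy of $\lambda$ on components, and a sum over the $\binom Kq2^q$ patterns. The gap is the total Betti bound at real exponents, which you flag as the main obstacle but do not resolve.

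\textbf{Why your gluing fails.} Your closed cover by the pieces $E\cap\{s_k\ell_k\ge0\}$ is usable only at rational $p$. For general real $p$, the functions $|\ell_k|^{p-1}$ are not analytic where $\ell_k=0$. So these closed pieces are not semi-Pfaffian over any domain where the Zell or Gabrielov--Vorobjov bounds apply; in logarithmic coordinates the face $\ell_k=0$ sits at $y_k=-\infty$. The fallback, a Mayer--Vietoris inequality for a cover by locally closed pieces, does not exist in ordinary homology. For example, the theta graph ($b=3$) is the disjoint union of the open tripod $X\setminus\{w\}$ and the point $\{w\}$, whose ordinary Betti numbers sum to $2$. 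Even for closed covers, the Mayer--Vietoris spectral sequence sums over index sets of cover members, not over the $3^K$ distinct intersections. Bounding each term uniformly costs a factor of order $\binom{3^K}{n}$. That is still $e^{O(K^2)}$, but it is incompatible with $(CK^2D)^{3K+1}$ once $n$ is large compared with $\log(KD)$.

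\textbf{The missing idea: Borel--Moore homology of the open strata.}
\begin{itemize}
\item Work in logarithmic coordinates $\ell_k(x)=\sigma_k e^{y_k}$. Recover $x=Tz(y)$ through a linear left inverse $T$ of the injective map $x\mapsto(\ell_k(x))_k$, with linear consistency equations forcing $z$ into its image. Each stratum then becomes a closed, typically non-compact set $Y_\sigma=\{\mathcal F_\sigma=0\}\subset\mathbb R^{q+1}$, where $\mathcal F_\sigma$ is the sum of squares of the defining equations.
\item The Pfaffian chain is $e^{y_k},e^{(p-1)y_k}$, of length $2q$ and degree one. This format is also what yields exactly $2^{2q^2+q+2}(4q+3)^{2q}$ in the component bound. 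Your length-one chain $t^{p-1}$ is not a Pfaffian chain, since $u'=(p-1)u/t$ is not polynomial.
\item Alexander duality for $Y_\sigma\cup\{\infty\}\subset S^{q+1}$ gives $b^{\rm BM}(Y_\sigma)\le1+b(\{\mathcal F_\sigma>0\})\le\exp(C_1q^2)$, since the complement is defined by one strict Pfaffian inequality.
\item Filter the compact eigenpair set by the number of active forms. The filtration is closed, and each layer is a disjoint union of strata that are clopen in it. The localization sequence then gives $b=b^{\rm BM}\le\sum_\sigma b^{\rm BM}(\mathcal Z_\sigma)\le3^K\exp(C_1K^2)$.
\end{itemize}

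\textbf{Rational exponents.} The paper does not stratify here. It encodes signs by $\ell_k(x)^2=u_k^{2B}$, $v_k^2=u_k^{2(A-B)}$ and $\ell_k(x)v_k\ge0$, which forces $v_k=\Phi_p(\ell_k(x))$. This gives one closed semialgebraic set in at most $3K+1$ variables, cut out by at most $5K+1$ polynomials of degree at most $D$. Your substitution $\ell_k=s_kt_k^B$ fixes a sign pattern, so it cannot be applied globally.

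\textbf{Component count.} Count components of the zero set of the eigen-equation on each stratum, not of $\Crit(R|_S)$. Eigenvectors in $S$ are critical for $R|_S$, but not conversely, and passing to a superset can lower $b_0$.
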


Theorem~A provides the universal finiteness and upper-bound theory. We now turn to the complete-graph geometry from which the exponential lower bounds arise.

Let $U_n(p)$ denote the largest spectral cardinality among unsigned, zero-potential, unit-measure weighted graphs on $n$ vertices, and let $B_n(p)$ be the corresponding maximum over all admissible signed weighted graph data with arbitrary real potentials and positive vertex measures. Set
$$
 R_n=\frac{3^n-2^{n+1}+3}{2}.
$$
For a connected unsigned graph with zero potential, the constant eigenline is the unique eigenline for eigenvalue zero and disappears under translation reduction. Thus Theorem~B counts $R_n-1$ reduced critical points, both for $p\ne2$ and for the logarithmic limiting functional at $p=2$, whereas Theorem~C includes the zero value and obtains at least $R_n$ values in the full spectrum.

Let $V_n=\mathbb R^n/\mathbb R\one$. For $0\ne\bar x\in V_n$, choose any representative $x\in\mathbb R^n$, let $\kappa_p(\bar x)$ be the unique representative satisfying $\sum_i\Phi_p(\kappa_p(\bar x)_i)=0$, and define
$$
 \mathfrak m_p(\bar x)=\sum_i|\kappa_p(\bar x)_i|^p,
 \qquad
 \Theta_p([\bar x])=[\Phi_p(\kappa_p(\bar x))],
$$
$$
 \Psi_p([\bar x])=
 \left(\frac{|x_i-x_j|^p}{\mathfrak m_p(\bar x)}\right)_{i<j},
 \qquad
 \mathcal R_{p,a}([\bar x])=a\cdot\Psi_p([\bar x]).
$$

\begin{theorem}[Theorem B: complete-graph cells and the transition at $p=2$]
\label{thm:intro-cellular-transition}
Let $n\geq2$. For every $p>1$, the projectivised $L^p$-duality map
$$
 \Theta_p:\Pj(\mathbb R^n/\mathbb R\one)
 \rightarrow\Pj(\one^\perp)
$$
is a definable homeomorphism. The projectivised coordinate-hyperplane arrangement on the target is a regular CW decomposition $\Cn$, and the nonconstant eigenlines of the uniformly weighted $K_n$ for $p\ne2$ are precisely the inverse images of its cell barycentres.

If $p>2$, the inverse image under $\Theta_p$ of the barycentre of a cell $C$ is a nondegenerate critical point of $\mathcal R_{p,\one}$, with Morse index $\dim C$; whenever $C$ is a proper face of a cell $D$, the critical value associated with $C$ is smaller than that associated with $D$.

Identify $V_n$ with $\one^\perp$ by choosing the Euclidean mean-zero representative. Under this identification, as $p\to2$,
$$
 \frac{\mathcal R_{p,\one}-n}{p-2}
 \rightarrow\mathcal H_n
 \quad\hbox{in }C^1(\mathbb RP^{n-2}),
$$
where, for $0\ne x\in\one^\perp$,
$$
 \mathcal H_n([x])=
 \frac{\sum_{i<j}(x_i-x_j)^2\log|x_i-x_j|
-n\sum_i x_i^2\log|x_i|}
{\sum_i x_i^2},
 \qquad 0^2\log0:=0.
$$
The critical points of $\mathcal H_n$ are the cell barycentres. For every coefficient field $\mathbb F$, the local critical group at the barycentre of $C$ is $\mathbb F$ in degree $\dim C$ and zero in every other degree; whenever $C$ is a proper face of a cell $D$, its value at the barycentre of $C$ is smaller than its value at the barycentre of $D$.

If $1<p<2$, every eigenline of the uniformly weighted $K_n$ corresponding to a cell $C$ is an isolated critical point. For every coefficient field $\mathbb F$, its local critical group is $\mathbb F$ in degree $\codim C$ and zero in every other degree. The local Brouwer degree of its gradient in a local chart is $(-1)^{\codim C}$, and whenever $C$ is a proper face of a cell $D$, its value at the eigenline associated with $C$ is larger than its value at the eigenline associated with $D$.

The polynomials recording these local critical groups are
\begin{align*}
 M_n^+(t)
 &=\sum_{C\in\Cn}t^{\dim C}
 =\frac{(1+2t)^n-2(1+t)^n+1}{2t^2},\\
 M_n^{\log}(t)&=M_n^+(t),\\
 M_n^-(t)
 &=\sum_{C\in\Cn}t^{\codim C}
 =t^{n-2}M_n^+(t^{-1})
 =\frac{(t+2)^n-2(t+1)^n+t^n}{2}.
\end{align*}
Evaluating any of these polynomials at $t=1$ gives $R_n-1$, the number of nonconstant critical points of the uniformly weighted $K_n$ for $p\ne2$ and of critical points of $\mathcal H_n$ at $p=2$.
\end{theorem}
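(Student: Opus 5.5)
The plan is to work entirely on the dual side $\Pj(\one^\perp)$ through $\Theta_p$, where the coordinate-hyperplane arrangement produces the cells.

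\textbf{Step 1: duality.} For $\bar x\ne0$, the map $c\mapsto\sum_i\Phi_p(x_i-c)$ is continuous and strictly decreasing in $c$, with limits $\pm\infty$. Hence $\kappa_p$ is well defined, semialgebraic-in-$\Phi_p$ and therefore definable, and positively homogeneous of degree one. It follows that $y=\Phi_p(\kappa_p(\bar x))$ lies in $\one^\perp\setminus\{0\}$. The inverse map is $y\mapsto[\Phi_{p'}(y)]$ modulo $\one$, where $p'$ is the conjugate exponent; this works because $\Phi_{p'}(y)$ is automatically the $\kappa_p$-normalised representative. So $\Theta_p$ is a continuous bijection between compact spaces, hence a homeomorphism.

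The coordinate hyperplanes $\{y_i=0\}$ cut $\Pj(\one^\perp)$ into cells indexed by sign vectors $s\in\{-,0,+\}^n$, taken modulo $\pm$, with both $+$ and $-$ present. The closed cells are projectivised polyhedral cones, so the decomposition is a regular CW complex. The number of such sign vectors is $(3^n-2^{n+1}+1)/2$, which equals $R_n-1$. Cell dimension is $\#\{s_i\ne0\}-2$, and this gives the generating function $M_n^+$.

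\textbf{Step 2: eigenlines of $K_n$.} With unit weights the eigen equation reads $\sum_j\Phi_p(x_i-x_j)=\lambda\Phi_p(x_i)$. I will use the $S_n$-symmetry and the permutation-equivariance of $\Theta_p$ to reduce the problem. The claim is that eigenvectors take at most three distinct values: a negative value, $0$, and a positive value (after normalisation). These values are the dual of the barycentre, whose coordinates are $\pm 1/k_\pm$ on the supports and $0$ elsewhere.

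To prove the claim, fix $\lambda$. Then $g(t)=\sum_j\Phi_p(t-x_j)-\lambda\Phi_p(t)$ must vanish at every coordinate. I will show by a convexity and sign-count argument that, for $p\ne2$, $g$ has at most three zeros compatible with the constraint. This argument fails at $p=2$ precisely because there $g$ is linear and $\lambda=n$ is degenerate. I will then verify the converse directly.

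\textbf{Step 3: Morse data.} At the barycentre of $C$, I will compute the Hessian of $\mathcal R_{p,\one}$ on $\Pj(V_n)$ in block coordinates: directions within the two nonzero groups, directions moving zero coordinates, and directions changing the group proportions.
\begin{itemize}
\item For $p>2$, the zero-group directions contribute terms of the form $|t|^{p-2}$, which vanish to second order. The directions tangent to $C$ are negative and the normal ones are positive; I expect the correct bookkeeping to give index $\dim C$.
\item For $1<p<2$, the functional is not $C^2$ in the normal directions; the terms of order $|t|^p$ dominate with the opposite sign. I will therefore compute critical groups by a local splitting: a nondegenerate part along $C$ together with a normal part that behaves like $-|t|^p$ or $+|t|^p$. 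This yields $\mathbb F$ in degree $\codim C$ and Brouwer degree $(-1)^{\codim C}$.
\end{itemize}
The ordering of critical values along faces follows by comparing explicit values, either through the closed-form values at the barycentres or through monotonicity along segments in the closed cell.

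\textbf{Step 4: the $p\to2$ limit.} I will differentiate $|t|^p=t^2e^{(p-2)\log|t|}$ in $p$ at $p=2$, and likewise differentiate the normaliser $\mathfrak m_p$; the derivative of $\kappa_p$ in $p$ produces the $-n\sum x_i^2\log|x_i|$ term. Uniform $C^1$ convergence comes from the locally uniform bounds on $t^2\log|t|$ and its derivative. The critical points of $\mathcal H_n$ follow by the same three-value argument applied to its Euler--Lagrange equation, and its critical groups by the same local splitting, now involving $t^2\log|t|$ terms in the normal directions. The Poincaré polynomials then follow by counting the cells as above.

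The main obstacle is Step 3 for $1<p<2$ and in the logarithmic case. The functional is not twice differentiable there, so the critical groups need a careful Gromoll--Meyer-type splitting, and the sign bookkeeping must respect the face order.
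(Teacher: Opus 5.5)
Your Step~1, the $p>2$ part of Step~3 (block decomposition of the linearisation: negative on zero-sum variations inside $P$ and inside $N$, positive on the zero block, kernel $\mathbb Rx$) and the outline of Step~4 follow the paper. There is one slip in Step~4. By the envelope identity, the $p$-derivative of the centring translation contributes nothing at $p=2$. The term $-n\sum_ix_i^2\log|x_i|$ instead comes from differentiating $|x_i|^p$ in $\mathfrak m_p$, combined with $E_2=n\sum_ix_i^2$ in the quotient rule.

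The genuine gap is the subquadratic local analysis: your proposed splitting does not match the actual local structure. At $x^0=\alpha\one_P-\beta\one_N$, the directions tangent to $\Theta_p^{-1}(C)$ separate the values inside $P$ and inside $N$, so the within-block edge forms also vanish at the base point. For $1<p<2$ the quotient is therefore not $C^2$ along $C$ either, and there is no nondegenerate quadratic part there. Its leading term is $+\bigl(W_P(\xi)+W_N(\eta)\bigr)/m_0$, where $W_P(\xi)=\sum_{\{i,i'\}\subset P}|\xi_i-\xi_{i'}|^p$ and $m_0$ is the base mass. This term is positive and homogeneous of degree $p$, the reverse of the $p>2$ sign, and your plan does not identify this. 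Had the part along $C$ kept its $p>2$ index, the critical group would sit in degree $\dim C$ or $n-2$, not in general in $\codim C$.

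The normal directions are of two different kinds. Variations $\zeta$ inside the zero block contribute the $K_r$ energy of $\zeta$ minus $\lambda_0$ times its $p$-centred mass, again of degree $p$. A common shift $t$ of the zero block is absorbed by re-centring into a displacement of the two nonzero levels, and contributes a smooth term $-\gamma t^2/m_0$ with $\gamma>0$. So no Gromoll--Meyer splitting is available. What is needed is:
\begin{enumerate}
\item the anisotropic blow-up $(\xi,\eta,\zeta,t)\mapsto(s^{1/p}\xi,s^{1/p}\eta,s^{1/p}\zeta,s^{1/2}t)$;
\item $C^1$ convergence of the rescaled quotient to a weighted-homogeneous model $A-B$ with $A,B>0$ away from the origin;
\item a continuation principle for critical groups and degrees under such rescalings.
\end{enumerate}

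You also leave the sign of the normal part open (``$-|t|^p$ or $+|t|^p$''), but degree $\codim C$ requires the zero-block part to be negative. That is a genuine spectral-gap input: the eigenvalue $\lambda_0\geq r+2^{p-1}$ must exceed $\max E_{r,p}/\mathfrak m_{r,p}=r-2+2^{p-1}$ over $K_r$. The paper proves this by applying the classification recursively to $K_r$ and using monotonicity of $a+b-(a^{1/q}+b^{1/q})^q$, with $q=p-1$.

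Your logarithmic case has the same defect. There $\mathcal H_n-\mathcal H_n(b_C)=\frac{\log\rho}{k\ell(k+\ell)}Q_{k,\ell,r}+O(\rho^2)$ with $Q_{k,\ell,r}=k\|\xi\|^2+\ell\|\eta\|^2-(k+\ell)\|\zeta\|^2-nrt^2$. So $\log\rho$ multiplies the whole form, tangential terms included, and must be removed by a radial change such as $v=\sqrt{-\log\|u\|}\,u$, not by a normal-only splitting.

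Finally, Step~2 is not substantiated. The function $g$ is a signed sum of shifted power functions $\Phi_p(\cdot-x_j)$, each changing concavity at its own shift, so no convexity or zero count on the half-lines is evident. The paper instead evaluates the eigen-equation at the largest and smallest negative magnitudes $U>u$ and subtracts. It then uses strict superadditivity ($q>1$) or subadditivity ($q<1$) of $s\mapsto s^q$, together with the centring identity $\sum_iu_i^q=\sum_jv_j^q$, to force $U=u$. The same device with $s\log s$ locates the critical points of $\mathcal H_n$.
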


Theorem~C gives exponential spectral growth and describes the parameter regions in which the critical-point count persists.

\begin{theorem}[Theorem C: exponential spectral growth and parameter regions]
\label{thm:intro-growth-chambers}
The following statements hold. For parts~\textup{(iv)}--\textup{(vi)}, fix $n\geq2$ and set $m=\binom n2$.
\begin{enumerate}[label=\textup{(\roman*)}]
\item For every $n\geq2$, $U_n(2)=B_n(2)=n$.

\item For every $n\geq2$ and every $p>1$ with $p\ne2$,
\begin{equation}\label{eq:intro-exponential-growth}
 U_n(p)\geq R_n=\frac{3^n-2^{n+1}+3}{2}.
\end{equation}
The graphs may be chosen to be connected complete graphs with positive integer edge weights. After ordering the $\binom n2$ edges as $e=0,\ldots,\binom n2-1$, for each fixed pair $(n,p)$ there is an integer $T=T(n,p)\geq2$ such that the weights may be chosen in the form
$
 A_e=Q+T^e
$
for every sufficiently large integer $Q$. Consequently, for every fixed $p>1$ with $p\ne2$,
$$
 \liminf_{n\to\infty}U_n(p)^{1/n}\geq3.
$$

\item At $p=4$,
\begin{equation}\label{eq:intro-fourth-order-rate}
 \lim_{n\to\infty}U_n(4)^{1/n}
 =\lim_{n\to\infty}B_n(4)^{1/n}=3.
\end{equation}

\item There are relatively closed definable caustic and Maxwell sets of codimension at least one in $(2,\infty)\times(0,\infty)^m$. Off the caustic, the critical-point count and index multiset are constant on each connected component; off its union with the Maxwell set, all critical values are distinct. The line $\{(p,\one):p>2\}$ lies in one component of the caustic complement. On every compact $J\Subset(2,\infty)$, there is a positive integer edge-weight vector $a$ for which $\mathcal R_{p,a}$ is Morse with Morse polynomial $M_n^+$ for every $p\in J$; its critical values are pairwise distinct except at finitely many $p\in J$.

\item For every compact $J\Subset(1,2)$, there is a positive integer edge-weight vector such that the resulting graph has at least $R_n$ eigenlines for every $p\in J$ and at least $R_n$ distinct spectral values for all but finitely many $p\in J$. For each fixed $p\in(1,2)$, there is a neighbourhood $V_p$ of $\one$ in $(0,\infty)^m$ such that the weights in $V_p$ for which the graph has fewer than $R_n$ distinct values in the full spectrum lie in a relatively closed definable subset of codimension at least one.

\item For every fixed $\eta\in\mathbb R^m$, under $a(p)=\one+(p-2)\eta$,
$$
 \frac{\mathcal R_{p,a(p)}-n}{p-2}
 \rightarrow\mathcal H_n+\eta\cdot\Psi_2
 \quad\hbox{in }C^1(\mathbb RP^{n-2}).
$$
There are an open neighbourhood $W\ni0$ and a relatively closed definable set $\mathfrak M_n^{\mathrm{tan}}\subset W$ of codimension at least one such that, for every $\eta\in W\setminus\mathfrak M_n^{\mathrm{tan}}$, $\mathcal H_n+\eta\cdot\Psi_2$ has at least $R_n-1$ distinct critical values, and there is $\delta=\delta(\eta)>0$ such that the path $a(p)=\one+(p-2)\eta$ has at least $R_n$ distinct values in the full spectrum whenever $0<|p-2|<\delta$.
\end{enumerate}
\end{theorem}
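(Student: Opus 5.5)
Theorem~C has six parts of quite different character, and I would prove them in order of dependence: first~(i), then the $p>2$ regime~(iv) together with~(ii) for $p>2$, then the $1<p<2$ regime~(v) and the rest of~(ii), then the transition~(vi), and finally the rate~(iii). Throughout I take Theorem~B as given: it supplies the $R_n-1$ nonconstant critical points of $\mathcal R_{p,\one}$, their local Morse data, and the logarithmic limit $\mathcal H_n$.

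\emph{Part (i).} At $p=2$, \eqref{eq:eigen-intro} is the generalised symmetric eigenproblem $Lx=\lambda Nx$ with $N=\mathrm{diag}(\nu)$ positive definite. It therefore has at most $n$ real eigenvalues, so $B_n(2)\le n$. For the lower bound I would take a path graph, or $K_n$ with generic weights, whose Laplacian has simple spectrum; this gives $n$ distinct values with zero potential and unit measure, so $U_n(2)\ge n$.

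\emph{Parts (iv) and (ii) for $p>2$.}
\begin{itemize}
\item Consider the family $(p,a)\mapsto\mathcal R_{p,a}$ on the compact manifold $\mathbb RP^{n-2}$. It is definable in the o-minimal structure generated by the Pfaffian functions $|t|^p$, with $p$ a variable.
\item Define the caustic as the set of parameters where $\mathcal R_{p,a}$ has a degenerate critical point, and the Maxwell set as the set where two critical points share a value.
\item Definable Sard, applied to the projection of the critical-point incidence variety, gives codimension $\ge1$ for both sets. Off the caustic, a parametric Morse lemma together with compactness shows that the critical points vary as a finite covering, so the count and the index multiset are locally constant.
\item By Theorem~B, every point $(p,\one)$ with $p>2$ is off the caustic. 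The line is connected, so it lies in a single component of the caustic complement.
\item For a compact $J$, nondegeneracy is open. So for rational $a$ close to $\one$, rescaled to integers (the Rayleigh quotient is homogeneous in $a$), $\mathcal R_{p,a}$ is Morse with polynomial $M_n^+$ for all $p\in J$.
\item To separate the critical values I use the weights $A_e=Q+T^e$. The critical values are $a\cdot\Psi_p(x_C(a))$, and to first order in $a-Q\one$ their differences are $\sum_e T^e(\Psi_p(x_C)-\Psi_p(x_D))_e$. Choosing $T$ larger than the ratios of these finitely many nonzero coordinate differences, uniformly on $J$ by compactness, makes these linear forms nonvanishing. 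Only finitely many $p\in J$ then remain exceptional, because the coincidence set is a definable subset of $J$ and is not all of $J$.
\item Adding the zero eigenvalue gives $R_n$ values.
\end{itemize}

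\emph{Part (v).} For $1<p<2$ the critical points are only isolated: $\mathcal R$ is only $C^1$ near the points where $x_i=x_j$. I would therefore replace nondegeneracy by stability of nontrivial critical groups. Each critical point has critical group $\mathbb F$ in degree $\codim C$, so it persists under $C^0$-small perturbations of the functional; this stability of critical groups is standard. Hence each isolated critical point yields at least one critical point nearby, and these nearby points are distinct because the neighbourhoods are disjoint. Separating the critical values then uses the same definable coincidence-set argument inside a neighbourhood $V_p$.

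\emph{Part (vi).} The $C^1$ convergence follows by Taylor expanding $|t|^p=t^2(1+(p-2)\log|t|+\dots)$, exactly as in Theorem~B, with the extra term coming from $\eta$. The critical points of $\mathcal H_n$ have nontrivial critical groups, so they persist for small $\eta$ and, by $C^1$-stability, for small $|p-2|$. The genericity of $\eta$ needed for distinct critical values comes from definable Sard applied to $\eta\mapsto$ (differences of values).

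\emph{Part (iii).} The lower bound $R_n^{1/n}\to3$ comes from~(ii). For the upper bound I would use the tensor eigenvalue bound at $p=4$: the eigenproblem becomes a system of homogeneous cubic equations. The idea is to bound $\#\sigma_4$ by the number of isolated eigenpairs, which is at most $((p-1)^n-1)/(p-2)=(3^n-1)/2$. Non-isolated components could be handled by a generic perturbation or by the Betti bound of Corollary~\ref{thm:intro-effective-refinements} with $D$ fixed; but the latter only gives $(CK^2)^{3K}$, so I instead need the tensor bound applied via a Bézout count on an upper semicontinuous perturbation.

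\emph{Main obstacle.} I expect the hardest step to be~(iii), namely showing that non-isolated eigenvector sets contribute at most $O(3^n)$ values. My first attempt would be a Lagrange-multiplier reformulation with a generic linear perturbation, arguing that the number of critical values is lower semicontinuous under such perturbations.
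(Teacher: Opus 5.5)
Part~(iii) has a genuine gap, at exactly the step you flag. You count eigen\emph{vectors}: you bound $\#\sigma_4$ by the number of isolated eigenpairs, quote $(3^n-1)/2$, and hope to absorb positive-dimensional eigenvector sets by perturbation and semicontinuity. The number $(3^n-1)/2$ is the Cartwright--Sturmfels count for the Euclidean-normalised equation $Tx^3=\lambda x$. At $p=4$ the graph problem is instead the pencil $\mathcal Ax^3=\lambda\,\mathcal Bx^3$ with diagonal mass tensor $(\mathcal Bx^3)_i=\nu_ix_i^3$, so that count does not apply. More seriously, the number of distinct real spectral values is not lower semicontinuous. A degenerate critical point can vanish under perturbation together with its value; in Proposition~\ref{prop:triangle-regions} the eigenline count drops from $4$ at $Z=h_{\min}$ to $2$ for $Z<h_{\min}$. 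So the values at a degenerate datum cannot be bounded by those of nearby generic data. The missing idea is to count eigen\emph{values} directly. Since $\nu_ix_i^3=0$ for all $i$ forces $x=0$ even over $\mathbb C$, the pencil is regular. Hence $\lambda\mapsto\operatorname{Res}(\mathcal Ax^3-\lambda\mathcal Bx^3)$ is a polynomial of degree exactly $n3^{n-1}$, with top coefficient $\pm\operatorname{Res}(\mathcal Bx^3)=\pm\prod_i\nu_i^{3^{n-1}}\ne0$. Every real eigenvalue is a root of it, whatever the geometry of its eigenvector set. This is the Ding--Wei count behind \eqref{eq:intro-even-tensor}, and it gives $R_n\le U_n(4)\le B_n(4)\le n3^{n-1}$ with no perturbation at all.

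Three further steps need a different mechanism.

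\emph{Maxwell sets.} Definable Sard gives the caustic but not the Maxwell sets in (iv)--(vi). It says nothing about whether two sheets of the critical incidence carry identical values on an open set of weights. What excludes this is the Hellmann--Feynman identity $\nabla_a\lambda=\Psi_p(x)$ along $C^1$ definable selections of critical points, combined with injectivity of $\Psi_p$ (Theorem~\ref{thm:projective-embedding}); see Theorem~\ref{thm:C1-Maxwell}. The immersion property of $\Psi_p$ is also what makes the critical incidence a manifold, so that Sard applies to the caustic at all. In (vi), the same argument with $\Psi_2$ must replace ``Sard applied to differences of values''. Without nondegeneracy those differences are not even functions of $\eta$.

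\emph{Finiteness in $p$.} In (iv) and (v), ``the coincidence set is a definable subset of $J$ and not all of $J$'' does not give finiteness, because such a set may contain subintervals. Nor does choosing $T$ ``uniformly on $J$'' control pairs of different block type whose uniform-weight values cross at exponents in $\mathfrak E_n\cap J$. Use instead that the joint Maxwell set in $J\times V_J$ has dimension at most $m$. The weights over which its fibre is positive-dimensional then form a definable set of dimension at most $m-1$, and any rational weight off its closure has only finitely many exceptional exponents.

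\emph{The $Q+T^e$ form for $1<p<2$.} For this part of (ii), degree persistence in a fixed isolating neighbourhood only locates the perturbed critical point somewhere in it. Its unperturbed value is then $\lambda_\rho+o(1)$, not $\lambda_\rho+o(\epsilon)$, so first-order separation along $\one+\epsilon v$ is lost. Genericity cannot rescue a prescribed ray either, since $\one$ itself lies in the Maxwell set. The paper localises using the anisotropic boundary bound $\|\nabla\widetilde{\mathcal R}_\rho\|\ge c\,s^{1/2}$ on $\delta_s(\partial U_\rho)$ and the shrinking scale $s_\epsilon=\epsilon^\theta$ with $1<\theta<2$ (proof of Theorem~\ref{thm:low-Rn}). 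Finally, persistence in (v) should rest on $C^1$-smallness of $(a-\one)\cdot\Psi_p$, uniformly in $p\in J$ (Lemma~\ref{lem:local-critical-group-continuation} or local degree), not on $C^0$-smallness.
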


\subsection{Consequences and interpretation}

Beyond finiteness, Theorems~A--C give uniform quantitative and topological bounds for signed weighted $p$-Schr\"odinger operators, a complete cellular description of the nonconstant eigenlines of the uniformly weighted complete graph, and exponential spectral growth under positive integer perturbations. The consequences below connect these results to Amghibech's extremal question, the Krasnosel'skii genus min--max sequence, and symmetric tensor eigenvalue theory.

Theorem~\ref{thm:intro-growth-chambers}\textup{(ii)} has an immediate extremal consequence. The graphs constructed there are connected and hence have at least $R_n-1$ distinct positive spectral values. By contrast, the positive values of the uniformly weighted $K_n$ are indexed by unordered pairs of positive block sizes $(k,\ell)$ with $k+\ell\leq n$; hence there are at most $\lfloor n^2/4\rfloor$ of them. Since
$$
 R_n-1>\left\lfloor\frac{n^2}{4}\right\rfloor
 \qquad(n\geq3),
$$
the theorem answers Amghibech's Question~3 \cite[p.~300]{Amghibech2003} for every $p>1$, $p\ne2$, using graphs with positive integer edge weights.

At $p=2$ one has the identity
$$
 \sum_{i<j}(x_i-x_j)^2
 =n\min_{c\in\mathbb R}\sum_i|x_i+c|^2.
$$
The reduced Rayleigh quotient of the uniformly weighted $K_n$ is therefore constant at $p=2$, whereas a generic perturbation of the edge weights gives a quadratic quotient with exactly $n-1$ critical points, the minimum allowed by the topology of $\mathbb RP^{n-2}$. By contrast, $\mathcal H_n$ has the $R_n-1$ barycentres of $\Cn$ as its critical points. For $p>2$, their reduced Morse indices are the cell dimensions; for $1<p<2$, their local critical groups are concentrated in the cell codimensions. Krasnosel'skii's perturbation theory established stability of variational critical values of even functionals on the sphere \cite{Krasnoselskii1955}. Here every nonconstant projective critical point of the complete-graph model has nonzero local degree. Each therefore persists under sufficiently small rational perturbations of the edge weights, for both $1<p<2$ and $p>2$.

The cellular count also quantifies the number of spectral values outside the Krasnosel'skii genus min--max sequence. With $P_n(t)=\sum_{j=0}^{n-2}t^j$, the $\mathbb F_2$-Poincar\'e polynomial of $\mathbb RP^{n-2}$, the Morse relations are
$$
 M_n^\pm(t)=P_n(t)+(1+t)Q_n^\pm(t),
 \qquad
 Q_n^-(t)=t^{n-3}Q_n^+(t^{-1})\quad(n\geq3),
$$
where the $Q_n^\pm$ have nonnegative integer coefficients and $2Q_n^\pm(1)=R_n-n$. Consequently, for $p>2$, if $(p,a)$ lies in the component of the joint caustic complement containing the curve $\{(q,\one):q>2\}$ and the reduced critical values at $(p,a)$ are distinct, then the corresponding graph has at least $R_n-n$ spectral values outside the $n$-term Krasnosel'skii genus min--max sequence. Parts~\textup{(iv)} and \textup{(v)} of Theorem~\ref{thm:intro-growth-chambers} make this conclusion uniform on every compact exponent interval contained in either $(1,2)$ or $(2,\infty)$, apart from finitely many value collisions.

For even integral exponents, the problem also lies in symmetric tensor eigenvalue theory, introduced independently by Lim and Qi \cite{Lim2005,Qi2005}. With the Euclidean normalisation used for E-eigenvalues, Cartwright and Sturmfels proved that a symmetric tensor has only finitely many normalised eigenvalues and counted the eigenpairs of a generic tensor \cite{CartwrightSturmfels2013}; that normalisation differs from the generalised graph eigenproblem considered here. In the graph setting, the tensor-pencil reduction of \cite[Proposition~3.1]{GeQin2025} has a nonsingular positive diagonal mass tensor, so the pencil is regular and has no eigenvalue at infinity. Combining this reduction with \cite[Theorem~2.1]{DingWei2015} gives
\begin{equation}\label{eq:intro-even-tensor}
 \#\sigma_p(a,\varepsilon,b,\nu)\leq n(p-1)^{n-1}
 \qquad(p\geq2\text{ even}).
\end{equation}
The graph tensors are highly structured and may have singular or positive-dimensional eigenvector sets. The tensor estimate applies at each fixed even exponent, whereas the topological bounds proved here are uniform over all real $p>1$.

For $p=4$, \eqref{eq:intro-even-tensor} and Theorem~\ref{thm:intro-growth-chambers}\textup{(ii)} give
$$
 R_n\leq U_n(4)\leq B_n(4)\leq n3^{n-1}.
$$
Thus both extremal exponential growth constants are $3$; the displayed upper and lower bounds differ only by a factor of order $n$.

Beyond pointwise finiteness, the full spectrum and its eigenvector sets admit a uniform global description over coefficient and exponent space, including in the endpoint regimes.

\begin{corollary}[Further global consequences]
\label{cor:intro-global-consequences}
For each fixed $n$, the space of coefficients and exponents admits a finite definable partition. On each cell, the spectral cardinality is constant, the ordered spectral values are continuous definable functions, and the homeomorphism types of the corresponding ambient pairs in $\mathbb S^{n-1}$ and $\mathbb RP^{n-1}$ are constant. For fixed graph data, the compact sets of normalised eigenpairs have one-sided Hausdorff limits as $p\downarrow1$; under the reparametrisation $\lambda=\beta(d)+\rho^p$ (with $\rho>0$), they also have Hausdorff limits as $p\to\infty$. The Clarke critical-value sets of the endpoint quotients restricted to the sphere are finite, uniformly in fixed dimension.
\end{corollary}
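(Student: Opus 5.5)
The plan is to derive all four claims from o-minimality of $\mathbb R_{\exp}$, with Theorem~A\ref{part:intro-finite} used as the input that forces fibres to be finite. First I will set the problem up as one definable family. For $t\ne0$ write $|t|^{p}=\exp(p\log|t|)$ and $\Phi_p(t)=\sgn(t)\exp((p-1)\log|t|)$, and set $\Phi_p(0)=0$. With this, the set
$$
\mathcal G=\bigl\{(d,p,\lambda,x):\ p>1,\ d\ \text{admissible},\ x\in\Sph^{n-1},\ \Delta_{p,a,\varepsilon,b}x=\lambda(\nu_i\Phi_p(x_i))_i\bigr\}
$$
is definable in $\mathbb R_{\exp}$. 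The sign vector $\varepsilon$ ranges over a finite set, so each $\varepsilon$ can be treated separately. Let $\Sigma\subset\{(d,p,\lambda)\}$ be the projection of $\mathcal G$. By Theorem~A each fibre $\Sigma_{(d,p)}=\sigma_p(d)$ is finite.

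Next I will build the partition. Take a cell decomposition of parameter space compatible with $\Sigma$. Above each cell, $\Sigma$ is a finite union of graphs of continuous definable functions $\lambda_1<\dots<\lambda_k$, which gives constant cardinality and continuous ordered values. Uniform finiteness of a definable family with finite fibres also yields a uniform bound on $k$, consistent with Theorem~A(ii). For the topology, I apply Hardt's definable trivialisation to the family $(d,p)\mapsto(\Sph^{n-1},\bigcup_{j}\{j\}\times E(d,p,\lambda_j(d,p)))$ and to its image in $\mathbb RP^{n-1}$. After refining the partition, each cell carries a definable trivialisation, so the homeomorphism types of the ambient pairs are constant on each cell.

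For the endpoint limits, fix $d$. The sets $S_p=\{(x,\lambda)\}$ of normalised eigenpairs form a definable one-parameter family of compact sets. I will check that they are uniformly bounded as $p\downarrow1$: testing the equation against $x$ bounds $|\lambda|$ by a Rayleigh quotient whose numerator is at most $C(d)2^{p}$ and whose denominator is bounded below on the sphere. A bounded definable one-parameter family of compact sets has a Hausdorff limit at an endpoint; concretely, the limit is the fibre over the endpoint of the closure of $\{(p,z):z\in S_p\}$, and monotonicity/curve selection gives Hausdorff convergence. As $p\to\infty$ I use $s=1/p\downarrow0$ and $\rho=(\lambda-\beta(d))^{1/p}$; note that $\lambda\geq\beta(d)$ by a lower Rayleigh bound, and $\rho$ is bounded because $\lambda-\beta\leq C2^p$ implies $\rho\leq 2C^{1/p}$. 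The same argument then applies to the reparametrised family, which is still definable in $\mathbb R_{\exp}$.

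Finally, the endpoint quotients restricted to the sphere are
$$
\mathcal R_1=\frac{\sum a_{ij}|x_i-\varepsilon_{ij}x_j|+\sum b_i|x_i|}{\sum\nu_i|x_i|}
$$
and its $\max$-type analogue at $p=\infty$. Both are semialgebraic and locally Lipschitz. By the definable Sard theorem for Clarke critical values (Bolte--Daniilidis--Lewis--Shiota), each has finitely many Clarke critical values. Uniformity in fixed dimension follows because the set of triples (coefficients, value) for which the value is Clarke critical is semialgebraic with finite fibres, hence uniformly bounded. I expect the main obstacle to be the Hausdorff limit as $p\to\infty$, in particular verifying the bound on $\rho$ uniformly and confirming that the closure of the reparametrised graph really produces a nonempty limit. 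If the lower bound $\lambda\geq\beta$ fails for signed data, I would instead compactify with $\lambda=\beta\pm\rho^p$ on two branches.
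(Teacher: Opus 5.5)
Your proposal is correct and follows essentially the paper's route. The paper uses a cylindrical decomposition of the total spectral set plus compatible Hardt triviality over each base cell (Theorem~\ref{thm:global-stratification}), a definable Hausdorff-limit principle applied to the bounded eigenpair families (Theorem~\ref{thm:eigenpair-endpoints}; your closure-plus-curve-selection argument is an equivalent substitute for the distance-function Lemma~\ref{lem:definable-Hausdorff}), and Bolte--Daniilidis--Lewis--Shiota with uniform finiteness for the Clarke values (Theorem~\ref{thm:clarke-values}, which also spells out the sphere charts and the family-definability of the Clarke subdifferential that you assert without proof).

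Both worries you raise resolve immediately. First, $\lambda\geq\beta(d)$ holds for all signed admissible data because $a_{ij}\geq0$ (Proposition~\ref{prop:graph-critical}), so no two-branch reparametrisation is needed. Second, each fibre is non-empty since the minimum of the Rayleigh quotient on the sphere is an eigenvalue, so your uniform boundedness already yields a non-empty limit.
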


Finally, the effective argument also applies to finite families of linear forms. In particular, for the generalised $p$-eigenvalue problem for the matrix pair $(A,C)$
\begin{equation}\label{eq:matrix-intro}
 A^{\mathsf T}\bigl(\mu\od\Phi_p(Ax)\bigr)
 =\lambda C^{\mathsf T}\bigl(\nu\od\Phi_p(Cx)\bigr),
\end{equation}
the numerator coefficients $\mu$ may be arbitrary, the denominator coefficients $\nu$ are positive, and $C$ is injective. Fixed matrix sizes again give explicit bounds, uniform in all data and $p>1$, for the number of distinct values and the total number of components of the eigenvector sets.

\subsection{Ideas of the proofs}

Two issues must be addressed: the full critical-value set may be singular or positive-dimensional, and the reduced quotient of the uniformly weighted $K_n$ becomes constant at $p=2$. For Theorem~A, we treat the eigenproblem, jointly in the exponent and all coefficients, as the critical-value problem for a definable $C^1$ Rayleigh quotient. O-minimal critical-value theory gives finiteness, and definable triviality gives a uniform bound. The effective estimates use the Pfaffian theory initiated by Khovanskii \cite{Khovanskii1991}. On each stratum determined by the signs and vanishing pattern of the linear forms, logarithmic coordinates give a Pfaffian system whose format depends only on the number of active linear forms, and hence only on $n+m$ in the graph problem. Quantitative component and Betti bounds for semi-Pfaffian sets \cite{GabrielovVorobjov2004}, together with a semialgebraic Betti-number estimate at rational exponents \cite{GabrielovVorobjov2005}, yield the explicit estimates.

The proofs of Theorems~B and~C begin with the projectivised $L^p$-duality map after minimisation over translations. Hessian calculations give the Morse indices for $p>2$. For $1<p<2$, and for the limiting logarithmic functional at $p=2$, critical-group calculations give the degrees in which the local groups are supported. For $p>2$, caustic and Maxwell sets describe degeneracy and critical-value collisions; for $1<p<2$, nonzero local degree gives persistence and definable collision sets give generic separation. Rational approximation and scaling produce positive integer weights. At $p=4$, a tensor-pencil estimate gives the upper bound with the matching exponential rate.

\subsection{Historical context and relation to earlier work}

The construction of variational eigenvalues and the problem of bounding the full nonlinear spectrum developed as distinct strands. Browder, Amann, Fu\v{c}\'{i}k and Ne\v{c}as developed extensions of Ljusternik--Schnirelmann theory for nonlinear eigenvalue problems \cite{Browder1965,Amann1972,FucikNecas1972}. These methods construct min--max critical levels; by themselves they do not bound the entire critical-value set. An early direct study of that set is Pokhozhaev's paper, which proves that the critical-value set has Lebesgue measure zero for sufficiently smooth Fredholm functionals on separable reflexive Banach spaces under a uniform finite-nullity hypothesis \cite[Theorem, p.~94]{Pokhozhaev1968}.

In the early 1970s, a systematic programme on upper bounds emerged. For a class of homogeneous nonlinear second-order Sturm--Liouville equations under explicit positivity hypotheses, Ne\v{c}as proved that the full spectrum is a discrete sequence tending to infinity \cite[Theorem, p.~1045]{Necas1971}; Kratochv\'{i}l and Ne\v{c}as obtained the corresponding fourth-order result \cite[Theorem, p.~648]{KratochvilNecas1971}. J.~Sou\v{c}ek and V.~Sou\v{c}ek proved that a real-analytic function has only finitely many critical values attained in any fixed compact subset of its domain \cite[Theorem~1]{SoucekSoucek1972}. At even integral exponents, the graph Rayleigh quotient is real analytic away from the origin, so this theorem already gives qualitative finiteness of the full graph spectrum in that regime. Fu\v{c}\'{i}k, Ne\v{c}as, J.~Sou\v{c}ek and V.~Sou\v{c}ek then developed an abstract upper-bound and countability theory for the full critical-level set of homogeneous nonlinear eigenproblems \cite{FucikNecasSoucekSoucek1972,FucikNecasSoucekSoucek1972Strengthening, FucikNecasSoucekSoucek1973}, consolidated in their 1973 monograph \cite{FucikNecasSoucekSoucek1973Book}. Together with Ku\v{c}era, they later obtained Hausdorff-measure bounds for full critical-level sets under finite-smoothness and Fredholm hypotheses \cite[Theorems~2.1 and~3.1, pp.~221, 225--226] {FucikKuceraNecasSoucekSoucek1974}. Their two-dimensional quartic example has the three eigenvalues $2$, $1$ and $2/3$, whereas the Ljusternik--Schnirelmann procedure gives only $2$ and $2/3$ \cite[p.~65]{FucikNecasSoucekSoucek1973}. Thus the distinction between a min--max sequence and the full nonlinear spectrum was explicit from the beginning of this theory.

A particularly close finite-dimensional antecedent is the study by J.~Sou\v{c}ek and V.~Sou\v{c}ek of the eigenvalues realised by real-analytic potential maps $A:\mathbb R^n\to\mathbb R^n$ through $A(x)=\lambda x$ on fixed Euclidean spheres. They asked whether the set of eigenvalues realised on a fixed sphere is finite, recalled finiteness in the homogeneous case, showed that this set may contain an interval in the nonhomogeneous case, and described the discrete and continuous parts that may occur in the general analytic setting \cite[pp.~614--619]{SoucekSoucek1974}. When $p$ is not an even integer, the graph energy is not real analytic on the coordinate and edge-form hyperplanes. Thus the graph problem lies outside the real-analytic setting and requires a different argument. Contemporaneously, Dancer analysed the global structure of solution sets for real-analytic nonlinear eigenvalue problems \cite{Dancer1973}.

For the continuum $p$-Laplacian, the distinction became a prominent open question. Lindqvist's Jyv\"askyl\"a lectures were published in 1995 \cite{Lindqvist1995}. In a later expanded version, he asked whether variational constructions exhaust the Dirichlet $p$-Laplacian spectrum, whether all its eigenvalues can be enumerated, and whether the spectrum is discrete even for a ball or a cube \cite[p.~194]{Lindqvist2008}. Binding and Rynne constructed $p$-Laplacian eigenvalues outside the Ljusternik--Schnirelmann sequence \cite{BindingRynne2008}, and Dr\'{a}bek described the general problem of characterising the full spectrum of a homogeneous nonlinear eigenproblem as open since the 1970s \cite[p.~1]{Drabek2012}. The finite-dimensional graph problem admits a different source of control: o-minimality applies at arbitrary real exponents, including those for which the energy is not analytic.

Van den Dries and Miller proved in 1994 that $\mathbb R_{\mathrm{an},\exp}$ is o-minimal \cite{vanDenDriesMiller1994}. Placing the graph family in this structure gives a qualitative route to finiteness and, in each fixed dimension, a non-effective coefficient- and exponent-uniform bound through o-minimal critical-value theory and definable uniformity. For each fixed exponent, the same qualitative conclusion follows within Miller's polynomially bounded expansion by restricted analytic functions and real power functions \cite{Miller1994}; $\mathbb R_{\mathrm{an},\exp}$ is the natural setting here because the exponent itself varies as a parameter. Theorem~A makes this mechanism quantitative: it supplies explicit bounds in terms of the numbers of vertices and positive-weight edges, together with uniform topological estimates for singular eigenvector sets.

The graph $p$-energy also appears in the classical literature on network potential theory. Duffin developed quadratic network extremal length \cite{Duffin1962}; Yamasaki studied extremum problems on infinite networks \cite{Yamasaki1975}; and Nakamura and Yamasaki introduced a generalised extremal length based on the nonlinear energy $p^{-1}\sum_{\{v,w\}\in E}|f(v)-f(w)|^p$ \cite{NakamuraYamasaki1976}. Holopainen and Soardi subsequently developed $p$-harmonic potential theory on graphs \cite{HolopainenSoardi1997}. These works concern extremal length, capacity, and $p$-harmonicity, rather than the cardinality of the full spectrum or the topology of its normalised eigenvector sets.

Within graph spectral theory, sharp results are known in several special regimes. On forests the variational spectrum exhausts the full spectrum \cite[Theorem~3.7]{DeiddaPuttiTudisco2023}. For $p\ne2$, every eigenfunction for a positive eigenvalue of the uniformly weighted $K_n$ takes one positive value, one negative value, and possibly the value zero. This classification is due to Amghibech \cite{Amghibech2003}; we use the formulation in \cite[Statement~D.3]{DeiddaTudiscoZhang2025}. Signed weighted graph $p$-Schr\"odinger operators with real potentials are treated in \cite[Section~2]{GeLiuZhang2023}, while Jost and Zhang develop a broader homogeneous-function framework, including zero-homogeneous critical-point constructions and vertex--edge correspondences \cite{JostZhang2021}. Tudisco and Zhang establish, for convex homogeneous pairs, a nonlinear spectral duality relating the nonzero spectrum and variational values \cite[Theorems~1 and~4]{TudiscoZhang2026}. In a related direction, Zhang's polar-duality theory preserves sublevel-set homotopy types, Morse critical points, and Rothe critical groups \cite{Zhang2026}. Theorem~A places signed weighted graph $p$-Schr\"odinger operators with arbitrary real potentials within a uniform full-spectrum theory, while Theorems~B and~C show that exponential extremal growth already occurs for complete graphs with positive integer edge weights.

The joint work of Deidda, Tudisco and Zhang provides a complete, self-contained account of nonlinear spectral graph theory \cite{DeiddaTudiscoZhang2025}. Deidda's thesis gives a broad treatment of the graph $p$-Laplacian eigenproblem, including its nodal, limiting and computational aspects \cite{DeiddaThesis2023}. Variational graph eigenvalues underpin spectral clustering \cite{BuhlerHein2009}. Tudisco and Hein proved a nodal-domain theorem and higher-order Cheeger inequalities for the graph $p$-Laplacian \cite{TudiscoHein2018}. Zhang introduced homological eigenvalues for graph $p$-Laplacians and used them to analyse non-variational eigenvalues and their dependence on $p$ \cite{Zhang2025}. For $p>2$, Deidda, Segala and Putti identify the index of an eigenpair with the Morse index of the Rayleigh quotient and show that differentiable saddle points of the $k$th spectral energy correspond to index-$k$ eigenpairs \cite{DeiddaSegalaPutti2025}. Theorem~B gives a complete model for all nonconstant eigenlines of the uniformly weighted $K_n$: it determines their Morse indices for $p>2$ and their local critical groups for $1<p<2$, including at values outside the Krasnosel'skii genus min--max sequence. Under the hypotheses of \cite[Theorem~2.7]{BerkolaikoHofmann2025}, graph surgery relates eigenpairs to stationary points of cut-graph eigenvalue branches. Theorem~A controls the entire critical-value set.

\subsection{Organisation}

Sections~\ref{sec:ominimal}--\ref{sec:effective-bounds} prove Theorem~A and its quantitative and topological refinements. Sections~\ref{sec:projective}--\ref{sec:growth} establish the complete-graph cell decomposition and prove Theorems~B and~C. The appendices give definable-family and topological results, endpoint limits, matrix extensions, small-graph results, and the exact algebraic verification of a nine-value triangle. For example, an exact computer-assisted proof, given in Appendix~\ref{app:nine-triangle}, constructs a signed weighted triangle with nonnegative integer potential and exactly nine distinct spectral values at $p=4$; Corollary~\ref{cor:nine-open-interval} shows that these values remain distinct throughout an open interval containing $4$.

\vspace{4mm}

\paragraph{\textbf{Acknowledgments.}} I thank the Isaac Newton Institute for Mathematical Sciences, Cambridge, for support and hospitality during the programme Geometric spectral theory and applications, where this work was undertaken. This work was supported by EPSRC grant EP/Z000580/1.

\section{O-minimal preliminaries}\label{sec:ominimal}

An expansion $\mathcal R$ of the ordered real field is specified by a Boolean algebra of definable subsets of $\R^q$ for each $q\geq1$. These algebras contain the semialgebraic sets and are closed under Cartesian products and coordinate projections. The expansion is \emph{o-minimal} if its definable subsets of $\R$ are precisely the finite unions of points and intervals. A map is definable if its graph is definable; throughout the paper, real parameters are allowed. Cell decomposition gives the following standard consequences \cite{Coste1999,vanDenDries1998,vanDenDriesMiller1996}.

\begin{proposition}[Basic finiteness properties]\label{prop:ominimal-facts}
Let $\mathcal{R}$ be an o-minimal expansion of the real field.
\begin{enumerate}[label=\textup{(\roman*)}]
\item Every definable set has finitely many definably connected components. Each such component is definably path-connected, and a definable path can be partitioned into finitely many $C^1$ arcs.
\item If $X\subset\R^q\times\R$ is definable and every fibre
$
 X_\theta=\{t\in\R:(\theta,t)\in X\}
$
is finite, then there exists $N<\infty$ such that $\#X_\theta\leq N$ for every $\theta\in\R^q$.
\item If $I\subset\R$ is an interval and $X\subset I\times\R$ is definable with finite vertical fibres, then there is a finite set $P\subset I$ such that, for every component $J$ of $I\setminus P$, the set $X\cap(J\times\R)$ is a disjoint union of graphs
$$
 \bigsqcup_{k=1}^r\{(t,f_k(t)):t\in J\},
$$
where $r\geq0$ and, when $r>0$, the continuous definable functions satisfy $f_1<\cdots<f_r$ on $J$.
\item Let $f:X\to Y$ be a continuous definable map and let $A_1,\ldots,A_s\subset X$ be definable. There is a finite definable partition $Y=\bigsqcup_\alpha Y_\alpha$ such that, for each $\alpha$ and $y_\alpha\in Y_\alpha$, there is a definable homeomorphism
$$
 f^{-1}(Y_\alpha)\rightarrow
 Y_\alpha\times f^{-1}(y_\alpha)
$$
over $Y_\alpha$ which carries each $A_j\cap f^{-1}(Y_\alpha)$ onto $Y_\alpha\times(A_j\cap f^{-1}(y_\alpha))$.
\end{enumerate}
\end{proposition}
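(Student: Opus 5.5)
These four facts are standard consequences of the cell decomposition theorem and its uniform version. I would derive each from those two theorems rather than reprove them from the axioms, citing \cite{vanDenDries1998,Coste1999} for the decomposition theorems themselves.

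For (i), I would take a cell decomposition of $\R^q$ adapted to the given definable set $X$. Then $X$ is a finite union of cells. Each cell is definably homeomorphic to an open box, so it is definably path-connected, and one can write an explicit definable path through the cell graph structure. The definably connected components are finite unions of these finitely many cells, which gives finiteness. Each component is definably path-connected because it is a chain of cells in which each meets the closure of the next, and definable curve selection joins adjacent cells. For the last claim, apply the monotonicity theorem coordinatewise to a definable path $\gamma:[0,1]\to\R^q$. Together with the $C^1$ cell decomposition theorem in dimension one, this shows that each coordinate is $C^1$ off a finite set. The finitely many exceptional points give the partition into $C^1$ arcs.

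For (ii) and (iii), I would use the uniform finiteness (uniform bounds) theorem. Take a cell decomposition of $\R^q\times\R$ compatible with $X$. Since each fibre $X_\theta$ is finite, no cell contained in $X$ can be a band $(f,g)$ over its base. Hence every cell in $X$ is the graph of a definable function over a cell of $\R^q$. The number $N$ of such cells then bounds $\#X_\theta$ for every $\theta$. For (iii), specialise to $q=1$ with base $I$. The base cells in $I$ are finitely many points, which form the set $P$, and open intervals. Over each open interval $J$, the cells contained in $X$ are graphs of continuous definable functions, and they are ordered $f_1<\cdots<f_r$ because the cells of a decomposition over a common base are stacked by the ordering of their defining functions.

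Part (iv) is Hardt's definable triviality theorem, in the version with finitely many distinguished definable subsets. I would cite it directly, noting that it applies to continuous definable maps between definable sets in any o-minimal expansion of the real field (see \cite[Ch.~9]{vanDenDries1998}). This is the only part whose proof is substantial, since it requires the induction on dimension through cell decomposition of the graph of $f$. For the purposes of this paper I would therefore treat it as a quoted theorem rather than reproduce it.
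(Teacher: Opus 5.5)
Your proposal is correct and follows essentially the same route as the paper: all four parts come from cylindrical cell decomposition, with finite fibres ruling out band cells in (ii)--(iii), and (iv) is quoted as definable Hardt triviality with distinguished subsets. The only differences are cosmetic. The paper applies the projection form of Hardt's theorem \cite[Theorem~5.22]{Coste1999} to $\operatorname{graph}(f)\cong X$, whereas you cite the continuous-map version in \cite{vanDenDries1998} directly. In (iii) you should also take the decomposition compatible with $I\times\R$, so that the point cells lying in $I$ give $P$.
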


Part~(ii) is usually called \emph{uniform finiteness}. Part~(iii) follows by applying cell decomposition to $X$: finite fibres exclude two-dimensional cells, and a finite refinement by the endpoints of the projected cells makes each remaining one-dimensional cell the graph of a function over an entire component $J$. Part~(iv) is the definable version of Hardt's triviality theorem, with compatibility for prescribed definable subsets; see \cite[Theorem~5.22]{Coste1999}. Apply the theorem to the coordinate projection of $\operatorname{graph}(f)$ and identify that graph with $X$. This gives the stated version for a continuous definable map, including compatibility with finitely many prescribed definable subsets.

The structure relevant here is
$$
 \R_{\exp}=(\R;<,+,\cdot,0,1,\exp).
$$
Its o-minimality already follows from the 1994 theorem of van den Dries and Miller, since $\R_{\exp}$ is a reduct of $\mathbb R_{\mathrm{an},\exp}$ \cite{vanDenDriesMiller1994}; Wilkie later proved o-minimality of $\R_{\exp}$ directly \cite{Wilkie1996}. The logarithm is definable as the inverse of the restriction of $\exp$ to $\R$, and hence the two-variable power map
\begin{equation}\label{eq:pow-def}
 \operatorname{pow}(t,p)
 =\begin{cases}
\exp(p\log|t|),&t\neq 0,\\
0,&t=0,
\end{cases}
 \qquad (p>0),
\end{equation}
is definable in $\R_{\exp}$. On the range $p>1$, the map
\begin{equation}\label{eq:phi-def}
 \Phi(t,p)
 =\begin{cases}
\sgn(t)\exp((p-1)\log|t|),&t\neq 0,\\
 0,&t=0,
 \end{cases}
\end{equation}
is also definable and equals $|t|^{p-2}t$.

The following critical-value principle will be used throughout.

\begin{lemma}[Finite critical values]\label{lem:finite-critical-values}
Let $M$ be a definable $C^1$ manifold and let $f:M\to\R$ be a definable $C^1$ function. Then the set of critical values
$
 f\bigl(\Crit(f)\bigr)
$
is finite.
\end{lemma}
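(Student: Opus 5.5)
The plan is to show that the set of critical values is a definable subset of $\R$ with empty interior; o-minimality then forces it to be finite. First, $\Crit(f)$ is definable. Work in finitely many definable $C^1$ charts $\phi_k:U_k\to M$, where $U_k\subset\R^d$ is open and definable. In a chart, $\Crit(f)\cap\phi_k(U_k)$ is the image under $\phi_k$ of
$$
\{u\in U_k:\ \nabla(f\circ\phi_k)(u)=0\}.
$$
Each partial derivative of a definable $C^1$ function is definable, since it is a limit of difference quotients and the limit can be expressed by a first-order formula. Hence $\Crit(f)$ is definable, and so is its image $f(\Crit(f))\subset\R$. By o-minimality this image is a finite union of points and open intervals, so it suffices to show that it contains no open interval.

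The main step is a definable Sard theorem for $C^1$ functions. Classical Sard requires $C^d$ smoothness when $\dim M=d$, so it is not available; the definable structure has to replace the extra smoothness. Set $g=f\circ\phi_k$ and $Z=\{\nabla g=0\}\subset U_k$, and suppose $g(Z)$ contains an open interval $I$. By definable choice there is a definable map $s:I\to Z$ with $g\circ s=\mathrm{id}$. By monotonicity and cell decomposition, after shrinking $I$ the map $s$ is $C^1$ on a subinterval. Differentiating $g(s(t))=t$ by the chain rule gives
$$
1=\nabla g(s(t))\cdot s'(t)=0,
$$
which is a contradiction. So $g(Z)$ has empty interior for each chart.

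Since there are finitely many charts, $f(\Crit(f))$ is a finite union of definable subsets of $\R$ with empty interior. Each such subset is finite, and therefore $f(\Crit(f))$ is finite. The point that needs care is producing a differentiable definable section. This follows from definable choice together with the monotonicity theorem: a one-variable definable map is $C^1$ off a finite set, applied coordinatewise to $s$.

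If the definition of a definable $C^1$ manifold does not supply finitely many charts, I would instead take $M\subset\R^q$ to be a definable $C^1$ submanifold. The same argument then runs with local parametrisations supplied by cell decomposition, since $M$ is a finite union of $C^1$ cells that are open in $M$ up to lower-dimensional pieces. Alternatively, I would run the curve argument directly on $M$: lift $I$ to a definable curve in $\Crit(f)$ and differentiate along it using the tangent space.
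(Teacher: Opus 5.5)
Your proof is correct, but it takes a different route from the paper's. You prove the scalar definable Sard theorem directly. Suppose $f(\Crit(f))$ contained an interval. Definable choice and the monotonicity theorem would then give a $C^1$ definable curve $s$ in the critical set with $f\circ s=\mathrm{id}$, which the chain rule rules out. One-dimensional o-minimality then turns empty interior into finiteness.

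The paper argues on the critical set itself. $\Crit(f)$ has finitely many definably connected components, and any two points of a component are joined by a definable path that is piecewise $C^1$. The chain rule makes $f$ constant along such a path, so $f$ is constant on each component.

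Both proofs rest on the same mechanism: differentiating $f$ along a definable curve lying in $\Crit(f)$. They extract different information, however.

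\begin{itemize}
\item The paper's version gives a structural bound: the number of critical values is at most the number of connected components of $\Crit(f)$. This is the same principle the paper later uses when it bounds spectral cardinality by Pfaffian component counts. It also works globally on $M$, without charts or definable choice.
\item Your version needs a finite definable atlas (or the submanifold variant you sketch) and definable choice, and it yields only that the critical-value set has empty interior. In exchange, it is the standard Sard-type argument and does not need definable components to be connected by piecewise-$C^1$ definable paths.
\end{itemize}
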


\begin{proof}
Derivatives of definable differentiable maps are definable, so the critical set $C=\Crit(f)$ is definable. By Proposition~\ref{prop:ominimal-facts}(i), it has finitely many definably connected components. Fix one such component $C_0$ and two points $x,y\in C_0$. There is a definable path $\gamma:[0,1]\to C_0$ joining them, and a finite partition of $[0,1]$ on whose open subintervals $\gamma$ is $C^1$. On each such subinterval, the chain rule gives
$$
 \frac{d}{dt}(f\circ\gamma)(t)
 =df_{\gamma(t)}\bigl(\gamma'(t)\bigr)=0.
$$
Continuity across the finitely many partition points shows that $f(x)=f(y)$. Thus $f$ is constant on every connected component of $C$, and there are only finitely many such components.
\end{proof}

\begin{remark}\label{rem:def-sard}
Lemma~\ref{lem:finite-critical-values} is the scalar-valued consequence of the definable Sard theorem. Kurdyka's o-minimal gradient inequality and the resulting finite-length property for bounded gradient trajectories are proved in \cite{Kurdyka1998}.
\end{remark}

The next step is to place homogeneous nonlinear eigenproblems in this critical-value framework.

\section{Definable homogeneous Rayleigh quotients}\label{sec:rayleigh}

Let $F,G:\R^n\setminus\{0\}\to\R$ be $C^1$ functions. Assume that, for some $\delta>0$,
\begin{equation}\label{eq:homogeneity}
 F(tx)=t^\delta F(x),\qquad G(tx)=t^\delta G(x)
 \quad\text{for all }t>0,
\end{equation}
and that $G(x)>0$ whenever $x\neq 0$. Define the degree-zero Rayleigh quotient
$$
 Q(x)=\frac{F(x)}{G(x)}.
$$
We call
\begin{equation}\label{eq:abstract-spectrum}
 \Spec(F,G)
 =\{\lambda\in\R:\exists x\neq 0\text{ with }
\nabla F(x)=\lambda\nabla G(x)\}
\end{equation}
the generalised spectrum of the pair $(F,G)$.

\begin{proposition}[Eigenvalues are critical values]\label{prop:eigen-critical}
Under the preceding assumptions,
\begin{equation}\label{eq:spec-critical}
 \Spec(F,G)
 =Q\bigl(\Crit(Q)\bigr)
 =Q\bigl(\Crit(Q|_{\Sph^{n-1}})\bigr).
\end{equation}
\end{proposition}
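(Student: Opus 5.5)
The plan is to compute the gradient of $Q$ directly and use Euler's identity for homogeneous functions. Since $F,G$ are $C^1$ and positively $\delta$-homogeneous, differentiating $F(tx)=t^\delta F(x)$ in $t$ at $t=1$ gives $\langle\nabla F(x),x\rangle=\delta F(x)$, and likewise for $G$. On $\R^n\setminus\{0\}$ the quotient rule gives
$$
 \nabla Q(x)=\frac{\nabla F(x)-Q(x)\nabla G(x)}{G(x)} .
$$
For the first equality, suppose $\nabla F(x)=\lambda\nabla G(x)$ with $x\neq0$. Pairing with $x$ and applying Euler gives $\delta F(x)=\lambda\delta G(x)$. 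Since $\delta>0$ and $G(x)>0$, this forces $\lambda=Q(x)$, and the gradient formula then shows $\nabla Q(x)=0$. So $\lambda\in Q(\Crit(Q))$. Conversely, if $\nabla Q(x)=0$, the formula gives $\nabla F(x)=Q(x)\nabla G(x)$, so $Q(x)\in\Spec(F,G)$.

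For the second equality, note that $Q$ is $0$-homogeneous, so $Q(tx)=Q(x)$ for $t>0$. Differentiating in $t$ gives $\langle\nabla Q(x),x\rangle=0$, i.e.\ $\nabla Q(x)\in x^\perp$. Also, if $x\in\Crit(Q)$ then $x/|x|\in\Crit(Q)$ with the same value, because the gradient of a $0$-homogeneous function is $(-1)$-homogeneous.

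At $x\in\Sph^{n-1}$ the tangent space is $T_x\Sph^{n-1}=x^\perp$, and $d(Q|_{\Sph^{n-1}})_x$ is the restriction of $dQ_x$ to $x^\perp$. Since $\nabla Q(x)$ already lies in $x^\perp$, it vanishes exactly when its restriction to $x^\perp$ vanishes. Hence $\Crit(Q|_{\Sph^{n-1}})=\Crit(Q)\cap\Sph^{n-1}$. Together with radial invariance of critical points and of values, this gives $Q(\Crit(Q))=Q(\Crit(Q|_{\Sph^{n-1}}))$.

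No step is a real obstacle. The only point needing care is Euler's identity under mere $C^1$ regularity away from the origin with positive homogeneity. Differentiating $t\mapsto F(tx)$ along the ray, which stays in $\R^n\setminus\{0\}$, is legitimate by the chain rule, so this goes through.
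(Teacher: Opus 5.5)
Your proof is correct and follows essentially the same route as the paper: Euler's identity plus the quotient rule give the first equality, and degree-zero homogeneity gives the second (via $\langle\nabla Q(x),x\rangle=0$ and $(-1)$-homogeneity of $\nabla Q$). The only difference is cosmetic: the paper writes the sphere condition as a Lagrange multiplier equation $\nabla Q(x)=\alpha x$ and pairs with $x$ to get $\alpha=0$, whereas you observe directly that $\nabla Q(x)\in x^\perp=T_x\Sph^{n-1}$, which is the same fact.
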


\begin{proof}
Euler's identity for homogeneous $C^1$ functions gives
\begin{equation}\label{eq:euler}
 \inner{\nabla F(x)}{x}=\delta F(x),\qquad
 \inner{\nabla G(x)}{x}=\delta G(x).
\end{equation}
If $\nabla F(x)=\lambda\nabla G(x)$, then \eqref{eq:euler} and $G(x)>0$ show that
$$
 \lambda=\frac{F(x)}{G(x)}=Q(x).
$$
Moreover,
\begin{equation}\label{eq:grad-quotient}
 \nabla Q(x)
 =\frac{G(x)\nabla F(x)-F(x)\nabla G(x)}{G(x)^2},
\end{equation}
so $\nabla Q(x)=0$. Conversely, \eqref{eq:grad-quotient} shows that every critical point of $Q$ satisfies $\nabla F(x)=Q(x)\nabla G(x)$. This proves the first equality in \eqref{eq:spec-critical}.

Since $Q$ is homogeneous of degree zero, Euler's identity gives $\inner{\nabla Q(x)}{x}=0$, and differentiating $Q(tx)=Q(x)$ with respect to $x$ gives $\nabla Q(tx)=t^{-1}\nabla Q(x)$ for $t>0$. Hence every unconstrained critical ray meets $\Sph^{n-1}$ in a critical point. Conversely, if $x\in\Sph^{n-1}$ is a critical point of the restriction of $Q$ to the sphere, the Lagrange multiplier equation has the form $\nabla Q(x)=\alpha x$. Taking the inner product with $x$ yields $\alpha=0$. Thus the constrained and unconstrained critical points agree on the sphere, proving the second equality.
\end{proof}

\begin{theorem}[Finite spectrum for definable homogeneous pairs]
\label{thm:definable-homogeneous}
Suppose, in addition, that $F$ and $G$ are definable in an o-minimal expansion of the real field. Then $\Spec(F,G)$ is finite.
\end{theorem}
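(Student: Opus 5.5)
By Proposition~\ref{prop:eigen-critical}, it suffices to show that $Q\bigl(\Crit(Q)\bigr)$ is finite, where $Q=F/G$ is regarded as a function on the open set $M=\R^n\setminus\{0\}$. The plan is to check that $M$ is a definable $C^1$ manifold and $Q$ a definable $C^1$ function on it, and then to invoke Lemma~\ref{lem:finite-critical-values} directly.

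The steps, in order, are as follows. First, $M$ is definable as the complement of a point in $\R^n$. It is an open subset of $\R^n$, so it is a definable $C^1$ manifold with the single identity chart. Second, $F$ and $G$ are definable by hypothesis and $G>0$ on $M$, so the quotient $Q$ is definable. Its graph is cut out from the graphs of $F$ and $G$ by the polynomial relation $yG(x)=F(x)$, followed by a projection, and the o-minimal structure is closed under both operations. Third, $Q$ is $C^1$ on $M$ because $F,G\in C^1$ and $G$ does not vanish; formula \eqref{eq:grad-quotient} gives its gradient explicitly. Fourth, Lemma~\ref{lem:finite-critical-values} then shows that $Q(\Crit(Q))$ is finite, and \eqref{eq:spec-critical} identifies this set with $\Spec(F,G)$.

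An alternative route works on the compact manifold $\Sph^{n-1}$ using $Q|_{\Sph^{n-1}}$, which is definable because the sphere is semialgebraic. This would use the second equality in \eqref{eq:spec-critical}. It needs a little more care, since one must say what a definable $C^1$ manifold structure on the sphere is, for instance via the semialgebraic stereographic charts. Working on the open set $M$ avoids this.

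The only point that needs any attention is the definability of $Q$. This is a routine closure property, so I do not expect a genuine obstacle: the substantive work has already been done in Lemma~\ref{lem:finite-critical-values} and Proposition~\ref{prop:eigen-critical}.
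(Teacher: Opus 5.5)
Your proof is correct and is essentially the paper's argument: both combine Lemma~\ref{lem:finite-critical-values} with Proposition~\ref{prop:eigen-critical}. The only difference is that the paper applies the lemma to $Q|_{\Sph^{n-1}}$ and uses the second equality in \eqref{eq:spec-critical}, whereas you apply it on the open set $\R^n\setminus\{0\}$ and use the first equality; this works equally well, since the lemma does not require compactness.
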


\begin{proof}
The quotient $Q|_{\Sph^{n-1}}$ is a definable $C^1$ function. Its set of critical values is finite by Lemma~\ref{lem:finite-critical-values}, and it equals $\Spec(F,G)$ by Proposition~\ref{prop:eigen-critical}.
\end{proof}

\begin{example}[The definability hypothesis is essential]
\label{ex:nondefinable-homogeneous}
Let $f:S^1\to\mathbb R$ be a smooth function with infinitely many distinct critical values. For example, take disjoint flat bumps whose supports accumulate at one point and whose amplitudes tend to zero faster than every power of their support diameters, and extend their sum by zero at the accumulation point. For any $\delta>0$, define, on $\mathbb R^2\setminus\{0\}$,
$$
 F(x)=\|x\|^\delta f(x/\|x\|),\qquad G(x)=\|x\|^\delta.
$$
These are smooth homogeneous functions of the same degree and $G>0$, but $F/G=f$ on the unit circle. Their generalised spectrum is therefore infinite.
\end{example}

\begin{remark}[Positive-dimensional eigenvector sets]
The conclusion of Theorem~\ref{thm:definable-homogeneous} includes critical sets of positive dimension. Such sets occur for repeated linear eigenvalues and zero-energy graph eigenfunctions. Definability ensures that the critical set has only finitely many connected components, even when some components are positive-dimensional.
\end{remark}

The argument is uniform in definable families, even when the homogeneity degree varies with the parameter.

\begin{theorem}[Uniform finiteness for definable Rayleigh families]
\label{thm:abstract-uniform}
Let $\Theta$ be a definable parameter set and let
$
 \cQ:\Theta\times(\R^n\setminus\{0\})\to\R
$
be definable. Assume that, for every $\theta\in\Theta$, the fibre $Q_\theta=\cQ(\theta,\cdot)$ is $C^1$ and homogeneous of degree zero, and that the map $(\theta,x)\mapsto\nabla_xQ_\theta(x)$ is definable. Set
$
 \Sigma_\theta
 =Q_\theta\bigl(\Crit(Q_\theta|_{\Sph^{n-1}})\bigr).
$
Then there exists $B<\infty$ such that $\#\Sigma_\theta\leq B$ for every $\theta\in\Theta$.
\end{theorem}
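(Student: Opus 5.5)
The plan is to form the total definable set of critical values over the parameter space and then apply uniform finiteness, Proposition~\ref{prop:ominimal-facts}(ii). Define
$$
 X=\bigl\{(\theta,\lambda)\in\Theta\times\R:\ \exists x\in\Sph^{n-1}\text{ with }\nabla_xQ_\theta(x)=0\text{ and }Q_\theta(x)=\lambda\bigr\}.
$$
The definable sets are closed under products and projections. The map $(\theta,x)\mapsto\nabla_xQ_\theta(x)$ and the function $\cQ$ are definable by hypothesis. Hence the set
$$
 \{(\theta,x,\lambda):x\in\Sph^{n-1},\ \nabla_xQ_\theta(x)=0,\ \cQ(\theta,x)=\lambda\}
$$
is definable, and $X$ is its projection, so $X$ is definable as well. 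As in the proof of Proposition~\ref{prop:eigen-critical}, degree-zero homogeneity implies that critical points of $Q_\theta|_{\Sph^{n-1}}$ are exactly the points of the sphere where the unconstrained gradient $\nabla_xQ_\theta$ vanishes. That argument used only the degree-zero homogeneity of $Q$ and the Lagrange multiplier computation $\alpha=\inner{\nabla Q(x)}{x}=0$, so it applies directly here. Therefore the fibre $X_\theta$ equals $\Sigma_\theta$.

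Next I will check that each fibre is finite. For fixed $\theta$, the restriction $Q_\theta|_{\Sph^{n-1}}$ is a definable $C^1$ function on the definable $C^1$ manifold $\Sph^{n-1}$. It is definable because it is a fibre of the definable map $\cQ$. Lemma~\ref{lem:finite-critical-values} then gives $\#\Sigma_\theta<\infty$. Strictly, that lemma needs the critical set to be definable, and this follows here from the definability of $\nabla_xQ_\theta$ even without invoking the general fact about derivatives.

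Finally, Proposition~\ref{prop:ominimal-facts}(ii) is stated for $X\subset\R^q\times\R$ with every fibre finite. I will set $X_\theta=\emptyset$ for $\theta\notin\Theta$, regarding $\Theta\subset\R^q$, so that every fibre over $\R^q$ is finite. The proposition then yields $N$ with $\#X_\theta\leq N$ for all $\theta$, and $B=N$ is the required bound.

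The only delicate point is making sure the homogeneity degree varying with $\theta$ causes no trouble. It does not, because only $Q_\theta$ and its gradient enter the definition of $X$, never $F$, $G$ or the degree $\delta$ separately. There is no other substantial obstacle; the work lies in the definability bookkeeping for $X$.
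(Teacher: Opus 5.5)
Your proposal is correct and follows the paper's proof: form the definable total critical-value set over $\Theta\times\R$, get finite fibres from Lemma~\ref{lem:finite-critical-values}, and conclude with uniform finiteness, Proposition~\ref{prop:ominimal-facts}(ii). The only difference is cosmetic. You encode criticality on $\Sph^{n-1}$ by $\nabla_xQ_\theta(x)=0$, justified by degree-zero homogeneity, whereas the paper writes the projected condition $\nabla_xQ_\theta(x)=\inner{\nabla_xQ_\theta(x)}{x}x$; by Euler's identity the two conditions are equivalent.
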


\begin{proof}
The total critical-value set
$$
 \Sigma
 =\{(\theta,\lambda):\exists x\in\Sph^{n-1},\
\nabla_xQ_\theta(x)
=\inner{\nabla_xQ_\theta(x)}{x}x,\
Q_\theta(x)=\lambda\}
 \subset\Theta\times\R
$$
is definable; the displayed vector equation says exactly that $x$ is a critical point of the restriction to the sphere. Each fibre $\Sigma_\theta$ is finite by Lemma~\ref{lem:finite-critical-values}. Uniform finiteness, Proposition~\ref{prop:ominimal-facts}(ii), gives a common cardinality bound.
\end{proof}

\begin{corollary}[One-parameter branch decomposition]
\label{cor:abstract-branches}
In the setting of Theorem~\ref{thm:abstract-uniform}, suppose that $\Theta=I\subset\R$ is an interval. There is a finite set $P\subset I$ such that, on every component $J$ of $I\setminus P$, $\Sigma_\theta=\{f_1(\theta),\ldots,f_r(\theta)\}$, where $f_1<\cdots<f_r$ are continuous definable functions on $J$.
\end{corollary}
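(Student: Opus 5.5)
The plan is to apply Proposition~\ref{prop:ominimal-facts}(iii) directly to the total critical-value set
$$
 \Sigma=\{(\theta,\lambda)\in I\times\R:\lambda\in\Sigma_\theta\},
$$
which was already shown to be definable in the proof of Theorem~\ref{thm:abstract-uniform}. That proof writes $\Sigma$ as a coordinate projection of a set cut out by the definable maps $(\theta,x)\mapsto Q_\theta(x)$ and $(\theta,x)\mapsto\nabla_xQ_\theta(x)$, together with the semialgebraic sphere condition. Since $\Theta=I$ is an interval, $\Sigma\subset I\times\R$ is a definable subset of the plane.

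The next step is to verify the hypothesis of part~(iii): every vertical fibre $\Sigma_\theta$ must be finite. For each fixed $\theta$, the fibre $Q_\theta|_{\Sph^{n-1}}$ is a definable $C^1$ function on the definable $C^1$ manifold $\Sph^{n-1}$. Lemma~\ref{lem:finite-critical-values} therefore shows that its critical-value set $\Sigma_\theta$ is finite. This finiteness is exactly what excludes two-dimensional cells in the cell decomposition underlying part~(iii).

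Proposition~\ref{prop:ominimal-facts}(iii) then supplies a finite set $P\subset I$ with the required property. On each component $J$ of $I\setminus P$, the set $\Sigma\cap(J\times\R)$ is a disjoint union of graphs of continuous definable functions $f_1<\cdots<f_r$ on $J$. Reading this off fibrewise gives $\Sigma_\theta=\{f_1(\theta),\ldots,f_r(\theta)\}$ for every $\theta\in J$. The case $r=0$ covers the possibility that $\Sigma_\theta$ is empty on $J$, although for a continuous function on a compact sphere it never is.

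I expect essentially no obstacle here. The only point requiring care is definability of $\Sigma$ as a subset of $I\times\R$, and this was already handled in Theorem~\ref{thm:abstract-uniform} through the definability of $\nabla_xQ_\theta$. No additional regularity in $\theta$ is needed, because continuity of the $f_k$ on $J$ comes from the cell decomposition itself and not from any dependence of $Q_\theta$ on $\theta$.
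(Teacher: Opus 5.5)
Your proposal is correct and follows the paper's proof: the paper likewise applies Proposition~\ref{prop:ominimal-facts}(iii) to the definable total critical-value set $\Sigma\subset I\times\R$ from Theorem~\ref{thm:abstract-uniform}. The finiteness of its vertical fibres comes from Lemma~\ref{lem:finite-critical-values}, exactly as you state.
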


\begin{proof}
Apply Proposition~\ref{prop:ominimal-facts}(iii) to the definable set $\Sigma\subset I\times\R$.
\end{proof}

The graph $p$-Laplacian enters this framework through its energy and mass.

\section{The graph \texorpdfstring{$p$}{p}-Laplacian}\label{sec:graph}

Fix $n\geq1$. Encode every finite simple graph on $\{1,\ldots,n\}$ by nonnegative symmetric edge coefficients, setting $a_{ij}=0$ on nonedges. Assign signs to the unordered pairs and let
$$
 a_{ij}=a_{ji}\geq0,\quad a_{ii}=0,\qquad
 \varepsilon_{ij}=\varepsilon_{ji}\in\{-1,1\},
 \qquad b_i\in\R,\qquad \nu_i>0.
$$
For $p>1$, define the energy and mass
\begin{align}
 \cE_{p,a,\varepsilon,b}(x)
 &=\sum_{1\leq i<j\leq n}a_{ij}|x_i-\varepsilon_{ij}x_j|^p
+\sum_{i=1}^n b_i|x_i|^p,\label{eq:energy}\\
 \cM_{p,\nu}(x)
 &=\sum_{i=1}^n\nu_i|x_i|^p.\label{eq:mass}
\end{align}
Both functions are $C^1$ and positively homogeneous of degree $p$, and $\cM_{p,\nu}(x)>0$ for $x\neq0$. Their homogeneous Rayleigh quotient is
\begin{equation}\label{eq:graph-rayleigh}
 \cQ_{p,a,\varepsilon,b,\nu}(x)
 =\frac{\cE_{p,a,\varepsilon,b}(x)}{\cM_{p,\nu}(x)}.
\end{equation}
When $b_i\geq0$, a common alternative normalisation is
$$
 \frac{\cE_{p,a,\varepsilon,b}(x)^{1/p}}
{\cM_{p,\nu}(x)^{1/p}}.
$$
For general real $b$, the energy may be negative, while the quotient in \eqref{eq:graph-rayleigh} remains $C^1$ on $\R^n\setminus\{0\}$. The arguments below require only positivity of the denominator.

Let $\Phi_p(t)=|t|^{p-2}t$, with $\Phi_p(0)=0$. Direct differentiation gives
\begin{align}
 \frac{1}{p}\frac{\partial\cE_{p,a,\varepsilon,b}}{\partial x_i}(x)
 &=\sum_{j\neq i}a_{ij}\Phi_p(x_i-\varepsilon_{ij}x_j)
+b_i\Phi_p(x_i),\label{eq:energy-gradient}\\
 \frac{1}{p}\frac{\partial\cM_{p,\nu}}{\partial x_i}(x)
 &=\nu_i\Phi_p(x_i).\label{eq:mass-gradient}
\end{align}
The contribution at the other endpoint has the stated form because $-\varepsilon_{ij}\Phi_p(x_i-\varepsilon_{ij}x_j) =\Phi_p(x_j-\varepsilon_{ij}x_i)$. Thus \eqref{eq:eigen-intro} is precisely
\begin{equation}\label{eq:gradient-eigen}
 \nabla\cE_{p,a,\varepsilon,b}(x)
 =\lambda\nabla\cM_{p,\nu}(x).
\end{equation}

\begin{proposition}[Variational identification]\label{prop:graph-critical}
For every $p>1$,
\begin{equation}\label{eq:graph-spec-critical}
 \sigma_p(a,\varepsilon,b,\nu)
 =\cQ_{p,a,\varepsilon,b,\nu}
\bigl(\Crit(\cQ_{p,a,\varepsilon,b,\nu}|_{\Sph^{n-1}})\bigr).
\end{equation}
In particular, if $(x,\lambda)$ is an eigenpair, then
\begin{equation}\label{eq:eigenvalue-quotient}
 \lambda
 =\frac{\cE_{p,a,\varepsilon,b}(x)}{\cM_{p,\nu}(x)}.
\end{equation}
Set $\beta=\min_i b_i/\nu_i$. Then every eigenvalue satisfies $\lambda\geq\beta$; in particular, it is nonnegative when $b_i\geq0$.
\end{proposition}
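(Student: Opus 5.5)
The plan is to reduce the statement to Proposition~\ref{prop:eigen-critical}, applied with $F=\cE_{p,a,\varepsilon,b}$, $G=\cM_{p,\nu}$ and $\delta=p$.

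First, check the hypotheses. Both functions are $C^1$ on $\R^n\setminus\{0\}$, since $t\mapsto|t|^p$ is $C^1$ on $\R$ for $p>1$ with derivative $p\Phi_p(t)$. The maps $x\mapsto x_i-\varepsilon_{ij}x_j$ are linear, so every summand is $C^1$. Positive homogeneity of degree $p$ is immediate from $|tu|^p=t^p|u|^p$ for $t>0$. Positivity $\cM_{p,\nu}(x)>0$ for $x\neq0$ holds because every $\nu_i>0$.

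Second, identify the spectra. By \eqref{eq:energy-gradient}--\eqref{eq:mass-gradient}, the eigenproblem \eqref{eq:eigen-intro} is equivalent to \eqref{eq:gradient-eigen}; the common factor $p$ cancels. Hence $\sigma_p(a,\varepsilon,b,\nu)=\Spec(\cE,\cM)$. Proposition~\ref{prop:eigen-critical} then gives \eqref{eq:graph-spec-critical}. Its proof via Euler's identity also gives $\lambda=Q(x)$ for any eigenpair, which is \eqref{eq:eigenvalue-quotient}.

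Third, prove the lower bound. Since $a_{ij}\geq0$, the edge part of $\cE$ is nonnegative. So
$$
 \cE_{p,a,\varepsilon,b}(x)\geq\sum_i b_i|x_i|^p
 =\sum_i\frac{b_i}{\nu_i}\nu_i|x_i|^p\geq\beta\,\cM_{p,\nu}(x).
$$
Dividing by $\cM_{p,\nu}(x)>0$ and using \eqref{eq:eigenvalue-quotient} yields $\lambda\geq\beta$. When all $b_i\geq0$ we have $\beta\geq0$, so every eigenvalue is nonnegative.

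Nothing here is a real obstacle. The only point needing care is the $C^1$ regularity at points where some linear form $x_i-\varepsilon_{ij}x_j$ or coordinate $x_i$ vanishes. This is settled by $p>1$, which makes $\Phi_p$ continuous with $\Phi_p(0)=0$.
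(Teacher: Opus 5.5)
Your proposal is correct and follows essentially the same route as the paper: it applies Proposition~\ref{prop:eigen-critical} with $F=\cE_{p,a,\varepsilon,b}$ and $G=\cM_{p,\nu}$ to obtain the critical-value identity and $\lambda=\cE/\cM$. It then derives $\lambda\geq\beta$ from nonnegativity of the edge energy together with $\sum_i b_i|x_i|^p\geq\beta\,\cM_{p,\nu}(x)$.
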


\begin{proof}
Proposition~\ref{prop:eigen-critical}, applied to $F=\cE_{p,a,\varepsilon,b}$ and $G=\cM_{p,\nu}$, gives \eqref{eq:graph-spec-critical} and \eqref{eq:eigenvalue-quotient}. The lower bound follows because the edge energy is nonnegative and $\sum_i b_i|x_i|^p\geq\beta\cM_{p,\nu}(x)$.
\end{proof}

\begin{proposition}[Spectral shift]\label{prop:potential-shift}
Set
$$
 \beta=\min_i\frac{b_i}{\nu_i},\qquad
 \widetilde b_i=b_i-\beta\nu_i\geq0.
$$
Then
$$
 \cQ_{p,a,\varepsilon,b,\nu}
 =\beta+\cQ_{p,a,\varepsilon,\widetilde b,\nu},\qquad
 \sigma_p(a,\varepsilon,b,\nu)
 =\beta+\sigma_p(a,\varepsilon,\widetilde b,\nu),
$$
and the normalised eigenvector sets agree after translating the eigenvalue.
\end{proposition}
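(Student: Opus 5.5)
The identity is purely algebraic. I will prove it directly from the definitions \eqref{eq:energy}--\eqref{eq:graph-rayleigh} and then transfer it through Proposition~\ref{prop:graph-critical}. First, $\beta$ is finite because $n\geq1$ and every $\nu_i>0$. By the definition of $\beta$ as a minimum, $\widetilde b_i=b_i-\beta\nu_i\geq0$ for each $i$, so $(a,\varepsilon,\widetilde b,\nu)$ is again admissible. Then, for every $x\neq0$,
$$
 \cE_{p,a,\varepsilon,b}(x)
 =\cE_{p,a,\varepsilon,\widetilde b}(x)+\beta\sum_i\nu_i|x_i|^p
 =\cE_{p,a,\varepsilon,\widetilde b}(x)+\beta\,\cM_{p,\nu}(x).
$$
This holds because the edge terms are unchanged and only the potential term is split. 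Dividing by $\cM_{p,\nu}(x)>0$ gives $\cQ_{p,a,\varepsilon,b,\nu}=\beta+\cQ_{p,a,\varepsilon,\widetilde b,\nu}$ on $\R^n\setminus\{0\}$.

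Next I transfer this to spectra and eigenvectors. The two quotients differ by a constant, so their differentials agree everywhere, and so do the differentials of their restrictions to $\Sph^{n-1}$. Hence the two restrictions have the same critical set. Proposition~\ref{prop:graph-critical} then gives
$$
 \sigma_p(a,\varepsilon,b,\nu)=\cQ_{p,a,\varepsilon,b,\nu}(\Crit)=\beta+\cQ_{p,a,\varepsilon,\widetilde b,\nu}(\Crit)=\beta+\sigma_p(a,\varepsilon,\widetilde b,\nu).
$$

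For the eigenvector sets, I would argue at the level of the equation, which is cleaner. By \eqref{eq:gradient-eigen} and the energy identity, $\nabla\cE_{p,a,\varepsilon,b}=\nabla\cE_{p,a,\varepsilon,\widetilde b}+\beta\nabla\cM_{p,\nu}$. Therefore
$$
\nabla\cE_{p,a,\varepsilon,b}(x)=\lambda\nabla\cM_{p,\nu}(x)
\iff
\nabla\cE_{p,a,\varepsilon,\widetilde b}(x)=(\lambda-\beta)\nabla\cM_{p,\nu}(x).
$$
This shows that $E(d,p,\lambda)=E(\widetilde d,p,\lambda-\beta)$ as subsets of $\Sph^{n-1}$, where $\widetilde d=(a,\varepsilon,\widetilde b,\nu)$. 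Passing to images under $\Sph^{n-1}\to\mathbb RP^{n-1}$ gives the same equality for $\overline E$. The equivalence also reproves the spectral identity independently of the critical-point description.

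No step is a real obstacle. The only points needing care are that $\widetilde b$ stays admissible (the sign condition $\widetilde b_i\geq0$ is immediate from the minimum) and that the coordinatewise identity $b_i\Phi_p(x_i)=\widetilde b_i\Phi_p(x_i)+\beta\nu_i\Phi_p(x_i)$ holds, including at $x_i=0$, where $\Phi_p(0)=0$.
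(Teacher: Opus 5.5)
Your proposal is correct and follows the paper's approach: the paper's proof is the single observation that $b_i=\widetilde b_i+\beta\nu_i$ yields both assertions directly. Your energy splitting and the equivalence $\nabla\cE_{p,a,\varepsilon,b}=\lambda\nabla\cM_{p,\nu}\iff\nabla\cE_{p,a,\varepsilon,\widetilde b}=(\lambda-\beta)\nabla\cM_{p,\nu}$ spell out exactly that argument.
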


\begin{proof}
The identity $b_i=\widetilde b_i+\beta\nu_i$ gives both assertions directly.
\end{proof}

\begin{proof}[Proof of Theorem~\ref{thm:intro-main}\ref{part:intro-finite}]
For fixed $p>1$, the maps $t\mapsto|t|^p$ are definable in $\R_{\exp}$ by \eqref{eq:pow-def}. Hence $\cE_{p,a,\varepsilon,b}$, $\cM_{p,\nu}$, and their quotient are definable. The result follows from Theorem~\ref{thm:definable-homogeneous} and Proposition~\ref{prop:graph-critical}.
\end{proof}

\begin{remark}[Standard graph conventions]\label{rem:conventions}
The formulation above includes several common choices.
\begin{itemize}
\item For the unsigned unnormalised graph $p$-Laplacian, take $\varepsilon_{ij}=1$, $b_i=0$, and $\nu_i=1$.
\item A positive vertex measure is incorporated through $\nu_i$. The usual normalised problem is obtained by taking $\nu_i$ to be the weighted degree, provided isolated zero-degree vertices are removed or treated separately.
\item If a subset of vertices is fixed to zero by a Dirichlet condition, then an edge from an interior vertex $i$ to the boundary contributes a term $a_{i\partial}|x_i|^p$; summing these contributions gives $b_i$.
\item The signed weighted $p$-Schr\"odinger convention of \cite{GeLiuZhang2023} is already in the form used here. In the convention of \cite{GeQin2025}, multiplication of the vertexwise eigen-equation by the positive vertex measure gives the present form. In both conventions the real potential is the coefficient $b_i$.
\item Switching by signs $\tau_i\in\{-1,1\}$ replaces $\varepsilon_{ij}$ by $\tau_i\varepsilon_{ij}\tau_j$ and $x_i$ by $\tau_i x_i$. It leaves the energy, mass, and full spectrum unchanged.
\item In the convention of \cite{DeiddaTudiscoZhang2025}, an edge weight $\omega_{ij}\geq 0$ enters through $a_{ij}=\omega_{ij}^p$. For a fixed $p$, this is exactly the preceding framework.
\end{itemize}
\end{remark}

\begin{remark}[Zero eigenvalues and disconnected graphs]
The theory applies to disconnected and noncoercive data. When $b_i\geq0$ and the energy has a nontrivial kernel, its zero eigenvectors form a definable critical set, possibly of positive dimension, contributing the single spectral value $0$. For a general real potential, zero may lie above the bottom of the spectrum; Proposition~\ref{prop:potential-shift} gives the precise reduction.
\end{remark}

\section{Uniform bounds and dependence on the exponent}\label{sec:uniform}

Joint definability of the power function in its base and exponent yields a bound uniform in both the graph data and $p$. Let $D_n$ be the data parameter space
$$
 d=(a,\varepsilon,b,\nu),\qquad a_{ij}=a_{ji}\geq0,
 \quad\varepsilon_{ij}=\varepsilon_{ji}\in\{-1,1\},
 \quad b_i\in\R,\quad\nu_i>0.
$$
Changing $\varepsilon_{ij}$ on a pair with $a_{ij}=0$ leaves the equations unchanged. The sets $D_n$ and $\Theta_n=D_n\times(1,\infty)$ are definable. We write $\theta=(d,p)$ for a point of $\Theta_n$.

For $1\leq i\leq n$, set
$$
 \mathcal{F}_i(\theta,x,\lambda)
 =\sum_{j\neq i}a_{ij}\Phi(x_i-\varepsilon_{ij}x_j,p)
+b_i\Phi(x_i,p)
-\lambda\nu_i\Phi(x_i,p).
$$
Define the total normalised eigenpair set
\begin{equation}\label{eq:total-eigenpairs}
 \mathcal{Z}_n
 =\bigl\{(\theta,x,\lambda):\theta\in\Theta_n,\ x\in\Sph^{n-1},\ \lambda\in\R,\
\mathcal{F}_i(\theta,x,\lambda)=0\ \text{for all }i\bigr\}.
\end{equation}
The function $\Phi$ is the definable map in \eqref{eq:phi-def}; therefore $\mathcal{Z}_n$ is definable in $\R_{\exp}$. Its projection
\begin{equation}\label{eq:total-spectrum}
 \mathcal{S}_n
 =\{(\theta,\lambda):\exists x\in\Sph^{n-1}\text{ with }(\theta,x,\lambda)\in\mathcal{Z}_n\}
\end{equation}
is definable as well.

\begin{proof}[Proof of Theorem~\ref{thm:intro-main}\ref{part:intro-uniform}]
For each $\theta=(d,p)$, the fibre $(\mathcal{S}_n)_\theta$ is exactly $\sigma_p(d)$ and is finite by Theorem~\ref{thm:intro-main}\ref{part:intro-finite}. Uniform finiteness applied to the definable set \eqref{eq:total-spectrum} gives a constant $\mathfrak B_n^{\mathrm{unif}}$ bounding every fibre. The parameter space includes every edge set, signature, real potential, and positive vertex measure, proving the asserted independence.
\end{proof}

The preceding o-minimal argument is qualitative. Section~\ref{sec:effective-bounds} obtains explicit bounds by working on strata determined by the signs and vanishing pattern of the linear forms. Results on spectral branches and definable triviality of the eigenvector sets are given in Appendix~\ref{app:families}; endpoint limits are treated in Appendix~\ref{app:endpoints}.

\section{Quantitative Pfaffian and semialgebraic bounds}
\label{sec:effective-bounds}

We derive the effective estimates in Theorem~\ref{thm:intro-main} and Corollary~\ref{thm:intro-effective-refinements}. The common structure is a quotient of $p$-homogeneous sums built from finitely many linear forms; coordinate forms and signed edge differences recover the graph problem.

\subsection{An explicit estimate}\label{subsec:effective}

A Pfaffian chain of length $r$ and degree $\alpha$ on an open domain $U\subset\mathbb R^d$ is a sequence of real-analytic functions $f_1,\ldots,f_r$ such that every partial derivative of $f_i$ is a polynomial of degree at most $\alpha$ in the base variables and $f_1,\ldots,f_i$. A function has degree at most $\beta$ in the chain if it is represented by a polynomial of total degree at most $\beta$ in those variables. We apply the following explicit consequence of the Pfaffian component theorem of Gabrielov and Vorobjov \cite[Corollary~3.3]{GabrielovVorobjov2004}. If finitely many Pfaffian functions on $U=\mathbb R^d$ share a chain of length $r$ and degree $\alpha$, and each has degree at most $\beta$ in that chain, then their common zero set has at most
\begin{equation}\label{eq:GV-component}
 2^{r(r-1)/2+1}\beta(\alpha+2\beta-1)^{d-1}
 \bigl((2d-1)(\alpha+\beta)-2d+2\bigr)^r
\end{equation}
connected components. Formula~\eqref{eq:GV-component} applies on all of $\mathbb R^d$, including unbounded domains and singular or positive-dimensional zero sets. The numerical parameters $p,c_r,\gamma_r$ enter as real coefficients and therefore do not affect the Pfaffian format $(d,r,\alpha,\beta)$.

\begin{theorem}[Effective bounds for spectra arising from finite families of linear forms]
\label{thm:linear-form-effective}
Let $L_1,\ldots,L_K\in(\R^n)^*$, let $c\in\R^K$ and $\gamma\in[0,\infty)^K$, and suppose that
$$
 \bigcap_{\{r:\gamma_r>0\}}\ker L_r=\{0\}.
$$
For each $p>1$, set
$$
 G_{\gamma,p}(x)=\sum_{r=1}^K \gamma_r|L_r(x)|^p,
 \qquad
 F_{c,p}(x)=\sum_{r=1}^K c_r|L_r(x)|^p,
$$
and let $\Sigma_{L,c,\gamma,p}$ be the set of real $\lambda$ for which
\begin{equation}\label{eq:linear-form-eigen}
 \sum_{r=1}^K c_r\Phi_p(L_r(x))L_r
 =\lambda\sum_{r=1}^K \gamma_r\Phi_p(L_r(x))L_r
\end{equation}
for some $x\ne0$. Then, for every $p>1$,
$$
 \#\Sigma_{L,c,\gamma,p}\leq\mathfrak B(K).
$$
If $E_L(p,\lambda)\subset\Sph^{n-1}$ denotes the normalised solution set of \eqref{eq:linear-form-eigen}, then, more strongly,
\begin{equation}\label{eq:linear-form-component-bound}
 \sum_{\lambda\in\Sigma_{L,c,\gamma,p}}
 b_0\bigl(E_L(p,\lambda)/\{\pm1\}\bigr)
 \leq
 \sum_{\lambda\in\Sigma_{L,c,\gamma,p}}
 b_0\bigl(E_L(p,\lambda)\bigr)
 \leq\mathfrak B(K).
\end{equation}
The coefficients $c_r$ may be arbitrary real numbers; repeated and zero forms may be retained in the indexed family.
\end{theorem}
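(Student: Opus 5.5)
The plan is to bound the single quantity $\sum_{\lambda}b_0(E_L(p,\lambda))$. The bound on $\#\Sigma_{L,c,\gamma,p}$ then follows, because each $\lambda\in\Sigma_{L,c,\gamma,p}$ contributes a nonempty $E_L(p,\lambda)$. The first inequality in \eqref{eq:linear-form-component-bound} is immediate, since the quotient map $E_L(p,\lambda)\to E_L(p,\lambda)/\{\pm1\}$ is a continuous surjection.

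\emph{Reduction to one critical set.} By Proposition~\ref{prop:eigen-critical} applied to $F_{c,p}$ and $G_{\gamma,p}$, the union $C=\bigcup_\lambda E_L(p,\lambda)$ is the critical set of $Q=F_{c,p}/G_{\gamma,p}$ on the sphere. This is where the hypothesis $\bigcap_{\gamma_r>0}\ker L_r=\{0\}$ is used: it gives $G_{\gamma,p}>0$ on $\R^n\setminus\{0\}$. The set $C$ is compact and definable, and the sets $E_L(p,\lambda)$ are its fibres under the continuous function $Q$. Since $\Sigma_{L,c,\gamma,p}$ is finite (Theorem~\ref{thm:definable-homogeneous}), these fibres are finitely many disjoint closed sets, so $\sum_\lambda b_0(E_L(p,\lambda))=b_0(C)$.

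\emph{Stratification.} Stratify $\Sph^{n-1}$ by the active set $S=\{r:L_r(x)\neq0\}$ with $|S|=q$, together with the sign vector $s\in\{\pm1\}^S$ of $(L_r(x))_{r\in S}$. For $x\neq0$ we have $q\geq1$, because $\bigcap_r\ker L_r=\{0\}$. There are at most $\binom Kq2^q$ strata with $|S|=q$. Every connected component of $C$ contains a component of $C\cap\Sigma_{S,s}$ for some stratum $\Sigma_{S,s}$, so
$$
b_0(C)\leq\sum_{S,s}b_0(C\cap\Sigma_{S,s}).
$$
It therefore suffices to show that each $b_0(C\cap\Sigma_{S,s})$ is at most $2^{2q^2+q+2}(4q+3)^{2q}$.

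\emph{Pfaffian model of one stratum.} Since $\Phi_p(0)=0$, on a stratum \eqref{eq:linear-form-eigen} reduces to
$$
\sum_{r\in S}(c_r-\lambda\gamma_r)\,s_r|L_r(x)|^{p-1}L_r=0.
$$
The map $x\mapsto y=(L_r(x))_{r\in S}$ is injective on $\bigcap_{r\notin S}\ker L_r$, again because $\bigcap_r\ker L_r=\{0\}$. Its image is a linear subspace $W\subset\R^q$, and $x$ is a fixed linear function $M(y)$ of $y$. Introduce logarithmic coordinates $u\in\R^q$ by $y_r=s_re^{u_r}$, and use the Pfaffian chain $e^{u_r},\,e^{(p-1)u_r}$ for $r\in S$, of length $2q$ and degree $\alpha=1$. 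The exponent $p$ enters only as a coefficient. In the variables $(u,\lambda)\in\R^{q+1}$, the stratum piece $C\cap\Sigma_{S,s}$ corresponds to the common zero set of the following functions:
\begin{itemize}
\item the linear equations $(s_re^{u_r})_r\in W$, of degree $1$ in the chain;
\item the sphere equation $|M((s_re^{u_r})_r)|^2=1$, of degree $2$;
\item the eigen-equation, whose components $\sum_r(c_r-\lambda\gamma_r)s_re^{(p-1)u_r}L_r(e_i)$ have degree $2$.
\end{itemize}
For a fixed point $x$ the value of $\lambda$ is determined, because $\lambda=Q(x)$. Hence the map $(u,\lambda)\mapsto x$ is a homeomorphism from this zero set onto $C\cap\Sigma_{S,s}$, and the component counts agree.

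\emph{Counting.} Apply \eqref{eq:GV-component} with $d=q+1$, $r=2q$, $\alpha=1$, $\beta=2$. This gives
$$
2^{q(2q-1)+1}\cdot2\cdot4^{q}\,(4q+1)^{2q}\leq 2^{2q^2+q+2}(4q+3)^{2q}.
$$
Summing over the $\binom Kq2^q$ strata with $|S|=q$ and then over $q$ gives $\mathfrak B(K)$.

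The main obstacle is the bookkeeping in the Pfaffian model. One must check that the format $(d,r,\alpha,\beta)$ depends only on $q$ and not on $n$; the injectivity of $x\mapsto y$ on the stratum is what makes this true. One must also check that the homeomorphism with $C\cap\Sigma_{S,s}$ correctly transports connected components, including for singular and positive-dimensional eigenvector sets. If the degree count comes out slightly differently, for example because $\lambda$ enters bilinearly or an auxiliary variable is needed, the slack between $4q+1$ and $4q+3$ should absorb it.
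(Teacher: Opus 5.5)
Your proposal is correct and follows essentially the same route as the paper. Both arguments stratify by the sign/support pattern of the $K$ forms, pass to logarithmic coordinates with the length-$2q$, degree-one chain $e^{u_r},e^{(p-1)u_r}$, apply \eqref{eq:GV-component} with $(d,r,\alpha,\beta)=(q+1,2q,1,2)$, use finiteness of the spectrum so that components do not mix different $\lambda$, and sum over the $\binom Kq2^q$ patterns.

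One arithmetic correction: the last factor is $\bigl((2q+1)\cdot3-2q\bigr)^{2q}=(4q+3)^{2q}$, not $(4q+1)^{2q}$. So the Gabrielov--Vorobjov count equals $2^{2q^2+q+2}(4q+3)^{2q}$ exactly, and there is no slack to absorb a larger $\beta$. Your degree count $\beta=2$ is nevertheless correct, so the conclusion stands.
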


\begin{proof}
Let $\mathcal L:\R^n\to\R^K$ be $\mathcal Lx=(L_1(x),\ldots,L_K(x))$. The kernel hypothesis makes $G_{\gamma,p}$ positive away from the origin and implies that $\mathcal L$ is injective. Fix a linear left inverse $T:\R^K\to\R^n$, so $T\mathcal L=I$.

Stratify the unit sphere by the signs of the $K$ forms. Fix a sign pattern $\sigma\in\{-1,0,1\}^K$ with $q$ nonzero entries, introduce one variable $y_r$ for each nonzero entry, and set
$$
 z_r(y)=
 \begin{cases}
 0,&\sigma_r=0,\\
\sigma_r e^{y_r},&\sigma_r\ne0.
 \end{cases}
$$
The equations
$
 \mathcal LTz(y)=z(y)$ and $\|Tz(y)\|_2^2=1$ state respectively that $z(y)\in\operatorname{im}\mathcal L$ and that $Tz(y)$ is normalised. They reconstruct $x=Tz(y)$ uniquely.

On $\R^{q+1}$, with variables $(y,\lambda)$, use the Pfaffian chain
$$
 U_r=e^{y_r},\qquad V_r=e^{(p-1)y_r}
 \qquad(\sigma_r\ne0).
$$
It has length $2q$ and degree one. The consistency equations have degree one in the chain, the normalisation equation has degree two, and \eqref{eq:linear-form-eigen} has degree at most two because its only products are $\lambda V_r$. The map $(x,\lambda)\mapsto(y,\lambda)$, where $y_r=\log|L_r(x)|$ for the nonzero forms, is a homeomorphism from the normalised eigenpair stratum onto this common zero set. Applying \eqref{eq:GV-component} with
$
 d_{\rm Pf}=q+1, r=2q,\alpha=1,\beta=2
$
therefore bounds the number of its connected components by
$
 2^{2q^2+q+2}(4q+3)^{2q}.
$

The spectrum is finite by Theorem~\ref{thm:definable-homogeneous}. Hence the projection of a connected eigenpair component to the $\lambda$-axis is a point. There are at most $\binom Kq2^q$ sign patterns with $q$ nonzero forms, and $q=0$ is excluded by injectivity and normalisation. Summing over $1\leq q\leq K$ gives $\mathfrak B(K)$. The quotient map by the antipodal action is continuous and surjective, so it cannot increase the number of connected components. This proves \eqref{eq:linear-form-component-bound}.
\end{proof}

\begin{proof}[Proof of Theorem~\ref{thm:intro-main}\ref{part:intro-effective}]
Consider the $N_n$ linear forms
\begin{equation}\label{eq:linear-form-family}
 x_i\quad(1\leq i\leq n),\qquad
 x_i-\varepsilon_{ij}x_j\quad(1\leq i<j\leq n).
\end{equation}
Assign coefficient $b_i$ to the form $x_i$ and coefficient $a_{ij}$ to the form $x_i-\varepsilon_{ij}x_j$ in the numerator. In the denominator assign coefficient $\nu_i$ to $x_i$ and zero to every edge form. The denominator is positive away from the origin, so Theorem~\ref{thm:linear-form-effective} gives the first inequality in \eqref{eq:explicit-Bn}, and its component estimate gives Corollary~\ref{cor:effective-eigenset-components}. Bounding each summand corresponding to support size $q$ by its value at $q=N_n$ and using $\sum_q\binom{N_n}{q}2^q\leq3^{N_n}$ gives the second inequality.
\end{proof}

The same stratification controls eigenvector components, and this control descends to projective eigenvectors. The antipodal quotient has a concrete semialgebraic model. Let
$$
 \mathcal P_n=\{P\in\operatorname{Sym}_n(\R):P^2=P,\ \operatorname{tr}P=1\}.
$$
The map $\pi_{\rm proj}:\Sph^{n-1}\to\mathcal P_n$, $\pi_{\rm proj}(x)=xx^{\mathsf T}$, has fibres exactly $\{x,-x\}$. The eigen-equation is odd in $x$, so every eigenvector set is antipodally invariant. We set $\overline E(d,p,\lambda)=\pi_{\rm proj}(E(d,p,\lambda))$. Since $E(d,p,\lambda)$ is closed in the sphere, it is compact. The induced continuous bijection $E(d,p,\lambda)/\{\pm1\}\to\overline E(d,p,\lambda)$ is therefore a homoemorphism.

\begin{corollary}[An effective bound on the total number of components]
\label{cor:effective-eigenset-components}
For every admissible datum $d$ on $n$ vertices and every $p>1$,
\begin{equation}\label{eq:effective-eigenset-components}
 \#\sigma_p(d)
 \leq \sum_{\lambda\in\sigma_p(d)}
b_0\bigl(\overline E(d,p,\lambda)\bigr)
 \leq \sum_{\lambda\in\sigma_p(d)}
 b_0\bigl(E(d,p,\lambda)\bigr)
 \leq \widehat{\mathfrak B}_n.
\end{equation}
Here $b_0$ is the number of connected components. Thus the same explicit constant that bounds the number of spectral values also bounds the total number of connected components of all normalised eigenvector sets, including singular and positive-dimensional ones.
\end{corollary}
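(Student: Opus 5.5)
The plan is to prove the three inequalities in \eqref{eq:effective-eigenset-components} from left to right.

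\emph{First inequality.} For each $\lambda\in\sigma_p(d)$ the set $E(d,p,\lambda)$ is nonempty, because $\lambda$ is attained by some normalised eigenvector. Hence $\overline E(d,p,\lambda)=\pi_{\rm proj}(E(d,p,\lambda))$ is nonempty and $b_0(\overline E(d,p,\lambda))\geq1$. Summing over the finite set $\sigma_p(d)$ gives $\#\sigma_p(d)\leq\sum_\lambda b_0(\overline E(d,p,\lambda))$.

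\emph{Second inequality.} It holds summand by summand. The restriction $\pi_{\rm proj}|_{E(d,p,\lambda)}\colon E(d,p,\lambda)\to\overline E(d,p,\lambda)$ is continuous and surjective. A continuous surjection maps each connected component into a connected set, and every point of the image is covered. Therefore the number of components of the image is at most that of the source. I will use the homeomorphism $E/\{\pm1\}\cong\overline E$ noted just before the corollary to identify this with the first inequality in \eqref{eq:linear-form-component-bound}.

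\emph{Third inequality.} This is the substantive step, and it is the component estimate \eqref{eq:linear-form-component-bound} of Theorem~\ref{thm:linear-form-effective} specialised to the graph problem. I will use the $K=N_n$ linear forms \eqref{eq:linear-form-family}. In the numerator, $c$ assigns $b_i$ to $x_i$ and $a_{ij}$ to $x_i-\varepsilon_{ij}x_j$. In the denominator, $\gamma$ assigns $\nu_i>0$ to $x_i$ and $0$ to the edge forms. The kernel hypothesis holds because the forms with $\gamma_r>0$ are the coordinates.

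I must then check that \eqref{eq:linear-form-eigen} with these data is exactly \eqref{eq:gradient-eigen}, that is, the graph eigenproblem \eqref{eq:eigen-intro}. Both sides of \eqref{eq:linear-form-eigen} are $\frac1p\nabla$ of $F_{c,p}$ and $G_{\gamma,p}$, and here $F_{c,p}=\cE_{p,a,\varepsilon,b}$ and $G_{\gamma,p}=\cM_{p,\nu}$. Consequently $\Sigma_{L,c,\gamma,p}=\sigma_p(d)$ and $E_L(p,\lambda)=E(d,p,\lambda)$. Theorem~\ref{thm:linear-form-effective} then gives the bound $\mathfrak B(N_n)=\widehat{\mathfrak B}_n$.

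The only delicate point is this identification of the solution sets, including signs. It follows from \eqref{eq:energy-gradient} and \eqref{eq:mass-gradient}, because the gradient of $|L_r(x)|^p$ is $p\Phi_p(L_r(x))L_r$. Pairs with $a_{ij}=0$ contribute nothing, whatever the choice of $\varepsilon_{ij}$, so no case split is needed there.
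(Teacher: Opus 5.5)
Your proposal is correct and is essentially the paper's proof. Nonemptiness of each eigenvector set gives the first inequality. The antipodal quotient, a continuous surjection, gives the second. Specialising Theorem~\ref{thm:linear-form-effective} to the $N_n$ coordinate and signed edge forms, with numerator coefficients $(b,a)$ and denominator coefficients $(\nu,0)$, gives the third, and your check that the two eigen-equations coincide (including signs) is exactly the identification that step needs.
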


\begin{proof}
Apply Theorem~\ref{thm:linear-form-effective} to the coordinate and signed edge forms in \eqref{eq:linear-form-family}, with the numerator and denominator coefficients used in the proof of Theorem~\ref{thm:intro-main}\ref{part:intro-effective}. This gives the two component bounds in \eqref{eq:effective-eigenset-components}. Every eigenvector set is non-empty, and the antipodal quotient of a non-empty eigenvector set has at least one connected component, which gives the spectral-cardinality inequality.
\end{proof}

\begin{remark}
If the graph has $m$ positive-weight edges, the same proof can omit the zero-weight edge forms and replace $N_n$ by $n+m$. The complete-graph value is used in \eqref{eq:explicit-Bn} to obtain a bound independent of the graph.
\end{remark}

\begin{corollary}[Bounds in terms of the support graph]
\label{cor:sparse-real-exponent}
For every real $p>1$ and every signed graph datum with $n$ vertices and $m$ positive-weight edges,
$$
 \#\sigma_p(d)
 \leq \sum_{\lambda\in\sigma_p(d)}b_0(E(d,p,\lambda))
 \leq \mathfrak B(n+m).
$$
In particular,
$$
 \log\#\sigma_p(d)=O((n+m)^2)
$$
uniformly in $p$ and all coefficients. If the graph formed by the positive-weight edges has maximum degree at most $\Delta$, then the logarithm is $O_\Delta(n^2)$.
\end{corollary}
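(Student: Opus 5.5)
The plan is to apply Theorem~\ref{thm:linear-form-effective} directly to a family of only $K=n+m$ linear forms, instead of the full family \eqref{eq:linear-form-family}. Let $\mathcal A=\{\{i,j\}:i<j,\ a_{ij}>0\}$, so $\#\mathcal A=m$. The family will consist of the coordinate forms $x_i$ for $1\leq i\leq n$ together with the edge forms $x_i-\varepsilon_{ij}x_j$ for $\{i,j\}\in\mathcal A$. In the numerator, $x_i$ gets coefficient $b_i$ and the edge form gets $a_{ij}$. In the denominator, $x_i$ gets $\nu_i$ and every edge form gets zero.

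\textbf{Step 1 (matching the eigenproblem).} Dropping the pairs with $a_{ij}=0$ changes neither $\cE_{p,a,\varepsilon,b}$ nor its gradient \eqref{eq:energy-gradient}. Hence $F_{c,p}=\cE_{p,a,\varepsilon,b}$ and $G_{\gamma,p}=\cM_{p,\nu}$, and \eqref{eq:linear-form-eigen} becomes exactly \eqref{eq:gradient-eigen}, up to the common factor $p$. So $\Sigma_{L,c,\gamma,p}=\sigma_p(d)$ and $E_L(p,\lambda)=E(d,p,\lambda)$.

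\textbf{Step 2 (checking the hypothesis).} The kernel condition holds because the forms with $\gamma_r>0$ are all the coordinate forms, whose kernels intersect in $\{0\}$.

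\textbf{Step 3 (the bound).} Theorem~\ref{thm:linear-form-effective}, in particular \eqref{eq:linear-form-component-bound}, then gives
$$
 \sum_{\lambda\in\sigma_p(d)}b_0(E(d,p,\lambda))\leq\mathfrak B(n+m).
$$
The first inequality, $\#\sigma_p(d)\leq\sum_\lambda b_0(E(d,p,\lambda))$, holds because each $E(d,p,\lambda)$ with $\lambda\in\sigma_p(d)$ is non-empty and so contributes at least one component. This is the same observation used in Corollary~\ref{cor:effective-eigenset-components}.

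\textbf{Step 4 (asymptotics).} Bound each summand of $\mathfrak B(K)$ by its value at $q=K$, and use $\sum_q\binom Kq2^q\leq3^K$. This gives
$$
 \mathfrak B(K)\leq3^K2^{2K^2+K+2}(4K+3)^{2K},
$$
whose logarithm is $2K^2\log2+O(K\log K)=O(K^2)$. Nothing here depends on $p$ or on the coefficients, since Theorem~\ref{thm:linear-form-effective} is uniform in both.

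\textbf{Step 5 (bounded degree).} If the support graph has maximum degree at most $\Delta$, then $m\leq n\Delta/2$. Hence $K\leq n(1+\Delta/2)$ and $K^2=O_\Delta(n^2)$.

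I do not expect a real obstacle: all the geometric content is already in Theorem~\ref{thm:linear-form-effective}. The only point needing care is Step~1, namely that omitting zero-weight edges gives literally the same eigen-equation and eigenvector sets, and that the theorem permits zero denominator coefficients on the edge forms. Both are immediate from the definitions.
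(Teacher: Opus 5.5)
Your proposal is correct and follows the paper's proof essentially verbatim. The paper also applies Theorem~\ref{thm:linear-form-effective} to the $n$ coordinate forms and the $m$ positive-weight edge forms only, and reads off the asymptotics from the definition of $\mathfrak B$ and the inequality $m\leq\Delta n/2$; your extra checks (that zero-weight edges do not change the eigen-equation, the kernel hypothesis via the coordinate forms, and non-emptiness of each $E(d,p,\lambda)$) are exactly the details the paper leaves implicit.
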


\begin{proof}
Use only the $n$ coordinate forms and the $m$ positive edge forms in Theorem~\ref{thm:linear-form-effective}. The definition of $\mathfrak B$ and the inequality $m\leq\Delta n/2$ give the stated asymptotics.
\end{proof}

The preceding argument does not require coefficientwise nonnegativity of the denominator once positivity of the resulting homogeneous form is assumed.

\begin{proposition}[Signed denominator coefficients]
\label{prop:signed-denominator}
In Theorem~\ref{thm:linear-form-effective}, for a fixed $p>1$, the conditions $\gamma_r\geq0$ and $\bigcap_{\{r:\gamma_r>0\}}\ker L_r=\{0\}$ may be replaced by the intrinsic condition
\begin{equation}\label{eq:denominator-positive}
 G_{\gamma,p}(x)=\sum_{r=1}^K\gamma_r|L_r(x)|^p>0
 \qquad(x\ne0),
\end{equation}
with arbitrary real $\gamma_r$. The same effective bound holds. For a varying-exponent family, it is enough to impose \eqref{eq:denominator-positive} at every exponent in the parameter interval.
\end{proposition}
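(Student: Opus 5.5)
The plan is to re-run the proof of Theorem~\ref{thm:linear-form-effective} and check that the hypotheses $\gamma_r\geq0$ and $\bigcap_{\gamma_r>0}\ker L_r=\{0\}$ enter at only three points. First, they make the denominator positive off the origin. Second, they make $\mathcal L$ injective, which is what provides the left inverse $T$. Third, they supply finiteness of the spectrum through Theorem~\ref{thm:definable-homogeneous}. I would replace each use by a consequence of \eqref{eq:denominator-positive}.

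For injectivity, suppose $\mathcal Lx=0$ with $x\ne0$. Then every $L_r(x)=0$, so $G_{\gamma,p}(x)=0$, which contradicts \eqref{eq:denominator-positive}. Hence $\mathcal L$ is injective and a left inverse $T$ exists exactly as before. For finiteness, take $F=F_{c,p}$ and $G=G_{\gamma,p}$. Both are $C^1$, $p$-homogeneous and definable in $\R_{\exp}$, and $G>0$ away from the origin. So Proposition~\ref{prop:eigen-critical} identifies $\Sigma_{L,c,\gamma,p}$ with the critical values of $F/G$ on the sphere; the gradient equation \eqref{eq:linear-form-eigen} is $\nabla F=\lambda\nabla G$ after dividing by $p$. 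Theorem~\ref{thm:definable-homogeneous} then gives finiteness. Nothing in that argument used signs of coefficients, only positivity of $G$.

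Next comes the Pfaffian count. On each sign stratum $\sigma$ with $q$ nonzero entries, I would use the same chain $U_r=e^{y_r}$, $V_r=e^{(p-1)y_r}$. The equations are $\mathcal LTz(y)=z(y)$, $\|Tz(y)\|^2=1$, and \eqref{eq:linear-form-eigen} written componentwise as $\sum_r(c_r-\lambda\gamma_r)\sigma_rV_rL_r=0$. The $\gamma_r$ appear only as real coefficients, and their signs do not change the format $(d,r,\alpha,\beta)=(q+1,2q,1,2)$. So \eqref{eq:GV-component} again gives at most $2^{2q^2+q+2}(4q+3)^{2q}$ components per stratum. Finiteness means each connected eigenpair component projects to a single $\lambda$. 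The case $q=0$ is excluded because injectivity and normalisation rule it out. Summing over strata yields $\mathfrak B(K)$, and the antipodal-quotient inequality in \eqref{eq:linear-form-component-bound} follows as before.

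For a varying-exponent family, the argument is pointwise in $p$: at each fixed exponent in the interval the fixed-$p$ statement applies once \eqref{eq:denominator-positive} holds there, so requiring it at every exponent suffices. The only step needing any care is confirming that Proposition~\ref{prop:eigen-critical} needs $G>0$ and nothing stronger. It does: it uses only Euler's identity and division by $G(x)$. I therefore do not expect a genuine obstacle.
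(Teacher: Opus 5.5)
Your proposal is correct and follows the paper's own argument. Positivity of $G_{\gamma,p}$ forces $\bigcap_r\ker L_r=\{0\}$, hence injectivity of $\mathcal L$, and the signed $\gamma_r$ enter the Pfaffian system only as real coefficients, so the format $(q+1,2q,1,2)$ and the finiteness input from Theorem~\ref{thm:definable-homogeneous} (which needs only $G>0$) are unchanged.
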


\begin{proof}
Condition \eqref{eq:denominator-positive} implies $\bigcap_r\ker L_r=\{0\}$, so the joint linear-form map is injective. This is the only consequence of coefficientwise nonnegativity used in the logarithmic-coordinate proof. The Pfaffian equations in that proof allow arbitrary real coefficients. Positivity makes $G_{\gamma,p}$ a nonvanishing denominator on the unit sphere; equivalently, Euler's identity gives
$
 \langle\nabla G_{\gamma,p}(x),x\rangle=pG_{\gamma,p}(x)>0$ for $x\ne0$.
\end{proof}

The component estimate can be strengthened to a uniform bound for the total rational homology of the eigenvector sets. For a locally compact space $X$, let
$$
 b^{\rm BM}(X;\mathbb Q)
 =\sum_{j\geq0}\dim_{\mathbb Q}H_j^{\rm BM}(X;\mathbb Q),
 \qquad
 b(X;\mathbb Q)=\sum_{j\geq0}b_j(X;\mathbb Q).
$$

\begin{theorem}[Total Betti-number bound at real exponents]
\label{thm:real-exponent-betti}
There is an absolute constant $C>0$ such that, for every real $p>1$, every homogeneous pair formed from $K$ linear forms as in Theorem~\ref{thm:linear-form-effective} whose denominator satisfies \eqref{eq:denominator-positive} obeys
\begin{equation}\label{eq:real-exponent-betti}
 \sum_{\lambda\in\Sigma_{L,c,\gamma,p}}
 b(E_L(p,\lambda);\mathbb Q)
 \leq\exp(CK^2).
\end{equation}
The same bound holds for the sum of the rational Betti numbers of the projective eigenvector sets. It is uniform in $p$, in the linear forms, and in all coefficients.
\end{theorem}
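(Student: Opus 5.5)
We follow the stratified logarithmic-coordinate argument from the proof of Theorem~\ref{thm:linear-form-effective}, replacing the component bound \eqref{eq:GV-component} with a Gabrielov--Vorobjov bound on the total Betti numbers of semi-Pfaffian sets \cite{GabrielovVorobjov2004}. The main difficulty is that Betti numbers are not additive over a stratification, unlike $b_0$. We therefore need a way to assemble the homology of the whole eigenvector set $E_L(p,\lambda)$ from the homology of its sign strata.

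\emph{Step 1: reduce to one compact set for each eigenvalue.} The spectrum is finite by Theorem~\ref{thm:definable-homogeneous}. The total normalised eigenpair set $Z=\{(x,\lambda)\}$ is therefore the disjoint union of the compact sets $E_L(p,\lambda)\times\{\lambda\}$, which are open and closed in $Z$. Hence it suffices to bound $b(Z;\mathbb Q)$.

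\emph{Step 2: pass from strata to Borel--Moore homology.} Stratify $Z$ by the sign pattern $\sigma\in\{-1,0,1\}^K$ of $(L_r(x))_r$. Order the strata by the number of zero entries and filter $Z$ by closed unions of strata. The long exact sequences of Borel--Moore homology for closed/open pairs then give
$$
b(Z;\mathbb Q)=b^{\rm BM}(Z;\mathbb Q)\le\sum_\sigma b^{\rm BM}(Z_\sigma;\mathbb Q),
$$
where the equality uses compactness of $Z$. There are at most $3^K$ patterns, a factor of size $e^{O(K)}$.

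\emph{Step 3: bound each stratum.} Each stratum $Z_\sigma$ is homeomorphic, through the logarithmic coordinates $(y,\lambda)$, to the Pfaffian zero set $W_\sigma\subset\mathbb R^{q+1}$ built in the proof of Theorem~\ref{thm:linear-form-effective}. Its format is chain length $2q$, degree $\alpha=1$, and $\beta=2$, and the real parameters $p,c,\gamma$ only enter as coefficients. The set $W_\sigma$ is closed but usually not compact, so its Borel--Moore homology is the homology of a one-point compactification. We handle this by intersecting with a large ball $\{|y|^2+\lambda^2\le R\}$ and using that for $R$ large the pair (ball, sphere) captures $H^{\rm BM}$. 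For this, apply definable triviality, Proposition~\ref{prop:ominimal-facts}(iv), in the radius variable, which is definable in $\mathbb R_{\exp}$. This yields $b^{\rm BM}(W_\sigma)\le b(W_\sigma\cap\bar B_R)+b(W_\sigma\cap S_R)$.

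Both sets on the right are compact semi-Pfaffian sets of the same format, up to one extra quadratic equation or inequality. The Gabrielov--Vorobjov Betti bound for such sets has the shape $2^{O(r^2)}(d(\alpha+\beta))^{O(d+r)}$ with $d=q+1$ and $r=2q$, which gives $\exp(O(q^2))$ per stratum. Summing over $q\le K$ with $\binom Kq2^q$ patterns gives $\exp(CK^2)$.

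\emph{Step 4: projective sets.} For the projective eigenvector sets, note that $\pm1$ acts freely on $E_L(p,\lambda)$. Transfer then gives $H_*(E/\{\pm1\};\mathbb Q)=H_*(E;\mathbb Q)^{\pm}$, a direct summand, so $b(\overline E;\mathbb Q)\le b(E;\mathbb Q)$ and the same bound follows.

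The main obstacle is Step 3: converting the Borel--Moore homology of the noncompact logarithmic strata into ordinary Betti numbers of compact semi-Pfaffian sets. The format must stay independent of $p$ and the coefficients, and the chosen radius must not introduce dependence on the data. An alternative I would try first is to avoid the noncompactness altogether. Replace each stratum by the compact semi-Pfaffian approximation $\{|L_r(x)|\ge\epsilon\ \text{for}\ \sigma_r\neq0\}$. Then use a Gabrielov--Vorobjov-type inequality relating a definable set to a family of closed approximations. Its constants again depend only on the format.
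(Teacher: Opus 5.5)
Your proposal is correct. Steps 1, 2 and 4 match the paper's argument: compactness of the total eigenpair set, the Borel--Moore filtration by the number of active forms, and transfer for the antipodal double cover.

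The difference is in Step 3, where you control the Borel--Moore homology of the noncompact logarithmic stratum $Y_\sigma\subset\mathbb R^{q+1}$.

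\textbf{The paper's route.} It never truncates. It writes $Y_\sigma=\{\mathcal F_\sigma=0\}$, where $\mathcal F_\sigma$ is the sum of squares of the defining Pfaffian functions (degree $4$ in the chain). It then applies the Gabrielov--Vorobjov Betti estimate to the open complement $\{\mathcal F_\sigma>0\}$, which is a single strict inequality on the simple domain $\mathbb R^{q+1}$. Finally it passes to the one-point compactification $A_\sigma\subset\mathbb S^{q+1}$ and uses Alexander duality to obtain $b^{\rm BM}(Y_\sigma)\le 1+b(\mathbb R^{q+1}\setminus Y_\sigma)$.

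\textbf{Your route.} You use the o-minimal conic structure at infinity, i.e.\ Hardt triviality of $|\cdot|^2$ on $Y_\sigma$ over some $(R_0,\infty)$. This identifies $H^{\rm BM}_*(Y_\sigma)$ with $H_*(Y_\sigma\cap\bar B_R,\,Y_\sigma\cap S_R)$, and the long exact sequence of the pair then gives your inequality. This is valid. As you note, $R$ enters only as a real coefficient, so the Pfaffian format, and hence the bound, does not depend on the data.

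\textbf{Comparison.} Your route works only with compact semi-Pfaffian sets defined by equations and one extra quadratic constraint. That is the most classical setting for Zell-type Betti bounds and needs no hypothesis about an unbounded domain. The cost is a harmless, non-effective choice of radius and one additional pair sequence. The paper's route needs no radius and only the single-strict-inequality bound on $\mathbb R^{q+1}$, with duality doing the compactification work. The $\epsilon$-approximation alternative you mention is not needed.
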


\begin{proof}
Let
$
 \mathcal Lx=(L_1(x),\ldots,L_K(x)).
$
Condition \eqref{eq:denominator-positive} implies that $\mathcal L$ is injective, so $n\leq K$; fix a linear left inverse $T$. Let $\mathcal Z$ be the normalised eigenpair set in $\mathbb S^{n-1}\times\mathbb R$. Pairing the eigen-equation with $x$ gives
$
 \lambda=F_{c,p}(x)/G_{\gamma,p}(x).
$
The denominator is positive on the sphere, so this quotient is bounded. Thus $\mathcal Z$ is a closed subset of a compact cylinder and is compact.

Fix a sign pattern $\sigma\in\{-1,0,1\}^K$, and let $S=\{r:\sigma_r\ne0\}$, with $q=|S|$. The case $q=0$ is empty by the normalisation equation. For $r\in S$ set
$
 z_r(y)=\sigma_re^{y_r},
 w_r(y)=\sigma_re^{(p-1)y_r},
$
and set $z_r=w_r=0$ otherwise. Let $Y_\sigma\subset\mathbb R^{q+1}$ be the common zero set
$$
 \mathcal LTz-z=0,\qquad
 \|Tz\|_2^2=1,\qquad
 \mathcal L^{\mathsf T}\bigl((c-\lambda\gamma)\odot w\bigr)=0.
$$
It is closed. The map
$
 (y,\lambda)\mapsto(Tz(y),\lambda)
$
is a homeomorphism from $Y_\sigma$ onto $\mathcal Z_\sigma$, the eigenpair stratum with sign pattern $\sigma$, with inverse
$
 (x,\lambda)\mapsto
 \bigl((\log|L_r(x)|)_{r\in S},\lambda\bigr).
$
Indeed, $\mathcal LTz=z$ gives $L_r(Tz)=z_r$, which has precisely the prescribed signs and support.

On all of $\mathbb R^{q+1}$ use the Pfaffian chain
$$
 U_r=e^{y_r},\qquad V_r=e^{(p-1)y_r}
 \qquad(r\in S).
$$
It has length $2q$ and degree one. The three displayed families of scalar equations have degrees at most $1,2,2$ in this chain. If $h_{\sigma,1},\ldots,h_{\sigma,s_\sigma}$ are all their scalar members, set
$$
 \mathcal F_\sigma=\sum_{j=1}^{s_\sigma}h_{\sigma,j}^2.
$$
Then $Y_\sigma=\{\mathcal F_\sigma=0\}$ and
$
 \mathbb R^{q+1}\setminus Y_\sigma=\{\mathcal F_\sigma>0\}.
$
This single strict inequality has Pfaffian format
$$
 N=q+1,\qquad \ell=2q,\qquad
 \alpha=1,\qquad\beta=4,\qquad s=1.
$$
Euclidean space is a simple Pfaffian domain. Therefore the general semi-Pfaffian Betti estimate \cite[Theorem~3.4 and Remark~2.12]{GabrielovVorobjov2004}, applied with $N=q+1$, chain length $2q$, $\alpha=1$, $\beta=4$, and one strict inequality, gives an absolute constant $C_1>0$ such that
\begin{equation}\label{eq:log-complement-betti}
 b(\mathbb R^{q+1}\setminus Y_\sigma;\mathbb Q)
 \leq\exp(C_1q^2).
\end{equation}
Substituting these format parameters into the quoted estimate, the exponential factor in the standard format bound is $2^{O(q^2)}$, while its remaining factors have the form $4^{O(q)}(5q)^{O(q)}$; their product is $\exp(O(q^2))$. The quantities $p-1$, the entries of $T$, and all coefficients occur only as real coefficients and do not alter this format.

Since $Y_\sigma$ is closed and definable, it is locally compact. Under stereographic compactification,
$
 A_\sigma=Y_\sigma\cup\{\infty\}\subset\Sph^{q+1}
$
is a compact definable set, and hence is triangulable. It is the one-point compactification of $Y_\sigma$, so
$
 H_i^{\rm BM}(Y_\sigma;\mathbb Q)
 \cong\widetilde H_i(A_\sigma;\mathbb Q).
$
Its singular and \v Cech cohomology agree. The universal coefficient theorem over $\mathbb Q$ and Alexander duality \cite[Chapter~VI, Corollary~8.7]{Bredon1993} give, degree by degree,
$$
 \dim_{\mathbb Q}\widetilde H_i(A_\sigma;\mathbb Q)
 =\dim_{\mathbb Q}\widetilde H^i(A_\sigma;\mathbb Q)
 =\dim_{\mathbb Q}\widetilde H_{q-i}
(\Sph^{q+1}\setminus A_\sigma;\mathbb Q).
$$
Here
$
 \Sph^{q+1}\setminus A_\sigma
 =\mathbb R^{q+1}\setminus Y_\sigma.
$
Consequently
\begin{equation}\label{eq:pattern-BM-bound}
 b^{\rm BM}(\mathcal Z_\sigma;\mathbb Q)
 =b^{\rm BM}(Y_\sigma;\mathbb Q)
 \leq1+b(\mathbb R^{q+1}\setminus Y_\sigma;\mathbb Q)
 \leq\exp(C_1q^2).
\end{equation}
The stratumwise estimates must now be assembled across sign patterns whose closures meet. A filtration by the number of active forms provides the required control.

For $0\leq q\leq K$, let
$$
 \mathcal Z_{\leq q}
 =\{(x,\lambda)\in\mathcal Z:\#\{r:L_r(x)\ne0\}\leq q\}.
$$
These are closed subsets of the compact set $\mathcal Z$, and $\mathcal Z_{\leq0}=\varnothing$. The layer $\mathcal Z_{\leq q}\setminus\mathcal Z_{\leq q-1}$ is the finite disjoint union of the sign-pattern strata with support size $q$. Each stratum is clopen in the layer: near one of its points the $q$ active forms retain their signs, while within the support-$q$ layer no inactive form can become nonzero. Thus Borel--Moore homology is the direct sum over these strata. The Borel--Moore localization sequence for the closed filtration now gives
$$
 b(\mathcal Z;\mathbb Q)
 =b^{\rm BM}(\mathcal Z;\mathbb Q)
 \leq
 \sum_{\sigma\in\{-1,0,1\}^K}
 b^{\rm BM}(\mathcal Z_\sigma;\mathbb Q)
 \leq3^K\exp(C_1K^2)
 \leq\exp(CK^2).
$$

The spectrum is finite, so $\mathcal Z$ is the finite disjoint union of the fibres $E_L(p,\lambda)\times\{\lambda\}$. Its total Betti number is therefore the sum in \eqref{eq:real-exponent-betti}. The antipodal quotient of each normalised eigenvector set is a two-sheeted covering. For a finite covering, the transfer map over $\mathbb Q$ shows that pullback in cohomology is injective, and hence
$$
 b(E_L(p,\lambda)/\{\pm1\};\mathbb Q)
 \leq b(E_L(p,\lambda);\mathbb Q).
$$
Summing over the spectrum proves the projective assertion.
\end{proof}

At rational exponents the absolute powers admit an algebraic lift, yielding a sharper degree-sensitive estimate. Now let $p=A/B$ in lowest terms, where $A>B\geq1$, and set
$
 D=2\max\{B,A-B,1\}.
$

\begin{theorem}[Semialgebraic bounds at rational exponents]
\label{thm:rational-bound}
There is an absolute constant $C>0$ with the following property. At $p=A/B$, let a homogeneous pair formed from $K$ linear forms as in Theorem~\ref{thm:linear-form-effective} have denominator satisfying \eqref{eq:denominator-positive}, and set $F=F_{c,p}$ and $G=G_{\gamma,p}$.
Then its normalised eigenpair set obeys
\begin{equation}\label{eq:rational-betti}
 b\!\left(
\{(x,\lambda)\in\mathbb S^{n-1}\times\mathbb R:\nabla F(x)=\lambda\nabla G(x)\};\mathbb Q
\right)
\leq(CK^2D)^{3K+1}.
\end{equation}
For $\lambda\in\operatorname{Spec}(F,G)$, set
$$
 E(\lambda)=\{x\in\mathbb S^{n-1}:\nabla F(x)=\lambda\nabla G(x)\}.
$$
Consequently,
\begin{equation}\label{eq:rational-spectrum}
 \#\operatorname{Spec}(F,G)
 \leq\sum_{\lambda\in\operatorname{Spec}(F,G)}b(E(\lambda);\mathbb Q)
 \leq(CK^2D)^{3K+1}.
\end{equation}
The same estimate holds for the total rational Betti number of the projective eigenvector sets, after enlarging $C$ if necessary. Thus
$$
 \log\#\operatorname{Spec}(F,G)=O(K\log(KD)).
$$
\end{theorem}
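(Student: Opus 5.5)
The plan is to mirror the proof of Theorem~\ref{thm:real-exponent-betti}, replacing the Pfaffian lift by an algebraic one and invoking a semialgebraic total Betti bound (Gabrielov--Vorobjov \cite{GabrielovVorobjov2005}, or a Basu--Pollack--Roy type estimate) instead of the Pfaffian one. Fix $p=A/B$ and a sign pattern $\sigma\in\{-1,0,1\}^K$ with support $S$, $|S|=q\geq1$. In place of $y_r=\log|L_r(x)|$ introduce a variable $u_r>0$ with $|L_r(x)|=u_r^{B}$ (recall $\gcd$ considerations are unnecessary here; we only need $u_r^B$ and $u_r^{A-B}$). Then $z_r=\sigma_r u_r^B$ and $\Phi_p(L_r(x))=\sigma_r u_r^{A-B}$. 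The stratum $\mathcal Z_\sigma$ is then homeomorphic to the semialgebraic set $Y_\sigma\subset\mathbb R^{q+1}$ given by $u_r>0$ together with
$$
 \mathcal LTz-z=0,\qquad \|Tz\|_2^2=1,\qquad \mathcal L^{\mathsf T}\bigl((c-\lambda\gamma)\odot w\bigr)=0,
$$
where $w_r=\sigma_ru_r^{A-B}$. The inverse is $(x,\lambda)\mapsto((|L_r(x)|^{1/B})_{r\in S},\lambda)$, which is continuous, so the homeomorphism argument of the previous proof carries over verbatim. All polynomials involved have degree at most $\max\{2B,A-B+1\}\leq D+1$, in $q+1\leq K+1$ variables, with the number of equations $O(K)$. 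As before, compactness of $\mathcal Z$ follows from boundedness of $\lambda=F/G$ on the sphere.

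The next step is to bound $b^{\rm BM}(Y_\sigma;\mathbb Q)$. I would replace the equations by the single polynomial $\mathcal F_\sigma=\sum h_j^2$ of degree $\leq 2D+2$. The set $Y_\sigma$ is then the closed subset $\{\mathcal F_\sigma=0\}$ of the open orthant $\{u>0\}$. To handle the orthant I could either substitute $u_r=v_r^2$ with $v_r\neq0$, which lets me work on $\mathbb R^{q+1}$ minus coordinate hyperplanes, or use the Borel--Moore Alexander-duality trick of the previous proof applied inside the one-point compactification of the orthant. The cleanest route is to take $Y'_\sigma=\{(v,\lambda):\mathcal F_\sigma(v^2,\lambda)=0,\ \prod v_r\neq 0\}$. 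This is a $2^q$-fold union of copies of $Y_\sigma$, so $b^{\rm BM}(Y_\sigma)\leq b^{\rm BM}(Y'_\sigma)$. Then $Y'_\sigma$ is closed in $\mathbb R^{q+1}$, since $\mathcal F_\sigma$ restricted to $\{v_r=0\}$ is forced nonzero by the equation $\mathcal LTz=z$ only after care. If that closedness fails, I instead bound the Betti numbers of the complement $\{\mathcal F_\sigma>0\}\cup\{\prod v_r=0\}$, a semialgebraic set defined by a formula with two polynomials of degree $O(KD)$. In either case, Alexander duality in $\mathbb S^{q+2}$ reduces the problem to a Basu--Pollack--Roy or Gabrielov--Vorobjov bound of the form $(sd)^{O(N)}$, with $N=q+1$ variables, $s=O(1)$ polynomials and degree $d=O(KD)$. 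Tracking constants through the sum-of-squares step, where the degree can grow with the number of terms, is what produces the stated $(CK^2D)^{3K+1}$ form.

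The final step assembles the strata exactly as in the proof of Theorem~\ref{thm:real-exponent-betti}. The filtration $\mathcal Z_{\leq q}$ has layers that are clopen unions of sign strata, and the Borel--Moore localization sequence gives $b(\mathcal Z)\leq\sum_\sigma b^{\rm BM}(\mathcal Z_\sigma)\leq 3^K(C'K^2D)^{3K+1}$. The factor $3^K$ is absorbed by enlarging $C$. Finiteness of the spectrum (Theorem~\ref{thm:definable-homogeneous}) splits $\mathcal Z$ into the disjoint pieces $E(\lambda)\times\{\lambda\}$, which gives \eqref{eq:rational-spectrum}; nonemptiness of each $E(\lambda)$ gives the first inequality. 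The transfer argument for the double cover gives the projective statement, and taking logarithms gives $O(K\log(KD))$.

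The main obstacle I expect is the bookkeeping in the second step. This means choosing a semialgebraic Betti estimate whose exponent is linear in the number of variables, so that we get $(\cdot)^{O(K)}$ rather than the Pfaffian $\exp(K^2)$, while keeping the base at $K^2D$. It also means handling the strict positivity $u_r>0$ without inflating the variable count beyond $3K+1$.
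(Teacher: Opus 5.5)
Your route differs from the paper's, and as written its central step fails. In logarithmic coordinates the lower sign strata lie at infinity, which is why $Y_\sigma$ was closed in the Pfaffian proof. In your coordinates $u_r=|L_r(x)|^{1/B}$ (or $u_r=v_r^2$) they sit at $u_r=0$, at finite distance. So whenever $\overline{\mathcal Z_\sigma}$ meets a smaller stratum, $\mathcal F_\sigma$ vanishes somewhere on $\{\prod_r v_r=0\}$, and $Y'_\sigma$ is not closed. For example, for two isolated vertices with zero potential the eigenpair set is $\mathbb S^1\times\{0\}$, and for $\sigma=(+,+)$ the point $(v_1,v_2,\lambda)=(1,0,0)$ is a zero of $\mathcal F_\sigma(v^2,\lambda)$.

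Your fallback does not rescue this. The identification $H^{\rm BM}_i(Y)\cong\widetilde H_{q-i}(\mathbb R^{q+1}\setminus Y)$ requires $Y\cup\{\infty\}$ to be compact, i.e.\ $Y$ closed; already $Y=(0,1)\subset\mathbb R$ violates it. A correct repair is to write $Y'_\sigma=Z_1\setminus Z_2$, with closed sets $Z_1=\{\mathcal F_\sigma(v^2,\lambda)=0\}$ and $Z_2=Z_1\cap\{\prod_r v_r=0\}$. The Borel--Moore long exact sequence of this closed pair gives $b^{\rm BM}(Y'_\sigma)\le b^{\rm BM}(Z_1)+b^{\rm BM}(Z_2)$. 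Each term is then bounded by Alexander duality plus a semialgebraic Betti estimate in $q+1$ variables with degree $O(K+D)$. After this, your reassembly over strata goes through and gives a bound of the shape $3^K(C(K+D))^{K+1}$, which is dominated by $(CK^2D)^{3K+1}$.

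The paper avoids all of this by not stratifying. It lifts the whole eigenpair set at once, in the variables $(x,u,v,\lambda)$, with the conditions
\begin{itemize}
\item $u_r\ge0$, $L_r(x)^2=u_r^{2B}$, $v_r^2=u_r^{2(A-B)}$, $L_r(x)v_r\ge0$;
\item $\|x\|_2^2=1$ and $\sum_r(c_r-\lambda\gamma_r)v_rL_r=0$.
\end{itemize}
These force $u_r=|L_r(x)|^{1/B}$ and $v_r=\Phi_p(L_r(x))$ for every sign of $L_r(x)$, zero included. So the lifted set is compact and projects homeomorphically onto the full eigenpair set.

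A single application of the Gabrielov--Vorobjov bound $(C_0s^2d)^N$ then gives $(CK^2D)^{3K+1}$, with $N=n+2K+1\le3K+1$, $s\le5K+1$ and $d=D+1$. The factor $K^2$ is $s^2$ and the exponent is the variable count. So the stated shape does not come from sum-of-squares bookkeeping, which at most doubles the degree whatever the number of terms.

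The sign condition $L_rv_r\ge0$ is what makes stratification unnecessary; logarithms forced it in the Pfaffian proof only because they cannot see zero values. Once repaired, your approach uses fewer variables per piece and hence gives a smaller exponent, at the cost of Borel--Moore homology, duality and reassembly. The paper's global lift instead gives a one-step proof that needs no topology beyond the cited Betti bound.
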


\begin{proof}
Denominator positivity makes the joint linear map
$
 x\mapsto(L_1(x),\ldots,L_K(x))
$
injective, so $n\leq K$. Introduce $u_r\geq0$ and $v_r\in\mathbb R$ and impose
\begin{equation}\label{eq:rational-lift}
 L_r(x)^2=u_r^{2B},\qquad
 v_r^2=u_r^{2(A-B)},\qquad
 L_r(x)v_r\geq0.
\end{equation}
These conditions uniquely force
$$
 u_r=|L_r(x)|^{1/B},\qquad
 v_r=\Phi_p(L_r(x));
$$
if $L_r(x)=0$, the first two equations force $u_r=v_r=0$. Add $\|x\|_2^2=1$ and the $n$ coordinate equations in
\begin{equation}\label{eq:rational-eigen-lift}
 \sum_{r=1}^K(c_r-\lambda\gamma_r)v_rL_r=0
 \quad\hbox{in }(\mathbb R^n)^*.
\end{equation}
The lift uses
$$
 N=n+2K+1\leq3K+1
$$
variables and at most
$$
 s=4K+n+1\leq5K+1
$$
distinct polynomials, all of degree at most $D$. Here each $L_r$ in \eqref{eq:rational-eigen-lift} is a fixed covector, so the mixed coordinate terms $\lambda v_r(L_r)_i$ have degree two. Projection to $(x,\lambda)$ is a semialgebraic homeomorphism, with the displayed auxiliary variables as its continuous inverse.

The bound of Gabrielov and Vorobjov for the Betti numbers of semialgebraic sets defined by quantifier-free formulae \cite[Theorem~1]{GabrielovVorobjov2005} applies to polynomials of degree strictly less than its parameter $d_{\rm alg}$. Apply it with $d_{\rm alg}=D+1$. Since $D\geq2$, its estimate $b(X)\leq(C_0s^2d_{\rm alg})^N$ gives \eqref{eq:rational-betti} after changing the absolute constant.

The normalised eigenpair set is the finite disjoint union of $E(\lambda)\times\{\lambda\}$ over the distinct spectral values $\lambda$, yielding \eqref{eq:rational-spectrum}. The transfer argument for the antipodal two-sheeted cover gives the projective estimate.
\end{proof}

\begin{corollary}[Graph bounds at rational exponents]
\label{cor:rational-graph}
For a graph with $n$ vertices and $m$ positive-weight edges, take $K=n+m$ in Theorem~\ref{thm:rational-bound}. Hence
$$
 \log\#\sigma_{A/B}(d)
 =O\bigl((n+m)\log((n+m)D)\bigr).
$$
For dense graphs and fixed rational $p$ this is $O(n^2\log n)$; for bounded-degree graphs it is $O_p(n\log n)$.
\end{corollary}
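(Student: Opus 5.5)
The plan is to specialise Theorem~\ref{thm:rational-bound} to the graph linear-form family, exactly as in the proof of Corollary~\ref{cor:sparse-real-exponent}, and then simplify the logarithm. First I would take the $n$ coordinate forms $x_i$ together with only the $m$ signed edge forms $x_i-\varepsilon_{ij}x_j$ for which $a_{ij}>0$. Edges with $a_{ij}=0$ contribute nothing to either side of \eqref{eq:eigen-intro}, so they may be dropped. In the numerator, $x_i$ carries coefficient $b_i$ and each retained edge form carries $a_{ij}$. In the denominator, $x_i$ carries $\nu_i$ and every edge form carries $0$. The denominator $\sum_i\nu_i|x_i|^p$ is then positive away from the origin, so \eqref{eq:denominator-positive} holds. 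By \eqref{eq:gradient-eigen} (after dividing by $p$), $\sigma_{A/B}(d)=\operatorname{Spec}(F,G)$ with $K=n+m$ forms.

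Next I would apply \eqref{eq:rational-spectrum}. It gives
$$
\#\sigma_{A/B}(d)\leq\bigl(C(n+m)^2D\bigr)^{3(n+m)+1}.
$$
Taking logarithms yields $(3K+1)\bigl(\log C+2\log K+\log D\bigr)$, which is $O(K\log(KD))$ because $K\geq1$ and $D\geq2$.

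For the two special regimes I would argue as follows. For dense graphs, $m\leq\binom n2$, so $K\leq n^2$. With $p$ fixed, $D$ is a constant, and $\log(KD)=O(\log n)$. This gives $O(n^2\log n)$. For graphs of maximum degree at most $\Delta$, $m\leq\Delta n/2$, so $K=O_\Delta(n)$, and the bound becomes $O_p(n\log n)$. Here I read the implied constant as also depending on the degree bound, as in Corollary~\ref{cor:sparse-real-exponent}.

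There is no real obstacle here. The only care needed is to confirm two points. First, discarding zero-weight edge forms leaves the injectivity and positivity hypotheses intact, which holds because the coordinate forms alone already separate points. Second, the constant $C$ is absolute, so the asymptotics are uniform in all coefficients.
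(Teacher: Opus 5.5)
Your proposal is correct and follows the paper's argument: the corollary is the direct specialisation of Theorem~\ref{thm:rational-bound} to the $n$ coordinate forms and the $m$ positive-weight edge forms, exactly as in Corollary~\ref{cor:sparse-real-exponent}, followed by taking logarithms of $(CK^2D)^{3K+1}$. Your reading that the constant in the bounded-degree regime also depends on the degree bound is the intended one.
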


\begin{proof}[Proof of Corollary~\ref{thm:intro-effective-refinements}]
Combine Corollary~\ref{cor:sparse-real-exponent}, Theorem~\ref{thm:real-exponent-betti}, and Corollary~\ref{cor:rational-graph}. The projective estimates follow from the transfer argument for the antipodal double cover.
\end{proof}

\begin{remark}[Comparison with the even-integer tensor bound]
\label{rem:even-tensor-bound}
Suppose that $p\geq2$ is even. The tensor-pencil realisation \cite[Proposition~3.1]{GeQin2025} identifies \eqref{eq:eigen-intro} with the generalised eigenproblem for a pair of symmetric tensors of order $p$ and dimension $n$. The denominator is the diagonal mass tensor
$
 (B^{(p)}x^{p-1})_i=\nu_i x_i^{p-1}.
$
It is nonsingular over $\mathbb C$: the equation $B^{(p)}x^{p-1}=0$ has only the zero solution because every $\nu_i$ is positive. Hence the tensor pair is regular and has no eigenvalue at infinity. The degree formula for regular tensor pairs \cite[Theorem~2.1]{DingWei2015} therefore gives exactly $n(p-1)^{n-1}$ complex generalised eigenvalues, counted with algebraic multiplicity. Every distinct real graph eigenvalue is among them, and hence
\begin{equation}\label{eq:combined-even-bound}
 \#\sigma_p(a,\varepsilon,b,\nu)
 \leq\min\bigl\{\widehat{\mathfrak B}_n,\,n(p-1)^{n-1}\bigr\}
 \qquad(p\geq2\text{ even}).
\end{equation}
For fixed even $p$, the tensor estimate has logarithm $O(n)$, compared with $O(n^4)$ for $\widehat{\mathfrak B}_n$. For fixed $n$, it grows as $p^{n-1}$, whereas $\widehat{\mathfrak B}_n$ is uniform over all real $p>1$. The Pfaffian estimate also controls the total number of connected components of the normalised and projective eigenvector sets. The tensor estimate reflects algebraic degree at a fixed even exponent, whereas the Pfaffian estimate is uniform in $p>1$ and also controls the topology of the real eigenvector sets.
\end{remark}

This completes the upper-bound part of the argument. We now turn to the complete-graph geometry that produces the exponential lower bounds.

\section{Complete graphs in projective coordinates}
\label{sec:projective}

Throughout Sections~\ref{sec:projective}--\ref{sec:morse-chambers} we assume $n\geq2$ and work, unless explicitly stated otherwise, with the unsigned complete graph having zero potential, unit vertex measure, and strictly positive edge weights:
$$
 \varepsilon_{ij}=1,\qquad b_i=0,\qquad \nu_i=1,\qquad a_{ij}>0.
$$

We reduce its Rayleigh quotient by translations and introduce the two projective maps used below. We distinguish three notions. A \emph{spectral value} is a number $\lambda$ in the full spectrum. A \emph{projective eigenline} is a point $[x]\in\mathbb{R}P^{n-1}$ for which a nonzero representative satisfies the eigen-equation for some $\lambda$; after normalisation it is equivalently an antipodal pair. A \emph{reduced critical point} is a critical point of the reduced Rayleigh quotient on
$$
 \mathbb P(\mathbb R^n/\mathbb R\one)\cong\mathbb{R}P^{n-2}.
$$
The constant eigenline is the unique eigenline with eigenvalue zero and is absent from the reduced projective space. For $p>2$, Proposition~\ref{prop:index-shift} below shows that, at a regular nonconstant eigenline, the negative index of the full projective Hessian is one greater than the reduced Morse index. A property of a definable parameter manifold is called \emph{generic} if it holds outside a relatively closed definable subset of codimension at least one. Its locus of validity contains an open dense definable set whose complement has measure zero with respect to every smooth volume form on the parameter manifold.

Throughout the section, set
$$
 [n]=\{1,\ldots,n\},\qquad V_n=\mathbb R^n/\mathbb R\one,\qquad m=\binom n2,
$$
and write $\bar x$ for the translation class of $x\in\mathbb R^n$.
For $S\subseteq[n]$, let $\one_S$ denote its indicator vector.

\subsection{Optimal translation and \texorpdfstring{normalised $p$-th powers of pairwise distances}{normalised p-th powers of pairwise distances}}

Identify $V_n$ with the Euclidean mean-zero hyperplane $H=\one^\perp$. For a nonzero class $\bar u\in V_n$ and any representative $u\in\mathbb R^n$, let $c_p(u)$ minimise $c\mapsto\sum_i|u_i+c|^p$, and define
$$
 \kappa_p(\bar u)=u+c_p(u)\one
$$
to be the $p$-centred representative of $\bar u$. Replacing $u$ by $u+t\one$ replaces $c_p(u)$ by $c_p(u)-t$, so $\kappa_p$ is well defined. Set $c_p(0)=0$.

\begin{lemma}[The minimising translation]\label{lem:p-centre}
Let $p>1$ and $0\ne u\in H$. The function
$$
 c\mapsto\sum_{i=1}^n|u_i+c|^p
$$
has a unique minimiser $c_p(u)$, characterised by
\begin{equation}\label{eq:p-centre}
 \sum_{i=1}^n\Phi_p(u_i+c_p(u))=0.
\end{equation}
The intrinsic quantity
\begin{equation}\label{eq:mp-def}
 \mathfrak m_p(\bar u)
 =\sum_i|\kappa_p(\bar u)_i|^p
 =\min_{c\in\mathbb R}\sum_{i=1}^n|u_i+c|^p
\end{equation}
is positive and homogeneous of degree $p$ on $V_n\setminus\{0\}$. It is $C^1$ for every $p>1$. If $p>2$, then
$$
 c_p:H\setminus\{0\}\to\mathbb R,\qquad
 \kappa_p:V_n\setminus\{0\}\to\mathbb R^n
$$
are $C^1$, while $\mathfrak m_p$ is $C^2$. The maps $(p,u)\mapsto c_p(u)$, $(p,\bar u)\mapsto\kappa_p(\bar u)$, and $(p,\bar u)\mapsto\mathfrak m_p(\bar u)$ are definable. For $p>2$, $Dc_p(u)$ and $D^2\mathfrak m_p(\bar u)$ depend jointly continuously on $(p,u)$ and $(p,\bar u)$, respectively.
\end{lemma}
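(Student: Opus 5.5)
The plan is to study the one-variable function $g_u(c)=\sum_i|u_i+c|^p$ directly. Strict convexity and coercivity give existence and uniqueness of the minimiser. The implicit function theorem then handles regularity for $p>2$, and the formulas involved give definability.

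\emph{Existence and uniqueness.} For $p>1$, each map $c\mapsto|u_i+c|^p$ is $C^1$ and strictly convex, so $g_u$ is $C^1$ and strictly convex. Moreover $g_u(c)\to\infty$ as $|c|\to\infty$. Hence $g_u$ has a unique minimiser $c_p(u)$, characterised by $g_u'(c)=p\sum_i\Phi_p(u_i+c)=0$, which is \eqref{eq:p-centre}. Replacing $u$ by $u+t\one$ shifts the minimiser by $-t$, so $\mathfrak m_p(\bar u)=\min_c g_u(c)$ is intrinsic. It is positive when $\bar u\ne0$: the minimum vanishes only if $u+c\one=0$ for some $c$, i.e.\ $u\in\mathbb R\one$. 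Scaling $u\mapsto su$ with $s>0$ sends $c_p$ to $sc_p$, which gives homogeneity of degree $p$.

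\emph{Regularity.} The $C^1$ property of $\mathfrak m_p$ for all $p>1$ will come from a Danskin-type envelope argument. Continuity of $c_p$ follows from uniqueness of the minimiser, compactness of minimisers near a given $u$, and joint continuity of $(u,c)\mapsto g_u(c)$. Then $\mathfrak m_p(u)=g_u(c_p(u))$, and the difference quotient argument, using $\partial_cg=0$ at the minimiser, gives $\nabla\mathfrak m_p(u)=p\,\Phi_p(u+c_p(u)\one)$. This is continuous in $u$, so $\mathfrak m_p$ is $C^1$.

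For $p>2$ I will apply the implicit function theorem to $h(u,c)=\sum_i\Phi_p(u_i+c)$. Here $\Phi_p$ is $C^1$ with $\Phi_p'(t)=(p-1)|t|^{p-2}$, so $h$ is $C^1$ and
$$
 \partial_ch=(p-1)\sum_i|u_i+c|^{p-2}.
$$
This derivative is positive because not all $u_i+c$ vanish (as $u\notin\mathbb R\one$). Hence $c_p$ is $C^1$ and
$$
 Dc_p(u)=-\frac{\bigl(|u_i+c_p|^{p-2}\bigr)_i}{\sum_j|u_j+c_p|^{p-2}}.
$$
It follows that $\kappa_p$ is $C^1$, and $\nabla\mathfrak m_p=p\,\Phi_p(\kappa_p)$ is a composition of $C^1$ maps, so $\mathfrak m_p$ is $C^2$. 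Everything descends to $V_n$ by translation invariance, working on the slice $H\setminus\{0\}$.

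\emph{Definability and joint continuity.} $c_p(u)$ is the unique $c$ with $\sum_i\Phi(u_i+c,p)=0$, which is first-order in $\R_{\exp}$ using \eqref{eq:phi-def}. Hence $(p,u)\mapsto c_p(u)$, $\kappa_p$ and $\mathfrak m_p$ are definable. For joint continuity I will apply the same implicit function argument to $h(p,u,c)$ with $p$ as an extra variable. The map $(p,t)\mapsto|t|^{p-2}$ is continuous on $(2,\infty)\times\mathbb R$, so $h$ and $\partial_ch$ are jointly continuous, and $c_p$ is jointly continuous by the uniqueness and compactness argument. Then the explicit formulas for $Dc_p$ and for
$$
 D^2\mathfrak m_p=p(p-1)\operatorname{diag}\bigl(|\kappa_i|^{p-2}\bigr)\bigl(I+\one\,Dc_p\bigr)
$$
are jointly continuous in $(p,u)$.

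\emph{Main obstacle.} The most delicate step is the behaviour near $p\downarrow2$ and at points where some $\kappa_i=0$, together with making sure joint continuity uses only continuity in $p$ and not differentiability in $p$. Since the formulas involve only $|t|^{p-2}$ with $p>2$, which is continuous and vanishes at $t=0$, this should go through.
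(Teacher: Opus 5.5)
Your proposal is correct and follows essentially the same route as the paper. It uses strict convexity and coercivity for the minimiser, an envelope (difference-quotient) argument giving $D\mathfrak m_p=p\,\Phi_p(\kappa_p)$ for every $p>1$, the implicit function theorem applied to $\sum_i\Phi_p(u_i+c)$ for $p>2$, and definability from uniqueness of the root.

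The only difference is minor. You get joint continuity of $Dc_p$ and $D^2\mathfrak m_p$ directly from joint continuity of $c_p$ and of $(p,t)\mapsto|t|^{p-2}$. The paper additionally runs the implicit function theorem in $(p,u,c)$ and records $\partial_pc_p$ and $\partial_p\mathfrak m_p$, which goes beyond what the statement requires.
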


\begin{proof}
Strict convexity and coercivity give a unique minimiser, and differentiation in $c$ gives \eqref{eq:p-centre}. The left side is strictly increasing in $c$. For each $(p,u)$, the left side has a zero, and
$$
 -\max_i u_i\leq c_p(u)\leq -\min_i u_i.
$$
The minimiser is jointly continuous in $(p,u)$ on $(1,\infty)\times(H\setminus\{0\})$. Indeed, if $(p_j,u_j)\to(p,u)$, the displayed bound places the corresponding minimisers in one compact interval; every subsequential limit minimises the limiting strictly convex function, and uniqueness identifies it with $c_p(u)$. Definability follows from definable choice and uniqueness.

For fixed $p>1$, set $\kappa=\kappa_p(\bar u)$. Differentiation of the minimum gives
\begin{equation}\label{eq:mp-envelope}
 D\mathfrak m_p(\bar u)[\bar h]
 =p\sum_i\Phi_p(\kappa_i)h_i.
\end{equation}
Evaluating the difference quotient at $c_p(u)$ gives one inequality, while evaluating it at the minimiser $c_p(u+th)$ gives the reverse inequality; continuity of the minimiser shows that the two limits agree. The formula is independent of the representative of $\bar h$ by \eqref{eq:p-centre}. Its right-hand side is linear in $\bar h$ and continuous in $\bar u$; in finite dimension, the directional derivative is therefore the Fr\'echet derivative, and $\mathfrak m_p$ is $C^1$ for every $p>1$. Positivity and homogeneity follow immediately from strict convexity and the change of variables $c\mapsto tc$.

Assume now that $p>2$. The implicit-function theorem applies to
\begin{equation}\label{eq:mp-IFT}
 H_0(p,u,c)=\sum_i\Phi_p(u_i+c),
\qquad
 \partial_cH_0=(p-1)\sum_i|u_i+c|^{p-2}>0.
\end{equation}
Writing $w_i=|\kappa_i|^{p-2}$ and $S=\sum_iw_i$, one obtains
\begin{equation}\label{eq:mp-centre-derivative}
 Dc_p(u)[h]= -\frac{\sum_iw_ih_i}{S}
\end{equation}
and
\begin{equation}\label{eq:mp-hessian}
 D^2\mathfrak m_p(\bar u)[\bar h,\bar k]
 =p(p-1)\left(
\sum_iw_ih_ik_i
-\frac{(\sum_iw_ih_i)(\sum_iw_ik_i)}{S}
 \right).
\end{equation}
The bilinear expression on the right-hand side is unchanged if constants are added to $h$ or $k$, so it is well defined on $V_n$. The implicit-function theorem in the variables $(p,u,c)$ also gives
\begin{align}
 \partial_pc_p(u)
 &=-\frac{\sum_i\Phi_p(\kappa_i)\log|\kappa_i|}
{(p-1)\sum_i|\kappa_i|^{p-2}},\label{eq:p-centre-p-derivative}\\
 \partial_p\mathfrak m_p(\bar u)
 &=\sum_i|\kappa_i|^p\log|\kappa_i|,
 \label{eq:mp-p-derivative}
\end{align}
where expressions of the form $|t|^\alpha\log|t|$ at $t=0$ are assigned their continuous value zero. The functions $|t|^\alpha\log|t|$ and $|t|^\alpha(\log|t|)^2$ have continuous zero extensions for $\alpha>0$. Thus \eqref{eq:mp-centre-derivative}, \eqref{eq:mp-hessian}, and \eqref{eq:p-centre-p-derivative} are jointly continuous for $p>2$. Differentiating the envelope formula \eqref{eq:mp-envelope} in $p$ and in $\bar u$ proves the asserted joint regularity, including the $C^1$ dependence of $D\mathfrak m_p$.
\end{proof}

\subsection{The projectivised \texorpdfstring{$L^p$}{Lp}-duality map and the coordinate-hyperplane cells}
\label{subsec:sign-cells}

Set $p'=p/(p-1)$. The coordinatewise maps $\Phi_p$ and $\Phi_{p'}$ are mutually inverse. Projectivising the coordinatewise $L^p$-duality map after minimisation over translations gives
\begin{equation}\label{eq:Theta}
 \Theta_p\colon\Pj(V_n)\rightarrow\Pj(H),
 \qquad
 \Theta_p([\bar x])=[\Phi_p(\kappa_p(\bar x))].
\end{equation}

\begin{proposition}[The projectivised $L^p$-duality map]
\label{prop:Theta}
For every $p>1$, the map $\Theta_p$ is a definable homeomorphism. Its inverse is
\begin{equation}\label{eq:Theta-inverse}
 [y]\mapsto[\overline{\Phi_{p'}(y)}],
 \qquad y\in H\setminus\{0\}.
\end{equation}
Before projectivisation, the corresponding maps are odd and positively homogeneous of degrees $p-1$ and $p'-1=1/(p-1)$, respectively. Their restrictions to the sign strata are definable diffeomorphisms between the corresponding strata.
\end{proposition}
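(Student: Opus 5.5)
The plan is to first check that $\Theta_p$ lands in $\Pj(H)$ and is well defined, and then write down an explicit two-sided inverse.

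For $0\ne\bar x\in V_n$, set $\kappa=\kappa_p(\bar x)$. By Lemma~\ref{lem:p-centre}, $\sum_i\Phi_p(\kappa_i)=0$, so $y=\Phi_p(\kappa)$ lies in $H=\one^\perp$. It is nonzero, because $\kappa\ne0$: otherwise $\bar x=0$. Since $\kappa_p(t\bar x)=t\kappa_p(\bar x)$ for every real $t\ne0$ (by uniqueness of the minimiser and evenness of $|\cdot|^p$), and $\Phi_p$ is odd and positively $(p-1)$-homogeneous, the unprojectivised map $\bar x\mapsto\Phi_p(\kappa_p(\bar x))$ is odd and $(p-1)$-homogeneous. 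Hence it descends to projective spaces.

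Conversely, for $0\ne y\in H$ put $x=\Phi_{p'}(y)$. Then $\Phi_p(x)=y$, so $\sum_i\Phi_p(x_i)=\sum_iy_i=0$. By the characterisation \eqref{eq:p-centre} and uniqueness of the minimiser, $c_p(x)=0$, so $\kappa_p(\bar x)=x$. Therefore $\Theta_p([\bar x])=[\Phi_p(x)]=[y]$. In the other direction, $\Phi_{p'}(\Phi_p(\kappa_p(\bar x)))=\kappa_p(\bar x)$, and this represents $\bar x$. So \eqref{eq:Theta-inverse} is a two-sided inverse. It is also odd and $1/(p-1)$-homogeneous, since $\Phi_{p'}$ has these properties.

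Continuity of both maps follows from the joint continuity of $c_p$ in Lemma~\ref{lem:p-centre} and the continuity of $\Phi_p$ and $\Phi_{p'}$ on the unprojectivised spaces minus the origin; both maps then pass to the quotients. Definability comes from the definability of $\kappa_p$ and of $\Phi$ in $\R_{\exp}$. Because $\Pj(V_n)$ is compact and $\Pj(H)$ is Hausdorff, it would in fact suffice to know that $\Theta_p$ is a continuous bijection.

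For the sign strata, fix a sign pattern $\sigma\in\{-1,0,1\}^n$ and let $H_\sigma=\{y\in H:\sgn y=\sigma\}$. The corresponding stratum in $V_n\setminus\{0\}$ is the set of $\bar x$ with $\sgn\kappa_p(\bar x)=\sigma$. Since $\Phi_p$ preserves signs, the maps exchange these strata.

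On $H_\sigma$ the inverse is $y\mapsto\overline{\Phi_{p'}(y)}$. In the active coordinates this is real-analytic, because $|t|^{p'-1}$ is analytic for $t\ne0$, and the vanishing coordinates stay identically zero. It is therefore a smooth map into $V_n$. Its image is the set of classes whose $p$-centred representative has pattern $\sigma$; on that image the forward map is the inverse of $u\mapsto\overline{\Phi_{p'}(u)}$ restricted to $H_\sigma$.

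The main obstacle is to show that the forward map is smooth on its stratum for all $p>1$. For $1<p<2$ the global implicit function theorem fails, since $\partial_cH_0$ may be infinite at zero coordinates. I would handle this by representing the stratum as an immersed submanifold. It is the image of $H_\sigma$ under the composite of $\Phi_{p'}$ with the quotient map, which is linear. The differential of $\Phi_{p'}$ on the active coordinates is the diagonal with entries $(p'-1)|y_i|^{p'-2}\ne0$. Its composite with the quotient map is injective on $T H_\sigma$, because a vector in $TH_\sigma$ that maps to $\R\one$ must be constant on the active coordinates and zero elsewhere. It must also satisfy the weighted relation coming from $\sum y_i=0$, which forces it to be zero unless the pattern is degenerate; I would check this case directly.

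Given this, the inverse is an injective immersion, and a homeomorphism onto its image. Its image, the set of classes with a given sign pattern, is therefore a definable submanifold, and the inverse is a definable diffeomorphism onto it. Hence both restrictions are definable diffeomorphisms between the corresponding strata.
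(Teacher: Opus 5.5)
Your proof is correct. The homeomorphism part matches the paper: image in $H$, the two-sided inverse via $\Phi_{p'}\circ\Phi_p=\mathrm{id}$ and uniqueness of the $p$-centre, oddness and homogeneity, continuity and definability.

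For the sign strata the routes differ. The paper models the source stratum by its $p$-centred representatives, $\{x:\sum_i\Phi_p(x_i)=0,\ x_i>0\ (i\in P),\ x_j<0\ (j\in N),\ x_Z=0\}/\mathbb R_{>0}$. It then notes that the coordinatewise power maps are mutually inverse diffeomorphisms of the open orthant which carry the centring hypersurface onto the hyperplane $\sum_iy_i=0$, so no regularity of $c_p$ is ever needed. You instead treat the stratum as a subset of the quotient $V_n$. You show that $y\mapsto\overline{\Phi_{p'}(y)}$ is an injective immersion of $H_\sigma$ that is a homeomorphism onto its image. This costs one linear-algebra check, but it gives the more intrinsic conclusion that the strata are embedded submanifolds of $\Pj(V_n)$ with its own smooth structure, which the paper's slice model takes for granted.

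The two computations are really the same. With $x=\Phi_{p'}(y)$ one has $|y_i|^{2-p'}=|x_i|^{p-2}$, so your relation $\sum_ih_i=0$ is exactly tangency to the centring equation.

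Two small points. First, the ``degenerate pattern'' caveat is unnecessary. If $D\Phi_{p'}(y)h=c\one$, then $c=0$ immediately when $Z\ne\varnothing$. Otherwise $h_i=c|y_i|^{2-p'}/(p'-1)$ with all weights positive, so $\sum_ih_i=0$ forces $c=0$. Second, you should add that, by positive homogeneity, the radial direction at $y$ is sent to the radial direction at $\overline{\Phi_{p'}(y)}$. Hence the embedding descends to the projectivised strata $C_{P,N}$.
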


\begin{proof}
The centring identity \eqref{eq:p-centre} gives $\sum_i\Phi_p(\kappa_p(\bar x)_i)=0$, so the image lies in $H$. Conversely, if $y\in H$, then $x=\Phi_{p'}(y)$ satisfies
$$
 \sum_i\Phi_p(x_i)=\sum_i y_i=0.
$$
Thus $x$ is already $p$-centred, and the coordinatewise inverse identities together with Lemma~\ref{lem:p-centre} show that the displayed maps are continuous definable inverses. Their homogeneity and oddness make them projectively well defined. For the assertion on sign strata, fix disjoint nonempty sets $P,N$ and impose $x_z=0$ for $z\notin P\cup N$. Choosing the orientation in which the coordinates on $P$ are positive and those on $N$ are negative, the source stratum is
$$
 \left\{x:\sum_i\Phi_p(x_i)=0,\quad
 x_i>0\ (i\in P),\quad x_j<0\ (j\in N),\quad
 x_z=0\ (z\notin P\cup N)\right\}\big/\mathbb R_{>0}.
$$
On every nonzero coordinate, $\Phi_p$ and $\Phi_{p'}$ are smooth mutually inverse power maps, and they carry the centring equation exactly to the linear equation $\sum_i y_i=0$ on the target. Their restrictions are therefore diffeomorphisms between the corresponding sign strata for every $p>1$; globally the maps are definable homeomorphisms.
\end{proof}

If $[\bar x]\in\Pj(V_n)$ is represented by a $p$-centred vector $x$, then
$\Theta_p([\bar x])=[\Phi_p(x)]$, so its coordinate signs and zero coordinates
agree with those of $x$, up to simultaneous reversal of all signs. The coordinate
hyperplanes in $\Pj(H)$ therefore determine the sign strata described below, and
their inverse images under $\Theta_p$ determine the corresponding strata of
$\Pj(V_n)$. For disjoint non-empty subsets $P,N\subset[n]$, set $Z=[n]\setminus(P\cup N)$ and
\begin{equation}\label{eq:sign-cell}
 C_{P,N}=
 \left\{[y]\in\Pj(H):
 y_i>0\ (i\in P),\quad y_j<0\ (j\in N),\quad
 y_z=0\ (z\in Z)\right\}.
\end{equation}
Since $C_{P,N}=C_{N,P}$ in projective space, $[P\mid N]$ denotes the unordered pair $\{(P,N),(N,P)\}$.

\begin{proposition}[Projectivised coordinate arrangement]
\label{prop:sign-cells}
The sets $C_{P,N}$ are the open cells of a regular CW decomposition $\Cn$ of
$
 \Pj(H)\cong\mathbb RP^{n-2}.
$
If $k=|P|$, $\ell=|N|$, and $r=|Z|$, then
\begin{equation}\label{eq:cell-product-dimension}
\begin{aligned}
 C_{P,N}
 &\cong \operatorname{Int}\Delta^{k-1}\times\operatorname{Int}\Delta^{\ell-1},
 &\overline C_{P,N}
 &\cong \Delta^{k-1}\times\Delta^{\ell-1},\\
 \dim C_{P,N}&=n-r-2.
\end{aligned}
\end{equation}
A cell $C_{P',N'}$ is a face of $\overline C_{P,N}$ precisely when, after possibly interchanging its two labels,
$$
 \varnothing\ne P'\subseteq P,\qquad
 \varnothing\ne N'\subseteq N.
$$
Equivalently, faces are obtained by moving indices from $P$ or $N$ into $Z$.
\end{proposition}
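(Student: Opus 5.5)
The approach is to describe each $C_{P,N}$ explicitly in an affine chart, identify its closure with a product of simplices, and then check the CW axioms directly.

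\textbf{Partition and normalisation.} Every $0\ne y\in H$ has $\sum_i y_i=0$, so it has at least one positive and at least one negative coordinate. Hence $[y]$ lies in exactly one $C_{P,N}$, with $P=\{y>0\}$ and $N=\{y<0\}$ taken up to the interchange $(P,N)\leftrightarrow(N,P)$ forced by $y\sim -y$. So the sets $C_{P,N}$, indexed by unordered pairs $[P\mid N]$, partition $\Pj(H)$. On $C_{P,N}$ we fix the representative with $y_i>0$ on $P$ and normalise it by $\sum_{i\in P}y_i=1$; the constraint $y\in H$ then forces $\sum_{j\in N}|y_j|=1$.

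\textbf{Product structure and closure.} With this normalisation, the map
$$
[y]\mapsto\bigl((y_i)_{i\in P},(-y_j)_{j\in N}\bigr)
$$
is a homeomorphism from $C_{P,N}$ onto $\operatorname{Int}\Delta^{k-1}\times\operatorname{Int}\Delta^{\ell-1}$; its inverse is evident and continuous into projective space. This gives dimension $(k-1)+(\ell-1)=n-r-2$. For the closure, consider the continuous map
$$
\Delta^{k-1}\times\Delta^{\ell-1}\to\Pj(H),\qquad (s,t)\mapsto[s\oplus(-t)\oplus 0_Z].
$$
The vector $s\oplus(-t)\oplus 0_Z$ is never zero and lies in $H$, because $\sum s=\sum t=1$. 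The map is injective: if two such vectors are proportional, the positive parts force the proportionality constant to be positive, and the normalisation then forces it to equal $1$. Its domain is compact and $\Pj(H)$ is Hausdorff, so the map is an embedding. Its image is closed and contains $C_{P,N}$ as a dense subset, so the image equals $\overline C_{P,N}$, giving $\overline C_{P,N}\cong\Delta^{k-1}\times\Delta^{\ell-1}$.

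\textbf{Faces and CW axioms.} The boundary of $\overline C_{P,N}$ consists of the points where some coordinates of $s$ or $t$ vanish. Such a point lies in $C_{P',N'}$ with $\varnothing\ne P'\subseteq P$ and $\varnothing\ne N'\subseteq N$; both are nonempty because $s$ and $t$ lie in simplices. Conversely, every such pair is realised, namely by the relative interior of the corresponding face of the product of simplices. Since cells are defined only up to the interchange of $P$ and $N$, this is exactly the stated face criterion, and it shows that each boundary is a finite union of lower-dimensional cells. The characteristic maps are homeomorphisms onto closed cells, since products of simplices are closed balls, so the decomposition is regular. Finitely many cells make closure-finiteness and the weak topology automatic.

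\textbf{Main obstacle.} The one step needing care is the embedding claim: the sign-fixing normalisation must be consistent with the projective identification, that is, injectivity must hold across the antipodal ambiguity. I would handle this using the fact that $P$ and $N$ are nonempty and disjoint, so a negative scalar cannot map $s\oplus(-t)\oplus 0_Z$ to another vector whose positive support is $P$.
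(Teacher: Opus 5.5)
Your proof is correct and takes essentially the same route as the paper: the sign partition, the normalisation $\sum_{i\in P}y_i=1=-\sum_{j\in N}y_j$, and the characteristic map $\Delta^{k-1}\times\Delta^{\ell-1}\to\Pj(H)$, which becomes an embedding via the sign-based injectivity argument and compactness, with faces read off by letting simplex coordinates vanish. Your density argument identifying the image with $\overline C_{P,N}$ is a slightly more explicit version of the paper's corresponding step.
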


\begin{proof}
Every nonzero vector in $H$ has at least one positive and one negative coordinate, so the cells partition $\Pj(H)$. Choose one of the two ordered representatives $(P,N)$ of $[P\mid N]$. Each projective class in $C_{P,N}$ has a unique representative satisfying
\begin{equation}\label{eq:cell-normalization}
 \sum_{i\in P}y_i=1=-\sum_{j\in N}y_j.
\end{equation}
The positive and negative coordinate lists are then points in $\operatorname{Int}\Delta^{k-1}$ and $\operatorname{Int}\Delta^{\ell-1}$, respectively. Allowing some of these coordinates to vanish defines a continuous characteristic map
$$
 \Delta^{k-1}\times\Delta^{\ell-1}\rightarrow\Pj(H),
 \qquad
 (u,v)\mapsto[(u,-v,0_Z)].
$$
It is injective: a positive projective rescaling is fixed by \eqref{eq:cell-normalization}, while a negative rescaling interchanges $P$ and $N$ and therefore cannot identify two points in the chosen parameter domain. Its image is exactly $\overline C_{P,N}$: allowing zero coordinates gives every face in the closure, and every limit of points in $C_{P,N}$ has such a normalised representative. Compactness and the Hausdorff property therefore make the characteristic map a homeomorphism onto $\overline C_{P,N}$. Its boundary is exactly the union obtained by deleting at least one positive or negative coordinate, without deleting an entire sign block. These are precisely the cells described in the face criterion. The characteristic maps are therefore regular, and the dimension is $(k-1)+(\ell-1)=n-r-2$.
\end{proof}

In the normalisation \eqref{eq:cell-normalization}, the cell barycentre is represented by
\begin{equation}\label{eq:cell-barycentre}
 b_{P,N,i}=
 \begin{cases}1/k,&i\in P,\\
-1/\ell,&i\in N,\\
0,&i\in Z.
 \end{cases}
\end{equation}
For $C=C_{P,N}$, let $b_C=[b_{P,N}]\in\Pj(H)$ be its projective barycentre. The duality map organises the cells; a second projective map retains the normalised $p$-th powers of pairwise distances and makes the edge weights linear parameters. For $[\bar x]\in\mathbb P(V_n)$ define
\begin{equation}\label{eq:Psi-def}
 \Psi_p([\bar x])_{ij}
 =\frac{|x_i-x_j|^p}{\mathfrak m_p(\bar x)},
 \qquad 1\leq i<j\leq n.
\end{equation}
Translation invariance and homogeneity make this well defined.

\begin{theorem}[Embedding by normalised $p$-th powers of pairwise distances]
\label{thm:projective-embedding}
For every $p>1$, the map
$$
 \Psi_p:\mathbb{R}P^{n-2}\rightarrow\mathbb R^m
$$
is a definable $C^1$ map and a topological embedding. At $p=2$ it is a real-analytic embedding, and for every $p>2$ it is a $C^2$ definable embedding. The family $(p,[\bar x])\mapsto\Psi_p([\bar x])$ is definable and $C^1$ jointly on $(2,\infty)\times\mathbb{R}P^{n-2}$. In every smooth projective chart, $(p,z)\mapsto D_z\Psi_p(z)$ is locally $C^1$, and $D_z^2\Psi_p(z)$ is jointly continuous. For every compact $I\Subset(1,\infty)$, moreover, the family is continuous as a map
$$
 p\mapsto\Psi_p\in
 C^1\bigl(\mathbb RP^{n-2},\mathbb R^{\binom n2}\bigr).
$$
\end{theorem}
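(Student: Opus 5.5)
We separate injectivity (the topological-embedding part) from the regularity and definability assertions, which follow from Lemma~\ref{lem:p-centre}.

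\emph{Regularity and definability.} Work on the sphere of $H$, or in a smooth projective chart. The map $\Psi_p$ is the quotient of the maps $x\mapsto|x_i-x_j|^p$ by $\mathfrak m_p(\bar x)$. The numerators are $C^1$ for $p>1$ and $C^2$ for $p>2$. They are real-analytic at $p=2$, where they are squares of linear forms. By Lemma~\ref{lem:p-centre}, $\mathfrak m_p$ is positive and $C^1$ for $p>1$, and $C^2$ for $p>2$. At $p=2$ it equals $\sum_i(x_i-\bar x)^2$, which is a positive quadratic form. Joint definability in $(p,x)$ follows from the definability of $\operatorname{pow}$ and of $(p,\bar u)\mapsto\mathfrak m_p(\bar u)$.

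For $p>2$, the joint statements come from two facts in Lemma~\ref{lem:p-centre}. First, $D\mathfrak m_p$ is $C^1$ jointly in $(p,\bar u)$ and $D^2\mathfrak m_p$ is jointly continuous. Second, the numerators satisfy $\partial_p|t|^p=|t|^p\log|t|$, $D_t|t|^p=p\Phi_p(t)$, and $\partial_p\bigl(p\Phi_p(t)\bigr)=\Phi_p(t)+p\Phi_p(t)\log|t|$, all of which are continuous. Together these give $C^1$ dependence of $(p,z)\mapsto D_z\Psi_p(z)$ and joint continuity of $D_z^2\Psi_p$.

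For the continuity of $p\mapsto\Psi_p\in C^1$ on a compact $I\Subset(1,\infty)$, we check that $\Psi_p$ and $D\Psi_p$ are jointly continuous on $I\times\mathbb RP^{n-2}$. Uniform continuity on this compact set then gives continuity into $C^1$. The derivative $D\mathfrak m_p[\bar h]=p\sum_i\Phi_p(\kappa_i)h_i$ from \eqref{eq:mp-envelope} is jointly continuous, because $\kappa_p$ is jointly continuous (by the proof of Lemma~\ref{lem:p-centre}) and $\Phi_p(t)$ is jointly continuous in $(p,t)$ for $p>1$. The numerator derivatives $p\Phi_p(x_i-x_j)(h_i-h_j)$ are likewise jointly continuous.

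\emph{Injectivity.} Suppose $\Psi_p([\bar x])=\Psi_p([\bar y])$. Then there is $s>0$ such that $|x_i-x_j|=s|y_i-y_j|$ for all $i<j$; rescaling $y$, we may take $s=1$. We must show that two point configurations on the line with equal pairwise distances differ by a translation and possibly a reflection. Pick $i_0$ with $x_{i_0}$ minimal and $i_1$ with $x_{i_1}$ maximal. Translate and reflect $y$ so that $y_{i_0}=0$ and $y_{i_1}=|x_{i_1}-x_{i_0}|=D$, where $D$ is the diameter of the $x$-configuration. Since $y$ has the same diameter, every $y_k$ lies in $[0,D]$. Then $y_k=|y_k-y_{i_0}|=|x_k-x_{i_0}|=x_k-x_{i_0}$. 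Hence $y$ equals $x$ up to translation and sign, so $[\bar y]=[\bar x]$. The case $D=0$ cannot occur because $\bar x\ne0$.

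A continuous injection from the compact space $\mathbb RP^{n-2}$ into the Hausdorff space $\mathbb R^m$ is a topological embedding. This gives the embedding claim for every $p>1$.

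\emph{Immersion at $p\ge2$.} The words ``$C^2$ embedding'' and ``real-analytic embedding'' presumably also require $D\Psi_p$ to be injective, and I expect this to be the main obstacle. The plan is as follows. Since $\Psi_p=\mathfrak m_p^{-1}\,(|x_i-x_j|^p)_{i<j}$ and $\mathfrak m_p$ is a positive multiple of the degree-$p$ homogeneity, suppose $D\Psi_p[h]=0$ for a tangent vector $h$. Then $D(|x_i-x_j|^p)[h]=\mu|x_i-x_j|^p$ for a single scalar $\mu$, namely $p\Phi_p(x_i-x_j)(h_i-h_j)=\mu|x_i-x_j|^p$ for all $i<j$.

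For pairs with $x_i\ne x_j$ this gives $h_i-h_j=(\mu/p)(x_i-x_j)$. For pairs with $x_i=x_j$ and $p>2$ it gives no information; at $p=2$ the same degeneracy occurs. For such coincident pairs I would instead compare with a third index $k$ at which $x_k\ne x_i$. Both $i$ and $j$ then give $h_i-h_k=(\mu/p)(x_i-x_k)$ and $h_j-h_k=(\mu/p)(x_j-x_k)$, which forces $h_i=h_j$. Such a $k$ exists because $x$ is nonconstant.

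It follows that $h\equiv(\mu/p)x$ modulo constants, so $h$ is radial and vanishes as a projective tangent vector. This proves that $\Psi_p$ is an immersion. Combined with injectivity and compactness, $\Psi_p$ is an embedding of the stated class.
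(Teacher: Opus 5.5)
Your proposal is correct. The regularity, definability and compact-$I$ parts are the paper's argument (Lemma~\ref{lem:p-centre} plus the envelope formula, as in Lemma~\ref{lem:Psi-C1-family}), but you argue injectivity and immersion differently.

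\emph{Injectivity.} The paper takes $p$-centred, $L^p$-normalised representatives and passes to the squared-distance matrix $D$. It recovers the configuration from $-\tfrac12\Pi D\Pi=(\Pi\widehat x)(\Pi\widehat x)^{\mathsf T}$ up to sign and translation, and then uses monotonicity of the centring equation to remove the translation. Your extreme-point/diameter argument is the elementary one-dimensional rigidity statement. It works directly on projective translation classes, so you never need the centring step. The double-centring route has the advantage that it would generalise to configurations in $\mathbb R^d$.

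\emph{Immersion.} The paper treats $p=2$ on the Euclidean sphere of $H$ and $p>2$ on the slice $\mathcal S_p$, where the denominator is constant. This requires the slice to be a $C^1$ submanifold doubly covering $\mathbb RP^{n-2}$, which uses $p\geq2$, and a final tangency argument to show that the leftover constant $c$ vanishes. You instead keep the denominator's variation as the single scalar $\mu=D\mathfrak m_p[h]/\mathfrak m_p$ and conclude $h\in\mathbb R x+\mathbb R\one$. Your third-index step is the paper's observation that the pairs with $x_i\ne x_j$ form a connected complete multipartite graph.

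Your immersion computation only uses that $\Psi_p$ is $C^1$ (with $D|t|^p=p\Phi_p(t)$), so, despite your heading, it never invokes $p\geq2$. It therefore shows that $\Psi_p$ is a $C^1$ immersion, and hence a $C^1$ embedding, for every $p>1$. That is slightly more than the theorem asserts on $(1,2)$, where it claims only a topological embedding. It parallels the immersion argument the paper gives for $\Xi_{p,\varepsilon,\nu}$ in Theorem~\ref{thm:signed-coefficient-embedding}.
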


\begin{proof}
Choose $p$-centred representatives $\widehat x,\widehat y$ normalised by
$$
 \sum_i|\widehat x_i|^p=\sum_i|\widehat y_i|^p=1.
$$
If their images under $\Psi_p$ agree, then taking the $2/p$ power coordinatewise gives a common squared-distance matrix $D$. Let $\Pi=I-n^{-1}\one\one^{\mathsf T}$. Double centring gives
$$
 -\frac12\Pi D\Pi
 =(\Pi\widehat x)(\Pi\widehat x)^{\mathsf T}
 =(\Pi\widehat y)(\Pi\widehat y)^{\mathsf T}.
$$
The configurations are nonconstant, so $\widehat y=\epsilon\widehat x+c\one$ for some $\epsilon\in\{-1,1\}$ and $c\in\mathbb R$. Both vectors are $p$-centred. The strictly increasing function $c\mapsto\sum_i\Phi_p(\epsilon\widehat x_i+c)$ vanishes at zero, hence $c=0$. Thus $\Psi_p$ is injective. Lemma~\ref{lem:p-centre} and the $C^1$ regularity of $|t|^p$ give $C^1$ regularity for every $p>1$; definability is immediate. Since the source is compact and the target is Hausdorff, $\Psi_p$ is a definable topological embedding.

At $p=2$, one has $c_2(u)=0$ on $H$ and $\mathfrak m_2(\bar u)=\|u\|_2^2$, so $\Psi_2$ is real analytic. Let $h$ be tangent to the unit sphere of $H$, so the derivative of the denominator vanishes. If $D\Psi_2(x)[h]=0$, then $h_i=h_j$ whenever $x_i\ne x_j$. These pairs form a connected complete multipartite graph, and hence $h=c\one$; since $h\in H$, one has $h=0$. Thus $\Psi_2$ is an immersion and hence a real-analytic embedding.

Assume now that $p>2$. For immersion, work on the normalised $p$-centred slice
\begin{equation}\label{eq:p-slice}
 \mathcal S_p=
 \left\{x:\sum_i\Phi_p(x_i)=0,\quad\sum_i|x_i|^p=1\right\}.
\end{equation}
Its two constraint gradients are independent at a nonconstant point. Indeed, if they were proportional, then every nonzero $x_i$ would have the same value, contradicting $p$-centring. The projection $\mathcal S_p\to\mathbb{R}P^{n-2}$ is therefore a $C^1$ two-sheeted covering map. Let $h$ be tangent to $\mathcal S_p$. The denominator is constant on this slice, so its derivative along $h$ vanishes. If $D\Psi_p(x)[h]=0$, then, whenever $x_i\ne x_j$,
$$
 0=D|x_i-x_j|^p[h]
 =p\Phi_p(x_i-x_j)(h_i-h_j),
$$
so $h_i=h_j$. The edges $\{i,j\}$ for which $x_i\ne x_j$ form the connected complete multipartite graph determined by the coordinate levels of $x$, and hence $h=c\one$. Tangency to the first constraint gives
$$
 0=(p-1)c\sum_i|x_i|^{p-2},
$$
so $c=0$. The differential is injective. An injective immersion from a compact manifold into a Hausdorff space is an embedding. The regularity claims follow from Lemma~\ref{lem:p-centre} and the regularity of $|t|^p$ for $p>2$. The assertion for compact $I$ is proved in Lemma~\ref{lem:Psi-C1-family} below.
\end{proof}

\begin{lemma}[Compact $C^1$ dependence on the exponent]
\label{lem:Psi-C1-family}
Let $I\Subset(1,\infty)$ be compact. Choose a finite precompact definable projective atlas whose chart closures cover $\mathbb RP^{n-2}$, together with definable smooth local sections whose images lie in compact subsets of $H\setminus\{0\}$. In these charts, the maps
$$
 (p,z)\mapsto\Psi_p(z),
 \qquad
 (p,z)\mapsto D_z\Psi_p(z)
$$
are continuous and definable on the product of $I$ with each chart closure. Equivalently, $p\mapsto\Psi_p$ is continuous from $I$ into $C^1(\mathbb RP^{n-2},\mathbb R^{\binom n2})$.
\end{lemma}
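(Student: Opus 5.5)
Working in a chart, I will write $\Psi_p(z)$ through the section $z\mapsto u(z)\in H\setminus\{0\}$. This gives
$$
 \Psi_p(z)_{ij}=\frac{|u_i(z)-u_j(z)|^p}{\mathfrak m_p(\bar u(z))}.
$$
The proof then splits into three ingredients: the numerator, its $z$-derivative, and the denominator together with its derivative. All are treated on the compact set $I\times K$, where $K$ is a chart closure whose section image lies in a compact subset of $H\setminus\{0\}$.

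\textbf{Definability.} This is immediate. By Lemma~\ref{lem:p-centre}, $(p,\bar u)\mapsto\mathfrak m_p(\bar u)$ is definable, and $\operatorname{pow}$ is definable in $\R_{\exp}$. Derivatives of definable maps are definable.

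\textbf{Continuity of $\Psi_p$.} The numerator $|t|^p$ is jointly continuous in $(p,t)$ for $p>1$. The denominator $\mathfrak m_p$ is jointly continuous because $c_p(u)$ is, as shown in the proof of Lemma~\ref{lem:p-centre}. It is bounded below by a positive constant on the compact set, since it is positive and continuous there. Hence $(p,z)\mapsto\Psi_p(z)$ is continuous.

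\textbf{The derivative.} The derivative of the numerator is
$$
 p\,\Phi_p\bigl(u_i(z)-u_j(z)\bigr)\,D\bigl(u_i-u_j\bigr)(z).
$$
This is jointly continuous because $(p,t)\mapsto\Phi_p(t)=\sgn(t)|t|^{p-1}$ is continuous on $(1,\infty)\times\mathbb R$, with value $0$ at $t=0$ since $p-1>0$. For the denominator I use the envelope formula \eqref{eq:mp-envelope}:
$$
 D\mathfrak m_p(\bar u)[\bar h]=p\sum_i\Phi_p\bigl(\kappa_p(\bar u)_i\bigr)h_i .
$$
The map $(p,u)\mapsto\kappa_p(\bar u)=u+c_p(u)\one$ is jointly continuous, again by joint continuity of the minimiser. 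Composing with the continuous $\Phi_p$ gives joint continuity of $D\mathfrak m_p$ in $(p,\bar u)$, for all $p>1$ and not only $p>2$. The quotient rule, with the denominator bounded below, then shows that $(p,z)\mapsto D_z\Psi_p(z)$ is continuous on $I\times K$.

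\textbf{Uniform convergence in $C^1$.} A jointly continuous function on the compact set $I\times K$ is uniformly continuous. Therefore $p\mapsto\Psi_p|_K$ and $p\mapsto D_z\Psi_p|_K$ are continuous into $C^0(K)$ with the sup norm. The atlas is finite, so these chartwise statements give continuity of $p\mapsto\Psi_p$ into $C^1(\mathbb RP^{n-2},\mathbb R^{\binom n2})$.

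The main point needing care is the denominator derivative when $1<p\le 2$. There $c_p$ need not be differentiable, so I cannot use the implicit-function formulas. Instead I rely only on the envelope formula, which Lemma~\ref{lem:p-centre} already established for every $p>1$ with a proof using only continuity of the minimiser. I will check that this continuity holds jointly in $p$ as well: subsequential limits of minimisers at $(p_j,u_j)$ minimise the limiting strictly convex function, so they coincide with $c_p(u)$.
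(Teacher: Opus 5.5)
Your proposal is correct and follows essentially the same route as the paper. Both arguments use the joint continuity of the minimising translation $c_p(u)$ from Lemma~\ref{lem:p-centre} and the joint continuity of $(p,t)\mapsto|t|^p$ and $(p,t)\mapsto p\Phi_p(t)$ including at $t=0$, then combine the envelope formula for $D\mathfrak m_p$ (valid for every $p>1$, so no implicit-function argument is needed when $1<p\le2$), a uniform positive lower bound on the denominator over the compact chart sections, the quotient rule, and the finiteness of the atlas.
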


\begin{proof}
Work in the fixed mean-zero model $H=\one^\perp$ and use the atlas and local sections from the statement. By Lemma~\ref{lem:p-centre}, $(p,u)\mapsto c_p(u)$ is jointly continuous on the resulting compact sets.

The maps
$$
 (p,t)\mapsto |t|^p,
 \qquad
 (p,t)\mapsto p\Phi_p(t)
$$
are continuous on $(1,\infty)\times\mathbb R$, including at $t=0$. The envelope formula
$$
 D\mathfrak m_p(\bar u)[\bar h]
 =p\sum_i\Phi_p\bigl(\kappa_p(\bar u)_i\bigr)h_i
$$
therefore shows that both $\mathfrak m_p$ and its differential with respect to $\bar u$ are jointly continuous. The denominator is uniformly bounded away from zero on the compact chart section. Applying the quotient rule to the finitely many functions $|u_i-u_j|^p$ occurring in the numerator proves the assertion in that chart closure. The finitely many chart closures cover projective space, so these estimates give continuity in the global $C^1$ topology. Definability follows from definable choice and uniqueness.
\end{proof}

\subsection{Reduction by translations and Morse indices}

For positive edge weights $a=(a_{ij})$ on $K_n$, set
$$
 E_{p,a}(x)=\sum_{i<j}a_{ij}|x_i-x_j|^p,\qquad
 \mathcal R_{p,a}([\bar x])
 =\frac{E_{p,a}(x)}{\mathfrak m_p(\bar x)}.
$$
We write $(\Delta_{p,a}x)_i=\sum_{j\ne i}a_{ij}\Phi_p(x_i-x_j)$ for the corresponding unsigned zero-potential $p$-Laplacian.
Thus $\mathcal R_{p,a}$ is the pullback under $\Psi_p$ of the linear functional $z\mapsto a\cdot z$:
\begin{equation}\label{eq:height-identity}
 \mathcal R_{p,a}=a\cdot\Psi_p.
\end{equation}

\begin{proposition}[Reduced critical-point correspondence]
\label{prop:reduced-correspondence}
Let $p>1$ and $a_{ij}>0$. The nonconstant eigenlines of the unsigned, zero-potential, unit-vertex-measure complete graph are in bijection with the critical points of $\mathcal R_{p,a}$ on $\mathbb{R}P^{n-2}$. Each of the two $p$-centred representatives with $\sum_i|x_i|^p=1$ satisfies
$
 \Delta_{p,a}x=\mathcal R_{p,a}([\bar x])\Phi_p(x).
$
The constant eigenline is the unique eigenline with eigenvalue zero and is not represented in the reduced space.
\end{proposition}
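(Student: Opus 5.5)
The plan is to relate critical points of the reduced quotient $\mathcal R_{p,a}$ on $\Pj(V_n)$ to critical points of the full quotient $\cQ=E_{p,a}/\|\cdot\|_p^p$ on $\R^n\setminus\{0\}$. By Proposition~\ref{prop:eigen-critical}, the latter are exactly the eigenvectors. The key structural observation is that $E_{p,a}$ is translation invariant, while $\mathfrak m_p(\bar x)=\min_c\sum_i|x_i+c|^p$. Hence $\cQ(x)\le \mathcal R_{p,a}([\bar x])$, with equality exactly when $x$ is $p$-centred, by Lemma~\ref{lem:p-centre}.

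The first step identifies the eigenvalue zero and shows that nonconstant eigenvectors are $p$-centred. If $\Delta_{p,a}x=\lambda\Phi_p(x)$, summing over $i$ kills the left side by antisymmetry of $a_{ij}\Phi_p(x_i-x_j)$. This gives $\lambda\sum_i\Phi_p(x_i)=0$.
\begin{itemize}
\item If $\lambda\ne0$, then $x$ satisfies the centring equation \eqref{eq:p-centre}, so $x=\kappa_p(\bar x)$.
\item If $\lambda=0$, pairing with $x$ gives $E_{p,a}(x)=0$. Since every $a_{ij}>0$, this forces $x\in\R\one$.
\end{itemize}
So the constant line is the unique eigenline with eigenvalue zero. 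Every nonconstant eigenline has positive eigenvalue and a $p$-centred representative, unique up to sign after normalising $\sum|x_i|^p=1$.

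The second step shows that an eigenvector gives a reduced critical point. Let $x$ be a $p$-centred nonconstant eigenvector and $h\in\R^n$ arbitrary. Differentiate $E_{p,a}(x+th)-\lambda\,\mathfrak m_p(\overline{x+th})$ at $t=0$, using the envelope formula \eqref{eq:mp-envelope}. Since $\kappa_p(\bar x)=x$, this derivative is $p\langle\Delta_{p,a}x-\lambda\Phi_p(x),h\rangle=0$. Because $\lambda=E_{p,a}(x)/\mathfrak m_p(\bar x)$, the quotient rule gives $D\mathcal R_{p,a}(\bar x)=0$. Only the $C^1$ regularity of $\mathfrak m_p$ is needed, which holds for every $p>1$.

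The third step is the converse. Suppose $[\bar x]$ is critical for $\mathcal R_{p,a}$, and let $x=\kappa_p(\bar x)$ with $\lambda=\mathcal R_{p,a}([\bar x])$. Reversing the computation gives $\langle\Delta_{p,a}x-\lambda\Phi_p(x),h\rangle=0$ for all $h$, since the derivative vanishes on all of $V_n$ and homogeneity handles the radial direction. Hence $x$ is an eigenvector with eigenvalue $\lambda$, which is the displayed identity for both signed normalised representatives.

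The maps eigenline $\mapsto[\bar x]$ and $[\bar x]\mapsto[\kappa_p(\bar x)]$ are mutually inverse on these sets. This is because eigenvectors are already $p$-centred and $\kappa_p$ is well defined on classes, which gives the bijection. The only delicate point is justifying the derivative of $\mathfrak m_p$ for $1<p<2$, where $c_p$ may fail to be differentiable. This is handled by the envelope formula already established in Lemma~\ref{lem:p-centre}, so I do not expect a genuine obstacle.
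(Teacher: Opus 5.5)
Your proposal is correct and follows essentially the same route as the paper. In one direction, the envelope formula \eqref{eq:mp-envelope} and the quotient rule turn a reduced critical point into the eigen-equation at the $p$-centred representative; in the other, summing the vertex equations gives $p$-centring and pairing with $x$ gives $\lambda=E_{p,a}(x)/\sum_i|x_i|^p>0$. Your explicit treatment of the $\lambda=0$ case and of the mutually inverse maps only spells out details the paper leaves implicit, and the opening inequality $\mathcal Q_{p,a}(x)\leq\mathcal R_{p,a}([\bar x])$ is never actually used.
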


\begin{proof}
Both $E_{p,a}$ and $\mathfrak m_p$ are $C^1$ on $V_n\setminus\{0\}$ for every $p>1$. If $[\bar x]$ is critical and $\lambda=\mathcal R_{p,a}([\bar x])$, the differential vanishes on a complement to the radial direction, while degree-zero homogeneity makes it vanish in the radial direction as well. Hence $D\mathcal R_{p,a}(\bar x)=0$ on $V_n$, and the quotient rule gives
$
 D(E_{p,a}-\lambda\mathfrak m_p)(\bar x)=0.
$
Lift this identity to $\mathbb R^n$. Each covector annihilates the translation direction, and \eqref{eq:mp-envelope}, which remains valid for $p>1$, gives
$
 p\Delta_{p,a}x=p\lambda\Phi_p(x)
$
at either normalised $p$-centred representative.

Conversely, let $(x,\lambda)$ be a nonconstant eigenpair. Pairing the eigen-equation with $x$ gives
$$
 \lambda=\frac{E_{p,a}(x)}{\sum_i|x_i|^p}>0.
$$
Summing its vertex equations gives $\sum_i\Phi_p(x_i)=0$, so $x$ is $p$-centred. The differential identity above then vanishes on $V_n$, and homogeneity identifies $\lambda$ with $\mathcal R_{p,a}([\bar x])$; hence $[\bar x]$ is critical.
\end{proof}

The height-function representation also makes the variation of a critical value with respect to the edge weights transparent. Fix $p>1$, and let $\Omega$ be an open subset of the positive weight space. Along any $C^1$ branch $a\mapsto[\bar x(a)]$, $a\in\Omega$, of reduced critical points, set $\lambda(a)=a\cdot\Psi_p([\bar x(a)])$. For every weight variation $\dot a$, the critical-point equation and the chain rule give
$
 D\lambda(a)[\dot a]=\dot a\cdot\Psi_p([\bar x(a)]).
$
Hence
$
 \nabla_a\lambda(a)=\Psi_p([\bar x(a)]).
$
For a $p$-centred representative this gives the Hellmann--Feynman formula
\begin{equation}\label{eq:edge-hellmann-feynman}
 \frac{\partial\lambda}{\partial a_{uv}}
 =\frac{|x_u-x_v|^p}{\mathfrak m_p(\bar x)}
 =\frac{|x_u-x_v|^p}{\sum_i|x_i|^p}.
\end{equation}
When the linearisation has only its scaling kernel, this is also the edge-weight specialisation of \cite[Theorem~2.4]{BerkolaikoHofmann2025}; this regularity condition is characterised in Proposition~\ref{prop:index-shift}.

\begin{proposition}[Morse index before and after reduction]\label{prop:index-shift}
Let $p>2$, and let $(x,\lambda)$ be a nonconstant normalised $p$-centred eigenpair for an unsigned complete graph with positive edge weights. Set
$$
 L=D_x\bigl(\Delta_{p,a}x-\lambda\Phi_p(x)\bigr),\qquad
 w=(|x_1|^{p-2},\ldots,|x_n|^{p-2}),
$$
and $W=\{h:w\cdot h=0\}$. Homogeneity and $p$-centring give $Lx=0$ and $x\in W$. Since $pL$ is the Hessian of $E_{p,a}-\lambda\sum_i|x_i|^p$, the operator $L$ is symmetric. Let
$$
 \mathcal Q_{p,a}(y)=\frac{E_{p,a}(y)}{\sum_i|y_i|^p}.
$$
On $x^\perp\simeq T_{[x]}\mathbb RP^{n-1}$, its Hessian is
\begin{equation}\label{eq:full-hessian-L}
 \operatorname{Hess}\mathcal Q_{p,a}([x])(h,k)
 =\frac{p}{\sum_i|x_i|^p}\langle Lh,k\rangle.
\end{equation}
On $W/\mathbb Rx\simeq T_{[\bar x]}\mathbb P(V_n)$,
\begin{equation}\label{eq:reduced-hessian-L}
 \operatorname{Hess}\mathcal R_{p,a}([\bar x])([h],[k])
 =\frac{p}{\mathfrak m_p(\bar x)}\langle Lh,k\rangle.
\end{equation}
The reduced critical point is nondegenerate if and only if $\ker L=\mathbb Rx$. We call such an eigenline \emph{regular} and define its Morse index on $\mathbb RP^{n-1}$ by
$
 \indm_{\mathrm{full}}(x,\lambda)
 =\indm_-\bigl(L|_{x^\perp}\bigr).
$
For a regular eigenline,
\begin{equation}\label{eq:index-shift}
 \indm_{\mathrm{full}}(x,\lambda)
 =1+\indm_{\mathrm{red}}([\bar x]).
\end{equation}
\end{proposition}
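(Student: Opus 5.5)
The plan is to compute both Hessians from the general formula for the Hessian of a homogeneous quotient at a critical point. Then compare the quadratic form $\langle L\cdot,\cdot\rangle$ on $\mathbb R^n$ with its restriction to $W$, using one identity: $L\one=-\lambda(p-1)w$.

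\textbf{Preliminaries.} For $p>2$ the functions $E_{p,a}$ and $\sum_i|y_i|^p$ are $C^2$ away from the origin, and $\mathfrak m_p$ is $C^2$ by Lemma~\ref{lem:p-centre}.
\begin{itemize}
\item The gradient of $E_{p,a}-\lambda\sum_i|y_i|^p$ is $p(\Delta_{p,a}y-\lambda\Phi_p(y))$. Its derivative is therefore $pL$, which is symmetric.
\item This gradient is positively homogeneous of degree $p-1$ and vanishes at $x$. Differentiating along the ray through $x$ gives $Lx=0$.
\item $p$-centring gives $w\cdot x=\sum_i\Phi_p(x_i)=0$, so $x\in W$.
\item $\Delta_{p,a}$ is translation invariant and $D\Phi_p(x)\one=(p-1)w$. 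Hence
$$L\one=-\lambda(p-1)w,$$
with $\lambda>0$ because the eigenline is nonconstant (Proposition~\ref{prop:reduced-correspondence}).
\end{itemize}

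\textbf{The Hessian formulas.} At a critical point of $Q=F/G$ with $\nabla F=\lambda\nabla G$, the Hessian on a complement of the radial direction is $G^{-1}(D^2F-\lambda D^2G)$. The first-order terms vanish, and the radial direction is absorbed by degree-zero homogeneity.
\begin{itemize}
\item With $F=E_{p,a}$ and $G=\sum_i|y_i|^p$, this gives \eqref{eq:full-hessian-L} directly.
\item For $\mathcal R_{p,a}$, the vector $w$ is not orthogonal to $\one$ since $w\cdot\one=S>0$, so $W$ is a complement to $\mathbb R\one$. Identify $V_n$ with $W$ and the tangent space $T_{[\bar x]}\mathbb P(V_n)$ with $W/\mathbb Rx$.
\item Since $x$ is $p$-centred, $\mathfrak m_p(\bar x)=\sum_i|x_i|^p$.
\item For $h,k\in W$ we have $\sum_iw_ih_i=0$, so \eqref{eq:mp-hessian} reduces to $p(p-1)\sum_iw_ih_ik_i$. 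This is the Hessian of $\sum_i|y_i|^p$ in those directions.
\item $D^2E_{p,a}$ needs no correction. Hence $D^2(E_{p,a}-\lambda\mathfrak m_p)=p\langle L\cdot,\cdot\rangle$ on $W$, which gives \eqref{eq:reduced-hessian-L}.
\end{itemize}

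\textbf{Nondegeneracy.} First, $\ker L\subset W$: if $Lk=0$, then
$$0=\langle k,L\one\rangle=-\lambda(p-1)\,w\cdot k.$$
Next, the radical of the form restricted to $W$ is $\{h\in W:Lh\in\mathbb Rw\}$. Take such an $h$ with $Lh=\mu w$. Then $k=h+\mu(\lambda(p-1))^{-1}\one$ lies in $\ker L\subset W$. Pairing $k$ with $w$ and using $w\cdot h=0$ gives $\mu S=0$, so $\mu=0$. Hence the radical on $W$ is exactly $\ker L$. The form on $W/\mathbb Rx$ is therefore nondegenerate if and only if $\ker L=\mathbb Rx$.

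\textbf{Index shift.} Because $x$ lies in the radical of both forms,
$$\indm_-(L|_{x^\perp})=\indm_-(L\text{ on }\mathbb R^n),\qquad \indm_{\mathrm{red}}=\indm_-(L|_W).$$
Now split $\mathbb R^n=W\oplus\mathbb R\one$.
\begin{itemize}
\item The splitting is $L$-orthogonal: $\langle Lh,\one\rangle=-\lambda(p-1)\,w\cdot h=0$ for $h\in W$.
\item The second summand is negative: $\langle L\one,\one\rangle=-\lambda(p-1)S<0$.
\end{itemize}
Negative indices add over an $L$-orthogonal splitting, which yields \eqref{eq:index-shift}.

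The delicate step is the reduced Hessian formula. It relies on the $C^2$ regularity of $\mathfrak m_p$ for $p>2$ and on the vanishing of the correction term in \eqref{eq:mp-hessian} on $W$. The choice of $W$ rather than $H$ as the model of $V_n$ is exactly what makes that correction term disappear.
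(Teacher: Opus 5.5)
Your proposal is correct and follows essentially the same route as the paper: both Hessians are reduced to $\langle L\cdot,\cdot\rangle$ because the rank-one correction in \eqref{eq:mp-hessian} vanishes on $W$, and the identity $L\one=-\lambda(p-1)w$ is then used to identify the radical on $W$ with $\ker L$ and to exhibit $\mathbb R^n=\mathbb R\one\oplus W$ as an $L$-orthogonal splitting with a negative translation summand. Your treatment of the radical, which builds $h+\mu(\lambda(p-1))^{-1}\one\in\ker L$, is a slight detour compared with the paper's direct observation that $Lh\in\one^\perp\cap\mathbb Rw=\{0\}$, but it is equally valid.
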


\begin{proof}
At a critical point,
$$
 D^2\mathcal R_{p,a}
 =\frac{D^2E_{p,a}-\lambda D^2\mathfrak m_p}{\mathfrak m_p}.
$$
For $h,k\in W$, the rank-one correction in \eqref{eq:mp-hessian} vanishes. Combining the remaining term in $D^2\mathfrak m_p$ with $D^2E_{p,a}$ gives \eqref{eq:reduced-hessian-L}.

Translation invariance of the edge term gives $L\one=-\lambda(p-1)w$. By symmetry, for $h\in W$, $\langle Lh,\one\rangle=\langle h,L\one\rangle=0$; hence $L(W)\subset\one^\perp$. If $h\in W$ belongs to the radical of the restricted form, then $Lh\in W^\perp=\mathbb Rw$. Since also $Lh\in\one^\perp$ and $w\cdot\one>0$, it follows that $Lh=0$. Conversely, every element of $\ker L\cap W$ lies in this radical.

If $y\in\ker L$, write $y=\alpha\one+h$ with $h\in W$. Pairing $Ly=0$ with $\one$ gives
$
 0=\alpha\langle L\one,\one\rangle,
$
because $\langle Lh,\one\rangle=0$. Since the nonconstant eigenvalue is positive,
$$
 \langle L\one,\one\rangle
 =-\lambda(p-1)\sum_i|x_i|^{p-2}<0,
$$
so $\alpha=0$. Thus $\ker L\subset W$, and the radical of the restricted form is $\ker L$. This proves the nondegeneracy criterion.

The decomposition $\mathbb R^n=\mathbb R\one\oplus W$ is $L$-orthogonal, and the translation summand is negative. After quotienting by the scaling kernel, the remaining form is the reduced Hessian up to the positive factor in \eqref{eq:reduced-hessian-L}. This proves \eqref{eq:index-shift}.
\end{proof}

\section{\texorpdfstring{The uniformly weighted $K_n$}{The uniformly weighted complete graph}}
\label{sec:uniform-cellular}

For uniform weights and $p\ne2$, the nonconstant eigenlines can be classified exactly. Under $\Theta_p$, they correspond to the cell barycentres; for $p>2$, this description also determines their Morse indices.

\begin{lemma}[Coordinate levels of eigenvectors of the uniformly weighted $K_n$]
\label{lem:uniform-level-collapse}
Let $p>1$, $p\ne2$, and let $x$ be a nonconstant eigenvector of the uniformly weighted $K_n$. Then its positive coordinates all have one value, its negative coordinates all have one value, and its remaining coordinates vanish.
\end{lemma}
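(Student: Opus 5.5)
The plan is to show that the eigen-equation forces every nonzero coordinate value to satisfy a single scalar equation in which the remaining coordinates enter only through symmetric sums. Strict monotonicity on each half-line will then pin the value down.

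Write the eigen-equation for the uniformly weighted $K_n$ as
$$
 \sum_{j\ne i}\Phi_p(x_i-x_j)=\lambda\Phi_p(x_i),\qquad i=1,\ldots,n .
$$
Since the diagonal term $\Phi_p(x_i-x_i)$ vanishes, the left side equals $g(x_i)$, where
$$
 g(t)=\sum_{j=1}^n\Phi_p(t-x_j).
$$
Thus every coordinate $t=x_i$ is a root of
$$
 h(t)=g(t)-\lambda\Phi_p(t).
$$
By Proposition~\ref{prop:reduced-correspondence}, or by pairing with $x$, a nonconstant eigenvector has $\lambda>0$, and summing the equations gives $\sum_i\Phi_p(x_i)=0$. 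It therefore suffices to show that $h$ has at most one zero on $(0,\infty)$ and at most one on $(-\infty,0)$. The negative half-line follows from the positive one by replacing $x$ with $-x$, since the equation is odd.

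On $(0,\infty)$, divide by $t^{p-1}$ and set
$$
 k(t)=h(t)/t^{p-1}=\sum_j\Phi_p(1-x_j/t)-\lambda .
$$
The plan is to show that $k$ is strictly monotone, so that it can vanish at most once. Each summand is a function of $s=1/t$, namely $s\mapsto\Phi_p(1-x_js)$, and this depends monotonically on $s$. It is decreasing when $x_j>0$, increasing when $x_j<0$, and constant when $x_j=0$. Because the terms have mixed directions, this direct monotonicity argument fails.

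The alternative is to count zeros through convexity. For $p>2$, the function $\Phi_p$ is convex on $[0,\infty)$ and concave on $(-\infty,0]$, so $g$ is a sum of translates of a function with a single inflection point. I will instead compare $g$ with $\lambda\Phi_p$ using the identity $\sum_i\Phi_p(x_i)=0$ together with the fact that $x$ takes only finitely many levels.

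A cleaner route is a second-difference argument. Suppose $0<x_a<x_b$ are two positive coordinates. Subtracting their equations gives
$$
 \sum_j\bigl[\Phi_p(x_b-x_j)-\Phi_p(x_a-x_j)\bigr]=\lambda\bigl[\Phi_p(x_b)-\Phi_p(x_a)\bigr].
$$
Applying the mean value theorem to each bracket gives
$$
 (p-1)(x_b-x_a)\sum_j|\xi_j|^{p-2}=\lambda(p-1)(x_b-x_a)|\eta|^{p-2},
$$
where $\xi_j$ lies between $x_a-x_j$ and $x_b-x_j$, and $\eta\in(x_a,x_b)$. This yields one scalar identity for each pair. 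I then intend to consider extreme positive coordinates, namely $x_b=\max_i x_i$ and $x_a$ the smallest positive coordinate, and to compare these identities with the relation obtained by pairing the eigen-equation with $\one_{\{i:x_i>0\}}$. The goal is a contradiction via strict convexity or concavity of $|\cdot|^{p-2}$, using $p>2$ or $p<2$ respectively. Only the sign of $p-2$ enters, which explains why $p=2$ must be excluded.

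The main obstacle is exactly this step: proving that the number of distinct positive levels is one, with a mechanism that works uniformly for $p>2$ and for $1<p<2$. If the mean-value comparison proves too lossy, my fallback is to invoke Amghibech's classification as stated in \cite[Statement~D.3]{DeiddaTudiscoZhang2025}. That classification asserts exactly this level collapse for positive eigenvalues, and every nonconstant eigenvalue is positive here.
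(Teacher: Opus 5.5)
Your preliminary reductions are correct: $\lambda>0$, the centring identity $\sum_i\Phi_p(x_i)=0$ (so both signs occur), reduction to positive coordinates by oddness, and the normalisation $k(t)=\sum_j\Phi_p(1-x_j/t)-\lambda$, which vanishes at every positive coordinate. But the proposal stops at the only substantive step, as you acknowledge, so there is a genuine gap.

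The mean-value identity $\sum_j|\xi_j|^{p-2}=\lambda|\eta|^{p-2}$ involves intermediate points you cannot locate. For $1<p<2$, some intervals $(x_a-x_j,x_b-x_j)$ also contain $0$, where $|\cdot|^{p-2}$ blows up. There is no second exact relation to play this identity against, so it yields no contradiction. The fallback of citing Amghibech's classification simply imports the statement to be proved. You also aim higher than necessary: you only need that $k$ cannot vanish at two distinct \emph{coordinates}, not that $h$ has at most one zero on all of $(0,\infty)$.

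The missing idea is to drop global monotonicity of $k$ and instead compare $k$ at the two \emph{extreme} positive levels. Let $x_a<x_b$ be the smallest and largest positive coordinates, set $\delta=1/x_a-1/x_b>0$ and $q=p-1$, and write the negative coordinates as $-u_j$. Extremality gives $1-x_j/x_b\geq0\geq1-x_j/x_a$ for every positive $x_j$. The zero coordinates drop out, and
\begin{equation*}
 0=k(x_a)-k(x_b)
 =\sum_{x_j<0}\Bigl[\bigl(1+\tfrac{u_j}{x_a}\bigr)^q-\bigl(1+\tfrac{u_j}{x_b}\bigr)^q\Bigr]
 -\sum_{x_j>0}\Bigl[\bigl(\tfrac{x_j}{x_a}-1\bigr)^q+\bigl(1-\tfrac{x_j}{x_b}\bigr)^q\Bigr].
\end{equation*}
In each first bracket the two positive bases differ by $u_j\delta$. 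In each second bracket the two nonnegative bases sum to $x_j\delta$.

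For $q>1$, strict superadditivity of $s\mapsto s^q$ makes every first bracket strictly larger than $(u_j\delta)^q$ and every second bracket at most $(x_j\delta)^q$. Since at least one negative coordinate exists, the right-hand side is strictly larger than $\delta^q\bigl(\sum_{x_j<0}u_j^q-\sum_{x_j>0}x_j^q\bigr)$, which vanishes by centring. For $0<q<1$ every inequality reverses. Either way this is a contradiction, so $x_a=x_b$.

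This is the paper's argument, carried out there on the negative coordinates at $-\max u_i$ and $-\min u_i$. Two points matter. First, it is the extremal choice that makes the same-sign terms straddle zero; for a non-extreme pair, the bound on the second sum goes the wrong way. Second, $p\neq2$ enters only through the strictness of super- or subadditivity, which degenerates to equality at $q=1$.
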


\begin{proof}
Set $q=p-1$, so $q>0$ and $q\ne1$. Taking the scalar product of the eigen-equation with $x$ shows that a nonconstant vector has
$$
 \lambda\sum_i|x_i|^p
 =\sum_{i<j}|x_i-x_j|^p>0.
$$
Summing the component equations and using the oddness of $\Phi_p$ gives
\begin{equation}\label{eq:uniform-centre}
 \sum_i\Phi_p(x_i)=0.
\end{equation}
Thus both signs occur.

Write the negative coordinates as $-u_i$, $i\in I_-$, and the positive coordinates as $v_j$, $j\in I_+$, with $u_i,v_j>0$. Set
$$
 U=\max_{i\in I_-}u_i,\qquad
 u=\min_{i\in I_-}u_i,\qquad
 z=\#\{i:x_i=0\}.
$$
Equation \eqref{eq:uniform-centre} is
\begin{equation}\label{eq:uniform-q-balance}
 \sum_{i\in I_-}u_i^q=\sum_{j\in I_+}v_j^q.
\end{equation}
Evaluate the eigen-equation at a coordinate $-U$ and at a coordinate $-u$, and divide respectively by $-U^q$ and $-u^q$. This gives
\begin{align}
 \lambda
 &=\sum_{i\in I_-}\left(1-\frac{u_i}{U}\right)^q
+z+\sum_{j\in I_+}\left(1+\frac{v_j}{U}\right)^q,
\label{eq:uniform-at-Umax}\\
 \lambda
 &=-\sum_{i\in I_-}\left(\frac{u_i}{u}-1\right)^q
+z+\sum_{j\in I_+}\left(1+\frac{v_j}{u}\right)^q.
\label{eq:uniform-at-umin}
\end{align}

Suppose that $U>u$ and set $\delta=u^{-1}-U^{-1}>0$. Subtracting \eqref{eq:uniform-at-Umax} from \eqref{eq:uniform-at-umin} yields
\begin{align}
 0
 =\sum_{j\in I_+}
\left[
\left(1+\frac{v_j}{u}\right)^q
-\left(1+\frac{v_j}{U}\right)^q
\right]-\sum_{i\in I_-}
\left[
\left(\frac{u_i}{u}-1\right)^q
+\left(1-\frac{u_i}{U}\right)^q
\right].
 \label{eq:uniform-level-difference}
\end{align}
For $A,B\geq0$,
$$
 (A+B)^q
 \begin{cases}
 \geq A^q+B^q,&q>1,\\
 \leq A^q+B^q,&0<q<1,
 \end{cases}
$$
strictly when $A,B>0$. Since
$$
 1+\frac{v_j}{u}
 =1+\frac{v_j}{U}+v_j\delta
$$
and both summands on the right are positive,
\begin{equation}\label{eq:uniform-positive-comparison}
 \left(1+\frac{v_j}{u}\right)^q
 -\left(1+\frac{v_j}{U}\right)^q
 \begin{cases}
 >(v_j\delta)^q,&q>1,\\
 <(v_j\delta)^q,&0<q<1.
 \end{cases}
\end{equation}
Also
$$
 \left(\frac{u_i}{u}-1\right)
 +\left(1-\frac{u_i}{U}\right)=u_i\delta,
$$
with nonnegative summands, and hence
\begin{equation}\label{eq:uniform-negative-comparison}
 \left(\frac{u_i}{u}-1\right)^q
 +\left(1-\frac{u_i}{U}\right)^q
 \begin{cases}
 \leq(u_i\delta)^q,&q>1,\\
 \geq(u_i\delta)^q,&0<q<1.
 \end{cases}
\end{equation}
Equality here occurs exactly when $u_i\in\{u,U\}$, whereas \eqref{eq:uniform-positive-comparison} is strict for every $j\in I_+$.

If $q>1$, the right side of \eqref{eq:uniform-level-difference} is therefore strictly greater than
$$
 \delta^q\left(\sum_{j\in I_+}v_j^q-\sum_{i\in I_-}u_i^q\right)=0
$$
by \eqref{eq:uniform-q-balance}; if $0<q<1$, it is strictly less than the same zero quantity. Both conclusions contradict \eqref{eq:uniform-level-difference}. Hence $U=u$, so every negative coordinate has the same value. Applying the argument to $-x$ proves the same assertion for the positive coordinates. All remaining coordinates are zero.
\end{proof}

\begin{proposition}[Eigenlines of the uniformly weighted $K_n$, regularity, and inertia]
\label{prop:uniform-rays-inertia}
Let $p>1$, $p\ne2$. Every nonconstant eigenline of the uniformly weighted $K_n$ is indexed by a partition into labelled blocks
$$
 \{1,\ldots,n\}=P\sqcup N\sqcup Z,\qquad
 k=|P|\geq1,\quad\ell=|N|\geq1,\quad r=|Z|,
$$
with $(P,N,Z)$ identified with $(N,P,Z)$, and has a representative
\begin{equation}\label{eq:uniform-ray}
 x_i=
 \begin{cases}
 \ell^{1/(p-1)},&i\in P,\\
 -k^{1/(p-1)},&i\in N,\\
 0,&i\in Z.
 \end{cases}
\end{equation}
Its eigenvalue is
\begin{equation}\label{eq:uniform-ray-value}
 \lambda_{k,\ell,r}
 =\bigl(k^{1/(p-1)}+\ell^{1/(p-1)}\bigr)^{p-1}+r.
\end{equation}
There are
$$
 R_n-1=\frac{3^n-2^{n+1}+1}{2}
$$
nonconstant eigenlines.

If $p>2$, every one of these eigenlines is regular. The quadratic form induced by the full linearisation after quotienting by its scaling kernel has negative index $n-r-1$, while the reduced Morse index is $n-r-2$.
\end{proposition}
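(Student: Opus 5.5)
By Lemma~\ref{lem:uniform-level-collapse}, every nonconstant eigenvector takes one positive value $v$ on a block $P$, one negative value $-u$ on a block $N$, and vanishes on $Z$. The $p$-centring identity \eqref{eq:uniform-centre} gives $k v^{p-1}=\ell u^{p-1}$. Up to scaling this forces $v=\ell^{1/(p-1)}$ and $u=k^{1/(p-1)}$, which is \eqref{eq:uniform-ray}.

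Conversely, I will verify directly that every such vector solves the eigen-equation, checking one vertex of each type.
\begin{itemize}
\item At $i\in P$: the sum is $\ell(u+v)^{p-1}+r v^{p-1}$, to be compared with $\lambda v^{p-1}$. Dividing gives $\lambda=\ell(1+u/v)^{p-1}+r$. Substituting $u/v=(k/\ell)^{1/(p-1)}$ yields $\ell^{\,}\bigl(\ell^{1/(p-1)}+k^{1/(p-1)}\bigr)^{p-1}/\ell+r$, which is \eqref{eq:uniform-ray-value}.
\item At $i\in N$: the same formula follows by symmetry.
\item At $z\in Z$: the sum is $k\Phi_p(-v)+\ell\Phi_p(u)=-kv^{p-1}+\ell u^{p-1}=0$, matching $\lambda\Phi_p(0)=0$.
\end{itemize}

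Eigenlines therefore correspond bijectively to unordered pairs $\{P,N\}$ of disjoint nonempty sets. To count them, note that there are $3^n$ maps $[n]\to\{P,N,Z\}$. Inclusion–exclusion removes those missing $P$ or missing $N$, leaving $3^n-2\cdot2^n+1$. Halving for the swap $(P,N)\leftrightarrow(N,P)$ gives $R_n-1$.

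For $p>2$, the linearisation $L$ has diagonal weights $|x_i-x_j|^{p-2}$. Write $s=u+v$, so that $L$ is $(p-1)$ times a weighted graph Laplacian $\mathcal L$ minus $\lambda(p-1)\operatorname{diag}(w)$. The edge weights of $\mathcal L$ are:
\begin{itemize}
\item $s^{p-2}$ on $P$--$N$ edges;
\item $v^{p-2}$ on $P$--$Z$ edges and $u^{p-2}$ on $N$--$Z$ edges;
\item $0$ inside each block, since those coordinates coincide.
\end{itemize}
The diagonal weight $w$ equals $v^{p-2}$ on $P$, $u^{p-2}$ on $N$, and $0$ on $Z$.

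This block structure lets me diagonalise the form on three kinds of subspaces.
\begin{itemize}
\item \emph{Mean-zero vectors within $P$.} The quadratic form is $(\ell s^{p-2}+rv^{p-2}-\lambda v^{p-2})|h|^2$. Using $\lambda=\ell(s/v)^{p-1}+r$, this becomes $\ell s^{p-2}(1-s/v)|h|^2<0$. This gives $k-1$ negative directions.
\item \emph{Mean-zero vectors within $N$.} Symmetrically this gives $\ell-1$ negative directions.
\item \emph{Mean-zero vectors within $Z$.} The coefficient is $ku^{p-2}$-type positive, namely $kv^{p-2}+\ell u^{p-2}>0$. This gives $r-1$ positive directions.
\end{itemize}
The remaining block-constant $3$-dimensional space (or $2$-dimensional if $r=0$) contains $x$, which lies in the kernel, and $\mathbf 1$, which is negative. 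I will check that the third direction is positive when $r>0$ by computing the $3\times3$ quotient matrix explicitly.

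Collecting the counts, the negative index on $x^\perp$ after removing the kernel is $(k-1)+(\ell-1)+1=n-r-1$. This uses that the constant direction, projected to $x^\perp$, still contributes a negative direction; I will confirm this via the $L$-orthogonal splitting $\mathbb R\one\oplus W$ in Proposition~\ref{prop:index-shift}. The same computation shows $\ker L=\mathbb Rx$, which is exactly regularity. Formula \eqref{eq:index-shift} then gives the reduced index $n-r-2$.

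The main obstacle is the small block-constant quotient computation, in particular the sign of the $Z$-direction and the nondegeneracy there. I will handle it by working in $W$ and using the explicit values of $\lambda$, $u$, and $v$.
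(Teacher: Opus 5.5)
Your proposal is correct and follows the paper's proof: the same three-level classification from Lemma~\ref{lem:uniform-level-collapse} and the centring identity, then the same $L$-invariant splitting into zero-sum subspaces on $P$, $N$, $Z$ plus the block-constant subspace, with Proposition~\ref{prop:index-shift} giving the reduced index. Your inclusion--exclusion count is only a repackaging of the paper's sum $\sum_r\binom nr(2^{n-r-1}-1)$.

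The block-constant step you flag as the main obstacle closes without any $3\times3$ computation, as in the paper. The vector $\one_Z$ gives the positive value $r(kv^{p-2}+\ell u^{p-2})$, $\one$ is negative, and $x$ lies in the kernel. Dimension counting then forces exactly one positive, one negative and one null direction, with no positive direction when $r=0$. Also, since $Lx=0$, the negative index on $x^\perp$ equals that on $\mathbb R^n$, so no projection argument is needed.
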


\begin{proof}
Lemma~\ref{lem:uniform-level-collapse} gives the three-level form. Equation \eqref{eq:uniform-centre} fixes the ratio of the two nonzero levels, and direct substitution gives \eqref{eq:uniform-ray} and \eqref{eq:uniform-ray-value}.

Assume now that $p>2$. We compute the regularity and inertia directly. Set
$$
 \alpha=\ell^{1/(p-1)},\quad
 \beta=k^{1/(p-1)},\quad
 A=\alpha^{p-2},\quad B=\beta^{p-2},\quad
 C=(\alpha+\beta)^{p-2}.
$$
Let $L=D_x(\Delta_{p,\one}x-\lambda_{k,\ell,r}\Phi_p(x))$. Apart from the positive factor $p-1$, the quadratic form of the linearisation is
\begin{equation}\label{eq:uniform-linearized-form}
 \frac{\langle Ly,y\rangle}{p-1}
 =\sum_{i<j}|x_i-x_j|^{p-2}(y_i-y_j)^2
 -\lambda_{k,\ell,r}\sum_i|x_i|^{p-2}y_i^2.
\end{equation}
Decompose $\mathbb R^n$ into the zero-sum subspaces supported on $P$ and $N$, together with the zero-sum subspace supported on $Z$ when $r>0$, and the subspace of vectors constant on each non-empty block. The first two subspaces have dimensions $k-1$ and $\ell-1$; when $r>0$, the third has dimension $r-1$. On them $L/(p-1)$ acts respectively by
$$
 -A\beta C,\qquad -B\alpha C,\qquad kA+\ell B,
$$
where the third expression is present only when $r>0$.

On the block-constant subspace, set $y=u_P\one_P+u_N\one_N+u_Z\one_Z$, omitting the last term when $r=0$. Direct substitution in \eqref{eq:uniform-linearized-form} shows that $Ly=0$ precisely when
$$
 \beta u_P+\alpha u_N=0,\qquad u_Z=0\quad(r>0),
$$
so the kernel on this subspace is $\mathbb Rx$. The all-ones vector is a negative direction, while, if $r>0$, $\one_Z$ is a positive direction. Since the block-constant subspace has dimension two for $r=0$ and three for $r>0$, its inertia is thereby determined. Together with the zero-sum block subspaces above, this gives
$$
 \indm_-(L)=n-r-1,\qquad
 \indm_+(L)=r,\qquad\dim\ker L=1.
$$
Proposition~\ref{prop:index-shift} gives the reduced index.

For the eigenline count, which applies to every $p\ne2$, fix $r$, choose $Z$, and then a nontrivial sign partition of its complement, modulo global sign. This gives $\binom nr(2^{n-r-1}-1)$ eigenlines. Summing over $0\leq r\leq n-2$ gives the stated total.
\end{proof}

\begin{corollary}[Cell-barycentre parametrisation of the eigenlines of the uniformly weighted $K_n$]
\label{cor:uniform-barycentres}
For every $p>1$ with $p\ne2$, the nonconstant eigenlines of the uniformly weighted $K_n$ are exactly
$$
 \Theta_p^{-1}([b_{P,N}]),\qquad C_{P,N}\in\Cn.
$$
For $p>2$, the reduced Morse index of the eigenline associated with $C_{P,N}$ is $\dim C_{P,N}$.
\end{corollary}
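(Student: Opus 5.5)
The plan is to combine Proposition~\ref{prop:uniform-rays-inertia} with the explicit form of $\Theta_p$ from Proposition~\ref{prop:Theta}. The first step is to show that every eigenline of the form \eqref{eq:uniform-ray} is automatically $p$-centred. Indeed, $\sum_i\Phi_p(x_i)=k\ell-\ell k=0$. This is just \eqref{eq:uniform-centre}, and it is also immediate from the explicit coordinates. By Lemma~\ref{lem:p-centre} the $p$-centred representative is unique, so $\kappa_p(\bar x)=x$. Hence $\Theta_p([\bar x])=[\Phi_p(x)]$.

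Next I will compute $\Phi_p(x)$ coordinatewise. On $P$ the coordinate is $\ell^{1/(p-1)}$, so $\Phi_p$ gives $(\ell^{1/(p-1)})^{p-1}=\ell$. On $N$ it gives $-k$, and on $Z$ it gives $0$. Dividing by $k\ell>0$ produces the vector with entries $1/k$ on $P$, $-1/\ell$ on $N$ and $0$ on $Z$, which is exactly $b_{P,N}$ in \eqref{eq:cell-barycentre}. Therefore $\Theta_p$ sends the eigenline indexed by $[P\mid N]$ to $b_{C_{P,N}}$. Since $\Theta_p$ is a bijection by Proposition~\ref{prop:Theta}, the eigenline equals $\Theta_p^{-1}([b_{P,N}])$.

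To obtain exactness, I note two facts. The nonconstant eigenlines are indexed by unordered labelled partitions $[P\mid N]$ with $P,N$ nonempty, by Lemma~\ref{lem:uniform-level-collapse} and Proposition~\ref{prop:uniform-rays-inertia}. The cells of $\Cn$ are indexed by the same data, by Proposition~\ref{prop:sign-cells}. The assignment above is therefore a bijection between eigenlines and cell barycentres. The counts agree ($R_n-1$ on both sides), which serves as a consistency check. Here nonconstant eigenlines are regarded as points of $\Pj(V_n)$ via Proposition~\ref{prop:reduced-correspondence}.

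For the Morse index when $p>2$, Proposition~\ref{prop:uniform-rays-inertia} gives reduced index $n-r-2$ with $r=|Z|$. Proposition~\ref{prop:sign-cells} gives $\dim C_{P,N}=n-r-2$, so the two coincide. The only point requiring care, and a minor one, is confirming that $\kappa_p$ fixes the explicit representative, so that no translation changes the sign pattern. This is settled by the centring identity.
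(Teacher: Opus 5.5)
Your proof is correct and matches the paper's argument: evaluate $\Phi_p$ on the explicit representative \eqref{eq:uniform-ray}, divide by $k\ell$ to get $b_{P,N}$, and read off the reduced index $n-r-2=\dim C_{P,N}$ from Proposition~\ref{prop:uniform-rays-inertia}. Your remarks that the representative is already $p$-centred (so $\kappa_p$ fixes it) and that the labels $[P\mid N]$ match the cells bijectively spell out steps the paper leaves implicit.
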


\begin{proof}
For the representative in \eqref{eq:uniform-ray}, $\Phi_p(x_i)=\ell$ on $P$, $\Phi_p(x_j)=-k$ on $N$, and $\Phi_p(x_z)=0$ on $Z$. Division by $k\ell$ gives \eqref{eq:cell-barycentre}. Proposition~\ref{prop:uniform-rays-inertia} gives the index $n-r-2=\dim C_{P,N}$ when $p>2$.
\end{proof}

For $p\ne2$, the classification associates an eigenvalue branch with each cell, and the resulting formula extends through $p=2$. Its dependence on $p$ is encoded by the two nonzero block sizes. For a cell $C=C_{P,N}$ with block sizes $(k,\ell,r)$ and for $p>1$, define
$$
 \lambda_p(C)=r+
 \bigl(k^{1/(p-1)}+\ell^{1/(p-1)}\bigr)^{p-1}.
$$
For $p\ne2$, this is the eigenvalue in \eqref{eq:uniform-ray-value}. Set
$$
 h(C)=(k+\ell)\log(k+\ell)-k\log k-\ell\log\ell.
$$

\begin{proposition}[Dependence on $p$ of the block values for the uniformly weighted $K_n$]
\label{prop:uniform-entropy}
Let $p>1$, and let a cell $C$ have block sizes $(k,\ell,r)$. Set $q=p-1$ and define
$$
 F_{k,\ell}(q)=\left(k^{1/q}+\ell^{1/q}\right)^q,
 \qquad
 \theta_{k,\ell}(q)=
 \frac{k^{1/q}}{k^{1/q}+\ell^{1/q}}.
$$
Let
$$
 \mathsf H(\theta)
 =-\theta\log\theta-(1-\theta)\log(1-\theta)
$$
be the binary entropy with natural logarithms. Then
\begin{align}
 \lambda_p(C)&=r+F_{k,\ell}(p-1),\label{eq:entropy-value}\\
 \frac{d}{dp}\log(\lambda_p(C)-r)
 &=\mathsf H\!\left(\theta_{k,\ell}(p-1)\right)>0,
 \label{eq:entropy-first}\\
 \frac{d^2}{dp^2}\log(\lambda_p(C)-r)
 &=\frac{\theta_{k,\ell}(q)(1-\theta_{k,\ell}(q))}{q^3}
\left(\log\frac{k}{\ell}\right)^2\geq0.
 \label{eq:entropy-second}
\end{align}
Moreover,
\begin{equation}\label{eq:entropy-linear-jet}
 \lambda_2(C)=n,
 \qquad
 \left.\frac{d}{dp}\lambda_p(C)\right|_{p=2}
 =h(C),
\end{equation}
and, for every $p>1$,
\begin{equation}\label{eq:entropy-speed-bound}
 0<\lambda_p'(C)
 \leq(\log2)\bigl(\lambda_p(C)-r\bigr),
\end{equation}
with equality in the upper bound if and only if $k=\ell$. Consequently, $\lambda_p(C)$ is strictly increasing in $p$, the logarithm of $\lambda_p(C)-r$ is convex (strictly unless $k=\ell$), and
$$
 \lambda_p(C)<n\quad(1<p<2),
 \qquad
 \lambda_p(C)>n\quad(p>2).
$$
After interchanging $k$ and $\ell$ if necessary so that $k\leq\ell$,
\begin{equation}\label{eq:entropy-endpoint-limits}
 \lim_{p\downarrow1}\lambda_p(C)=n-k
 =n-\min\{k,\ell\},
 \qquad
 \lim_{p\to\infty}2^{1-p}\lambda_p(C)=\sqrt{k\ell}.
\end{equation}
\end{proposition}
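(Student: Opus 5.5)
The plan is to work with $G(q)=\log F_{k,\ell}(q)=q\log(k^{1/q}+\ell^{1/q})$ and differentiate directly. Put $s=k^{1/q}+\ell^{1/q}$ and $\theta=\theta_{k,\ell}(q)$. Using $\frac{d}{dq}k^{1/q}=-q^{-2}k^{1/q}\log k$, I get
$$
 G'(q)=\log s-\frac1q\bigl(\theta\log k+(1-\theta)\log\ell\bigr).
$$
Since $\log\theta=\frac1q\log k-\log s$ and $\log(1-\theta)=\frac1q\log\ell-\log s$, this combination is exactly $-\theta\log\theta-(1-\theta)\log(1-\theta)=\mathsf H(\theta)$. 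This proves \eqref{eq:entropy-first}, because $d/dp=d/dq$. Positivity holds since $\theta\in(0,1)$.

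For \eqref{eq:entropy-second}, I use $\mathsf H'(\theta)=\log\frac{1-\theta}{\theta}=\frac1q\log\frac{\ell}{k}$. A short computation gives $\theta'(q)=-\theta(1-\theta)q^{-2}\log\frac{k}{\ell}$. Their product by the chain rule gives the stated formula.

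The remaining claims follow in order:
\begin{itemize}
\item At $p=2$ we have $q=1$, so $F=k+\ell$ and $\lambda_2=n$. Also $\theta=k/(k+\ell)$, and $\lambda_2'=(k+\ell)\mathsf H(k/(k+\ell))=h(C)$ after expanding the entropy.
\item For \eqref{eq:entropy-speed-bound}, write $\lambda_p'=(\lambda_p-r)\mathsf H(\theta)$ and use $\mathsf H\le\log2$, with equality iff $\theta=1/2$, i.e. $k=\ell$.
\item Strict monotonicity follows from $\lambda_p'>0$, and convexity of $\log(\lambda_p-r)$ from \eqref{eq:entropy-second}, strict when $k\ne\ell$. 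Combined with $\lambda_2=n$, monotonicity gives $\lambda_p<n$ for $p<2$ and $\lambda_p>n$ for $p>2$.
\end{itemize}

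The endpoint limits are the only mildly delicate part. As $q\downarrow0$ with $k\le\ell$, I factor $F=\ell\,(1+(k/\ell)^{1/q})^q$. If $k<\ell$, the bracket tends to $1$ because $(k/\ell)^{1/q}\to0$ and $q\to0$. If $k=\ell$, then $F=2^q\ell\to\ell$. Either way $\lambda_p\to r+\ell=n-k$.

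As $q\to\infty$, I expand $k^{1/q}=1+\frac{\log k}{q}+O(q^{-2})$. This gives $s=2\bigl(1+\frac{\log(k\ell)}{2q}+O(q^{-2})\bigr)$, so $s^q\sim2^q\sqrt{k\ell}$. Hence $2^{1-p}\lambda_p=2^{-q}(r+F)\to\sqrt{k\ell}$.

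I expect no real obstacle. The main care is the bookkeeping of the entropy identity and the $k=\ell$ case in the $p\downarrow1$ limit.
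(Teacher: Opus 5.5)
Your proposal is correct and follows essentially the same route as the paper: you differentiate $\log F_{k,\ell}$ directly to obtain the binary entropy, multiply $\mathsf H'(\theta)$ by $\theta'(q)$ for the second derivative, and read off the jet at $q=1$, the speed bound from $\mathsf H\le\log 2$, and the lower endpoint limit from the factorisation $F=\ell\bigl(1+(k/\ell)^{1/q}\bigr)^q$. The only cosmetic difference is at $q\to\infty$, where you Taylor-expand $k^{1/q}$ explicitly, whereas the paper takes the limit of the normalised power mean $2^{-q}F=\bigl((k^{1/q}+\ell^{1/q})/2\bigr)^q$ directly.
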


\begin{proof}
Let
$$
 A=k^{1/q},\quad B=\ell^{1/q},\quad
 S=A+B,
 \qquad \theta=A/S,
$$
so that $F_{k,\ell}(q)=S^q$. Differentiating gives
\begin{align*}
 \frac{d}{dq}\log F_{k,\ell}(q)
 &=\log S-\frac{A\log k+B\log\ell}{qS}\\
 &=\log S-\theta\log A-(1-\theta)\log B\\
 &=-\theta\log\theta-(1-\theta)\log(1-\theta)
 =\mathsf H(\theta).
\end{align*}
Since $dq/dp=1$, this proves \eqref{eq:entropy-first}. Moreover,
$$
 \log\frac{\theta}{1-\theta}
 =\frac{\log k-\log\ell}{q},
$$
so
$$
 \theta'
 =\frac{\theta(1-\theta)}{q^2}(\log\ell-\log k),
 \qquad
 \mathsf H'(\theta)
 =\log\frac{1-\theta}{\theta}
 =\frac{\log\ell-\log k}{q}.
$$
Their product is \eqref{eq:entropy-second}.

At $q=1$, one has $F_{k,\ell}(1)=k+\ell$ and $\theta=k/(k+\ell)$. Hence
$$
 F_{k,\ell}'(1)
 =(k+\ell)\mathsf H\left(\frac{k}{k+\ell}\right)
 =h(C),
$$
which proves \eqref{eq:entropy-linear-jet}. The standard bound $\mathsf H(\theta)\leq\log2$, with equality precisely at $\theta=1/2$, gives \eqref{eq:entropy-speed-bound}. Strict positivity of binary entropy proves monotonicity and the inequalities on the two sides of $2$.

If $k\leq\ell$, then
$$
 F_{k,\ell}(q)
 =\ell\left(1+(k/\ell)^{1/q}\right)^q
 \rightarrow\ell
 \qquad(q\downarrow0),
$$
with the same conclusion when $k=\ell$, since then $F_{k,k}(q)=2^qk$. This proves the first endpoint limit. For the second,
$$
 2^{-q}F_{k,\ell}(q)
 =\left(\frac{k^{1/q}+\ell^{1/q}}2\right)^q
 \rightarrow\sqrt{k\ell},
$$
and the additive term $r$ disappears after multiplication by $2^{-q}$.
\end{proof}
\begin{corollary}[Second-order expansion at $p=2$]
\label{cor:uniform-second-order}
For a fixed cell $C$ with block sizes $(k,\ell,r)$, set $s=k+\ell$. As $p\to2$,
\begin{align}
 \lambda_p(C)
 =n+h(C)(p-2)
 +\frac{h(C)^2+k\ell\bigl(\log(k/\ell)\bigr)^2}{2s}(p-2)^2+O((p-2)^3).
 \label{eq:uniform-second-order}
\end{align}
In particular,
$$
 \left.\frac{d^2}{dp^2}\lambda_p(C)\right|_{p=2}
 =\frac{h(C)^2+k\ell\bigl(\log(k/\ell)\bigr)^2}{k+\ell}.
$$
\end{corollary}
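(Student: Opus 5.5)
The expansion is a second-order Taylor expansion of $\lambda_p(C)=r+F_{k,\ell}(p-1)$ at $p=2$, i.e.\ at $q=1$. It uses the logarithmic derivatives already computed in Proposition~\ref{prop:uniform-entropy}. Set $g(q)=\log F_{k,\ell}(q)$, so $F=e^{g}$. Since $F_{k,\ell}$ is real analytic in $q>0$ (it is a composition of exponentials and logarithms of positive quantities), Taylor's theorem gives the $O((p-2)^3)$ remainder. The work is therefore to identify $F(1)$, $F'(1)$ and $F''(1)$.

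The zeroth and first orders are already in hand. By \eqref{eq:entropy-linear-jet}, $F(1)=s$, so $\lambda_2(C)=r+s=n$, and $F'(1)=h(C)$. For the second derivative, differentiate $F=e^{g}$ twice:
$$
 F''=F\bigl((g')^2+g''\bigr).
$$
At $q=1$, \eqref{eq:entropy-first} gives $g'(1)=\mathsf H(\theta)$ with $\theta=k/s$, and $F(1)g'(1)=h(C)$. Hence
$$
 F(1)g'(1)^2=\frac{h(C)^2}{s}.
$$

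From \eqref{eq:entropy-second} at $q=1$, $g''(1)=\theta(1-\theta)(\log(k/\ell))^2=\frac{k\ell}{s^2}(\log(k/\ell))^2$. Multiplying by $F(1)=s$ gives
$$
 F(1)g''(1)=\frac{k\ell}{s}\bigl(\log(k/\ell)\bigr)^2.
$$

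Adding the two pieces,
$$
 F''(1)=\frac{h(C)^2+k\ell\bigl(\log(k/\ell)\bigr)^2}{s},
$$
which is the stated second derivative. Halving it gives the quadratic coefficient in \eqref{eq:uniform-second-order}. The only point needing care is the bookkeeping at $q=1$: $\theta_{k,\ell}(1)=k/s$, and $(1-\theta)\theta=k\ell/s^2$. No real obstacle is expected beyond this.
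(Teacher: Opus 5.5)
Your proposal is correct and follows the paper's proof essentially verbatim: both compute $F_{k,\ell}''(1)=F_{k,\ell}(1)\bigl((\log F_{k,\ell})'(1)^2+(\log F_{k,\ell})''(1)\bigr)$ from the first and second logarithmic derivatives in Proposition~\ref{prop:uniform-entropy} at $q=1$, then apply Taylor's theorem with $q-1=p-2$. Your remark that $F_{k,\ell}$ is real analytic for $q>0$ is a useful explicit justification of the $O((p-2)^3)$ remainder, which the paper leaves implicit.
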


\begin{proof}
At $q=1$, equation \eqref{eq:entropy-second} gives
$$
 \left(\log F_{k,\ell}\right)''(1)
 =\frac{k\ell}{(k+\ell)^2}
 \left(\log\frac{k}{\ell}\right)^2,
$$
while \eqref{eq:entropy-linear-jet} gives $(\log F_{k,\ell})'(1)=h(C)/(k+\ell)$. Hence
$$
 F_{k,\ell}''(1)
 =\frac{h(C)^2+k\ell(\log(k/\ell))^2}{k+\ell}.
$$
Taylor's theorem and $q-1=p-2$ prove the assertion.
\end{proof}
\begin{proposition}[Ordering along the face poset]
\label{prop:face-order}
Under a proper face inclusion $C'\subset\overline C$,
\begin{equation}\label{eq:face-order}
\begin{aligned}
 \lambda_p(C')&<\lambda_p(C) &&(p>2),&
 \lambda_p(C')&>\lambda_p(C) &&(1<p<2),\\
 h(C')&<h(C).&&&
\end{aligned}
\end{equation}
\end{proposition}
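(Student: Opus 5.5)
The plan is to reduce to elementary faces and then use the closed formulas for $\lambda_p(C)$ and $h(C)$ from Proposition~\ref{prop:uniform-entropy}. By the face criterion in Proposition~\ref{prop:sign-cells}, a proper face $C'$ of $C=C_{P,N}$ arises, after possibly interchanging labels, by moving a nonempty set of indices from $P$ or $N$ into $Z$, keeping both sign blocks nonempty. Such a move can be performed one index at a time, and every intermediate stage is again a cell, so we get a chain of cells
$$
 C'=C_0\subsetneq\overline{C_1},\ \ldots,\ C_{s-1}\subsetneq\overline{C_s}=\overline C .
$$
Strict inequalities compose along this chain. Both $\lambda_p$ and $h$ are symmetric in $(k,\ell)$. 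It therefore suffices to compare block sizes $(k,\ell,r)$ with $(k-1,\ell,r+1)$, where $k\geq2$ and $\ell\geq1$.

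For the eigenvalues, write $q=p-1$ and $s=1/q$, and put $N_s(a,b)=(a^s+b^s)^{1/s}$ for $a,b\geq0$. Then $F_{k,\ell}(q)=N_s(k,\ell)$, and
$$
 \lambda_p(C)-\lambda_p(C')=N_s(k,\ell)-N_s(k-1,\ell)-1 .
$$
For $a>0$ and $b=\ell>0$ we have
$$
 \partial_aN_s(a,b)=\bigl(N_s(a,b)/a\bigr)^{1-s}, \qquad N_s(a,b)>a .
$$
When $p>2$ we have $s<1$, so this derivative is strictly greater than $1$. When $1<p<2$ we have $s>1$, so it is strictly less than $1$. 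Integrating over $a\in[k-1,k]$, where the integrand is continuous because $k-1\geq1$, gives
$$
 N_s(k,\ell)-N_s(k-1,\ell)>1\quad(p>2),
 \qquad
 N_s(k,\ell)-N_s(k-1,\ell)<1\quad(1<p<2).
$$
These are exactly the two eigenvalue inequalities in \eqref{eq:face-order}. This is the step that needs care, namely strictness and the sign reversal at $s=1$. The derivative formula settles it cleanly, since the integrand is strictly on one side of $1$ throughout the interval. At $p=2$ the integrand is identically $1$, which is consistent with $\lambda_2\equiv n$.

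For the entropy term, extend $h(k,\ell)=(k+\ell)\log(k+\ell)-k\log k-\ell\log\ell$ to real $k>0$. Then
$$
 \partial_kh=\log\frac{k+\ell}{k}>0
$$
since $\ell\geq1$. Integrating over $[k-1,k]$ gives $h(C')<h(C)$. As a consistency check, this also follows from \eqref{eq:entropy-linear-jet}: the eigenvalue inequalities hold on both sides of $p=2$ with equality $\lambda_2=n$ there, so $h(C)-h(C')\geq0$. The direct computation is still needed for strictness.
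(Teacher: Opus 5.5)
Your proposal is correct and follows essentially the same route as the paper. Both arguments reduce to a single index moved into the zero block, observe that the $k$-derivative of $\bigl(k^{1/q}+\ell^{1/q}\bigr)^q$ is strictly above or below $1$ according as $q>1$ or $q<1$, integrate over $[k-1,k]$, and use $\partial_k h=\log(1+\ell/k)>0$. Your formula $(N_s/a)^{1-s}$ is algebraically identical to the paper's $\bigl(1+(\ell/k)^{1/s}\bigr)^{s-1}$; only the parametrisation of the exponent differs.
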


\begin{proof}
It is enough to treat a codimension-one face, since every proper face inclusion is a chain of such inclusions. Set $s=p-1$ and
$
 F_s(k,\ell)=\bigl(k^{1/s}+\ell^{1/s}\bigr)^s.
$
Moving one index from the positive block to the zero block (necessarily $k\geq2$) changes $(k,\ell,r)$ to $(k-1,\ell,r+1)$, and
$$
 \lambda_p(C)-\lambda_p(C')
 =F_s(k,\ell)-F_s(k-1,\ell)-1.
$$
For real $k>0$,
\begin{equation}\label{eq:F-derivative}
 \frac{\partial F_s}{\partial k}
 =\left(1+\left(\frac{\ell}{k}\right)^{1/s}\right)^{s-1}.
\end{equation}
This derivative is strictly greater than $1$ when $s>1$ and strictly less than $1$ when $0<s<1$. Integration over $[k-1,k]$ proves the first two inequalities. The same argument applies when an index is moved from the negative block. For the logarithmic value,
$$
 \frac{\partial}{\partial k}
 \bigl((k+\ell)\log(k+\ell)-k\log k-\ell\log\ell\bigr)
 =\log\left(1+\frac{\ell}{k}\right)>0,
$$
and symmetrically in $\ell$, which proves the logarithmic inequality.
\end{proof}

\begin{corollary}[Extremal positive values of the uniformly weighted $K_n$]
\label{cor:uniform-extremes}
Let $q=p-1$, and set
$$
 a_n=\left\lfloor\frac n2\right\rfloor,\qquad
 b_n=\left\lceil\frac n2\right\rceil,
$$
$$
 \lambda_{\mathrm{edge}}(n,p)=n-2+2^q,\qquad
 \lambda_{\mathrm{bal}}(n,p)
 =\left(a_n^{1/q}+b_n^{1/q}\right)^q.
$$
For $p>2$, the smallest and largest positive eigenvalues of the uniformly weighted $K_n$ are respectively $\lambda_{\mathrm{edge}}(n,p)$ and $\lambda_{\mathrm{bal}}(n,p)$. For $1<p<2$, the order is reversed. At $p=2$, both expressions equal $n$.

For $n\geq3$ and $p\ne2$, the value $\lambda_{\mathrm{edge}}(n,p)$ is attained precisely on the permutation orbit of block type $(1,1,n-2)$, while $\lambda_{\mathrm{bal}}(n,p)$ is attained precisely on the permutation orbit of the balanced block type with $r=0$. The first contains $\binom n2$ eigenlines; the second contains
$$
 \begin{cases}
 \displaystyle\binom n{\lfloor n/2\rfloor},&n\ \hbox{odd},\\[6pt]
 \displaystyle\frac12\binom n{n/2},&n\ \hbox{even}.
 \end{cases}
$$
For $n=2$, the two orbit types coincide.
\end{corollary}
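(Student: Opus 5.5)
The plan is to reduce everything to the explicit eigenvalue formula \eqref{eq:uniform-ray-value}, the face ordering of Proposition~\ref{prop:face-order}, and a one-variable convexity argument for the balanced maximum.

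\emph{Step 1: the minimum.} By Proposition~\ref{prop:uniform-rays-inertia}, every positive eigenvalue is $\lambda_p(C)$ for some cell $C$ with block sizes $(k,\ell,r)$. By Proposition~\ref{prop:face-order}, for $p>2$ the value strictly increases along proper face inclusions. Every cell has a vertex of $\Cn$ in its closure, namely a $0$-cell, which has type $(1,1,n-2)$. Hence the minimum for $p>2$ is attained only on $0$-cells, and it equals $(1+1)^q+n-2=\lambda_{\mathrm{edge}}$. For $1<p<2$ the inequalities reverse, so the same set gives the maximum. Strictness shows that a cell of positive dimension never attains this value. The $0$-cells are the $\binom n2$ pairs $\{i\},\{j\}$.

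\emph{Step 2: the maximum.} By the same monotonicity, the extremum on the other side is attained only on top-dimensional cells, those with $r=0$; any cell with $r>0$ is a proper face of some $r=0$ cell. It then remains to compare $F_q(k,n-k)=(k^{1/q}+(n-k)^{1/q})^q$ for $1\le k\le n-1$.

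Set $g(t)=t^{1/q}+(n-t)^{1/q}$ on $(0,n)$. This function is symmetric about $n/2$. For $q>1$ the exponent $1/q<1$, so $g$ is strictly concave and is strictly maximised at the point closest to $n/2$. For $q<1$ it is strictly convex and strictly minimised there. Consequently, among integer $k$, the balanced choice $\{a_n,b_n\}$ is the unique extremiser up to swapping $k$ and $\ell$. Since $t\mapsto t^q$ is increasing, this gives the largest value for $p>2$ and the smallest for $p<2$, namely $\lambda_{\mathrm{bal}}$.

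To confirm that this is a strict uniqueness statement on integers, I would use strict concavity together with symmetry: $g(k)<g(k')$ whenever $|k-n/2|>|k'-n/2|$. This is the main check, and it is routine.

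\emph{Step 3: orbit counts and the degenerate cases.} The count of balanced unordered splits is $\binom n{\lfloor n/2\rfloor}$ for odd $n$, since choosing the smaller block determines the split. For even $n$ it is $\frac12\binom n{n/2}$, because each split is counted twice.

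At $p=2$, both expressions equal $n$ by direct substitution; this is also recorded in \eqref{eq:entropy-linear-jet}.

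For $n\ge3$, the edge type $(1,1,n-2)$ has $r\ge1$ while the balanced type has $r=0$, so the two orbits are distinct. The strictness from Steps 1 and 2 then gives the ``precisely'' assertions. For $n=2$, both types are $(1,1,0)$, so the two orbits coincide.

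The only step that needs care is the strict-face argument. It must handle cells that are not comparable in the face poset. The remedy is that every cell lies above some $0$-cell and below some $r=0$ cell, so chaining the strict inequalities from Proposition~\ref{prop:face-order} suffices.
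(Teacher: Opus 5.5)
Your proposal is correct and follows essentially the same route as the paper: Proposition~\ref{prop:face-order} reduces the extrema to the $0$-cells (type $(1,1,n-2)$) and the top-dimensional cells ($r=0$), and strict concavity (for $q>1$) or strict convexity (for $0<q<1$) of the symmetric function $k\mapsto k^{1/q}+(n-k)^{1/q}$ singles out the balanced split. The strictness of these inequalities, together with the labelled-partition count modulo global sign, then gives the ``precisely'' claims and the orbit sizes.
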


\begin{proof}
Every cell contains a vertex in its closure and is a face of a top-dimensional cell. Proposition~\ref{prop:face-order} therefore reduces the global extrema to vertices and top-dimensional cells. Every vertex has block sizes $(1,1,n-2)$, giving $n-2+2^q$.

On a top-dimensional cell, $r=0$ and $k+\ell=n$. If $q>1$, the function
$
 k\mapsto k^{1/q}+(n-k)^{1/q}
$
is strictly concave and symmetric, hence is maximised at the split $(\lfloor n/2\rfloor,\lceil n/2\rceil)$. If $0<q<1$, it is strictly convex and is minimised at that split. Together with the reversal of face order for $1<p<2$, this proves the assertions about the extrema. The strict inequalities show that equality occurs only for the stated block types; their orbit sizes follow from the labelled-partition count modulo global sign.
\end{proof}

We write $\sigma_p(K_n)$ for the spectrum of the uniformly weighted, unsigned, zero-potential, unit-measure complete graph.
\begin{corollary}[The positive eigenvalue nearest to $n$]
\label{cor:uniform-opening-gap}
For every $n\geq2$ and every $p>1$,
\begin{equation}\label{eq:uniform-opening-gap}
 \operatorname{dist}\bigl(n,\sigma_p(K_n)\setminus\{0\}\bigr)
 =\bigl|2^{p-1}-2\bigr|.
\end{equation}
For $n\geq3$ and $p\ne2$, this distance is attained precisely by the $(1,1,n-2)$ orbit. In particular,
\begin{equation}\label{eq:uniform-opening-rate}
 \operatorname{dist}\bigl(n,\sigma_p(K_n)\setminus\{0\}\bigr)
 =2\log2\,|p-2|+O((p-2)^2)
 \qquad(p\to2).
\end{equation}
\end{corollary}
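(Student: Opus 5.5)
The nonzero spectrum of the uniformly weighted $K_n$ is $\{\lambda_p(C):C\in\Cn\}$ by Proposition~\ref{prop:uniform-rays-inertia} for $p\neq2$. At $p=2$ it is $\{n\}$, since the Laplacian of $K_n$ is $nI-\one\one^{\mathsf T}$; the formula also gives $\lambda_2(C)=n$ for every $C$ by \eqref{eq:entropy-linear-jet}. So the task is to minimise $|\lambda_p(C)-n|$ over cells. At $p=2$ the distance is $0=|2^{1}-2|$ and there is nothing to prove, so assume $p\neq2$.

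By Proposition~\ref{prop:face-order}, for $p>2$ every $\lambda_p(C)$ exceeds $n$ (Proposition~\ref{prop:uniform-entropy}), and $\lambda_p$ strictly increases along proper face inclusions. The nearest value to $n$ is therefore the minimum of $\lambda_p$, attained at vertices. For $1<p<2$ every value is below $n$ and the order along faces is reversed, so the nearest value is the maximum, again attained at vertices. Every vertex has block type $(1,1,n-2)$, which gives $\lambda=n-2+2^{p-1}$ and distance $|2^{p-1}-2|$. This is exactly Corollary~\ref{cor:uniform-extremes}, which identifies $\lambda_{\mathrm{edge}}$ as the smallest positive eigenvalue for $p>2$ and the largest for $1<p<2$.

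For the uniqueness statement with $n\ge3$, I would use the strictness in Proposition~\ref{prop:face-order}. Every non-vertex cell has a vertex as a proper face, so its value lies strictly farther from $n$. Hence only the $(1,1,n-2)$ orbit attains the distance; this is also the equality clause of Corollary~\ref{cor:uniform-extremes}. For $n=2$ there is only the single vertex cell, so the formula holds trivially.

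The asymptotic follows by expanding $2^{p-1}-2=2(e^{(p-2)\log2}-1)=2\log2\,(p-2)+O((p-2)^2)$ and taking absolute values. The only step needing care is the sign bookkeeping: checking that on both sides of $p=2$ the extremal value sits on the side nearest $n$. Everything else is immediate from the cited results.
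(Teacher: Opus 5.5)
Your proposal is correct and follows essentially the same route as the paper. Proposition~\ref{prop:uniform-entropy} places all positive values on one side of $n$; Corollary~\ref{cor:uniform-extremes}, which is the strict face ordering of Proposition~\ref{prop:face-order}, then identifies the vertex value $n-2+2^{p-1}$ as the nearest one and gives the equality clause; and the rate is the Taylor expansion of $2^{p-1}$ at $p=2$. Your explicit treatment of the trivial cases $p=2$ and $n=2$ is a harmless addition that the paper leaves implicit.
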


\begin{proof}
Proposition~\ref{prop:uniform-entropy} places every positive spectral value of the uniformly weighted $K_n$ below $n$ when $1<p<2$ and above $n$ when $p>2$. Corollary~\ref{cor:uniform-extremes} says that the positive value nearest to $n$ is therefore
$
 \lambda_{\mathrm{edge}}(n,p)=n-2+2^{p-1}.
$
This proves \eqref{eq:uniform-opening-gap} and the equality statement. The expansion \eqref{eq:uniform-opening-rate} is the Taylor expansion of $2^{p-1}$ at $p=2$.
\end{proof}

The preceding results locate the extreme values; we next determine when distinct block types produce the same spectral value. For
$
 \mathcal I_n=
 \{(k,\ell)\in\mathbb N^2:1\leq k\leq\ell,\ k+\ell\leq n\},
$
define
\begin{equation}\label{eq:uniform-block-value}
 \Lambda^{(n)}_{k,\ell}(p)
 =n-k-\ell+
 \left(k^{1/(p-1)}+\ell^{1/(p-1)}\right)^{p-1}.
\end{equation}

\begin{proposition}[Coincidences among the eigenvalues of the uniformly weighted $K_n$ occur at finitely many exponents]
\label{prop:uniform-exceptional}
Define the collision set
$$
 \mathfrak E_n=
 \left\{p>1:
 \begin{array}{l}
 \Lambda^{(n)}_{k,\ell}(p)=
 \Lambda^{(n)}_{k',\ell'}(p)
 \text{ for distinct}\\[-2pt]
 (k,\ell),(k',\ell')\in\mathcal I_n
 \end{array}
 \right\}.
$$
Then:
\begin{enumerate}[label=\textup{(\roman*)}]
\item For every $p>1$,
\begin{equation}\label{eq:uniform-value-set}
 \sigma_p(K_n)=\{0\}\cup
 \{\Lambda^{(n)}_{k,\ell}(p):(k,\ell)\in\mathcal I_n\}.
\end{equation}
At $p=2$, all displayed nonzero values equal $n$.

\item The set $\mathfrak E_n$ is finite. If $p\notin\mathfrak E_n$, then
\begin{equation}\label{eq:generic-uniform-count}
 \#\sigma_p(K_n)
 =1+|\mathcal I_n|
 =\left\lfloor\frac{n^2}{4}\right\rfloor+1.
\end{equation}
Moreover, $2\in\mathfrak E_n$ if and only if $n\geq3$.

\item At $p=3$,
\begin{equation}\label{eq:p3-block-value}
 \Lambda^{(n)}_{k,\ell}(3)=n+2\sqrt{k\ell},
 \qquad
 \#\sigma_3(K_n)
 =1+\#\{k\ell:(k,\ell)\in\mathcal I_n\}.
\end{equation}
Thus $3\in\mathfrak E_n$ if and only if $n\geq5$. In particular,
$$
 \Lambda^{(n)}_{1,4}(3)=\Lambda^{(n)}_{2,2}(3)=n+4,
 \qquad
 \#\sigma_3(K_5)=6.
$$

\item For $(k,\ell)\in\mathcal I_n$, set $r=n-k-\ell$ and
$$
 \mu_{k,\ell,r}=
 \begin{cases}
 \displaystyle\frac{n!}{k!\,\ell!\,r!},&k<\ell,\\[7pt]
 \displaystyle\frac{n!}{2(k!)^2r!},&k=\ell.
 \end{cases}
$$
For every $p>1$ with $p\ne2$ and every positive spectral value $\lambda$, the number of eigenlines with eigenvalue $\lambda$ is
\begin{equation}\label{eq:uniform-value-multiplicity}
 \sum_{\substack{(k,\ell)\in\mathcal I_n\\\Lambda^{(n)}_{k,\ell}(p)=\lambda}}
 \mu_{k,\ell,n-k-\ell}.
\end{equation}
Outside $\mathfrak E_n$, this sum has one term.
\end{enumerate}
\end{proposition}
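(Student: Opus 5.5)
The plan is to read everything off the classification in Proposition~\ref{prop:uniform-rays-inertia}, plus one o-minimal argument for finiteness of $\mathfrak E_n$.

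\textbf{Part (i).} For $p\ne2$, Proposition~\ref{prop:uniform-rays-inertia} says every nonconstant eigenline has block type $(k,\ell,r)$ with eigenvalue $\lambda_{k,\ell,r}$. This eigenvalue is symmetric in $(k,\ell)$, so after ordering $k\leq\ell$ it equals $\Lambda^{(n)}_{k,\ell}(p)$ with $r=n-k-\ell$. Every $(k,\ell)\in\mathcal I_n$ is realised by choosing disjoint blocks of those sizes. The constant eigenline contributes $0$, and Proposition~\ref{prop:graph-critical} shows that every nonconstant eigenvalue is positive.

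At $p=2$ the operator is the linear Laplacian $nI-\one\one^{\mathsf T}$ of $K_n$, whose spectrum is $\{0,n\}$. Since $F_{k,\ell}(1)=k+\ell$, each displayed value equals $n$. So \eqref{eq:uniform-value-set} also holds at $p=2$, because $\mathcal I_n\ne\varnothing$ for $n\geq2$.

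\textbf{Part (ii).} Fix distinct $(k,\ell),(k',\ell')\in\mathcal I_n$ and set $g(p)=\Lambda^{(n)}_{k,\ell}(p)-\Lambda^{(n)}_{k',\ell'}(p)$.
\begin{itemize}
\item The function $g$ is definable in $\R_{\exp}$, so its zero set in $(1,\infty)$ is a finite union of points and intervals.
\item The function $g$ is real analytic on $(1,\infty)$, since it is built from $\exp$ and $\log$ of positive quantities. A zero interval would therefore force $g\equiv0$.
\item To exclude $g\equiv0$, use the endpoint limits \eqref{eq:entropy-endpoint-limits}. As $p\to\infty$, $2^{1-p}g(p)\to\sqrt{k\ell}-\sqrt{k'\ell'}$, so $g\equiv0$ forces $k\ell=k'\ell'$. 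As $p\downarrow1$, $g(p)\to(n-k)-(n-k')=k'-k$, so it also forces $k=k'$. Together these give $\ell=\ell'$, a contradiction.
\end{itemize}
Taking the union over the finitely many pairs shows $\mathfrak E_n$ is finite. Outside $\mathfrak E_n$ the values are distinct, so the count is $1+|\mathcal I_n|$. Grouping by $s=k+\ell$ gives
$$
 |\mathcal I_n|=\sum_{s=2}^n\lfloor s/2\rfloor=\lfloor n^2/4\rfloor,
$$
which is a routine parity check. Finally, $2\in\mathfrak E_n$ iff $|\mathcal I_n|\geq2$, that is, iff $n\geq3$.

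\textbf{Part (iii).} At $p=3$ we have $q=2$, and
$$
 (\sqrt k+\sqrt\ell)^2=k+\ell+2\sqrt{k\ell},
$$
which gives \eqref{eq:p3-block-value}. Collisions occur exactly for distinct pairs with equal products.
\begin{itemize}
\item For $n\leq4$, $\mathcal I_n\subseteq\{(1,1),(1,2),(1,3),(2,2)\}$, with products $1,2,3,4$. These are distinct, so there is no collision.
\item For $n\geq5$, the pairs $(1,4)$ and $(2,2)$ both lie in $\mathcal I_n$ and have product $4$. Hence $\Lambda^{(n)}_{1,4}(3)=\Lambda^{(n)}_{2,2}(3)=n+4$.
\item For $n=5$, the products over $\mathcal I_5$ are $1,2,3,4,4,6$. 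These take five distinct values, so $\#\sigma_3(K_5)=6$.
\end{itemize}

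\textbf{Part (iv).} By Proposition~\ref{prop:uniform-rays-inertia}, eigenlines correspond bijectively to unordered pairs $\{P,N\}$ of disjoint nonempty blocks, with $Z$ the complement. The number of such configurations with block sizes $\{k,\ell\}$ and $r=n-k-\ell$ is the multinomial $n!/(k!\,\ell!\,r!)$ when $k<\ell$. Here we take $P$ to be the block of size $k$, so each line is counted once. When $k=\ell$ the swap $P\leftrightarrow N$ identifies ordered configurations in pairs, giving half the multinomial. Summing over the block types with value $\lambda$ gives \eqref{eq:uniform-value-multiplicity}. Outside $\mathfrak E_n$ only one block type attains each value, so the sum has a single term.

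The only step needing real care is the non-identity argument in (ii), which combines the analyticity step with both endpoint limits; the other parts are direct bookkeeping.
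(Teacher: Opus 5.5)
Your proposal is correct and follows essentially the same route as the paper. Part (i) comes from the complete-graph classification together with the quadratic identity at $p=2$. Finiteness of $\mathfrak E_n$ comes from real analyticity and $\mathbb R_{\exp}$-definability of each difference, with $g\not\equiv0$ forced by the $p\downarrow1$ limit $n-k$ and the $p\to\infty$ limit $\sqrt{k\ell}$ of $2^{1-p}\Lambda^{(n)}_{k,\ell}$. Parts (iii)--(iv) are the same product and multinomial bookkeeping. One small point of citation: strict positivity of the nonconstant eigenvalues comes from the explicit formula \eqref{eq:uniform-ray-value}, or from the Rayleigh identity on a connected graph, and not from the bound $\lambda\geq\beta$ in Proposition~\ref{prop:graph-critical}.
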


\begin{proof}
The complete-graph classification gives \eqref{eq:uniform-value-set} for $p\ne2$, and the quadratic identity on $\one^\perp$ gives it at $p=2$.

Set $q=p-1$. For $k\leq\ell$,
$
 \lim_{q\downarrow0}
 (k^{1/q}+\ell^{1/q})^q=\ell,
$
including $k=\ell$, where the expression is $2^qk$. Therefore
\begin{equation}\label{eq:uniform-q0-limit}
 \lim_{q\downarrow0}\Lambda^{(n)}_{k,\ell}(1+q)=n-k.
\end{equation}
If $\Lambda^{(n)}_{k,\ell}\equiv\Lambda^{(n)}_{k',\ell'}$, then \eqref{eq:uniform-q0-limit} gives $k=k'$. At the other endpoint,
\begin{equation}\label{eq:uniform-qinf-limit}
 \lim_{q\to\infty}
 2^{-q}\left(k^{1/q}+\ell^{1/q}\right)^q=\sqrt{k\ell}.
\end{equation}
Indeed, after taking logarithms,
$$
 q\log\left(
 \frac{e^{(\log k)/q}+e^{(\log\ell)/q}}2
 \right)
 \rightarrow\frac{\log k+\log\ell}{2}.
$$
The additive term $n-k-\ell$ disappears after multiplication by $2^{-q}$. Equation~\eqref{eq:uniform-qinf-limit} then gives $k\ell=k'\ell'$, and hence $\ell=\ell'$.

Every difference $\Lambda^{(n)}_{k,\ell}-\Lambda^{(n)}_{k',\ell'}$ associated with distinct pairs $(k,\ell)$ and $(k',\ell')$ is consequently a nonzero real-analytic function on $(1,\infty)$ and is definable in $\mathbb R_{\exp}$. Its zero set contains no interval and hence is finite by o-minimality. The union over finitely many pairs is $\mathfrak E_n$. Moreover,
$$
 |\mathcal I_n|
 =\sum_{s=2}^n\left\lfloor\frac s2\right\rfloor
 =\left\lfloor\frac{n^2}{4}\right\rfloor.
$$
At $p=2$, all the functions $\Lambda^{(n)}_{k,\ell}$ have the same value whenever $|\mathcal I_n|\geq2$, which is equivalent to $n\geq3$.

Equation~\eqref{eq:p3-block-value} follows directly from \eqref{eq:uniform-block-value}. The admissible products are distinct for $n\leq4$, whereas the $(1,4)$ and $(2,2)$ products collide for every $n\geq5$. The number $\mu_{k,\ell,r}$ is the multinomial count of labelled partitions with block sizes $(k,\ell,r)$, divided by two exactly when $k=\ell$. Summing over every block pair producing the same value proves \eqref{eq:uniform-value-multiplicity}.
\end{proof}

\begin{corollary}[The spectrum of the uniformly weighted $K_n$ at $p=3$ and the multiplication-table problem]
\label{cor:p3-multiplication}
Let $ \delta=1-(1+\log\log2)/\log2=0.086071332\ldots $. Then
\begin{equation}\label{eq:p3-multiplication-order}
 \#\sigma_3(K_n)
 \asymp
 \frac{n^2}
 {(\log n)^\delta(\log\log n)^{3/2}}
 \qquad(n\to\infty).
\end{equation}
\end{corollary}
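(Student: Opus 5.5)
By Proposition~\ref{prop:uniform-exceptional}\textup{(iii)}, $\#\sigma_3(K_n)=1+\#\{k\ell:1\leq k\leq\ell,\ k+\ell\leq n\}$. So the whole task is to count the distinct products $k\ell$ under these constraints, and then compare this count with the Erd\H{o}s multiplication-table function $A(N)=\#\{ab:1\leq a,b\leq N\}$. Ford's theorem gives
$$
 A(N)\asymp\frac{N^2}{(\log N)^\delta(\log\log N)^{3/2}}.
$$
The plan is to sandwich our count between two values of $A$ whose arguments are comparable to $n$, and then use Ford's order of magnitude at both ends.

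\emph{Upper bound.} Set $\mathcal P_n=\{k\ell:(k,\ell)\in\mathcal I_n\}$. Every pair in $\mathcal I_n$ has $k\leq\ell\leq n-1$, so $\mathcal P_n$ is contained in the multiplication table of size $n$. Hence $\#\mathcal P_n\leq A(n)$.

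\emph{Lower bound.} If $1\leq a,b\leq\lfloor n/2\rfloor$, order the pair so that $k=\min(a,b)$ and $\ell=\max(a,b)$. Then $k+\ell\leq n$, so $ab\in\mathcal P_n$. Hence $\#\mathcal P_n\geq A(\lfloor n/2\rfloor)$.

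\emph{Comparing the two ends.} Ford's asymptotic order shows that $A(\lfloor n/2\rfloor)\asymp A(n)$. Indeed, $(\log(n/2))^\delta\sim(\log n)^\delta$, $(\log\log(n/2))^{3/2}\sim(\log\log n)^{3/2}$, and $(n/2)^2$ differs from $n^2$ by the factor $4$. The additive $1$ counting the zero eigenvalue is negligible. These three facts together give \eqref{eq:p3-multiplication-order}, with $\delta=1-(1+\log\log2)/\log2$ being exactly Ford's exponent. The numerical value $0.0860713\ldots$ is a direct evaluation.

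The main obstacle is the order of magnitude of $A(N)$ itself. That is Ford's deep theorem (Annals 2008), which rests on localised divisor-counting estimates, and I would simply cite it rather than reprove it. The only other point needing care is that Ford's result is a two-sided $\asymp$ estimate rather than an asymptotic equality. Because of that, the sandwich must be justified using only the explicit form of the bound at comparable arguments, which is what the comparison step above does; it does not need any regular-variation property of $A$ beyond that.
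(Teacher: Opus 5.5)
Your proposal is correct and matches the paper's proof. Proposition~\ref{prop:uniform-exceptional}\textup{(iii)} reduces the count to distinct products $k\ell$. You then sandwich this count between $A(\lfloor n/2\rfloor)$ and $A(n)$ and apply Ford's theorem at comparable arguments.
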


\begin{proof}
Let
$$
 T(n)=\#\{k\ell:(k,\ell)\in\mathcal I_n\},\qquad
 M(N)=\#\{ab:1\leq a,b\leq N\}.
$$
After sorting the factors,
$
 M(\lfloor n/2\rfloor)\leq T(n)\leq M(n).
$
Ford's theorem on the number of distinct products $ab$ with $1\leq a,b\leq N$ gives
$$
 M(N)\asymp
 \frac{N^2}
 {(\log N)^\delta(\log\log N)^{3/2}}
$$
\cite[Corollary~3]{Ford2008}. The bounds above and \eqref{eq:p3-block-value} prove the result.
\end{proof}

Consequently, the number of spectral values of the uniformly weighted $K_n$ at $p=3$ is $o(n^2)$ but $n^{2-o(1)}$, whereas the number of eigenlines grows as $3^n/2$.

\begin{remark}[The $(1,4)$--$(2,2)$ collision at $p=3$]
For every $n\geq5$, the common value $n+4$ is attained on exactly
$$
 5\binom n5+3\binom n4
$$
eigenlines: the first term comes from $(1,4,n-5)$, and the second from $(2,2,n-4)$. Their reduced Morse indices are respectively $3$ and $2$. Thus the quotient $\mathcal R_{3,\one}$ is Morse but has critical points of different indices sharing a critical value.
\end{remark}

For $p>2$, collisions of spectral values do not affect the regularity of the individual eigenlines. Counting critical points by index retains the cellular information lost when equal values are identified.

\begin{theorem}[Morse polynomial and indices for the uniformly weighted $K_n$]
\label{thm:exact-morse-polynomial}
Fix $p>2$. The reduced quotient $\mathcal R_{p,\one}$ for the uniformly weighted $K_n$ is Morse on $\mathbb{R}P^{n-2}$. For $0\leq q\leq n-2$, the number of critical points of reduced Morse index $q$ is
\begin{equation}\label{eq:index-count-reduced}
 c_{n,q}=\binom n{q+2}(2^{q+1}-1).
\end{equation}
Its Morse polynomial is
\begin{align}
 M_n^+(t)
 &=\sum_{q=0}^{n-2}\binom n{q+2}(2^{q+1}-1)t^q
 \label{eq:M-sum}\\
 &=\frac{(1+2t)^n-2(1+t)^n+1}{2t^2}.
 \label{eq:M-closed}
\end{align}
In particular,
\begin{equation}\label{eq:M-checks}
 M_n^+(1)=\frac{3^n-2^{n+1}+1}{2}=R_n-1,\qquad
 M_n^+(-1)=\frac{1+(-1)^n}{2}
 =\chi(\mathbb{R}P^{n-2}).
\end{equation}
\end{theorem}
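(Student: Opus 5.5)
The proof combines results already established; most of the work is counting. First, the Morse property. By Proposition~\ref{prop:reduced-correspondence}, the critical points of $\mathcal R_{p,\one}$ on $\mathbb RP^{n-2}$ are exactly the nonconstant eigenlines. By Proposition~\ref{prop:uniform-rays-inertia}, each of these is regular when $p>2$. Proposition~\ref{prop:index-shift} then shows that each is a nondegenerate critical point, with reduced Morse index $n-r-2$. Corollary~\ref{cor:uniform-barycentres} identifies this index with $\dim C_{P,N}$. Since there are finitely many critical points ($R_n-1$ of them), all nondegenerate, $\mathcal R_{p,\one}$ is Morse. This uses that $\mathcal R_{p,\one}$ is $C^2$ for $p>2$, which follows from Theorem~\ref{thm:projective-embedding} and the identity $\mathcal R_{p,a}=a\cdot\Psi_p$.

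Next, count the critical points of each index. A critical point of reduced index $q$ corresponds to a cell of dimension $q$, that is, to block sizes with $r=n-q-2$ and $k+\ell=q+2$. To enumerate these, choose the support $P\cup N$ of size $q+2$ in $\binom n{q+2}$ ways. Then split the support into two nonempty labelled blocks, modulo the global sign swap $(P,N)\leftrightarrow(N,P)$. This gives $(2^{q+2}-2)/2=2^{q+1}-1$ splits, which yields \eqref{eq:index-count-reduced} and hence \eqref{eq:M-sum}.

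For the closed form \eqref{eq:M-closed}, substitute $j=q+2$ and multiply by $2t^2$. This gives $\sum_{j=2}^n\binom nj(2^j-2)t^j$. The binomial theorem evaluates this sum as $(1+2t)^n-2(1+t)^n+1$. The terms with $j=0$ and $j=1$ vanish automatically: for $j=0$ we have $1-2+1=0$, and for $j=1$ we have $2nt-2nt=0$.

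The checks in \eqref{eq:M-checks} follow by direct evaluation. At $t=1$ the closed form gives $(3^n-2^{n+1}+1)/2$. At $t=-1$ it gives $((-1)^n-0+1)/2$, which equals $\chi(\mathbb RP^{n-2})$: this is $1$ when $n-2$ is even and $0$ when it is odd.

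There is no real obstacle here. The only points needing care are confirming that the $C^2$ regularity of $\mathcal R_{p,\one}$ makes the notion of a Morse function meaningful for $p>2$, and handling the modulo-sign count correctly. Both reduce to cited results or to elementary arithmetic.
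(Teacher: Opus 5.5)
Your proposal is correct and follows essentially the same route as the paper. Nondegeneracy comes from Propositions~\ref{prop:reduced-correspondence} and~\ref{prop:uniform-rays-inertia}, via the kernel criterion of Proposition~\ref{prop:index-shift}; the $q$-cells are counted as a support of size $q+2$ with a nonconstant sign vector modulo global sign; and the binomial theorem gives the closed form and its evaluations at $t=\pm1$.
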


\begin{proof}
Propositions~\ref{prop:reduced-correspondence} and~\ref{prop:uniform-rays-inertia} show that every critical point is nondegenerate. An eigenline with $r$ zero coordinates has reduced index $q=n-r-2$. Choosing its zero set and its nontrivial sign partition gives
$$
 \binom nr(2^{n-r-1}-1)
 =\binom n{q+2}(2^{q+1}-1).
$$
Equivalently, a $q$-cell is obtained by choosing a support of size $q+2$ and a nonconstant sign vector on that support modulo global sign. Thus $M_n^+$ is the face polynomial of $\Cn$. The binomial theorem gives \eqref{eq:M-closed}, and the two specialisations follow.
\end{proof}

Outside $\mathfrak E_n$, the $R_n-1$ eigenlines give exactly $\lfloor n^2/4\rfloor$ nonzero spectral values. At $p=3$ and $n\geq5$, the additional coincidences are determined by equal products $k\ell$ subject to $k+\ell\leq n$. For each fixed $p>2$, sufficiently small generic perturbations preserve these $R_n-1$ critical points and separate their values.

The quadratic case is different: the reduced uniform quotient is constant, whereas generic weights give a perfect Morse function with the minimum number of critical points permitted by the $\mathbb F_2$-Betti numbers.

\begin{proposition}[The quadratic case: generic weights give an $\mathbb F_2$-perfect Morse function]
\label{prop:p2-perfect}
At $p=2$ the reduced quotient for the uniformly weighted $K_n$ on $\mathbb{R}P^{n-2}$ is identically $n$. For generic positive edge weights on $K_n$, the reduced quadratic quotient has exactly $n-1$ critical points, one in each Morse index $0,\ldots,n-2$, and Morse polynomial $1+t+\cdots+t^{n-2}$. It is perfect over $\mathbb F_2$.
\end{proposition}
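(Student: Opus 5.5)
At $p=2$ everything reduces to linear algebra on $H=\one^\perp$. The plan is to identify the reduced quotient with a Rayleigh quotient of a symmetric matrix on $H$ and then use the standard Morse theory of such quotients on projective space. Since $c_2(u)=0$ on $H$ and $\mathfrak m_2(\bar u)=\|u\|_2^2$ (as noted in the proof of Theorem~\ref{thm:projective-embedding}), we have
$$
\mathcal R_{2,a}([u])=\frac{\langle L_a u,u\rangle}{\|u\|_2^2},\qquad u\in H\setminus\{0\},
$$
where $L_a$ is the weighted graph Laplacian. $L_a$ preserves $H$ and annihilates $\one$. For uniform weights, $L_{\one}=nI-\one\one^{\mathsf T}$, which acts as $nI$ on $H$. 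This gives the identity $\sum_{i<j}(x_i-x_j)^2=n\|x\|^2$ on $H$, so the uniform quotient is identically $n$.

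For general positive weights, $L_a|_H$ is a symmetric operator on the $(n-1)$-dimensional space $H$, and its eigenvalues $\mu_1\leq\cdots\leq\mu_{n-1}$ are all positive because $K_n$ is connected. I will invoke the classical fact that the critical points of $[u]\mapsto\langle Au,u\rangle/\|u\|^2$ on $\Pj(H)$ are exactly the eigenlines of $A$. When the $\mu_j$ are simple there are exactly $n-1$ such points. The Hessian at the eigenline of $\mu_j$ is, up to a positive factor, $\operatorname{diag}(\mu_i-\mu_j)_{i\ne j}$, so that point is nondegenerate with Morse index $j-1$. The Morse polynomial is therefore $1+t+\cdots+t^{n-2}$. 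It equals the $\mathbb F_2$-Poincar\'e polynomial of $\mathbb RP^{n-2}$, and so the function is $\mathbb F_2$-perfect.

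The main obstacle is genericity: simple spectrum must fail only on a relatively closed definable set of codimension at least one in $(0,\infty)^m$. Let $\mathfrak D\subset(0,\infty)^m$ be the set of weights where the discriminant of the characteristic polynomial of $L_a|_H$ vanishes. This discriminant is a polynomial in $a$, so $\mathfrak D$ is closed and semialgebraic. It is a proper subset (hence of codimension at least one) as soon as the polynomial is not identically zero, and for that a single positive weight vector with simple spectrum suffices. I would first perturb the uniform weights as $a=\one+\epsilon\eta$. First-order perturbation theory splits the degenerate eigenvalue $n$ according to the eigenvalues of the compression of $L_\eta$ to $H$, and choosing $\eta$ so that $L_\eta|_H$ has simple spectrum makes $L_a|_H$ simple for small $\epsilon>0$. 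A concrete $\eta$ is a path-graph weighting, whose Laplacian has the simple eigenvalues $2-2\cos(\pi j/n)$. As a backup, one could simply compute the spectrum of an explicit small positive perturbation and read off that the eigenvalues are distinct.
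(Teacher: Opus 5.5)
Your proposal is correct and follows the paper's proof. You identify the reduced quotient with the Rayleigh quotient of $L_a|_H$, read off Morse indices $0,\dots,n-2$ when the spectrum is simple, and obtain genericity from a discriminant that is not identically zero because a path Laplacian has simple spectrum. Your only deviation is the witness weight vector: the paper adds small positive weights to the missing edges of a path, whereas you use $\one+\epsilon\eta$ with $\eta$ a path weighting, and this needs no perturbation theory at all, since $L_{\one+\epsilon\eta}|_H=nI+\epsilon L_\eta|_H$ is exact and hence has simple spectrum for every $\epsilon>0$.
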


\begin{proof}
For $x\perp\one$,
$
 \sum_{i<j}(x_i-x_j)^2=n\sum_i x_i^2.
$
The discriminant of the characteristic polynomial of the weighted Laplacian restricted to $\one^\perp$ is a polynomial in the edge weights. This polynomial is not identically zero: a path has simple Laplacian spectrum, and assigning sufficiently small positive weights to its missing edges preserves simplicity. Its zero set is therefore a relatively closed definable subset of codimension at least one. Outside this set, the restriction has simple spectrum. Its $n-1$ ordered eigenlines are the critical points of the quadratic Rayleigh quotient on projective space and have successive indices $0,\ldots,n-2$. Since $b_j(\mathbb RP^{n-2};\mathbb F_2)=1$ for $0\leq j\leq n-2$, the Morse function is perfect over $\mathbb F_2$.
\end{proof}

\section{The singular transition at \texorpdfstring{$p=2$}{p=2}}
\label{sec:singular-transition}

For uniform weights, the collapse at $p=2$ conceals the $R_n-1$ critical points present on either side. Rescaling the quotient by $p-2$ reveals the logarithmic functional governing the transition.

\begin{lemma}[Logarithmic divided differences near $p=2$]
\label{lem:scalar-log}
For every $M>0$, as $p\to2$,
\begin{equation}\label{eq:scalar-log-C1}
 \frac{|u|^p-u^2}{p-2}\rightarrow u^2\log|u|
 \quad\hbox{in }C^1([-M,M]),
\end{equation}
where $0^2\log0=0$ and the derivative at zero is zero. Moreover, uniformly on $[-M,M]$,
\begin{equation}\label{eq:Phi-log-uniform}
 \Phi_p(u)\rightarrow u,\qquad
 \frac{\Phi_p(u)-u}{p-2}\rightarrow u\log|u|.
\end{equation}
\end{lemma}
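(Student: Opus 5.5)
The plan is to reduce everything to the one-variable exponential identity $|u|^p-u^2=u^2\bigl(e^{(p-2)\log|u|}-1\bigr)$ for $u\ne0$, together with a uniform remainder estimate for $e^s-1-s$. Write $\epsilon=p-2$ and $L=\log|u|$. For $u\ne0$,
$$
 \frac{|u|^p-u^2}{\epsilon}-u^2L
 =u^2L\,\frac{e^{\epsilon L}-1-\epsilon L}{\epsilon L}.
$$
I will use the elementary bound $|e^s-1-s|\leq \tfrac12 s^2e^{|s|}$. Then the error is at most $\tfrac12|\epsilon|\,u^2L^2e^{|\epsilon||L|}$. For $|u|\leq M$ and $|\epsilon|\leq\tfrac12$, this is $|\epsilon|$ times a function bounded on $[-M,M]$: for $|u|\leq1$ one has $e^{|\epsilon||L|}=|u|^{-|\epsilon|}\leq|u|^{-1/2}$, so $u^2L^2|u|^{-1/2}$ is bounded near $0$, and for $1\leq|u|\leq M$ everything is trivially bounded. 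At $u=0$ both sides vanish. This gives uniform convergence of the function values with rate $O(|p-2|)$.

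For the $C^1$ part, I differentiate. For $u\ne0$, $\frac{d}{du}\frac{|u|^p-u^2}{\epsilon}=\frac{p\Phi_p(u)-2u}{\epsilon}$ and $\frac{d}{du}(u^2L)=2uL+u$. I split
$$
 \frac{p\Phi_p(u)-2u}{\epsilon}=\Phi_p(u)+2\,\frac{\Phi_p(u)-u}{\epsilon},
$$
so the derivative claim reduces exactly to the two statements in \eqref{eq:Phi-log-uniform}. Writing $\Phi_p(u)-u=u\bigl(e^{\epsilon L}-1\bigr)$, the same remainder estimate gives $\bigl|\frac{\Phi_p(u)-u}{\epsilon}-uL\bigr|\leq\tfrac12|\epsilon|\,|u|L^2|u|^{-1/2}$ near $0$. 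This tends to zero uniformly because $|u|^{1/2}L^2$ is bounded. Similarly, $|\Phi_p(u)-u|\leq|\epsilon|\,|u||L|e^{|\epsilon||L|}\to0$ uniformly. At $u=0$ every derivative is zero, since $p>1$ and $p\Phi_p$ and $2uL+u$ extend continuously by $0$. The limit of the derivatives is therefore $u+2uL$, which is the derivative of $u^2\log|u|$.

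The only delicate point is the endpoint $u=0$. The factor $e^{|\epsilon||L|}=|u|^{-|\epsilon|}$ blows up for $p<2$, and it has to be absorbed by the powers of $|u|$ in front. Restricting to $|p-2|\leq\tfrac12$ and using boundedness of $|u|^{a}L^2$ on $[-M,M]$ for $a>0$ handles this. The argument treats $p<2$ and $p>2$ simultaneously, so no case split is needed.
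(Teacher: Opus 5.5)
Your proof is correct and follows essentially the paper's route. The paper likewise writes $(\Phi_p(u)-u)/(p-2)=u\log|u|\int_0^1|u|^{\theta(p-2)}\,d\theta$. It reduces the $C^1$ claim to uniform convergence of this quotient through the identity $F_\epsilon'=u+(2+\epsilon)\gamma_\epsilon$, which is equivalent to your splitting $\Phi_p+2\gamma_\epsilon$. It also absorbs the factor $|u|^{-|p-2|}$ near $u=0$ into a positive power of $|u|$.

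The only difference is in how the convergence is estimated. Your second-order remainder $|e^s-1-s|\le\frac12 s^2e^{|s|}$ gives an explicit uniform $O(|p-2|)$ rate. The paper instead argues qualitatively, splitting into $|u|\le\delta$ and $\delta\le|u|\le M$.
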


\begin{proof}
Let $\epsilon=p-2$ and, for $\epsilon\ne0$, set
$$
 \gamma_\epsilon(u)=
 \frac{\Phi_{2+\epsilon}(u)-u}{\epsilon},
 \qquad
 F_\epsilon(u)=
 \frac{|u|^{2+\epsilon}-u^2}{\epsilon}.
$$
At $\epsilon=0$ set $\gamma_0(u)=u\log|u|$ and $F_0(u)=u^2\log|u|$, with value zero at the origin. For $u\ne0$,
\begin{equation}\label{eq:scalar-mean-exponent}
 \gamma_\epsilon(u)
 =u\log|u|\int_0^1|u|^{\theta\epsilon}\,d\theta,
 \qquad F_\epsilon(u)=u\gamma_\epsilon(u).
\end{equation}
Fix $0<\epsilon_0<1$. If $0<|u|\leq1$ and $|\epsilon|\leq\epsilon_0$, then
$
 |\gamma_\epsilon(u)|
 \leq |u|^{1-\epsilon_0}|\log|u||.
$
The right-hand side tends to zero as $u\to0$. On $\delta\leq|u|\leq M$, the expression in \eqref{eq:scalar-mean-exponent} converges uniformly to $u\log|u|$. Together with the preceding estimate on $|u|\leq\delta$, this proves $\gamma_\epsilon\to\gamma_0$ uniformly. Also,
$
 F_\epsilon'(u)=u+(2+\epsilon)\gamma_\epsilon(u),
$
including at $u=0$ by continuous extension. This proves \eqref{eq:scalar-log-C1}; the two assertions in \eqref{eq:Phi-log-uniform} follow at once.
\end{proof}

The scalar expansion must next be propagated through the $p$-dependent minimising translation in the denominator.

\begin{lemma}[Derivative of the $p$-centre at $p=2$]
\label{lem:centre-derivative}
Let $\Sph(H)$ be the Euclidean unit sphere in $H=\one^\perp$. Uniformly for $x\in\Sph(H)$,
\begin{equation}\label{eq:centre-derivative}
 c_p(x)=O(|p-2|),\qquad
 \frac{c_p(x)}{p-2}\rightarrow
 \dot c(x):=-\frac1n\sum_i x_i\log|x_i|.
\end{equation}
If
$$
 S(x)=\sum_i x_i^2,\qquad
 B(x)=\sum_i x_i^2\log|x_i|,
$$
then
\begin{equation}\label{eq:mass-C1-expansion}
 \frac{\mathfrak m_p-S}{p-2}\rightarrow B
 \quad\hbox{in }C^1(\Sph(H)).
\end{equation}
\end{lemma}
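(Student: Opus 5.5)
The plan is to treat $c_p(x)$ as the zero of the monotone function $c\mapsto\sum_i\Phi_p(x_i+c)$ and expand around $p=2$, where $c_2(x)=0$ because $x\in H$. Then the envelope formula \eqref{eq:mp-envelope} transfers this expansion to $\mathfrak m_p$ and its derivative. Throughout, write $\epsilon=p-2$ and use the functions $\gamma_\epsilon$, $F_\epsilon$ from the proof of Lemma~\ref{lem:scalar-log}, with $M=2$. The relevant coordinates stay in $[-2,2]$ once $|c_p|\le1$, and the bound $|c_p(x)|\le\max_i|x_i|\le1$ from Lemma~\ref{lem:p-centre} guarantees this.

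\emph{Step 1: $c_p=O(|\epsilon|)$.} The function $g(c)=\sum_i\Phi_p(x_i+c)$ is increasing with $g(c_p)=0$. Writing $\Phi_p(u)=u+\epsilon\gamma_\epsilon(u)$ and using $\sum_ix_i=0$ gives
$$
 0=g(c_p)=nc_p+\epsilon\sum_i\gamma_\epsilon(x_i+c_p).
$$
Lemma~\ref{lem:scalar-log} makes $\gamma_\epsilon$ uniformly bounded on $[-2,2]$, so $|c_p|\le C|\epsilon|$ uniformly on $\Sph(H)$.

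\emph{Step 2: the derivative of the centre.} The same identity gives
$$
 \frac{c_p}{\epsilon}=-\frac1n\sum_i\gamma_\epsilon(x_i+c_p).
$$
Now $\gamma_\epsilon\to\gamma_0$ uniformly, $\gamma_0$ is uniformly continuous on $[-2,2]$, and $c_p\to0$ uniformly. Together these give uniform convergence to $-\frac1n\sum_ix_i\log|x_i|$, which is \eqref{eq:centre-derivative}. No differentiability of $c_p$ in $p$ is needed, which is useful for $1<p<2$, where Lemma~\ref{lem:p-centre} gives no smoothness.

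\emph{Step 3: $C^0$ part of \eqref{eq:mass-C1-expansion}.} Since $\mathfrak m_p(x)=\min_c\sum_i|x_i+c|^p$, I would bound $\mathfrak m_p$ from both sides. Evaluating at $c=0$ gives the upper bound $\mathfrak m_p\le\sum|x_i|^p=S+\epsilon\sum F_\epsilon(x_i)$. For the lower bound, evaluate at $c_p$:
$$
 \mathfrak m_p=\sum(x_i+c_p)^2+\epsilon\sum F_\epsilon(x_i+c_p)=S+nc_p^2+\epsilon\sum F_\epsilon(x_i+c_p).
$$
Here $nc_p^2=O(\epsilon^2)$ by Step 1, and $F_\epsilon\to F_0$ uniformly with $F_0$ uniformly continuous. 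So $(\mathfrak m_p-S)/\epsilon\to B$ uniformly; the exact expression alone suffices, and the upper bound is not really needed.

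\emph{Step 4: $C^1$ convergence.} This is the main obstacle, because $\mathfrak m_p$ and $S$ must be differentiated tangentially on $\Sph(H)$. By \eqref{eq:mp-envelope}, for tangent $h\in H$,
$$
 D\mathfrak m_p[h]=p\sum_i\Phi_p(x_i+c_p)h_i,\qquad DS[h]=2\sum_ix_ih_i,\qquad DB[h]=\sum_i\bigl(2x_i\log|x_i|+x_i\bigr)h_i.
$$
Expanding,
$$
 p\Phi_p(x_i+c_p)-2x_i=\epsilon\Phi_p(x_i+c_p)+2c_p+2\epsilon\gamma_\epsilon(x_i+c_p).
$$
After dividing by $\epsilon$, the first term tends to $x_i$ and the third to $2x_i\log|x_i|$, both uniformly by \eqref{eq:Phi-log-uniform}, Step 1 and uniform continuity. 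The middle term gives $2c_p/\epsilon\to2\dot c$, but it contributes $2(c_p/\epsilon)\sum_ih_i=0$ because $h\in H$. So the differentials converge uniformly in $x$ and in unit $h$, and the tangential derivatives on $\Sph(H)$ converge as well, since they are restrictions of these ambient differentials. The delicate point is uniformity near zero coordinates, where $\log|x_i|$ blows up. The uniform convergence in Lemma~\ref{lem:scalar-log} on the whole interval $[-M,M]$, combined with the uniform shift $c_p\to0$, handles this.
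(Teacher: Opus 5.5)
Your proposal is correct and follows essentially the same route as the paper. It uses the same exact centring identity $0=nc_p+\epsilon\sum_i\gamma_\epsilon(x_i+c_p)$, first for $c_p=O(|\epsilon|)$ and then for the limit $c_p/\epsilon\to\dot c$. It uses the same exact value formula $\mathfrak m_p=S+nc_p^2+\epsilon\sum_iF_\epsilon(x_i+c_p)$. For the differentials it uses the envelope identity, with the common-translation term cancelled by $\sum_ih_i=0$.
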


\begin{proof}
Set $\epsilon=p-2$ and $c_\epsilon=c_{2+\epsilon}(x)$. A minimising translation lies between $-\max_i x_i$ and $-\min_i x_i$, so all $x_i+c_\epsilon$ remain in a common bounded interval. The centring equation and Lemma~\ref{lem:scalar-log} give the exact identity
$$
 0=\sum_i\Phi_{2+\epsilon}(x_i+c_\epsilon)
 =n c_\epsilon+\epsilon\sum_i\gamma_\epsilon(x_i+c_\epsilon).
$$
Uniform boundedness of $\gamma_\epsilon$ first gives $c_\epsilon=O(|\epsilon|)$; uniform convergence in \eqref{eq:scalar-mean-exponent} then gives \eqref{eq:centre-derivative}.

Since $\sum_i x_i=0$,
\begin{equation}\label{eq:mass-divided-value}
 \frac{\mathfrak m_{2+\epsilon}(x)-S(x)}{\epsilon}
 =\sum_iF_\epsilon(x_i+c_\epsilon)
 +\frac{n c_\epsilon^2}{\epsilon},
\end{equation}
and the right side converges uniformly to $B(x)$. For $h\in H$, the envelope identity \eqref{eq:mp-envelope} gives
\begin{align}
 \frac{D\mathfrak m_{2+\epsilon}(x)[h] -D\mathfrak m_2(x)[h]}{\epsilon}
 =\sum_i\left[x_i+(2+\epsilon)\gamma_\epsilon(x_i+c_\epsilon)
 \right]h_i.
 \label{eq:mass-divided-differential}
\end{align}
The term involving the common translation vanishes because $(2+\epsilon)c_\epsilon\sum_i h_i/\epsilon=0$. Uniform convergence of $\gamma_\epsilon$, together with $c_\epsilon\to0$, proves convergence of the differentials in operator norm and hence \eqref{eq:mass-C1-expansion}.
\end{proof}

\begin{theorem}[The $C^1$ first-order logarithmic limit at $p=2$]
\label{thm:log-blowup}
For $0\ne x\in\one^\perp$, define
\begin{equation}\label{eq:log-functional}
 \mathcal H_n([x])=
 \frac{\displaystyle
 \sum_{i<j}(x_i-x_j)^2\log|x_i-x_j|
 -n\sum_i x_i^2\log|x_i|}
 {\displaystyle\sum_i x_i^2},
\end{equation}
where $0^2\log0=0$. For $x\in\one^\perp$, the identity $\sum_{i<j}(x_i-x_j)^2=n\sum_i x_i^2$ shows that the logarithmic terms introduced by rescaling $x$ cancel. Hence $\mathcal H_n([x])$ is well defined on $\mathbb P(\one^\perp)$. On this projective space,
\begin{equation}\label{eq:p2-expansion}
 \frac{\mathcal R_{p,\one}-n}{p-2}
 \rightarrow\mathcal H_n
 \quad\hbox{in }C^1
 \qquad(p\to2).
\end{equation}
At the eigenline represented by a three-level vector with positive block size $k$ and negative block size $\ell$,
\begin{equation}\label{eq:H-value}
 \mathcal H_n=(k+\ell)\log(k+\ell)-k\log k-\ell\log\ell.
\end{equation}
\end{theorem}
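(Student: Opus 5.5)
We work throughout on the compact sphere $\Sph(H)$ of the mean-zero hyperplane $H=\one^\perp$. Its quotient by $\pm1$ is $\mathbb P(\one^\perp)$, and $C^1$ convergence of even functions on $\Sph(H)$ is equivalent to $C^1$ convergence on projective space. On $\Sph(H)$ we write
$$
 \mathcal R_{p,\one}(x)=\frac{E_p(x)}{\mathfrak m_p(x)},
 \qquad E_p(x)=\sum_{i<j}|x_i-x_j|^p .
$$
Set $\epsilon=p-2$, $S(x)=\sum_ix_i^2$, $B(x)=\sum_ix_i^2\log|x_i|$, and $A(x)=\sum_{i<j}(x_i-x_j)^2\log|x_i-x_j|$. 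On $\Sph(H)$ we have $S\equiv1$ and $E_2=nS$.

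\textbf{Numerator.} The differences $x_i-x_j$ stay in $[-2,2]$ on $\Sph(H)$. Lemma~\ref{lem:scalar-log}, applied with $M=2$ to each term $F_\epsilon(x_i-x_j)$ and composed with the linear maps $x\mapsto x_i-x_j$, therefore gives
$$
 \frac{E_p-E_2}{\epsilon}\rightarrow A\quad\hbox{in }C^1(\Sph(H)).
$$

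\textbf{Denominator.} Lemma~\ref{lem:centre-derivative} gives $(\mathfrak m_p-S)/\epsilon\to B$ in $C^1(\Sph(H))$. Here we use that the derivative of $\mathfrak m_p$ restricted to the sphere is the tangential part of the ambient derivative computed there.

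\textbf{Algebraic identity.} We write
$$
 \frac{\mathcal R_{p,\one}-n}{\epsilon}
 =\frac{1}{\mathfrak m_p}\left(\frac{E_p-E_2}{\epsilon}
 -n\,\frac{\mathfrak m_p-S}{\epsilon}\right)
 +\frac{E_2-nS}{\epsilon\,\mathfrak m_p}.
$$
The last term vanishes identically because $E_2=nS$ on $H$. Since $\mathfrak m_p\to S=1$ in $C^1$ (it is $S+O(\epsilon)$ in $C^1$ by the same lemma), $\mathfrak m_p$ is bounded away from zero for $|\epsilon|$ small. The quotient rule and products of $C^1$-convergent families then give $C^1$ convergence to $(A-nB)/S=\mathcal H_n$.

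\textbf{Well-definedness.} For $t>0$, replacing $x$ by $tx$ adds $\log t\,(\sum_{i<j}(x_i-x_j)^2-n\sum_ix_i^2)=0$ to the numerator, and the expression is even in $x$. This is the remark in the statement, and it shows that the limit descends to $\mathbb P(\one^\perp)$.

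\textbf{The value \eqref{eq:H-value}.} By the well-definedness just shown we may use any representative, and we take the barycentre representative with $x=1/k$ on $P$ and $x=-1/\ell$ on $N$. Let $s=k+\ell$. Then $S=1/k+1/\ell=s/(k\ell)$ and
$$
 nB=-n\,\frac{s}{k\ell}\cdot\frac{k\log k+\ell\log\ell}{s}
 \cdot\frac{1}{1}
$$
up to grouping; more precisely $nB=-n(\log k/k+\log\ell/\ell)$. The differences in $A$ are of three types:
\begin{itemize}
\item $P$--$N$ pairs have difference $s/(k\ell)$ and occur $k\ell$ times;
\item $P$--$Z$ pairs have difference $1/k$ and occur $kr$ times;
\item $N$--$Z$ pairs have difference $1/\ell$ and occur $\ell r$ times.
\end{itemize}
This gives
$$
 A=\frac{s^2}{k\ell}\log\frac{s}{k\ell}-\frac{r\log k}{k}-\frac{r\log\ell}{\ell}.
$$
Forming $(A-nB)/S$ with $n=s+r$, the $r$-terms cancel. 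Dividing by $S=s/(k\ell)$ leaves
$$
 s\log s-s\log(k\ell)+\ell\log k+k\log\ell=s\log s-k\log k-\ell\log\ell ,
$$
as claimed. As a cross-check, this equals $h(C)=\frac{d}{dp}\lambda_p(C)|_{p=2}$ from Proposition~\ref{prop:uniform-entropy}, consistent with the $C^1$ limit.

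\textbf{Main obstacle.} The only delicate point is uniformity of the denominator's $C^1$ expansion near the coordinate hyperplanes, where $\log$ is singular and $c_p$ moves with $p$. That is exactly what Lemmas~\ref{lem:scalar-log} and~\ref{lem:centre-derivative} supply; with them in hand the remaining steps are bookkeeping.
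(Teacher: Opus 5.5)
Your proof is correct and follows the paper's argument. The paper also works on $\Sph(H)$, combines Lemmas~\ref{lem:scalar-log} and~\ref{lem:centre-derivative} through the exact identity $(\mathcal R_{2+\epsilon,\one}-n)/\epsilon=(A_\epsilon-nB_\epsilon)/(S+\epsilon B_\epsilon)$ (your decomposition, using $E_2=nS$), and then substitutes the three-level representative $(\ell,-k,0)$, which is proportional to your barycentre. The only blemish is your first displayed formula for $nB$: as written it equals $-n(\log k/\ell+\log\ell/k)$, so delete it and keep your correct $nB=-n(\log k/k+\log\ell/\ell)$.
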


\begin{proof}
Work first on the Euclidean unit sphere in $H$. Set
$$
 E_p(x)=\sum_{i<j}|x_i-x_j|^p,\qquad
 A(x)=\sum_{i<j}(x_i-x_j)^2\log|x_i-x_j|
$$
and, with $\epsilon=p-2$, define
$$
 A_\epsilon(x)=\frac{E_{2+\epsilon}(x)-E_2(x)}{\epsilon},
 \qquad
 B_\epsilon(x)=
 \frac{\mathfrak m_{2+\epsilon}(x)-S(x)}{\epsilon}.
$$
Lemma~\ref{lem:scalar-log}, applied to the finitely many difference forms, gives $A_\epsilon\to A$ in $C^1(\Sph(H))$, while Lemma~\ref{lem:centre-derivative} gives $B_\epsilon\to B$ there. Since $E_2=nS$, the exact identity
\begin{equation}\label{eq:exact-log-quotient}
 \frac{\mathcal R_{2+\epsilon,\one}-n}{\epsilon}
 =
 \frac{A_\epsilon-nB_\epsilon}{S+\epsilon B_\epsilon}
\end{equation}
holds. Its denominator is uniformly bounded away from zero on $\Sph(H)$, and hence the right side converges in $C^1$ to $(A-nB)/S=\mathcal H_n$. All the functions involved are antipodally invariant and homogeneous of degree zero, so the convergence descends to projective space. Substitution of the values $\ell,-k,0$ on blocks of sizes $k,\ell,r$ gives \eqref{eq:H-value}; all terms involving $r$ cancel.
\end{proof}

\subsection{Critical set and local form of the logarithmic model}
\label{subsec:logarithmic-model}

We first determine the critical set of the limiting functional and then its local critical groups. Because the logarithmic terms are only $C^1$ at vanishing coordinate and edge forms, the local analysis requires a substitute for the ordinary Morse lemma.

\begin{theorem}[Critical points of the logarithmic model]
\label{thm:H-critical-set}
The critical points of $\mathcal H_n$ are exactly the cell barycentres $b_C$, $C\in\Cn$. At the barycentre with block sizes $(k,\ell,r)$,
\begin{equation}\label{eq:H-cell-value}
 \mathcal H_n(b_C)
 =h(C)
 =(k+\ell)\log(k+\ell)-k\log k-\ell\log\ell.
\end{equation}
Consequently $\mathcal H_n$ has exactly $R_n-1$ critical points. Moreover, $\mathcal H_n(b_C)<\mathcal H_n(b_D)$ whenever $D$ is a proper coface of $C$.
\end{theorem}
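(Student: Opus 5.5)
We have to find the critical points of $\mathcal H_n$ on $\mathbb P(\one^\perp)$. I would not attempt a direct logarithmic Euler--Lagrange analysis. Instead I would derive the critical-point equation for $\mathcal H_n$ and then run the level-collapse argument of Lemma~\ref{lem:uniform-level-collapse} in its logarithmic, infinitesimal version.

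\textbf{Step 1: the critical-point equation.} $\mathcal H_n$ is $C^1$, since $t^2\log|t|$ is $C^1$ with derivative $2t\log|t|+t$. Write $\mathcal H_n=(A-nB)/S$ with $A,B,S$ as in the proof of Theorem~\ref{thm:log-blowup}. At $x\in\Sph(H)$, criticality means
$$
DA-nDB=\mathcal H_n\,DS \quad\hbox{on }H.
$$
Computing each differential gives
$$
\sum_{j\ne i}\bigl[2(x_i-x_j)\log|x_i-x_j|+(x_i-x_j)\bigr]-n\bigl[2x_i\log|x_i|+x_i\bigr]-2\mathcal H_n x_i=\mu
$$
for all $i$, where $\mu$ is a Lagrange constant coming from the translation direction. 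Because $\sum_{j\ne i}(x_i-x_j)=nx_i$ on $H$, the linear terms cancel, and we obtain
$$
\sum_{j\ne i}\gamma_0(x_i-x_j)-n\gamma_0(x_i)-\mathcal H_n x_i=\mu/2,
\qquad \gamma_0(u)=u\log|u|.
$$
This is exactly the $\epsilon$-derivative at $\epsilon=0$ of the eigen-equation $\sum_j\Phi_p(x_i-x_j)=\lambda\Phi_p(x_i+c)$. The constant $\mu$ plays the role of the $p$-centre derivative $\dot c$. Summing over $i$ and using oddness of $\gamma_0$ gives $\mu/2=-\sum_i\gamma_0(x_i)$, which matches \eqref{eq:centre-derivative}.

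\textbf{Step 2: level collapse.} This is the main obstacle. I would mimic the proof of Lemma~\ref{lem:uniform-level-collapse}: take the extreme negative levels $-U<-u$, evaluate the equation at those two coordinates, and subtract. The inequalities in that proof came from strict superadditivity or subadditivity of $t^q$ with $q\ne1$. The logarithmic analogue should be strict superadditivity of $t\log t$ on $t>0$, i.e. $(A+B)\log(A+B)>A\log A+B\log B$, together with a balance identity.

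Before applying this, I would first translate so that the equation is homogeneous. The fact that $\mu$ is independent of $i$ suggests replacing $x$ by the shifted vector $y=x+c\one$ with $c$ chosen to kill $\mu$. Such a $c$ cannot be defined exactly, since $\gamma_0$ is not homogeneous. So I expect to have to handle the $\mu$ term directly: subtracting the equations at two coordinates eliminates $\mu$, and the comparison then uses differences of the form $\gamma_0(x_i-x_j)$ against $\mathcal H_n(x_i-x_{i'})$ and $n(\gamma_0(x_i)-\gamma_0(x_{i'}))$.

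If this direct estimate gets messy, my fallback is a perturbative route. Critical points of $\mathcal H_n$ are $C^1$-limits of critical points of $(\mathcal R_{p,\one}-n)/(p-2)$ by \eqref{eq:p2-expansion}, and those critical points are exactly the barycentres, which are independent of $p$ in the $\Theta_p$ picture and converge to $b_C$ by Corollary~\ref{cor:uniform-barycentres}. However, $C^1$ convergence only shows that barycentres are critical for $\mathcal H_n$, and it gives no exclusion of other critical points. So the exclusion really needs the Step 2 inequality.

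\textbf{Step 3: three-level points, counting, and ordering.} Once the three-level structure is established, the constraint $x\perp\one$ forces the levels on the positive and negative blocks to be $1/k$ and $-1/\ell$, i.e. the point is $b_C$. Conversely, every $b_C$ is critical: either substitute directly into the Step 1 equation (a symmetric computation in which only three distinct rows occur), or use the $C^1$ limit of the critical points $b_C$ of $\mathcal R_{p,\one}$. The value \eqref{eq:H-cell-value} is \eqref{eq:H-value}. The count $R_n-1$ follows from Theorem~\ref{thm:exact-morse-polynomial}, since it equals $M_n^+(1)$. The face ordering is the statement $h(C')<h(C)$ in Proposition~\ref{prop:face-order}.
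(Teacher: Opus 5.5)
\textbf{Steps 1 and 3 are fine; Step 2 has a genuine gap.} Your Steps~1 and~3 agree with the paper. That includes the critical equation $A_i=hx_i+n\,x_i\log|x_i|+\gamma$ with $A_i=\sum_{j\ne i}(x_i-x_j)\log|x_i-x_j|$ and $\gamma=\mu/2=-\sum_ix_i\log|x_i|$. It also includes the converse (direct substitution, or your $C^1$-limit argument, which is valid for that direction), the value \eqref{eq:H-value}, the count $M_n^+(1)=R_n-1$, and Proposition~\ref{prop:face-order}. The gap is Step~2, which carries the whole content of ``exactly'' in the theorem, and there you give no argument. The route you do sketch eliminates the wrong quantity. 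Subtracting the equations at the two extreme negative coordinates removes $\gamma$ but leaves $h(U-u)$, where $h=\mathcal H_n([x])$ is a global quantity you have no means to control. The logarithms $\log|x_i-x_j|$ are also not normalised, so superadditivity of $s\log s$ has nothing to act on. Your fallback, as you note yourself, only shows that the barycentres are critical.

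\textbf{The missing step.} The true analogue of dividing by $-U^q$ and $-u^q$ in Lemma~\ref{lem:uniform-level-collapse} is to divide the critical equation at $x_i=-U$ and at $x_i=-u$ by the coordinate value. Here the negative coordinates are $-u_i$, the positive ones are $v_j$, $U=\max u_i>u=\min u_i$, and $\varphi(s)=s\log s$ with $0\log0=0$. Since $\sum_{j\ne i}(x_i-x_j)=nx_i$, all $\log U$ terms collect into $n\log U$ and cancel $n\log|x_i|$, leaving
\[
 G_U:=\sum_i\varphi\Bigl(1-\frac{u_i}{U}\Bigr)+\sum_j\varphi\Bigl(1+\frac{v_j}{U}\Bigr)=h-\frac{\gamma}{U},
\]
\[
 G_u:=-\sum_i\varphi\Bigl(\frac{u_i}{u}-1\Bigr)+\sum_j\varphi\Bigl(1+\frac{v_j}{u}\Bigr)=h-\frac{\gamma}{u}.
\]
Subtraction now removes $h$ and keeps $\gamma$. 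The explicit value you computed in Step~1 is exactly what is needed: $\gamma=L_--L_+$ with $L_-=\sum_iu_i\log u_i$ and $L_+=\sum_jv_j\log v_j$, so $G_u-G_U=\delta(L_+-L_-)$ with $\delta=u^{-1}-U^{-1}>0$. On the other hand, $1+v_j/u=(1+v_j/U)+v_j\delta$ and $(u_i/u-1)+(1-u_i/U)=u_i\delta$. Strict superadditivity of $\varphi$, which is strict on the positive block (nonempty since $0\ne x\perp\one$), therefore gives
\[
 G_u-G_U>\sum_j\varphi(v_j\delta)-\sum_i\varphi(u_i\delta)
 =\delta(L_+-L_-)+\delta\log\delta\Bigl(\sum_jv_j-\sum_iu_i\Bigr)=\delta(L_+-L_-).
\]
The ``balance identity'' you need is simply $\sum_iu_i=\sum_jv_j$, i.e.\ $x\perp\one$. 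The resulting contradiction forces all negative magnitudes to be equal, and applying the argument to $-x$ handles the positive block. Without this computation your proposal proves only that the barycentres lie in the critical set, not that they exhaust it.
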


\begin{proof}
Let $[x]\in\Pj(H)$ be critical, choose a representative $x\in H\setminus\{0\}$, and set $h=\mathcal H_n([x])$. Define
$$
 A_i=\sum_{j\ne i}(x_i-x_j)\log|x_i-x_j|.
$$
The derivative of $u^2\log|u|$ is $2u\log|u|+u$. In the derivative of the numerator of \eqref{eq:log-functional}, the extra linear edge terms sum to $n x_i$ because $\sum_jx_j=0$, and cancel the corresponding extra mass term. Thus the derivative of the numerator has $i$th component
$
 2\bigl(A_i-nx_i\log|x_i|\bigr).
$
Let $\mathcal N$ denote the numerator of \eqref{eq:log-functional} and set $S(x)=\sum_i x_i^2$. Projective criticality gives $d(\mathcal N-hS)_x=0$ on $x^\perp\cap H$. Since the restriction of $\mathcal N-hS$ to $H$ is homogeneous of degree two and vanishes at $x$, its radial derivative also vanishes. Hence $d(\mathcal N-hS)_x$ annihilates $H$, so there is a constant $\gamma$ such that
\begin{equation}\label{eq:H-critical-equation}
 A_i=h x_i+n x_i\log|x_i|+\gamma.
\end{equation}
Summing in $i$ and using $\sum_iA_i=\sum_i x_i=0$ gives
\begin{equation}\label{eq:H-multiplier}
 \gamma=-\sum_i x_i\log|x_i|.
\end{equation}

Write the negative coordinates as $-u_i$, the positive coordinates as $v_j$, and set
$$
 L_-=\sum_i u_i\log u_i,\qquad
 L_+=\sum_jv_j\log v_j.
$$
Then
$$
 \gamma=L_--L_+,\qquad \sum_i u_i=\sum_jv_j.
$$
Suppose that the negative magnitudes are not all equal. Set
$$
 U=\max_i u_i,\qquad u=\min_i u_i,\qquad
 \delta=u^{-1}-U^{-1}>0
$$
and
\begin{align*}
 G_U&=\sum_i\left(1-\frac{u_i}{U}\right)\log\left(1-\frac{u_i}{U}\right)
 +\sum_j\left(1+\frac{v_j}{U}\right)\log\left(1+\frac{v_j}{U}\right),\\
 G_u&=-\sum_i\left(\frac{u_i}{u}-1\right)\log\left(\frac{u_i}{u}-1\right)
 +\sum_j\left(1+\frac{v_j}{u}\right)\log\left(1+\frac{v_j}{u}\right),
\end{align*}
with $0\log0=0$. Dividing \eqref{eq:H-critical-equation} by the coordinate values at $x_{i_U}=-U$ and $x_{i_u}=-u$ gives
$$
 \frac{A_{i_U}}{-U}=n\log U+G_U,\qquad
 \frac{A_{i_u}}{-u}=n\log u+G_u,
$$
and hence
\begin{equation}\label{eq:H-extreme-equality}
 G_U=h-\frac \gamma U,\qquad G_u=h-\frac \gamma u,\qquad
 G_u-G_U=(L_+-L_-)\delta.
\end{equation}

For $\varphi(s)=s\log s$, strict superadditivity gives, for every positive $v_j$,
$$
 \varphi\left(1+\frac{v_j}{u}\right)
 -\varphi\left(1+\frac{v_j}{U}\right)
 >\varphi(v_j\delta),
$$
whereas superadditivity gives
$$
 \varphi\left(\frac{u_i}{u}-1\right)
 +\varphi\left(1-\frac{u_i}{U}\right)
 \leq\varphi(u_i\delta).
$$
After summation,
\begin{align*}
 G_u-G_U
 &>\sum_j\varphi(v_j\delta)-\sum_i\varphi(u_i\delta)\\
 &=\delta(L_+-L_-)+\delta\log\delta\left(\sum_jv_j-\sum_i u_i\right)
 =\delta(L_+-L_-),
\end{align*}
contradicting \eqref{eq:H-extreme-equality}. All negative magnitudes are therefore equal. Applying the argument to $-x$ shows that all positive magnitudes are equal. If a sign block is a singleton, the corresponding conclusion is automatic, so the argument covers every block cardinality.

Conversely, let $x^0$ take the values $\ell,-k,0$ on $P,N,Z$. A direct calculation gives
$$
 A_i=
 \begin{cases}
 \ell\bigl((k+\ell)\log(k+\ell)+r\log\ell\bigr),&i\in P,\\
 -k\bigl((k+\ell)\log(k+\ell)+r\log k\bigr),&i\in N,\\
 k\ell(\log k-\log\ell),&i\in Z,
 \end{cases}
$$
while
$$
 \gamma=k\ell(\log k-\log\ell),\qquad
 h=(k+\ell)\log(k+\ell)-k\log k-\ell\log\ell.
$$
Thus \eqref{eq:H-critical-equation} holds, proving the converse and the value formula. The cell count is $M_n^+(1)=R_n-1$, and strict face ordering is Proposition~\ref{prop:face-order}.
\end{proof}

At a point $x^0$ with values $\ell,-k,0$ on $P,N,Z$, set $s=k+\ell$. When $r\geq1$, use the mean-zero projective chart
\begin{align}
 x_i&=\ell+\xi_i-\frac rs t &&(i\in P),\notag\\
 x_j&=-k+\eta_j-\frac rs t &&(j\in N),\notag\\
 x_a&=t+\zeta_a &&(a\in Z),
 \label{eq:log-chart}
\end{align}
where $\sum_P\xi_i=\sum_N\eta_j=\sum_Z\zeta_a=0$. When $r=0$, omit $\zeta$ and $t$. Let
$$
 \rho^2=\|\xi\|^2+\|\eta\|^2+\|\zeta\|^2+t^2
$$
with the omitted terms understood, and define
\begin{equation}\label{eq:log-Q}
 Q_{k,\ell,r}
 =k\|\xi\|^2+\ell\|\eta\|^2
 -(k+\ell)\|\zeta\|^2-nr t^2.
\end{equation}

\begin{lemma}[Logarithmic splitting]
\label{lem:log-splitting}
As $\rho\downarrow0$ in the chart \eqref{eq:log-chart},
\begin{equation}\label{eq:log-splitting}
 \mathcal H_n([x])-\mathcal H_n([x^0])
 =\frac{\log\rho}{k\ell(k+\ell)}Q_{k,\ell,r}
 +R(\xi,\eta,\zeta,t),
\end{equation}
where, on a sufficiently small chart,
\begin{equation}\label{eq:log-remainder}
 |R|\leq C\rho^2,\qquad \|DR\|\leq C\rho.
\end{equation}
\end{lemma}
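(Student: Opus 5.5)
We expand the numerator $\mathcal N(x)$ and the denominator $S(x)$ of \eqref{eq:log-functional} directly in the chart \eqref{eq:log-chart}, writing $\mathcal H_n=\mathcal N/S$. In the chart, $S(x)=S(x^0)+O(\rho^2)$ with no linear term: the linear term $2\langle x^0,\delta x\rangle$ vanishes because $\sum_P\xi=\sum_N\eta=0$ and the $t$-shift $-(r/s)t$ pairs with $\sum_P\ell-\sum_Nk=0$. Since $x^0$ is critical (Theorem~\ref{thm:H-critical-set}), only second-order behaviour matters. We sort the terms $u^2\log|u|$ in $\mathcal N$ by whether the form $u$ vanishes at $x^0$.

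\emph{Nonvanishing forms.} These are the coordinates on $P\cup N$ and the differences between blocks $P$--$N$, $P$--$Z$ and $N$--$Z$. Each such term is real analytic near $x^0$. Together with the analytic $S$ and the vanishing first derivative, they contribute a $C^2$ function with zero differential at the base point. That contribution is $O(\rho^2)$ with derivative $O(\rho)$, so it goes into $R$.

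\emph{Vanishing forms.} These are the within-block differences $x_i-x_{i'}$ for $i,i'\in P$ (which equal $\xi_i-\xi_{i'}$), the analogous differences in $N$ and $Z$, and the coordinates $x_a=t+\zeta_a$ for $a\in Z$ (the last entering with weight $-n$). For each vanishing linear form $u$, write
$$
 u^2\log|u|=u^2\log\rho+u^2\log(|u|/\rho).
$$
The second summand is bounded by $C\rho^2$, since $|u|\le C\rho$ and $s^2\log s$ is bounded on bounded sets. Its derivative is $(2u\log(|u|/\rho)+u)Du-u^2\rho^{-1}D\rho$, which is $O(\rho)$ because $|u\log(|u|/\rho)|\le C\rho$. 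Hence these summands also go into $R$.

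The $\log\rho$ parts assemble into $\log\rho$ times a quadratic form. Using $\sum_{i<i'\in P}(\xi_i-\xi_{i'})^2=k\|\xi\|^2$ for mean-zero $\xi$, and similarly for $\eta$ and $\zeta$, together with
$$
 \sum_{a\in Z}(t+\zeta_a)^2=r t^2+\|\zeta\|^2,
$$
the form is
$$
 k\|\xi\|^2+\ell\|\eta\|^2+r\|\zeta\|^2-n\bigl(rt^2+\|\zeta\|^2\bigr)
 =k\|\xi\|^2+\ell\|\eta\|^2-(k+\ell)\|\zeta\|^2-nrt^2,
$$
using $r-n=-(k+\ell)$. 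This is exactly $Q_{k,\ell,r}$. Dividing by $S(x)=S(x^0)+O(\rho^2)$ with $S(x^0)=k\ell^2+\ell k^2=k\ell(k+\ell)$ gives the prefactor in \eqref{eq:log-splitting}.

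The division leaves the error term $Q\log\rho\cdot O(\rho^2)$. This is $O(\rho^4|\log\rho|)$, with derivative $O(\rho^3|\log\rho|)$, so it is absorbed into $R$ on a small chart.

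The main obstacle is the uniform derivative bound $\|DR\|\le C\rho$ for the vanishing-form terms. The function $\log(|u|/\rho)$ is singular wherever $u=0$ but $\rho\neq0$, so the bound must rely on the factor $u$, which kills the singularity, together with the bound $|Du|\le C$. We will check this carefully. We will also check that the cross terms between $\log\rho$ and the $O(\rho^2)$ variation of $S$ keep the correct orders in $C^1$.
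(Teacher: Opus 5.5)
Your proposal is correct and follows the paper's proof essentially step by step. The shared steps are the expansion of $S$ in the chart with no linear term, the splitting $L^2\log|L|=L^2\log\rho+L^2\log(|L|/\rho)$ for the forms vanishing at $x^0$ with the same $O(\rho^2)$ and $O(\rho)$ bounds, the assembly of the $\log\rho$ coefficients into $Q_{k,\ell,r}$, smoothness plus criticality of $[x^0]$ for all remaining terms, and division by $S(x^0)=k\ell(k+\ell)$. Your explicit handling of the quotient cross term $\log\rho\,Q\cdot O(\rho^2)$ fills in a step the paper leaves implicit.
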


\begin{proof}
The denominator $S(x)=\sum_i x_i^2$ is exactly
\begin{equation}\label{eq:log-chart-denominator}
 k\ell(k+\ell)+\|\xi\|^2+\|\eta\|^2+\|\zeta\|^2
 +\frac{nr}{k+\ell}t^2,
\end{equation}
so it has no linear term. For a linear form $L$ vanishing at the base point,
$$
 L^2\log|L|
 =L^2\log\rho+
 L^2\log\frac{|L|}{\rho}.
$$
The second term is $O(\rho^2)$ with derivative $O(\rho)$. Indeed, $z\log|z|$ is bounded on bounded intervals and
$$
 D\left(L^2\log\frac{|L|}{\rho}\right)
 =\left(2L\log\frac{|L|}{\rho}+L\right)DL
 -\frac{L^2}{\rho^2}u,
$$
where $u=(\xi,\eta,\zeta,t)$.

The linear forms vanishing at $x^0$ are precisely the differences within each constant block and the coordinate forms $x_a$ with $a\in Z$. The coefficient of $\log\rho$ in the numerator of \eqref{eq:log-functional} is therefore
\begin{align*}
 \sum_{i<i'\in P}(\xi_i-\xi_{i'})^2
 +\sum_{j<j'\in N}(\eta_j-\eta_{j'})^2
 +\sum_{a<a'\in Z}(\zeta_a-\zeta_{a'})^2
 -n\sum_{a\in Z}(t+\zeta_a)^2
 =Q_{k,\ell,r},
\end{align*}
where we used $\sum_{i<i'\in P}(\xi_i-\xi_{i'})^2=k\|\xi\|^2$ and its analogues. Every other logarithmic term is smooth. Its total linear term cancels because $[x^0]$ is critical. Dividing by $S(x^0)=k\ell(k+\ell)$ and using \eqref{eq:log-chart-denominator} proves both \eqref{eq:log-splitting} and \eqref{eq:log-remainder}.
\end{proof}

The expansion has a nondegenerate quadratic form multiplied by $\log\rho$, but it is not a smooth Morse normal form. Critical groups and local degree provide the appropriate invariants. For an isolated critical point $z$ of a $C^1$ function $f$ and a sufficiently small isolating neighbourhood $U$, define the local critical groups
$$
 C_j(f,z;\mathbb F)
 =H_j\bigl(f^{\leq f(z)}\cap U,\,
 (f^{\leq f(z)}\cap U)\setminus\{z\};\mathbb F\bigr).
$$
By excision, these groups are independent of $U$.
For an isolated critical point $z$, $\deg(\nabla f,z)$ denotes the local Brouwer degree of the gradient in any local chart centred at $z$.

\begin{lemma}[Local continuation of critical groups]
\label{lem:local-critical-group-continuation}
Let $U\subset\mathbb R^d$ be a bounded open neighbourhood of the origin, and suppose that each $G_\tau$, $0\leq\tau\leq1$, is $C^1$ on an open neighbourhood of $\overline U$. Assume that
$$
 [0,1]\rightarrow C^1(\overline U),\qquad
 \tau\mapsto G_\tau,
$$
is continuous, that $G_\tau(0)=0$, and that
$$
 \operatorname{Crit}(G_\tau)\cap\overline U=\{0\}
 \qquad(0\leq\tau\leq1).
$$
Then, for every coefficient field $\mathbb F$,
$$
 C_j(G_0,0;\mathbb F)\cong C_j(G_1,0;\mathbb F)
 \qquad(j\geq0).
$$
Moreover,
$
 \deg(\nabla G_0,U,0)=\deg(\nabla G_1,U,0).
$
\end{lemma}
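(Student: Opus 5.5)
The plan is to use the standard continuation invariance of critical groups (Chang, Mawhin--Willem, Corvellec--Hantoute) for families with a uniformly isolated critical point, adapted to the $C^1$ setting. Invariance along $[0,1]$ follows from invariance on short subintervals: compactness of $[0,1]$ lets us chain finitely many local isomorphisms. So fix $\tau_0$. We aim to show $C_j(G_\tau,0)\cong C_j(G_{\tau_0},0)$ for all $\tau$ near $\tau_0$.

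The first step is a uniform gradient lower bound. The set $\{(\tau,z):\tau\in[0,1],\ z\in\overline U\setminus B_r(0)\}$ is compact and $(\tau,z)\mapsto\nabla G_\tau(z)$ is jointly continuous there, by $C^1$-continuity in $\tau$. Since the critical points lie only at the origin, there is $c_r>0$ with $|\nabla G_\tau|\geq c_r$ off $B_r(0)$, for every $r>0$.

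The second step is the local deformation. For $\tau$ close to $\tau_0$, write $G_\tau=G_{\tau_0}+\epsilon_\tau$ with $\|\epsilon_\tau\|_{C^1(\overline U)}\to0$. Choose nested balls $B_{r_1}\subset B_{r_2}\subset U$. Following the proof of invariance of critical groups under small $C^1$ perturbations, use a pseudo-gradient flow of the linear homotopy $G_{\tau_0}+s\epsilon_\tau$, $s\in[0,1]$, cut off near $\partial B_{r_2}$. On the annulus $B_{r_2}\setminus B_{r_1}$ the gradient bound $c_{r_1}$ holds uniformly in $s$, by the first step applied to the convex combination; there $|\nabla(G_{\tau_0}+s\epsilon_\tau)|\geq c_{r_1}/2$ once $\|\epsilon_\tau\|_{C^1}$ is small. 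The level value $0$ is fixed for all $s$. One then shows that the pairs $(G^{\le0}\cap W,\,G^{\le0}\cap W\setminus\{0\})$ are related by deformation retractions for a Gromoll--Meyer type neighbourhood $W$ built from sublevels and the flow. Concretely, I would take $W=\{|G_{\tau_0}|\le\delta\}\cap\{\text{flow lines meeting }B_{r_1}\}$, with $\delta$ small relative to $c_{r_1}r_1$. Such a $W$ remains admissible for every $s$, because the $s$-derivative of the energy, namely $\epsilon_\tau$, is uniformly small compared with the energy drop $\gtrsim c_{r_1}(r_2-r_1)$ across the annulus. This yields $C_j(G_{\tau_0},0)\cong C_j(G_\tau,0)$ over any field $\mathbb F$.

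Degree invariance is simpler. The map $H(\tau,z)=\nabla G_\tau(z)$ is a continuous homotopy on $\overline U$ with no zeros on $\partial U$. Homotopy invariance of the Brouwer degree then gives equality of $\deg(\nabla G_\tau,U,0)$ for all $\tau$.

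The main obstacle is the second step. Since the $G_\tau$ are merely $C^1$, there is no genuine gradient flow, and the locally Lipschitz pseudo-gradient field (Palais) must be chosen uniformly in $s$. I would first try to cite the stability theorem for critical groups under $C^1$-small perturbations with an isolated critical point, e.g.\ Chang's Theorem~I.5.6 or Corvellec--Hantoute, which only require continuity of the family in $C^1$ and a uniformly isolated critical set. That theorem supplies the local isomorphism directly, and compactness of $[0,1]$ finishes the argument.
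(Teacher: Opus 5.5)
Your proposal is correct and takes essentially the paper's route. Both arguments get the critical-group isomorphism by citing a continuation theorem of Chang--Ghoussoub/Corvellec--Hantoute type for $C^1$ families with a uniformly isolated critical point (the paper cites Perera--Schechter), and both get the degree identity from homotopy invariance, since $\tau\mapsto\nabla G_\tau$ is jointly continuous and has no zeros on $\partial U$.

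The differences are minor. The paper applies the theorem to the whole interval at once, after replacing $G_\tau$ by the coercive cut-off $\chi G_\tau+(1-\chi)\|x\|^2$ so that the Palais--Smale hypothesis holds globally. You instead chain local stability using compactness of $[0,1]$; in finite dimensions Palais--Smale on the compact set $\overline U$ is automatic, so this works. Your hand-built Gromoll--Meyer construction in the second step becomes unnecessary once you cite the theorem. As written it is only heuristic: Gromoll--Meyer pairs for different functions are compared through a sandwich of inclusions, not through deformation retractions.
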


\begin{proof}
Choose an open neighbourhood $U_0$ of the origin with $\overline U_0\Subset U$, and a $C^1$ function $\chi:\mathbb R^d\to[0,1]$ supported in $U$ and equal to one near $\overline U_0$. Extending $\chi G_\tau$ by zero and setting
$$
 \widetilde G_\tau(x)
 =\chi(x)G_\tau(x)+(1-\chi(x))\|x\|^2
$$
yields a coercive family, continuous in $C^1_{\mathrm{loc}}(\mathbb R^d)$. The functions $\widetilde G_\tau$ and $G_\tau$ agree on a neighbourhood of $\overline U_0$. Since $\operatorname{Crit}(G_\tau)\cap\overline U=\{0\}$, the origin is the only critical point of $\widetilde G_\tau$ in $\overline U_0$ for every $\tau$; the two families therefore have the same local critical groups at the origin. In finite dimensions the coercive family $\widetilde G_\tau$ satisfies the Palais--Smale condition. The continuation theorem \cite[Theorem~1.1.4]{PereraSchechter2013} therefore gives the critical-group isomorphisms. The degree identity follows directly from homotopy invariance, since $\nabla G_\tau$ has no zero on $\partial U$.
\end{proof}

\begin{lemma}[Finite-dimensional logarithmic Morse lemma]
\label{lem:log-Morse}
Suppose that $f$ and $R$ are $C^1$ near the origin and that, in a Euclidean chart, for $0<\|u\|$ sufficiently small,
$$
 f(u)=f(0)+\log\|u\|\,Q(u)+R(u),
$$
where $Q$ is a nondegenerate quadratic form and
$$
 |R(u)|\leq C\|u\|^2,\qquad
 \|DR(u)\|\leq C\|u\|.
$$
Then the origin is an isolated critical point, and its critical groups are those of $-Q$. If $m=\indm(-Q)$, then, for all sufficiently small $u\ne0$,
\begin{equation}\label{eq:log-gradient-degree}
 \|\nabla f(u)\|
 \geq c\|u\|\,|\log\|u\||,
 \qquad
 \deg(\nabla f,B_\rho,0)=(-1)^m
\end{equation}
for every sufficiently small $\rho>0$.
\end{lemma}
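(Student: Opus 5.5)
Write $f_0(u)=f(0)+\log\|u\|\,Q(u)$. The plan is to prove the gradient lower bound for $f$ directly, then deform $f$ through the family $G_\tau$ to a model, and finally read off critical groups and degree via Lemma~\ref{lem:local-critical-group-continuation}. Diagonalise $Q$ orthogonally, so that $Q(u)=\sum_i q_iu_i^2$ with all $q_i\ne0$, and set $q_{\min}=\min|q_i|$. The gradient of the model is
$$
\nabla f_0(u)=2\log\|u\|\,Au+\frac{Q(u)}{\|u\|^2}\,u,
$$
where $A=\operatorname{diag}(q_i)$. The first term has norm at least $2q_{\min}\|u\|\,|\log\|u\||$, and the second is $O(\|u\|)$. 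Since $\|\nabla R\|\leq C\|u\|$, we get $\|\nabla f(u)\|\geq c\|u\|\,|\log\|u\||$ for $\|u\|$ small. This proves both the inequality in \eqref{eq:log-gradient-degree} and the isolatedness of the origin.

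For the degree and critical groups, take $G_\tau=f-f(0)-\tau R$ for $\tau\in[0,1]$ on a small ball $U=B_\rho$. Each $G_\tau$ is $C^1$; at the origin the term $\log\|u\|\,Q(u)$ is $O(\|u\|^2|\log\|u\||)$ with gradient $O(\|u\||\log\|u\||)$, so it is $C^1$ with vanishing derivative there. The map $\tau\mapsto G_\tau$ is affine, hence continuous in $C^1(\overline U)$. The gradient bound above holds uniformly in $\tau$, since it used only $\|\nabla(\tau R)\|\leq C\|u\|$, so $\operatorname{Crit}(G_\tau)\cap\overline U=\{0\}$. Lemma~\ref{lem:local-critical-group-continuation} therefore reduces everything to the model $f_0-f(0)=\log\|u\|\,Q(u)$.

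For the model, work on $B_\rho$ with $\rho<1$, where $\log\|u\|<0$. There $\log\|u\|\,Q(u)\leq0$ exactly when $Q(u)\geq0$, i.e.\ exactly when $-Q(u)\leq0$. Thus the sublevel pair of the model at level $0$ coincides with that of $-Q$, namely $(\{-Q\leq0\}\cap B_\rho,\ \{-Q\leq0\}\cap B_\rho\setminus\{0\})$. Hence the critical groups are those of $-Q$: $\mathbb F$ in degree $m=\indm(-Q)$ and zero otherwise. For the degree, deform $2\log\|u\|\,Au+\|u\|^{-2}Q(u)u$ within maps nonvanishing on $\partial B_\rho$. First scale away the second term; this is admissible because the first term dominates on the sphere when $\rho$ is small. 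Then divide by the positive scalar $-2\log\|u\|$. The result is $-Au$, whose degree is $\operatorname{sgn}\det(-A)=(-1)^m$.

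The main obstacle is the continuation step. One must check that $G_\tau$ is genuinely $C^1$ at the origin, uniformly in $\tau$, and that the critical-group identification is legitimate even though the model is not $C^2$. The first point is handled by the $\|u\|^2\log\|u\|$ estimates above. The second follows from the sign identity, which gives the sublevel pairs on the nose, rather than from a Morse lemma.
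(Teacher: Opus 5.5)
Your proof is correct, but you reach the critical groups by a different route from the paper.

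The paper first makes the radial change of variables $v=\sqrt{L}\,u$, with $L=-\log\|u\|$. This turns $f-f(0)$ into $-Q(v)+\widetilde R(v)$, where $\widetilde R(v)=O(\|v\|^2/L)$ and $D\widetilde R(v)=O(\|v\|/L)$. The paper then applies the continuation lemma to $-Q+\tau\widetilde R$, and identifies the sublevel pairs through this radial homeomorphism.

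You stay in the original coordinates instead. You first use the continuation lemma to remove $R$. This is legitimate because the estimate giving $\|\nabla f(u)\|\geq c\|u\|\,|\log\|u\||$ also shows that $\nabla(\log\|u\|\,Q)$ dominates $(1-\tau)\nabla R$ uniformly in $\tau$. You then observe that for $\rho<1$ the pure model $\log\|u\|\,Q(u)$ has exactly the same zero-sublevel pair in $B_\rho$ as $-Q$, because $\log\|u\|<0$ there.

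Your route avoids the rescaling and the check that $\widetilde R$ extends as a $C^1$ function with the required $o$-estimates. It also lets one gradient estimate serve for isolatedness, continuation and degree. The paper's route, in exchange, gives an explicit local conjugacy of $f-f(0)$ to a $C^1$-small perturbation of the nondegenerate quadratic form $-Q$, which is a genuine normal form.

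Your degree computation, homotoping on $\partial B_\rho$ to $2\log\rho\,Au$ and then to $-Au$, is essentially the paper's.
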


\begin{proof}
Set $r=\|u\|$ and $L=-\log r$. For $r<e^{-1/2}$ make the radial change of variables
$
 v=\sqrt L\,u.
$
The function $r\mapsto r\sqrt{-\log r}$ is strictly increasing there, so this is a homeomorphism of neighbourhoods of the origin and a smooth diffeomorphism away from the origin. Since $Q(v)=LQ(u)$, the transformed function has the form
$$
 f(u(v))-f(0)=-Q(v)+\widetilde R(v),
 \qquad \widetilde R(v)=R(u(v)).
$$
Moreover,
$$
 \widetilde R(v)=o(\|v\|^2),\qquad
 D\widetilde R(v)=o(\|v\|).
$$
Indeed, $\|v\|=r\sqrt L$ and $\|D_vu\|=O(L^{-1/2})$; more explicitly, $|\widetilde R(v)|=O(\|v\|^2/L)$ and $\|D\widetilde R(v)\|=O(\|v\|/L)$. Thus $\widetilde R$ extends as a $C^1$ function at the origin with $D\widetilde R(0)=0$.

Write $Q(v)=\langle Av,v\rangle$, where $A$ is invertible, and set
$$
 G_\tau(v)=-Q(v)+\tau\widetilde R(v),\qquad0\leq\tau\leq1.
$$
After shrinking the ball, uniformly in $\tau$ and $v\ne0$,
$$
 \|DG_\tau(v)\|
 \geq2\|Av\|-\|D\widetilde R(v)\|
 \geq c\|v\|.
$$
Hence the origin is the only critical point of every $G_\tau$ in the closed ball. Lemma~\ref{lem:local-critical-group-continuation} gives
$$
 C_*(G_1,0;\mathbb F)\cong C_*(-Q,0;\mathbb F).
$$
Since the radial map is a diffeomorphism off the origin, the chain rule identifies the nonzero critical points of $f-f(0)$ with those of $G_1$. The gradient estimate therefore also proves that the origin is isolated. The radial homeomorphism identifies the local sublevel pair of $f-f(0)$ with that of $G_1$, so its critical groups are those of $-Q$.

For the quantitative gradient estimate, return to the original $u$-coordinates. Direct differentiation gives
$$
 \nabla\bigl(\log r\,Q(u)\bigr)
 =2\log r\,Au+Q(u)\frac{u}{r^2},
 \qquad r=\|u\|.
$$
Hence, for small $r>0$,
$$
 \|\nabla f(u)\|
 \geq2\sigma_{\min}(A)r|\log r|-\|A\|r-Cr
 \geq c r|\log r|.
$$
On $\partial B_\rho$, the straight-line homotopy from $\nabla f$ to $2\log\rho\,Au$ has no zero when $\rho$ is small. If $d$ is the dimension of the chart, then
$$
 \deg(\nabla f,B_\rho,0)
 =\sgn\det(2\log\rho\,A)
 =(-1)^d\sgn\det A
 =(-1)^{\indm(-Q)}.
$$
\end{proof}

\begin{corollary}[Critical groups and local degrees of the logarithmic model]
\label{cor:log-indices}
Every cell barycentre is an isolated critical point of $\mathcal H_n$. For a cell with $r$ zero coordinates and every coefficient field $\mathbb F$,
\begin{equation}\label{eq:log-critical-groups}
 C_j(\mathcal H_n,b_C;\mathbb F)
 \cong
 \begin{cases}
 \mathbb F,&j=n-r-2=\dim C,\\
 0,&j\ne n-r-2.
 \end{cases}
\end{equation}
The quadratic form $Q_{k,\ell,r}$ in \eqref{eq:log-Q} is nondegenerate. Consequently, the polynomial obtained by summing the local critical-group ranks is
$
 M_n^{\log}(t)=M_n^+(t).
$
Moreover, in any local chart,
\begin{equation}\label{eq:H-local-degree}
 \deg(\nabla\mathcal H_n,b_C)=(-1)^{\dim C}.
\end{equation}
\end{corollary}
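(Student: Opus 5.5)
The plan is to combine Lemma~\ref{lem:log-splitting} with Lemma~\ref{lem:log-Morse}. Fix a cell $C=C_{P,N}$ with block sizes $(k,\ell,r)$ and work in the chart \eqref{eq:log-chart} centred at the barycentre $b_C$; by Theorem~\ref{thm:H-critical-set} the base point $x^0$ (values $\ell,-k,0$) represents $b_C$. The chart coordinates $u=(\xi,\eta,\zeta,t)$ range over a linear space of dimension $(k-1)+(\ell-1)+(r-1)+1=n-2$ when $r\geq1$, and $(k-1)+(\ell-1)=n-2$ when $r=0$. This is the correct dimension of $\mathbb P(H)$, and $\rho=\|u\|$ is the Euclidean norm there. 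Lemma~\ref{lem:log-splitting} then writes $\mathcal H_n([x])-\mathcal H_n(b_C)=\log\rho\,Q(u)+R(u)$ with $Q=Q_{k,\ell,r}/(k\ell(k+\ell))$, and with $R$ satisfying exactly the bounds required in Lemma~\ref{lem:log-Morse}. The function $R$ is $C^1$ because $\mathcal H_n$ is $C^1$ and $\rho^2\log\rho$ times a quadratic form is $C^1$.

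Next I check that $Q_{k,\ell,r}$ is nondegenerate and compute its index. On the zero-sum spaces the form is diagonal: $k\|\xi\|^2$ is positive definite on a space of dimension $k-1$, $\ell\|\eta\|^2$ is positive definite on a space of dimension $\ell-1$, $-(k+\ell)\|\zeta\|^2$ is negative definite on a space of dimension $r-1$, and $-nrt^2$ is negative on the line spanned by $t$. Hence $Q$ is nondegenerate, with positive index $k+\ell-2=n-r-2=\dim C$ by Proposition~\ref{prop:sign-cells}, and negative index $r$. Therefore $\indm(-Q)=\dim C$. Lemma~\ref{lem:log-Morse} then shows that $b_C$ is isolated, that its critical groups are those of the nondegenerate quadratic $-Q$, and that its local degree is $(-1)^{\dim C}$.

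The critical groups of a nondegenerate quadratic form with Morse index $m$ are $\mathbb F$ in degree $m$ and zero otherwise, by the standard Morse lemma computation. This gives \eqref{eq:log-critical-groups}. Chart independence of the local degree follows because a change of chart is a local diffeomorphism, and the degree of a gradient at an isolated zero is invariant under such changes.

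Finally, Theorem~\ref{thm:H-critical-set} says that the barycentres are all the critical points. Summing $t^{\dim C}$ over $C\in\Cn$ therefore gives the face polynomial, which equals $M_n^+$ by Theorem~\ref{thm:exact-morse-polynomial}.

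The main point requiring care is the identification of the chart in \eqref{eq:log-chart} with a genuine Euclidean projective chart. Here $\rho$ must be comparable to the chart norm, and the map must be a diffeomorphism onto a neighbourhood in $\mathbb P(H)$. The tangent space of $\mathbb P(H)$ at $x^0$ is $H\cap (x^0)^\perp$. The chart directions are orthogonal to $x^0$: the zero-sum perturbations are orthogonal by the zero-sum constraints, and the $t$-direction $(-r/s\,\one_P,-r/s\,\one_N,\one_Z)$ pairs with $x^0$ to give $-(r/s)(k\ell-\ell k)=0$. These directions also lie in $H$ and span a space of the right dimension, so the affine slice $x^0+u$ is a standard chart. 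This confirms that the hypotheses of Lemma~\ref{lem:log-Morse} are met.
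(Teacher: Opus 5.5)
Your proposal is correct and follows the paper's proof. As there, you apply Lemma~\ref{lem:log-splitting} and Lemma~\ref{lem:log-Morse} in the chart \eqref{eq:log-chart}, count $\indm(-Q_{k,\ell,r})=(k-1)+(\ell-1)=\dim C$, and sum over cells using Theorem~\ref{thm:exact-morse-polynomial}. You also check that \eqref{eq:log-chart} is a genuine projective chart, that $R$ is $C^1$ (more precisely, $\log\rho$ times a quadratic form is $C^1$), and that the degree does not depend on the chart; the paper leaves these details implicit, while it treats the zero-dimensional case $n=2$ separately, which your argument covers only in a degenerate way.
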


\begin{proof}
For $n=2$, the reduced space is a point, so the unique logarithmic critical group is $\mathbb F$ in degree zero and its zero-dimensional local degree is $1=(-1)^0$; the assertion is immediate. Assume henceforth that $n\geq3$. The positive subspace of $Q_{k,\ell,r}$ has dimension $(k-1)+(\ell-1)=n-r-2$, and its negative subspace has dimension $r$. Thus $-Q_{k,\ell,r}$ has Morse index $n-r-2$. Apply Lemmas~\ref{lem:log-splitting} and~\ref{lem:log-Morse}, and sum over the cells using Theorem~\ref{thm:exact-morse-polynomial}. The degree assertion is the degree conclusion of Lemma~\ref{lem:log-Morse}.
\end{proof}

The unperturbed limit describes variation of the exponent alone. To separate its critical values, we now allow the edge weights to vary on the same first-order scale.

\begin{theorem}[Simultaneous first-order variation of the exponent and edge weights]
\label{thm:tangent-family}
Fix $\eta\in\mathbb R^{\binom n2}$ and set $a(p)=\one+(p-2)\eta$. For all $p$ sufficiently close to $2$ these weights are positive, and
\begin{equation}\label{eq:tangent-family}
 \frac{\mathcal R_{p,a(p)}-n}{p-2}
 \rightarrow\mathcal H_n+\eta\cdot\Psi_2
 \quad\hbox{in }C^1(\mathbb RP^{n-2}).
\end{equation}
\end{theorem}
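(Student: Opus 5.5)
The plan is to reduce everything to the height-function identity \eqref{eq:height-identity}, $\mathcal R_{p,a}=a\cdot\Psi_p$, which is linear in the weights. With $a(p)=\one+(p-2)\eta$ this gives the exact decomposition
$$
 \frac{\mathcal R_{p,a(p)}-n}{p-2}
 =\frac{\mathcal R_{p,\one}-n}{p-2}+\eta\cdot\Psi_p
$$
on $\mathbb RP^{n-2}$, valid for every $p\ne2$ close to $2$. Positivity of the weights is immediate: each entry of $a(p)$ is $1+(p-2)\eta_e$, which is positive once $|p-2|<1/(1+\max_e|\eta_e|)$. So we restrict to such $p$ from the outset.

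The first summand is handled entirely by Theorem~\ref{thm:log-blowup}. It converges to $\mathcal H_n$ in $C^1(\mathbb RP^{n-2})$, under the identification of $V_n$ with $\one^\perp$ through the mean-zero representative.

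For the second summand, we need $\Psi_p\to\Psi_2$ in $C^1(\mathbb RP^{n-2},\mathbb R^{\binom n2})$ as $p\to2$. This follows from Lemma~\ref{lem:Psi-C1-family} applied to a compact interval $I\Subset(1,\infty)$ containing $2$ in its interior: that lemma says $p\mapsto\Psi_p$ is continuous into $C^1$. Because $\eta$ is a fixed finite vector, $\eta\cdot\Psi_p\to\eta\cdot\Psi_2$ in $C^1$ as well. Adding the two limits gives \eqref{eq:tangent-family}.

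The only point needing care is that both convergences are taken in the same chart model and the same $C^1$ topology. Lemma~\ref{lem:Psi-C1-family} works in the fixed mean-zero model $H=\one^\perp$ with a finite atlas. Theorem~\ref{thm:log-blowup} works on the Euclidean unit sphere $\Sph(H)$ and descends to projective space. Both use the same identification of $\mathbb P(V_n)$ with $\mathbb P(H)$, and any two finite atlases give equivalent $C^1$ norms on the compact manifold, so the sum of the two convergent families converges in that norm. If one prefers to avoid citing the lemma, a direct alternative is to redo its quotient-rule argument on $\Sph(H)$. There $\Psi_p$ has numerator entries $|x_i-x_j|^p$ and denominator $\mathfrak m_p$, which by Lemmas~\ref{lem:scalar-log} and~\ref{lem:centre-derivative} converge in $C^1$ to $(x_i-x_j)^2$ and $S$. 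The denominator stays bounded away from zero, so the quotient converges in $C^1$. I expect no substantive obstacle beyond this bookkeeping.
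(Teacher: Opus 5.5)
Your proposal is correct and follows the paper's proof: the height identity \eqref{eq:height-identity} splits the quotient as $(\mathcal R_{p,\one}-n)/(p-2)+\eta\cdot\Psi_p$, Theorem~\ref{thm:log-blowup} handles the first term, and the $C^1$ convergence $\Psi_p\to\Psi_2$ handles the second. The only difference is how you get $\Psi_p\to\Psi_2$ in $C^1$. You cite Lemma~\ref{lem:Psi-C1-family}, which does apply on a compact interval containing $2$. The paper instead writes $\Psi_{2+\epsilon}([x])_{ij}=\bigl((x_i-x_j)^2+\epsilon F_\epsilon(x_i-x_j)\bigr)/\bigl(S(x)+\epsilon B_\epsilon(x)\bigr)$ on $\Sph(H)$ and uses Lemmas~\ref{lem:scalar-log} and~\ref{lem:centre-derivative}, which is exactly your fallback argument.
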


\begin{proof}
Set $\epsilon=p-2$. For $\epsilon\ne0$, let
$$
 F_\epsilon(u)=\frac{|u|^{2+\epsilon}-u^2}{\epsilon},
 \qquad
 B_\epsilon(x)=\frac{\mathfrak m_{2+\epsilon}(x)-S(x)}{\epsilon}.
$$
On $\Sph(H)$, Lemmas~\ref{lem:scalar-log} and~\ref{lem:centre-derivative} give the exact representation
$$
 \Psi_{2+\epsilon}([x])_{ij}
 =\frac{(x_i-x_j)^2+\epsilon
 F_\epsilon(x_i-x_j)}
 {S(x)+\epsilon B_\epsilon(x)}.
$$
It follows that $\Psi_p\to\Psi_2$ in $C^1$. Equation \eqref{eq:height-identity} now gives
$$
 \frac{\mathcal R_{p,a(p)}-n}{p-2}
 =
 \frac{\mathcal R_{p,\one}-n}{p-2}
 +\eta\cdot\Psi_p,
$$
and Theorem~\ref{thm:log-blowup} proves the claim.
\end{proof}

To separate critical values in this $C^1$ setting, we use the following general definable Maxwell-set theorem.

\begin{theorem}[Definable critical-value Maxwell sets for $C^1$ families]
\label{thm:C1-Maxwell}
Let $S$ be a definable $C^1$ manifold and let $M$ be a compact definable $C^1$ manifold. Suppose
$$
 h:S\times M\rightarrow\mathbb R,
 \qquad
 \Psi:S\times M\rightarrow\mathbb R^N
$$
are definable and continuous. They are $C^1$ in the $M$-variable, and their differentials in that variable depend continuously and definably on $(s,x)$. Assume that every $\Psi_s$ is injective. Set
$$
 f_{s,c}(x)=h_s(x)+c\cdot\Psi_s(x),
 \qquad (s,c)\in S\times\mathbb R^N.
$$
Let $\mathfrak C(h,\Psi)$ be the collision set
$$
 \left\{
 (s,c):
 \begin{array}{l}
 \exists x\ne y\text{ in }M,\quad
 d_Mf_{s,c}(x)=d_Mf_{s,c}(y)=0,\\
 f_{s,c}(x)=f_{s,c}(y)
 \end{array}
 \right\},
$$
and define the critical-value Maxwell set as the relative closure of this collision set,
$$
 \mathfrak M(h,\Psi)
 =\overline{\mathfrak C(h,\Psi)}^{\,S\times\mathbb R^N}.
$$
Then $\mathfrak M(h,\Psi)$ is closed and definable and
$$
 \dim\mathfrak M(h,\Psi)\leq\dim S+N-1.
$$
For every $(s,c)\notin\mathfrak M(h,\Psi)$, the critical set of $f_{s,c}$ is finite and distinct critical points have distinct critical values.
\end{theorem}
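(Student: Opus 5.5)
The plan is to prove closedness and definability first, then the dimension bound by a fibre-dimension count over a definable incidence set, and finally the generic conclusion from the complement of $\mathfrak M$. Definability is routine. The conditions $d_Mf_{s,c}(x)=0$, $d_Mf_{s,c}(y)=0$, $f_{s,c}(x)=f_{s,c}(y)$ and $x\ne y$ are first-order in $(s,c,x,y)$, because $h$, $\Psi$ and their $M$-differentials are definable. Projection therefore makes $\mathfrak C(h,\Psi)$ definable. Its relative closure is definable and closed, and closure does not increase dimension in an o-minimal structure. So it suffices to show $\dim\mathfrak C\le\dim S+N-1$.

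For the dimension bound, introduce the incidence set
$$
 \mathcal I=\{(s,c,x,y): x\ne y,\ d_Mf_{s,c}(x)=d_Mf_{s,c}(y)=0,\ f_{s,c}(x)=f_{s,c}(y)\}.
$$
Its projection is $\mathfrak C$. Let $k=\dim M$ and fix $(s,x,y)$ with $x\neq y$. The set of $c\in\mathbb R^N$ satisfying the three conditions is an affine subspace cut out by linear equations in $c$:
\begin{itemize}
\item the equation $d_xh_s+c\cdot d_x\Psi_s=0$, giving $k$ equations;
\item the analogous equation at $y$, giving $k$ more;
\item the scalar equation $c\cdot(\Psi_s(x)-\Psi_s(y))=h_s(y)-h_s(x)$.
\end{itemize}
Injectivity of $\Psi_s$ makes $\Psi_s(x)-\Psi_s(y)\neq0$.

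I would then like the whole system to have rank at least $2k+1$ whenever it is consistent. That would give fibres of dimension at most $N-2k-1$, hence $\dim\mathcal I\le\dim S+2k+N-2k-1$ and the bound after projection. But the rank can drop: $d_x\Psi_s$ need not be injective, since $\Psi_s$ is only assumed injective, not immersive.

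This is the main obstacle. To handle it I would stratify by rank instead of projecting naively. Consider the map $G:(s,c,x)\mapsto d_Mf_{s,c}(x)$. Its zero set $Z$ is fibred over $S\times M$ by affine subspaces of dimension $N-\operatorname{rank}(d_x\Psi_s)$. By definable cell decomposition, partition $Z$ into pieces on which this rank equals a constant $\rho$. On the piece with rank $\rho$, the set of $(s,x)$ admitting a solution satisfies $\rho$ solvability-independent conditions plus $k-\rho$ compatibility conditions. So I would also use a definable Sard argument. The map $(s,c,x)\mapsto(s,c)$ restricted to $Z$ has fibres equal to $\Crit(f_{s,c})$, and I want the key claim: for $(s,c)$ outside a set of dimension $<\dim S+N$, $\Crit(f_{s,c})$ is finite.

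I would prove the key claim via the transversality trick specific to the linear parameter $c$. Over each fixed $(s,x)$, the value map $c\mapsto f_{s,c}(x)$ restricted to the affine critical fibre is affine. The fibre-of-values count then shows that the critical-value map $(s,c,x)\mapsto f_{s,c}(x)$ on $Z$ is generically finite-to-one on value-levels. Concretely, Lemma~\ref{lem:finite-critical-values} already gives that each $f_{s,c}$ has finitely many critical values. A positive-dimensional critical component $K$ of $f_{s,c}$ forces $\Psi_s(K)$ into the affine hyperplane $\{c\cdot z=\text{const}\}$, and perturbing $c$ along $\Psi_s(x)-\Psi_s(y)$ for $x\ne y\in K$ splits it. I would formalise this as a dimension count for the set $\mathcal I$ with $x,y$ restricted to the same critical component, showing the scalar value equation is independent of the differential equations modulo the already-consistent system. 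This uses that $\Psi_s(x)-\Psi_s(y)\notin\operatorname{span}(\operatorname{im} d_x\Psi_s^{*}+\operatorname{im} d_y\Psi_s^{*})$ on a generic subset, with the exceptional subset handled by induction on dimension of the strata.

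Once $\dim\mathfrak M\le\dim S+N-1$ is established, the final assertion follows directly. If $(s,c)\notin\mathfrak M$, then $(s,c)\notin\mathfrak C$, so distinct critical points have distinct values. Since the critical set has finitely many components (Proposition~\ref{prop:ominimal-facts}(i)) and $f_{s,c}$ is constant on each (Lemma~\ref{lem:finite-critical-values}), any component with two points would lie in $\mathfrak C$. Hence every component is a point and the critical set is finite.
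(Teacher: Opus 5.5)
Your definability and closure step is fine, and so is your final paragraph on finiteness and distinct values off $\mathfrak M(h,\Psi)$. The gap is the dimension bound, which is the heart of the theorem: your proposal does not establish it, and the route you sketch cannot.

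The intermediate target $\dim\mathcal I\le\dim S+N-1$ is false in general, because the projection $\mathcal I\to\mathfrak C(h,\Psi)$ has large fibres exactly at degenerate parameters. Take $S$ a point, $h=0$, $M$ the unit circle and $\Psi$ its inclusion in $\mathbb R^2$.
\begin{itemize}
\item For $c\ne0$ the only critical points are $\pm c/|c|$, with distinct values $\pm|c|$, so $\mathfrak C=\{0\}$.
\item At $c=0$ every pair $x\ne y$ collides, so $\dim\mathcal I\ge2>\dim S+N-1=1$.
\end{itemize}
The same example refutes your genericity claim. At $c=0$ all pairs lie in one critical component, and for non-antipodal $x\ne y$ the two tangent lines already span $\mathbb R^2$. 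So $\Psi(x)-\Psi(y)$ lies in the span of the images of $d_x\Psi$ and $d_y\Psi$ on an open set of pairs. More generally $2k+1$ can exceed $N$, so no rank count for the linear system in $c$ at fixed $(s,x,y)$ can work. The proposed induction on strata therefore has no valid base. Separately, a critical component $K$ only satisfies $h_s+c\cdot\Psi_s=\mathrm{const}$ on $K$; $\Psi_s(K)$ lies in a hyperplane only when $h=0$.

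The missing idea is to argue on the parameter side, not on the incidence set.
\begin{itemize}
\item Suppose $\dim\mathfrak C=\dim S+N$. Then $\mathfrak C$ contains a nonempty open set.
\item Definable choice applied to $\mathcal I$, followed by $C^1$ cell decomposition, gives on a smaller open $U\subset S\times\mathbb R^N$ a pair of $C^1$ selections $x(s,c)\ne y(s,c)$. These are critical points in fixed charts with a common value $\lambda(s,c)$.
\item Differentiate $\lambda(s,c)=f_{s,c}(x(s,c))$ in $c$ at fixed $s$. Criticality kills the term containing $D_cx$, so $D_c\lambda[v]=v\cdot\Psi_s(x(s,c))$.
\item The same computation at $y$ gives $D_c\lambda[v]=v\cdot\Psi_s(y(s,c))$. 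Hence $\Psi_s(x(s,c))=\Psi_s(y(s,c))$, contradicting injectivity.
\end{itemize}
This is the rigorous form of your intuition about perturbing $c$ along $\Psi_s(x)-\Psi_s(y)$. The selections carry the critical points along as $c$ moves, so no rank hypothesis on $d_x\Psi_s$ is needed. The degenerate parameters, where the fibres of $\mathcal I$ are large, are simply avoided by working on an open set of $(s,c)$.

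Closure then preserves the dimension bound, and your last paragraph completes the proof. Your separate key claim on generic finiteness of the critical set is not needed, since finiteness follows afterwards exactly as you argue there.
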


\begin{proof}
The collision set $\mathfrak C(h,\Psi)$ is definable. If it had dimension $\dim S+N$, it would contain a non-empty relatively open definable cell $U\subset S\times\mathbb R^N$. Apply definable choice to the incidence set defining $\mathfrak C(h,\Psi)$ and refine $U$ by $C^1$ cell decomposition. After shrinking $U$, there are $C^1$ selections
$$
 x,y:U\rightarrow M,\qquad x(s,c)\ne y(s,c),
$$
lying in fixed coordinate charts and satisfying the two critical-point equations and the equal-value equation. Write the common value as $\lambda(s,c)$. For every coefficient direction $v\in\mathbb R^N$, the chain rule at fixed $s$, together with criticality, gives
$$
 D_c\lambda(s,c)[v]
 =d_Mf_{s,c}(x(s,c))\,D_cx(s,c)[v]
 +v\cdot\Psi_s(x(s,c))
 =v\cdot\Psi_s(x(s,c)),
$$
and the same calculation at $y(s,c)$ gives $D_c\lambda(s,c)[v]=v\cdot\Psi_s(y(s,c))$. Thus $\Psi_s(x(s,c))=\Psi_s(y(s,c))$, contradicting injectivity. Hence
$
 \dim\mathfrak C(h,\Psi)\leq\dim S+N-1.
$
Definable closure preserves dimension, proving the same bound for $\mathfrak M(h,\Psi)$.

Take $(s,c)$ outside this closure. Equal values at two distinct critical points are impossible. If $\Crit(f_{s,c})$ were infinite, it would contain a positive-dimensional definable $C^1$ cell and hence a nonconstant definable $C^1$ path. The chain rule makes $f_{s,c}$ constant along that path, producing a collision. This contradiction proves finiteness.
\end{proof}

\begin{theorem}[Critical-value separation near $p=2$]
\label{thm:tangent-Maxwell}
Set $m=\binom n2$ and
$$
 \mathcal H_{n,\eta}=\mathcal H_n+\eta\cdot\Psi_2.
$$
There exist an open convex ball $W\subset\mathbb R^m$ about the origin and a relatively closed definable set
$$
 \mathfrak M_n^{\mathrm{tan}}\subset W,
 \qquad
 \dim\mathfrak M_n^{\mathrm{tan}}\leq m-1,
$$
such that:
\begin{enumerate}[label=\textup{(\roman*)}]
\item for every $\eta\in W$, the functional $\mathcal H_{n,\eta}$ has a critical point in a fixed neighbourhood $U_C$ of each $b_C$, where $U_C$ is chosen to isolate $b_C$ as a critical point of $\mathcal H_n$;
\item for $\eta\in W\setminus\mathfrak M_n^{\mathrm{tan}}$, its critical set is finite and all critical values are pairwise distinct, so it has at least $R_n-1$ distinct critical values;
\item for every $\eta\in W\setminus\mathfrak M_n^{\mathrm{tan}}$, there is $\delta>0$ such that the complete graph with weights
$$
 a(p)=\one+(p-2)\eta
$$
has at least $R_n$ distinct spectral values whenever $0<|p-2|<\delta$.
\end{enumerate}
In particular, $\eta$ may be chosen rational and arbitrarily small.
\end{theorem}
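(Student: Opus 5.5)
We argue in three stages: persistence via local degree, separation via the Maxwell theorem, and transfer to $p$ near $2$ via the $C^1$ limit of Theorem~\ref{thm:tangent-family}.

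\emph{Step 1 (persistence).} By Corollary~\ref{cor:log-indices}, each $b_C$ is an isolated critical point of $\mathcal H_n$ with $\deg(\nabla\mathcal H_n,b_C)=(-1)^{\dim C}\ne0$. Choose pairwise disjoint closed chart balls $\overline U_C$ around the barycentres so that $\nabla\mathcal H_n$ has no zero on $\overline U_C\setminus\{b_C\}$, and let $\mu>0$ be the minimum of $\|\nabla\mathcal H_n\|$ over $\bigcup_C\partial U_C$. Since $\Psi_2$ is real analytic on the compact space $\mathbb RP^{n-2}$, there is a ball $W$ about $0$ with $\|\eta\cdot\nabla\Psi_2\|<\mu/2$ on these boundaries for all $\eta\in W$. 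The straight-line homotopy from $\mathcal H_n$ to $\mathcal H_{n,\eta}$ then has no gradient zero on $\partial U_C$. Homotopy invariance of degree gives $\deg(\nabla\mathcal H_{n,\eta},U_C)=(-1)^{\dim C}\ne0$, so $\mathcal H_{n,\eta}$ has a critical point in each of the $R_n-1$ disjoint sets $U_C$. This proves (i).

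\emph{Step 2 (separation).} Apply Theorem~\ref{thm:C1-Maxwell} with $S$ a point, $M=\mathbb RP^{n-2}$, $h=\mathcal H_n$ and $\Psi=\Psi_2$. Here $\mathcal H_n$ is definable in $\mathbb R_{\exp}$ and $C^1$ by Theorem~\ref{thm:log-blowup}, and $\Psi_2$ is an injective embedding by Theorem~\ref{thm:projective-embedding}. The theorem yields a closed definable set $\mathfrak M\subset\mathbb R^m$ of dimension at most $m-1$. Set $\mathfrak M_n^{\mathrm{tan}}=\mathfrak M\cap W$. For $\eta\in W\setminus\mathfrak M_n^{\mathrm{tan}}$ the critical set is finite with pairwise distinct values, and Step 1 provides at least $R_n-1$ critical points. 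This proves (ii). Since the complement of $\mathfrak M$ is open and dense, rational $\eta$ of arbitrarily small norm lie in it.

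\emph{Step 3 (transfer to $p$ near $2$).} Fix $\eta\in W\setminus\mathfrak M_n^{\mathrm{tan}}$ and let $z_1,\dots,z_N$, with $N\geq R_n-1$, be the critical points of $\mathcal H_{n,\eta}$. Set $f_p=(\mathcal R_{p,a(p)}-n)/(p-2)$. By Theorem~\ref{thm:tangent-family}, $f_p\to\mathcal H_{n,\eta}$ in $C^1$.

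Around each $z_i$ we need a small ball $V_i$ on whose boundary $\nabla\mathcal H_{n,\eta}$ does not vanish and inside which $\mathcal H_{n,\eta}$ has local degree nonzero. \textbf{This is the main obstacle}, because a critical point of a merely $C^1$ function can have degree $0$.

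We avoid the obstacle by using only the $R_n-1$ points produced in Step 1. Each $U_C$ has total degree $\pm1$ in the regions where critical points lie. Take the $V_i$ to be small disjoint neighbourhoods of the finitely many critical points in $U_C$, so that the degree over $U_C$ is the sum of their local degrees. Since this sum is nonzero, at least one $V_{i(C)}\subset U_C$ has nonzero local degree. Any two chosen points $z_{i(C)}$, $z_{i(C')}$ with $C\ne C'$ are distinct and have distinct values; let $g>0$ be the minimum gap between these values.

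The $C^1$ convergence of $f_p$, together with homotopy invariance of degree on each $V_{i(C)}$, gives $\delta>0$ such that for $0<|p-2|<\delta$ the function $f_p$ has a critical point $w_C\in V_{i(C)}$. Shrinking the balls in advance, the values $f_p(w_C)$ are within $g/3$ of $\mathcal H_{n,\eta}(z_{i(C)})$, and hence they are pairwise distinct. Therefore $\mathcal R_{p,a(p)}=n+(p-2)f_p$ has $R_n-1$ distinct critical values. All of them are positive for small $|p-2|$, since they are close to $n$.

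Proposition~\ref{prop:reduced-correspondence} turns these into nonconstant eigenvalues, with $a(p)>0$ for $|p-2|$ small. Adding the eigenvalue $0$ of the constant eigenline gives at least $R_n$ distinct spectral values, which proves (iii).
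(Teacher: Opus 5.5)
Your proposal is correct and matches the paper's proof step for step. Part (i) uses degree persistence under the straight-line homotopy $\mathcal H_n+t\eta\cdot\Psi_2$, part (ii) applies Theorem~\ref{thm:C1-Maxwell} with $S$ a point, $h=\mathcal H_n$ and $\Psi=\Psi_2$, and part (iii) uses additivity of degree to pick one critical point of nonzero local degree in each $U_C$, followed by the $C^1$ convergence of Theorem~\ref{thm:tangent-family} and a gap/oscillation argument; for $n=2$ the gap condition is vacuous, as the paper also notes.
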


\begin{proof}
Choose pairwise disjoint isolating neighbourhoods $U_C$ of the barycentres $b_C$. Equation~\eqref{eq:H-local-degree} gives
$$
 \deg(\nabla\mathcal H_n,b_C)=(-1)^{\dim C}\ne0.
$$
The gradient is nonzero on the finite union of their boundaries. Shrink an open convex ball $W$ about zero so that, for every $\eta\in W$, the homotopy
$$
 \mathcal H_n+t\eta\cdot\Psi_2,\qquad0\leq t\leq1,
$$
has no boundary critical point. Homotopy invariance of degree proves \textup{(i)}.

Apply Theorem~\ref{thm:C1-Maxwell} with $S$ a point, $h=\mathcal H_n$, and $\Psi=\Psi_2$. The map $\Psi_2$ is injective, so intersecting the resulting closed Maxwell set with $W$ gives $\mathfrak M_n^{\mathrm{tan}}$ and proves \textup{(ii)}.

Fix $\eta$ outside this set. Its critical set is finite, so every critical point is isolated. In each $U_C$, additivity of degree gives a critical point $y_C$ of nonzero local degree. Choose pairwise disjoint isolating neighbourhoods $V_C\Subset U_C$ around these points. If there is more than one cell, set
$$
 g=\min_{C\ne C'}
 \bigl|\mathcal H_{n,\eta}(y_C)-\mathcal H_{n,\eta}(y_{C'})\bigr|>0;
$$
for the zero-dimensional $n=2$ case set $g=1$. Shrink the $V_C$ so that
$
 \operatorname{osc}_{V_C}\mathcal H_{n,\eta}<g/8.
$

Theorem~\ref{thm:tangent-family} gives
$$
 F_p:=
 \frac{\mathcal R_{p,\one+(p-2)\eta}-n}{p-2}
 \rightarrow\mathcal H_{n,\eta}
 \quad\hbox{in }C^1.
$$
For $p$ sufficiently close to $2$, the local degrees on every $V_C$ persist and $\|F_p-\mathcal H_{n,\eta}\|_{C^0}<g/8$. Each $V_C$ therefore contains a critical point of $F_p$, and values chosen in distinct $V_C$'s lie in disjoint intervals of radius $g/4$. Since $p\ne2$, $F_p$ and the reduced quotient differ by a nonzero affine rescaling and have the same critical points and the same value collisions. Thus the reduced quotient has at least $R_n-1$ distinct nonconstant critical values. The weights are positive for $|p-2|$ small, and the constant eigenline gives the distinct value zero. This proves \textup{(iii)}.

The complement in $W$ of a lower-dimensional relatively closed definable set is open and dense, and contains rational points arbitrarily close to the origin.
\end{proof}

\section{The subquadratic critical-group theorem}
\label{sec:subquadratic}

For fixed $1<p<2$, the quotient fails to be $C^2$ at the relevant vanishing coordinate and edge differences. We therefore replace the Hessian calculation by an anisotropic first-order model from which the local critical groups and degrees can be read. Set $q=p-1$. At an eigenline of the uniformly weighted $K_n$ indexed by $[n]=P\sqcup N\sqcup Z$, with block sizes $(k,\ell,r)$, set
\begin{equation}\label{eq:low-constants}
 \alpha=\ell^{1/q},\qquad
 \beta=k^{1/q},\qquad
 D=\alpha+\beta,\qquad
 \lambda_0=D^q+r,\qquad
 m_0=k\ell D.
\end{equation}
Let $x^0=\alpha\one_P-\beta\one_N$.
Let
$$
 X_P=\left\{\xi\in\mathbb R^P:\sum_P\xi_i=0\right\},
$$
and define $X_N,X_Z$ analogously. For $r\geq1$, the following coordinates give a local chart on $\Pj(V_n)$ at the eigenline:
\begin{equation}\label{eq:low-chart}
 x_i=\alpha+\xi_i,\quad
 x_j=-\beta+\eta_j,\quad
 x_a=t+\zeta_a,
 \qquad
 (\xi,\eta,\zeta,t)\in
 X_P\oplus X_N\oplus X_Z\oplus\mathbb R.
\end{equation}
For $r=0$, omit $(\zeta,t)$. Projective rescaling fixes the difference between the $P$- and $N$-block means at $D$; translation then fixes those means at $\alpha$ and $-\beta$. Set
\begin{align*}
 W_P(\xi)=\sum_{\{i,i'\}\subset P}|\xi_i-\xi_{i'}|^p,\qquad
 W_N(\eta)=\sum_{\{j,j'\}\subset N}|\eta_j-\eta_{j'}|^p,
\end{align*}
and, on $X_Z$,
$$
 E_{r,p}(\zeta)=
 \sum_{\{a,a'\}\subset Z}|\zeta_a-\zeta_{a'}|^p,\qquad
 \mathfrak m_{r,p}(\bar\zeta)=
 \min_c\sum_{a\in Z}|\zeta_a+c|^p.
$$
For $r\geq1$, let $c_{r,p}(\zeta)$ denote the unique minimiser in the definition of $\mathfrak m_{r,p}(\bar\zeta)$.
When $r=0$ or $1$, the two zero-block functionals vanish (with the zero-dimensional variables omitted). The anisotropic dilation and corresponding homogeneous gauge are given by
\begin{align}
 \delta_s(\xi,\eta,\zeta,t)
 &=\bigl(s^{1/p}\xi,s^{1/p}\eta,s^{1/p}\zeta,s^{1/2}t\bigr),\label{eq:low-dilation}\\
 \varrho(\xi,\eta,\zeta,t)
 &=\|\xi\|^p+\|\eta\|^p+\|\zeta\|^p+t^2,
 \qquad \varrho(\delta_su)=s\varrho(u).
 \label{eq:low-gauge}
\end{align}
The $s^{1/p}$-scale resolves variations within the three coordinate blocks, whereas the $s^{1/2}$-scale captures the displacement of the zero block relative to the other two. For $s>0$, write $a_s=s^{1/p}$ and $b_s=s^{1/2}$.
\begin{lemma}[Scaled $p$-centre]
\label{lem:low-centre}
For $r\geq1$, write the optimising translation in the form
\begin{equation}\label{eq:scaled-centre}
 c_p(x^0+\delta_su)=-b_s t+a_s d_s(u).
\end{equation}
Then, uniformly for $u$ in compact subsets of the chart,
\begin{equation}\label{eq:scaled-centre-limit}
 d_s(u)\rightarrow c_{r,p}(\zeta),
 \qquad
 \sum_{a\in Z}\Phi_p\bigl(\zeta_a+c_{r,p}(\zeta)\bigr)=0.
\end{equation}
\end{lemma}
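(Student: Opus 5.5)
The plan is to work directly from the centring equation \eqref{eq:p-centre} for the vector $y=x^0+\delta_su$ and to rescale it. Write $c=c_p(y)=-b_st+a_sd$, so that $d=d_s(u)$ is defined by this relation. The coordinates of $y+c\one$ are then
$$
 \alpha+a_s\xi_i-b_st+a_sd\ (i\in P),\qquad
 -\beta+a_s\eta_j-b_st+a_sd\ (j\in N),\qquad
 a_s(\zeta_a+d)\ (a\in Z).
$$
Since the $Z$-coordinates scale exactly like $a_s$, the centring equation becomes
$$
 a_s^{q}\sum_{a\in Z}\Phi_p(\zeta_a+d)
 +\sum_{i\in P}\Phi_p(\alpha+\cdots)+\sum_{j\in N}\Phi_p(-\beta+\cdots)=0 .
$$

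Next, Taylor-expand the $P$ and $N$ terms, which is legitimate because $\Phi_p$ is smooth away from $0$ and the arguments stay near $\alpha$ and $-\beta$. The zeroth-order terms cancel: $k\alpha^q-\ell\beta^q=k\ell-\ell k=0$. The first-order terms are $q\alpha^{q-1}\sum_P(a_s\xi_i-b_st+a_sd)+q\beta^{q-1}\sum_N(\cdots)$. Here $\sum\xi=\sum\eta=0$, so these terms reduce to $q(k\alpha^{q-1}+\ell\beta^{q-1})(a_sd-b_st)$, and the remainder is $O((a_s+b_s+a_s|d|)^2)$.

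Dividing by $a_s^q$ gives
$$
 \sum_Z\Phi_p(\zeta_a+d)+\kappa\,a_s^{1-q}(d-b_s a_s^{-1}t)+\text{error}=0,
 \qquad \kappa>0 .
$$
The key point is the comparison of scales. Since $1<p<2$, we have $a_s^{1-q}=s^{(2-p)/p}\to0$ and $b_s/a_s^{q}=s^{1/2-(p-1)/p}=s^{(2-p)/(2p)}\to0$. The quadratic error divided by $a_s^q$ is $O(s^{(3-p)/p})$ and also tends to zero. So every term except the first vanishes in the limit, provided $d$ stays bounded.

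The main obstacle is securing this a priori bound on $d_s(u)$, uniformly for $u$ in a compact set. For the bound I would use monotonicity: the left side of the centring equation is strictly increasing in $c$, and the equation above is strictly increasing in $d$. Evaluating at $d=\pm M$, with $M$ larger than $\max|\zeta_a|+1$, gives $\sum_Z\Phi_p(\zeta_a\pm M)$ of definite sign with magnitude at least $1$. The other terms are $o(1)$ uniformly for $|d|\le M$, so the root satisfies $|d_s|<M$ once $s$ is small. Strictly, the Taylor remainder is only uniform on $|d|\le M$, but this suffices by the intermediate value theorem, since the root is unique.

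With the bound in hand, the equation converges uniformly, on $|d|\le M$ and $u$ in compacts, to the function $d\mapsto\sum_Z\Phi_p(\zeta_a+d)$. This limit is strictly increasing and has the unique root $c_{r,p}(\zeta)$. A uniform-convergence and strict-monotonicity argument, exactly as in the continuity proof of Lemma~\ref{lem:p-centre}, then gives $d_s(u)\to c_{r,p}(\zeta)$ uniformly. The identity in \eqref{eq:scaled-centre-limit} is the characterisation of that minimiser.
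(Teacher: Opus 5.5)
Your proof is correct and follows the paper's argument essentially step by step. You rescale the centring equation by $a_s^{-q}$ and Taylor-expand the $P$ and $N$ terms using $k\alpha^q=\ell\beta^q$ and the zero-sum conditions. You then trap the root in $(-M,M)$ by monotonicity at $d=\pm M$, and conclude from uniqueness of the limiting root.

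There is one small slip. After division by $a_s^q$, the quadratic remainder is dominated by the $b_s^2t^2$ term, so it is $O(s^{1/p})$ rather than $O(s^{(3-p)/p})$; the latter exponent accounts only for the $a_s^2$ contribution. Since $s^{1/p}\to0$ as well, the argument is unaffected.
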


\begin{proof}
The centred coordinates are
\begin{align*}
 y_i&=\alpha+a_s(\xi_i+d_s)-b_s t &&(i\in P),\\
 y_j&=-\beta+a_s(\eta_j+d_s)-b_s t &&(j\in N),\\
 y_a&=a_s(\zeta_a+d_s) &&(a\in Z).
\end{align*}
For a trial value $d$, define
\begin{align}
 \Gamma_s(u,d)={}&a_s^{-q}\left[
 \sum_P\Phi_p\bigl(\alpha+a_s(\xi_i+d)-b_s t\bigr)
 +\sum_N\Phi_p\bigl(-\beta+a_s(\eta_j+d)-b_s t\bigr)\right]
 \\
 &+\sum_Z\Phi_p(\zeta_a+d).
 \label{eq:scaled-centring-equation}
\end{align}
The centring equation is $\Gamma_s(u,d_s(u))=0$. Set $A_0=k\alpha^{p-2}+\ell\beta^{p-2}$. Taylor expansion at the two nonzero levels, the zero-sum conditions, and $k\alpha^q=\ell\beta^q=k\ell$ give, uniformly for $u,d$ in compact sets,
$$
 a_s^{-q}[\cdots]
 =qA_0a_s^{-q}(a_s d-b_s t)
 +O\bigl(a_s^{-q}(a_s+b_s)^2\bigr)=o(1).
$$
Indeed, the relevant powers of $s$ are
$$
 a_s^{1-q}=s^{(2-p)/p},\qquad
 b_s a_s^{-q}=s^{(2-p)/(2p)},\qquad
 b_s^2a_s^{-q}=s^{1/p},
$$
all of which tend to zero.

Fix a compact set $K$ of chart variables and choose $M>1+\sup_{u\in K,\,z\in Z}|\zeta_z|$. At $d=M$ the $Z$-block contribution in \eqref{eq:scaled-centring-equation} is uniformly positive, and at $d=-M$ it is uniformly negative, whereas the $P\cup N$ contribution is $o(1)$ at both values. Strict monotonicity therefore shows that the unique root lies in $(-M,M)$. If $s_j\downarrow0$ and $u_j\to u$ in $K$, every subsequential limit of $d_{s_j}(u_j)$ solves the limiting centring equation at $u$. Uniqueness of its root gives uniform convergence on $K$. For $r=1$ the space $X_Z$ is zero-dimensional and the limit is zero.
\end{proof}

Set
\begin{equation}\label{eq:low-C0}
 C_0=\frac{p(p-1)}2
 \bigl(k\alpha^{p-2}+\ell\beta^{p-2}\bigr).
\end{equation}

\begin{lemma}[Scaled mass and energy expansions]
\label{lem:low-scaled-expansions}
For every compact set $K$ in the chart, as $s\downarrow0$,
\begin{align}
 \frac{\mathfrak m_p(\overline{x^0+\delta_su})-m_0}{s}
 &\rightarrow
 \mathfrak m_{r,p}(\bar\zeta)+C_0t^2,
 \label{eq:low-mass-expansion}\\
 \frac{E_{p,\one}(x^0+\delta_su)-E_{p,\one}(x^0)}{s}
 &\rightarrow
 W_P(\xi)+W_N(\eta)+E_{r,p}(\zeta)+rC_0t^2
 \label{eq:low-energy-expansion}
\end{align}
in $C^1(K)$. When $r=0$, the terms in $(\zeta,t)$ are omitted.
\end{lemma}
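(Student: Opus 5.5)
We write $x=x^0+\delta_su$ in the chart \eqref{eq:low-chart} and expand each $p$-th power term separately. The plan is to split the coordinate and edge forms into two families. The first family consists of forms that are nonzero at $x^0$: coordinates in $P\cup N$, cross edges $P$--$N$, and edges between $P\cup N$ and $Z$. These we Taylor expand to second order, using $p>1$ and smoothness of $|t|^p$ away from $0$. The second family consists of forms vanishing at $x^0$: differences inside $P$, $N$ and $Z$, and the $Z$-coordinates. These are exactly $a_s$ times a fixed linear form in $(\xi,\eta,\zeta)$, and for them we use exact $p$-homogeneity. Each such term, divided by $s=a_s^p$, is then independent of $s$ and converges trivially in $C^1$. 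For the mass this means using the centred coordinates from Lemma~\ref{lem:low-centre}: $\sum_Z|y_a|^p/s=\sum_Z|\zeta_a+d_s|^p$.

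For the smooth terms, the first-order contributions should cancel. In the energy, the linear terms vanish because $x^0$ is an eigenvector and $t$ enters only through translation-like shifts. The zero-sum conditions on $\xi,\eta,\zeta$ kill the within-block linear parts, and projective normalisation fixes the mean difference $D$. The second-order parts of size $a_s^2$ divided by $s$ are $s^{(2-p)/p}\to0$; this is where $p<2$ is used. The mixed $a_sb_s/s$ terms also vanish: they carry a block sum of $\xi,\eta$ or $\zeta$, which is zero. What survives is the $b_s^2t^2/s=t^2$ part. For the energy, the edges $P$--$Z$ and $N$--$Z$ have Hessian weights $\alpha^{p-2}$ and $\beta^{p-2}$, giving $\tfrac{p(p-1)}2 r(k\alpha^{p-2}+\ell\beta^{p-2})t^2=rC_0t^2$. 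Note that the $Z$-coordinates move by $t$ relative to $P\cup N$ in the chart, while the $P$/$N$ internal differences do not involve $t$. For the mass, the centring of Lemma~\ref{lem:low-centre} shifts the $P\cup N$ coordinates by $-b_st+a_sd_s$. The quadratic term then gives $C_0t^2$, and the linear term $p\sum\Phi_p(\cdot)$ times the shift nearly cancels against the $Z$-block through the centring equation. Concretely, we will use $\mathfrak m_p=\min_c$ and compare with trial translations $c=-b_st+a_sd$, which gives upper and lower bounds matching to $o(s)$. The limit $d_s\to c_{r,p}(\zeta)$ then yields $\mathfrak m_{r,p}(\bar\zeta)$.

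The main obstacle is $C^1$, rather than $C^0$, convergence, because the differential in $t$ of the smooth terms involves $b_s/s$ and $a_s/s$ prefactors. For the mass we would use the envelope formula \eqref{eq:mp-envelope}: differentiating in the chart directions gives $p\sum_i\Phi_p(y_i)\,\partial y_i$, and the centring identity $\sum\Phi_p(y_i)=0$ removes the dependence on the derivative of $d_s$. The remaining terms are handled by the same Taylor bookkeeping at first-derivative level, namely $\Phi_p$ expanded to first order at $\alpha,-\beta$ and exact homogeneity of $\Phi_p$ on the $Z$-block. The last step requires $d_s\to c_{r,p}$ only in $C^0$, which Lemma~\ref{lem:low-centre} provides. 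The energy derivative is treated the same way using \eqref{eq:energy-gradient}. Uniformity on compact $K$ follows because all Taylor remainders are uniform on compacts away from the vanishing forms. Finally, the case $r=0$ simply drops the $(\zeta,t)$ terms.
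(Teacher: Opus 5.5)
Your plan is correct and follows the paper's proof. It uses Taylor expansion of the forms that are nonvanishing at $x^0$, and exact $p$-homogeneity for the within-block differences and the centred $Z$-coordinates $a_s(\zeta_a+d_s)$. For $C^1$ convergence of the mass it uses the envelope formula with the centring identity, so the $t$-derivative becomes $-pb_s\dot t\sum_{P\cup N}\Phi_p(y_i)=pqA_0b_s^2t\dot t+o(s)$ by first-order expansion at $\alpha,-\beta$, needing only boundedness of $d_s$.

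Two small corrections. First, the linear mass term vanishes identically because $k\alpha^q=\ell\beta^q$; no cancellation against the $Z$-block is involved. Second, the mixed $a_sb_s$ term in the mass carries $d_s$ rather than a vanishing block sum, but it is $o(s)$ anyway because $1/p+1/2>1$.
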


\begin{proof}
For convergence of the functions in \eqref{eq:low-mass-expansion}, expand the nonzero centred coordinates from the preceding proof. The linear term vanishes identically:
$$
 p\{k\alpha^q-\ell\beta^q\}(a_s d_s-b_s t)=0.
$$
Since $a_s/b_s=s^{1/p-1/2}\to0$, the quadratic term divided by $s=b_s^2$ converges to $C_0t^2$. After division by $s$, the zero-block contribution is exactly
$$
 \sum_Z|\zeta_a+d_s|^p
 \rightarrow \mathfrak m_{r,p}(\bar\zeta).
$$

For convergence of their differentials, the envelope identity eliminates derivatives of $d_s$ and gives, for $\dot u=(\dot\xi,\dot\eta,\dot\zeta,\dot t)$,
\begin{align}
 D_u\!\left[\mathfrak m_p\!\left(\overline{x^0+\delta_su}\right)
 \right][\dot u]
 =p\{&
 a_s\sum_P\Phi_p(y_i)\dot\xi_i+
 a_s\sum_N\Phi_p(y_j)\dot\eta_j+
 a_s\sum_Z\Phi_p(y_a)\dot\zeta_a\notag\\
 &+b_s\dot t\sum_Z\Phi_p(y_a)\}.
 \label{eq:low-mass-envelope}
\end{align}
After division by $s$, the first two sums tend to zero because $\sum\dot\xi=\sum\dot\eta=0$. The third tends uniformly to
$$
 p\sum_Z\Phi_p(\zeta_a+c_{r,p}(\zeta))\dot\zeta_a
 =D\mathfrak m_{r,p}(\bar\zeta)[\dot\zeta].
$$
The centring equation and Taylor expansion give
$$
 \sum_{P\cup N}\Phi_p(y_i)
 =qA_0(a_s d_s-b_s t)+O((a_s+b_s)^2)
 =-qA_0b_s t+o(b_s).
$$
Since the zero-block sum is its negative, the last term in \eqref{eq:low-mass-envelope}, divided by $s=b_s^2$, converges to $pqA_0t\dot t$, which equals $D(C_0t^2)[\dot t]$. Uniform continuity of $\Phi_p$ on bounded intervals also gives uniform convergence at points where an optimally centred zero-block coordinate vanishes.

For the energy, the within-$P$, within-$N$, and within-$Z$ terms are exactly $sW_P,sW_N,sE_{r,p}$. Every cross-block difference is nonzero at the base point and can be Taylor expanded. The $P$--$N$ linear terms vanish by the two zero-sum conditions. The $P$--$Z$ and $N$--$Z$ linear terms in $t$ are respectively $-pk\ell r b_s t$ and $+pk\ell r b_s t$, so they cancel. The remaining $t^2$ coefficient is
$$
 \frac{p(p-1)}2r
 \bigl(k\alpha^{p-2}+\ell\beta^{p-2}\bigr)=rC_0.
$$
Terms of size $a_s^2$ and $a_s b_s$ are $o(s)$ because $2/p>1$ and $1/p+1/2>1$. Differentiating with respect to the chart variables gives the same estimates and proves \eqref{eq:low-energy-expansion} in $C^1$.

When $r=0$, the minimising translation is smooth near $x^0$, its first variation vanishes on $X_P\oplus X_N$, and
$$
 \mathfrak m_p(\overline{x^0+\delta_su})
 =m_0+O(s^{2/p})=m_0+o(s),
$$
where the $O(s^{2/p})$ estimate holds in $C^1$ on compact subsets of the chart; the energy limit is $W_P+W_N$.
\end{proof}

The base identities are
\begin{equation}\label{eq:low-base-identities}
 m_0=k\alpha^p+\ell\beta^p=k\ell D,\qquad
 E_{p,\one}(x^0)=k\ell D^p+kr\alpha^p+\ell r\beta^p
 =m_0\lambda_0.
\end{equation}
Define
\begin{equation}\label{eq:low-gamma}
 \gamma=(\lambda_0-r)C_0
 =\frac{p(p-1)}2(\lambda_0-r)
 \bigl(k\alpha^{p-2}+\ell\beta^{p-2}\bigr)>0.
\end{equation}

\begin{theorem}[Anisotropic zero-block splitting]
\label{thm:low-splitting}
For every compact $K$ in the chart,
\begin{equation}\label{eq:low-scaled-splitting}
 \frac{\mathcal R_{p,\one}([x^0+\delta_su])-\lambda_0}{s}
 \rightarrow\mathcal P(u)
 \quad\hbox{in }C^1(K),
\end{equation}
where
\begin{equation}\label{eq:low-principal-form}
 \mathcal P(u)=\frac1{m_0}\left[
 W_P(\xi)+W_N(\eta)+E_{r,p}(\zeta)
 -\lambda_0\mathfrak m_{r,p}(\bar\zeta)-\gamma t^2
 \right].
\end{equation}
\end{theorem}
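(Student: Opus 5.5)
The plan is to combine the two scaled expansions of Lemma~\ref{lem:low-scaled-expansions} through the quotient rule, as was done for the logarithmic limit in \eqref{eq:exact-log-quotient}. Write
$$
 A_s(u)=\frac{E_{p,\one}(x^0+\delta_su)-E_{p,\one}(x^0)}{s},\qquad
 B_s(u)=\frac{\mathfrak m_p(\overline{x^0+\delta_su})-m_0}{s}.
$$
Using the base identity $E_{p,\one}(x^0)=m_0\lambda_0$ from \eqref{eq:low-base-identities}, we obtain the exact identity
$$
 \frac{\mathcal R_{p,\one}([x^0+\delta_su])-\lambda_0}{s}
 =\frac{m_0\lambda_0+sA_s-\lambda_0(m_0+sB_s)}{s(m_0+sB_s)}
 =\frac{A_s-\lambda_0B_s}{m_0+sB_s}.
$$
This holds for every $u$ in the chart and every $s>0$.

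On a compact set $K$, Lemma~\ref{lem:low-scaled-expansions} gives $C^1(K)$ convergence $A_s\to W_P+W_N+E_{r,p}+rC_0t^2$ and $B_s\to\mathfrak m_{r,p}+C_0t^2$. In particular, $B_s$ is bounded in $C^1(K)$, so $sB_s\to0$ in $C^1(K)$. The denominator therefore converges in $C^1$ to the positive constant $m_0$ and is uniformly bounded away from zero. Since products and quotients of $C^1$-convergent families with denominator bounded below converge in $C^1$, the right side converges in $C^1(K)$ to
$$
 \frac{1}{m_0}\Bigl[W_P+W_N+E_{r,p}-\lambda_0\mathfrak m_{r,p}+(rC_0-\lambda_0C_0)t^2\Bigr].
$$
Because $rC_0-\lambda_0C_0=-(\lambda_0-r)C_0=-\gamma$ by \eqref{eq:low-gamma}, this is exactly $\mathcal P$ in \eqref{eq:low-principal-form}.

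The case $r=0$ is identical, with the $(\zeta,t)$ terms omitted. There Lemma~\ref{lem:low-scaled-expansions} gives $B_s\to0$ and $A_s\to W_P+W_N$.

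There is one bookkeeping point: $\mathcal R_{p,\one}$ is evaluated on projective classes, so we must check that $u\mapsto[x^0+\delta_su]$ is the chart map \eqref{eq:low-chart} composed with the dilation. That is built into the chart. The real analytic work is all in Lemma~\ref{lem:low-scaled-expansions}, notably the $C^1$ control of the scaled centre through Lemma~\ref{lem:low-centre}, so given that lemma I expect no genuine obstacle. The only care needed is to confirm that $C^1$ convergence of $B_s$ does imply $sB_s\to0$ in $C^1$, which is immediate from boundedness.
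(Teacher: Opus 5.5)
Your proposal is correct and is essentially the paper's proof. The paper likewise combines the scaled mass and energy expansions of Lemma~\ref{lem:low-scaled-expansions} with the base identity $E_{p,\one}(x^0)=m_0\lambda_0$ and reads off the $t^2$ coefficient $(r-\lambda_0)C_0=-\gamma$. You additionally write out the exact quotient identity $(A_s-\lambda_0B_s)/(m_0+sB_s)$ and the routine $C^1$ quotient-convergence step, which the paper leaves implicit.
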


\begin{proof}
Combining \eqref{eq:low-mass-expansion}--\eqref{eq:low-energy-expansion} with \eqref{eq:low-base-identities} gives the asserted $C^1$ limit. The coefficient of $t^2$ is $(r-\lambda_0)C_0=-\gamma$.
\end{proof}

\begin{lemma}[The zero-block Rayleigh quotient]
\label{lem:zero-block-gap}
For $r\geq2$, set
$$
 \mu_{r,p}=\max_{\bar z\ne0}
 \frac{E_{r,p}(z)}{\mathfrak m_{r,p}(\bar z)}.
$$
Then
\begin{equation}\label{eq:zero-block-gap}
 \mu_{r,p}=r-2+2^q,
\end{equation}
and, for $\bar\zeta\ne0$,
\begin{equation}\label{eq:zero-block-coercivity}
 \lambda_0\mathfrak m_{r,p}(\bar\zeta)-E_{r,p}(\zeta)
 \geq\bigl(D^q+2-2^q\bigr)\mathfrak m_{r,p}(\bar\zeta)
 \geq2\,\mathfrak m_{r,p}(\bar\zeta).
\end{equation}
\end{lemma}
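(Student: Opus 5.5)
The maximum $\mu_{r,p}$ is the top reduced critical value of the uniformly weighted $K_r$ (unsigned, zero potential, unit measure) at exponent $p\in(1,2)$. I will identify it with the help of the reduced correspondence and the complete classification already proved, then read off \eqref{eq:zero-block-coercivity} by elementary inequalities.

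\emph{Step 1: the maximum is a spectral value.} On $\mathbb P(\mathbb R^r/\mathbb R\one)$ the quotient $E_{r,p}/\mathfrak m_{r,p}$ is exactly $\mathcal R_{p,\one}$ for $K_r$. It is continuous on a compact space, so the maximum is attained. The maximiser is a critical point, and Proposition~\ref{prop:reduced-correspondence} (applied with $r$ in place of $n$, valid for all $p>1$) turns it into a nonconstant eigenline of the uniformly weighted $K_r$. When $r=2$ the reduced space is a single point, and the value is directly $|z_1-z_2|^p/(2|(z_1-z_2)/2|^p)=2^{p-1}=r-2+2^q$.

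\emph{Step 2: identify the largest value.} For $r\ge3$ and $p\ne2$, Proposition~\ref{prop:uniform-exceptional}(i), or equivalently Proposition~\ref{prop:uniform-rays-inertia}, lists every nonconstant spectral value as $\lambda_{k',\ell',r'}$. Corollary~\ref{cor:uniform-extremes} shows that for $1<p<2$ the largest positive value is $\lambda_{\mathrm{edge}}(r,p)=r-2+2^q$. The face ordering reverses for $p<2$, so vertices, of block type $(1,1,r-2)$, are maximal. This gives \eqref{eq:zero-block-gap}. Throughout this section $p<2$, so the case $p=2$ never arises.

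\emph{Step 3: the coercivity.} Since $\lambda_0=D^q+r$, we get $\lambda_0\mathfrak m_{r,p}-E_{r,p}\ge(\lambda_0-\mu_{r,p})\mathfrak m_{r,p}=(D^q+2-2^q)\mathfrak m_{r,p}$. It remains to show $D^q\ge 2^q$. Because $k,\ell\ge1$, we have $\alpha,\beta\ge1$, hence $D\ge2$ and $D^q\ge2^q$. Dividing the quotient by $\mathfrak m_{r,p}(\bar\zeta)>0$ needs $\bar\zeta\ne0$, which is assumed.

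The only genuine content is Step 2, namely that the maximum sits at the vertex type rather than at a balanced top cell. This is already settled by the reversal in Proposition~\ref{prop:face-order} together with Corollary~\ref{cor:uniform-extremes}, so I expect no real obstacle beyond checking that these results are applied to $K_r$ with $r\ge3$ and $q\in(0,1)$.
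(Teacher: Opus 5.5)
Your proof is correct and follows the paper's own argument. A maximiser on the compact reduced projective space is a critical point. Proposition~\ref{prop:reduced-correspondence} together with the classification of the uniformly weighted $K_r$ identifies the critical values, the largest of which is the vertex value $r-2+2^q$. Then $D=\alpha+\beta\geq2$ gives the coercivity.

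The only difference is that you cite Corollary~\ref{cor:uniform-extremes}, i.e.\ the reversed face order for $1<p<2$. The paper instead redoes the underlying monotonicity directly: it sets $\Delta_q(a,b)=a+b-(a^{1/q}+b^{1/q})^q$ and shows $\partial_a\Delta_q=1-(1+(b/a)^{1/q})^{q-1}>0$. Your separate computation for $r=2$ is harmless.
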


\begin{proof}
A maximiser exists on compact projective space and is a critical point. By Proposition~\ref{prop:reduced-correspondence} and Lemma~\ref{lem:uniform-level-collapse}, every nonconstant critical value on $K_r$ has the form
$$
 r-a-b+\bigl(a^{1/q}+b^{1/q}\bigr)^q,
 \qquad a,b\geq1,\quad a+b\leq r.
$$
Set
$
 \Delta_q(a,b)
 =a+b-\bigl(a^{1/q}+b^{1/q}\bigr)^q.
$
For real $a,b>0$,
$$
 \frac{\partial\Delta_q}{\partial a}
 =1-\left(1+(b/a)^{1/q}\right)^{q-1}>0,
$$
and symmetrically in $b$, since $q-1<0$. Thus the minimum of $\Delta_q$ over $a,b\geq1$ occurs at $(1,1)$ and equals $2-2^q$. This proves \eqref{eq:zero-block-gap}.

Since $\lambda_0=D^q+r$,
$
 \lambda_0-\mu_{r,p}=D^q+2-2^q.
$
Here $D=\alpha+\beta\geq2$, so $D^q\geq2^q$, and \eqref{eq:zero-block-coercivity} follows. For $r=0,1$, the two zero-block functionals vanish.
\end{proof}

The principal form is therefore a difference of two positive weighted-homogeneous functions. The following continuation principle converts this splitting into critical groups and local degree.

\begin{lemma}[Critical groups under anisotropic scaling]
\label{lem:anisotropic-Morse}
Let
$$
 \delta_s(u_1,\ldots,u_d)
 =(s^{\omega_1}u_1,\ldots,s^{\omega_d}u_d),
 \qquad0<\omega_i<1,
$$
and let $\varrho$ be a continuous proper gauge, positive away from the origin, such that $\varrho(\delta_su)=s\varrho(u)$. Suppose that $\mathcal P\in C^1(\mathbb R^d)$ satisfies
$$
 \mathcal P(\delta_su)=s\mathcal P(u),
 \qquad D\mathcal P(u)\ne0\quad(u\ne0).
$$
Let $F\in C^1$ near the origin, with $F(0)=0$ and $DF(0)=0$, and assume that
$$
 F_s:=s^{-1}F\circ\delta_s\rightarrow\mathcal P
 \quad\hbox{in }C^1
$$
on compact sets. Then the origin is an isolated critical point of $F$, and, for every coefficient field $\mathbb F$,
$$
 C_*(F,0;\mathbb F)\cong C_*(\mathcal P,0;\mathbb F),
 \qquad
 \deg(\nabla F,0)=\deg(\nabla\mathcal P,0).
$$

Suppose, more specifically, that $\mathcal P(u,v)=A(u)-B(v)$, where $A$ and $B$ are $C^1$, positive away from the origin, and homogeneous of degree one for the induced diagonal dilations on spaces of dimensions $a$ and $b$. Then
\begin{equation}\label{eq:anisotropic-groups-degree}
 C_j(F,0;\mathbb F)\cong
 \begin{cases}\mathbb F,&j=b,\\0,&j\ne b,\end{cases}
 \qquad
 \deg(\nabla F,0)=(-1)^b.
\end{equation}
\end{lemma}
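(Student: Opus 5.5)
The plan is to blow up the origin by the anisotropic dilations and then invoke the continuation principle, Lemma~\ref{lem:local-critical-group-continuation}.

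\emph{Isolation.} First show that the origin is an isolated critical point of $F$. Since $\delta_s$ is a linear diagonal isomorphism,
$$
 DF_s(w)=s^{-1}DF(\delta_sw)\circ\delta_s,
$$
so $w$ is critical for $F_s$ exactly when $\delta_sw$ is critical for $F$. Put $\Sigma=\{\varrho=1\}$. It is compact because $\varrho$ is proper, and $D\mathcal P$ does not vanish on $\Sigma$. The $C^1$ convergence $F_s\to\mathcal P$ on a compact neighbourhood of $\Sigma$ then gives $s_0>0$ such that $DF_s\ne0$ on $\Sigma$ for $0<s\le s_0$. Every $u\ne0$ with $\varrho(u)\le s_0$ can be written $u=\delta_sw$ with $s=\varrho(u)$ and $w\in\Sigma$. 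Hence $DF(u)\ne0$, and the origin is isolated.

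\emph{Continuation.} Next, take $U=\{\varrho<1\}$, which is bounded because $\varrho$ is proper. Consider the family $\tau\mapsto G_\tau$ with $G_\tau=F_\tau$ for $0<\tau\le s_1$ and $G_0=\mathcal P$, reparametrised to $[0,1]$. I would check the following points.
\begin{itemize}
\item Continuity in $C^1(\overline U)$ for $\tau>0$ follows from the $C^1$ regularity of $F$ and the continuity of $s\mapsto\delta_s$.
\item Continuity at $\tau=0$ is exactly the hypothesis $F_s\to\mathcal P$ in $C^1$ on compact sets, applied to $\overline U$, which contains the origin.
\item $G_\tau(0)=0$ for all $\tau$.
\item $\operatorname{Crit}(G_\tau)\cap\overline U=\{0\}$. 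For $\tau=0$ this is the hypothesis on $\mathcal P$. For small $\tau>0$ it holds because $\delta_\tau\overline U\subset\{\varrho\le\tau\}$ lies in the punctured neighbourhood where $F$ has no critical points.
\end{itemize}
Lemma~\ref{lem:local-critical-group-continuation} then gives $C_*(F_s,0)\cong C_*(\mathcal P,0)$ and equality of local degrees. Finally, $F_s=s^{-1}F\circ\delta_s$ is $F$ composed with a linear isomorphism and multiplied by a positive constant. It therefore has the same local sublevel pairs up to homeomorphism, hence the same critical groups. Its degree is also unchanged, since $\delta_s$ has positive determinant and $\nabla F_s=s^{-1}\delta_s\nabla F\circ\delta_s$ differs from $\nabla F\circ\delta_s$ by a positive-definite linear factor.

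\emph{Split model.} For the specific case $\mathcal P(u,v)=A(u)-B(v)$, I would compute $C_*(\mathcal P,0)$ directly rather than through a Hessian. Use the isolating neighbourhood $U'=\{\varrho_A(u)<1,\ \varrho_B(v)<1\}$, where $\varrho_A,\varrho_B$ are gauges for the two factor dilations; by excision any isolating neighbourhood will do. Write $\delta^{(u)}_t$ for the induced dilation acting on the $u$-variables alone. The homotopy
$$
 h_t(u,v)=\bigl(\delta^{(u)}_{1-t}u,\,v\bigr),\qquad 0\le t\le1,
$$
with $\delta_0^{(u)}=0$, preserves $U'$. It also preserves $\{\mathcal P\le0\}$, since $A(\delta^{(u)}_{1-t}u)=(1-t)A(u)\le A(u)$. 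It preserves the complement of the origin within that sublevel set as well: if $u\ne0$ and $\mathcal P(u,v)\le0$, then $B(v)\ge A(u)>0$, so $v\ne0$. It is continuous at $t=1$ because $\omega_i>0$. Thus the sublevel pair deformation retracts onto
$$
 \bigl(\{0\}\times\{\varrho_B<1\},\ \{0\}\times(\{\varrho_B<1\}\setminus\{0\})\bigr),
$$
which is a $b$-ball relative to its punctured copy. Its homology is $\mathbb F$ in degree $b$ and zero otherwise.

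\emph{Degree.} For the degree I would use the standard Poincaré--Hopf relation for an isolated critical point in finite dimensions, namely $\deg(\nabla f,0)=\sum_j(-1)^j\dim C_j(f,0;\mathbb F)$; this yields $(-1)^b$. If a self-contained route is preferred, one could instead homotope $\nabla\mathcal P$ on $\partial U'$ to $(\nabla A,-\nabla B)$ and use the product formula for degree together with the fact that a positive homogeneous function has degree $1$ at its minimum.

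The step I expect to be most delicate is the verification in the continuation step that $\operatorname{Crit}(G_\tau)\cap\overline U=\{0\}$ uniformly as $\tau\downarrow0$. This needs the $C^1$ convergence on all of $\overline U$, including the origin, together with the interplay of the gauge and properness to guarantee that $\delta_\tau\overline U$ shrinks into the punctured isolating neighbourhood of $F$.
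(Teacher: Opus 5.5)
Your proof is correct, and it follows the paper in the general part but takes a genuinely different route in the split case.

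\textbf{General part.} You and the paper differ only in the continuation path. The paper fixes $s_0$ and runs the straight-line homotopy $(1-\tau)\mathcal P+\tau F_{s_0}$ on $\{\varrho<1\}$. It excludes nonzero critical points using the identity $H_{s_0,\tau}\circ\delta_r=rH_{s_0r,\tau}$ and the gradient bound on $\{\varrho=1\}$, and obtains isolation of the origin for $F$ only as a by-product. You prove isolation first, by rescaling to the gauge sphere. You then use the scaling family $s\mapsto F_s$ itself, closed up by $\mathcal P$ at $s=0$, as the continuation path. Both are equally economical.

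\textbf{Split case, critical groups.} The paper homotopes $A$ and $B$ to the models $\sum_i|u_i|^{1/\omega_i^+}$ and $\sum_j|v_j|^{1/\omega_j^-}$. This uses the weighted Euler identity to exclude critical points along the way, which is a second application of Lemma~\ref{lem:local-critical-group-continuation}. It then applies a componentwise homeomorphism to reduce to $\|U\|^2-\|V\|^2$, and reads off the degree as a product of monotone scalar maps. Your partial dilation $(u,v)\mapsto(\delta^{(u)}_{1-t}u,v)$ instead retracts the local sublevel pair of $A-B$ directly onto $(\{0\}\times D_B,\{0\}\times(D_B\setminus\{0\}))$. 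This needs only positivity and weighted homogeneity of $A$ and $B$, not differentiability or the Euler identity, and it avoids the auxiliary homotopy entirely.

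\textbf{Split case, degree.} Here your route costs more. You import the Rothe--Chang Poincar\'e--Hopf formula for isolated critical points of $C^1$ functions, which is standard but external, whereas the paper's explicit model makes the degree a direct computation. Your alternative route also works, with two remarks:
\begin{itemize}
\item $\nabla\mathcal P$ already equals $(\nabla A,-\nabla B)$, so no homotopy is needed before applying the product formula.
\item The fact $\deg(\nabla A,0)=1$ follows at once from the weighted Euler identity $\langle\nabla A(u),\Omega u\rangle=A(u)>0$, with $\Omega=\operatorname{diag}(\omega_i^+)$. This makes the straight-line homotopy from $\nabla A$ to the linear map $u\mapsto\Omega u$ admissible.
\end{itemize}
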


\begin{proof}
Let $S=\{u:\varrho(u)=1\}$. This set is compact, and $D\mathcal P$ has no zero on it, so $\|\nabla\mathcal P\|\geq c>0$ on $S$. Set
$$
 H_{s,\tau}=(1-\tau)\mathcal P+\tau F_s.
$$
Uniform convergence of $F_s$ and $DF_s$ on the compact set $S$ gives $s_0>0$ such that
\begin{equation}\label{eq:anisotropic-sphere-gradient}
 \|\nabla H_{s,\tau}(u)\|\geq c/2
 \qquad
 (u\in S,\ 0<s\leq s_0,\ 0\leq\tau\leq1).
\end{equation}
Fix $s_0$ and consider $H_{s_0,\tau}$ on $U=\{u:\varrho(u)<1\}$. Homogeneity and the semigroup law give
$
 H_{s_0,\tau}\circ\delta_r=rH_{s_0r,\tau}.
$
If $x=\delta_ru$ with $u\in S$ and $0<r\leq1$, differentiation gives
$$
 D\delta_r^{\mathsf T}
 \nabla H_{s_0,\tau}(\delta_ru)
 =r\nabla H_{s_0r,\tau}(u).
$$
Thus a nonzero critical point of $H_{s_0,\tau}$ in $\overline U$ would contradict \eqref{eq:anisotropic-sphere-gradient}. Lemma~\ref{lem:local-critical-group-continuation} therefore gives
$
 C_*(F_{s_0},0;\mathbb F)
 \cong C_*(\mathcal P,0;\mathbb F).
$
Multiplication by the positive scalar $s_0^{-1}$ and the homeomorphism $\delta_{s_0}$ identify the local sublevel pairs of $F_{s_0}$ and $F$. This proves the critical-group assertion and also shows that the origin is isolated for $F$.

For the degree, if $A_s=D\delta_s$, then
$
 \nabla F_s(u)=s^{-1}A_s^{\mathsf T}\nabla F(\delta_su).
$
The domain map $A_s$ and the target map $s^{-1}A_s^{\mathsf T}$ both have positive determinant. Coordinate change and the preceding homotopy therefore give $\deg(\nabla F,0)=\deg(\nabla\mathcal P,0)$.

For the final assertion, let the dilation weights on the $u$-space be $\omega_1^+,\ldots,\omega_a^+$ and those on the $v$-space be $\omega_1^-,\ldots,\omega_b^-$, and define
$$
 A_0(u)=\sum_{i=1}^a|u_i|^{1/\omega_i^+},
 \qquad
 B_0(v)=\sum_{j=1}^b|v_j|^{1/\omega_j^-}.
$$
Use the straight-line homotopies from $A$ to $A_0$ and from $B$ to $B_0$. Every function in these homotopies is positive away from the origin and weighted homogeneous. The weighted Euler identity therefore excludes nonzero critical points throughout, so local continuation applies to their difference. The componentwise homeomorphism
$$
 U_i=\operatorname{sgn}(u_i)|u_i|^{1/(2\omega_i^+)},
 \qquad
 V_j=\operatorname{sgn}(v_j)|v_j|^{1/(2\omega_j^-)}
$$
identifies the local sublevel pair of $A_0-B_0$ with that of $\|U\|^2-\|V\|^2$. Hence the critical group is concentrated in degree $b$. The gradient of $A_0-B_0$ is the Cartesian product of increasing scalar maps in the $u$-variables and decreasing scalar maps in the $b$ $v$-variables, so its local degree is $(-1)^b$.
\end{proof}

\begin{theorem}[Subquadratic critical groups and local degrees]
\label{thm:low-indices}
For $1<p<2$, every nonconstant eigenline of the uniformly weighted $K_n$ is an isolated critical point of the reduced quotient. For the cell $C_{P,N}$, let $z_{P,N}=\Theta_p^{-1}([b_{P,N}])$. If its zero-block size is $r$, then, for every coefficient field $\mathbb F$,
\begin{equation}\label{eq:low-critical-groups}
\begin{gathered}
 C_j(\mathcal R_{p,\one},z_{P,N};\mathbb F)
 \cong
 \begin{cases}
 \mathbb F,&j=r=\codim C_{P,N},\\
 0,&j\ne r,
 \end{cases}\\
 \deg(\nabla\mathcal R_{p,\one},z_{P,N})=(-1)^r.
\end{gathered}
\end{equation}
Consequently,
\begin{align}
 M_n^-(t)
 =\sum_{r=0}^{n-2}\binom nr(2^{n-r-1}-1)t^r
 =\frac{(t+2)^n-2(t+1)^n+t^n}{2}
 =t^{n-2}M_n^+(t^{-1}).
 \label{eq:low-polynomial}
\end{align}
\end{theorem}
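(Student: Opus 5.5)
The proof applies Lemma~\ref{lem:anisotropic-Morse} in the anisotropic chart \eqref{eq:low-chart}. For the eigenline $z_{P,N}$ with block sizes $(k,\ell,r)$, set
$$
 F(u)=\mathcal R_{p,\one}([x^0+u])-\lambda_0 .
$$
Proposition~\ref{prop:reduced-correspondence} shows that $z_{P,N}$ is a critical point, so $F(0)=0$ and $DF(0)=0$. The dilation \eqref{eq:low-dilation} has weights $1/p$ and $1/2$, both in $(0,1)$. The gauge \eqref{eq:low-gauge} is continuous, proper, positive away from the origin and satisfies $\varrho(\delta_s u)=s\varrho(u)$. Theorem~\ref{thm:low-splitting} gives $s^{-1}F\circ\delta_s\to\mathcal P$ in $C^1$ on compact sets.

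It therefore remains to show that $\mathcal P$ has the split form $A(u)-B(v)$ of the lemma, with both parts weighted homogeneous of degree one and positive away from the origin. Take
$$
 A(\xi,\eta)=\frac{W_P(\xi)+W_N(\eta)}{m_0},
 \qquad
 B(\zeta,t)=\frac{\lambda_0\mathfrak m_{r,p}(\bar\zeta)-E_{r,p}(\zeta)+\gamma t^2}{m_0}.
$$

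Homogeneity is immediate. $W_P$, $W_N$, $E_{r,p}$ and $\mathfrak m_{r,p}$ are $p$-homogeneous in variables scaled by $s^{1/p}$, and $t^2$ is scaled by $s$. Each part is $C^1$: the power sums are $C^1$ for $p>1$, and $\mathfrak m_{r,p}$ is $C^1$ by Lemma~\ref{lem:p-centre}.

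Positivity of $A$ on $X_P\oplus X_N\setminus\{0\}$ holds because $W_P(\xi)=0$ forces all $\xi_i$ equal, and the zero-sum condition then gives $\xi=0$; the same argument applies to $\eta$. Positivity of $B$ uses Lemma~\ref{lem:zero-block-gap}. By \eqref{eq:zero-block-coercivity}, $\lambda_0\mathfrak m_{r,p}-E_{r,p}\geq 2\mathfrak m_{r,p}$, which is positive for $\bar\zeta\neq0$, and $\zeta\in X_Z$ nonzero is a nonconstant class. Moreover $\gamma>0$ by \eqref{eq:low-gamma}. For $r=0,1$ the $\zeta$-part is absent, and for $r=0$ the $t$-part is also absent.

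The $v$-space has dimension $\dim X_Z+1=(r-1)+1=r$ when $r\geq1$, and is $\{0\}$ when $r=0$. Lemma~\ref{lem:anisotropic-Morse} then gives the isolation of the critical point, the critical groups concentrated in degree $r$, and local degree $(-1)^r$. Since $\dim C_{P,N}=n-r-2$ on $\mathbb RP^{n-2}$, we have $r=\codim C_{P,N}$.

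The main point to check is that the lemma's hypothesis $D\mathcal P(u)\neq0$ for $u\neq0$ is implied by the split form rather than assumed separately. This follows from the weighted Euler identity. Let $E$ be the generator of $\delta_s$. Then $D\mathcal P(u)[Eu]=\mathcal P(u)$, and pairing instead with the generator acting on the $u$- and $v$-parts with opposite signs gives $A(u)+B(v)>0$ for $(u,v)\neq0$. Hence $\mathcal P$ has no nonzero critical point. Euler's identity is available here because $A$ and $B$ are $C^1$ and weighted homogeneous.

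Finally, the polynomial \eqref{eq:low-polynomial} is a count. Theorem~\ref{thm:exact-morse-polynomial} shows there are $\binom nr(2^{n-r-1}-1)$ cells with zero-block size $r$, so
$$
 M_n^-(t)=\sum_{r=0}^{n-2}\binom nr(2^{n-r-1}-1)t^r .
$$
Substituting $r=n-2-q$ turns this into $t^{n-2}M_n^+(t^{-1})$. Applying \eqref{eq:M-closed} then gives the closed form $\tfrac12\bigl((t+2)^n-2(t+1)^n+t^n\bigr)$.
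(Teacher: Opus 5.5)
Your proposal is correct and follows the paper's proof essentially step for step. Both arguments take the splitting $\mathcal P=A-B$ from Theorem~\ref{thm:low-splitting}, obtain positivity of $B$ from Lemma~\ref{lem:zero-block-gap}, use the weighted Euler identity to exclude nonzero critical points, and apply Lemma~\ref{lem:anisotropic-Morse} with a negative factor of dimension $r$, followed by the same cell count.

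The differences are cosmetic. You write the Euler argument as a single pairing that yields $A+B>0$, and you derive the closed form of $M_n^-$ from that of $M_n^+$ rather than directly by the binomial theorem. The paper also comments explicitly on the degenerate cases $r=0$, $r=n-2$ and $n=2$, which your argument covers only implicitly.
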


\begin{proof}
By Lemma~\ref{lem:zero-block-gap}, the limiting function $\mathcal P$ in \eqref{eq:low-principal-form} has the form
$
 \frac1{m_0}\{A(\xi,\eta)-B(\zeta,t)\},
$
where
$$
 A=W_P+W_N,\qquad
 B=\lambda_0\mathfrak m_{r,p}-E_{r,p}+\gamma t^2
$$
On $X_P\oplus X_N$, the function $A$ is positive away from the origin. For $r\geq2$, Lemma~\ref{lem:zero-block-gap} gives
$$
 B(\zeta,t)
 \geq2\,\mathfrak m_{r,p}(\bar\zeta)+\gamma t^2
$$
so $B$ is positive away from the origin on $X_Z\oplus\mathbb R$. If $r=1$, then $X_Z=\{0\}$ and $B(0,t)=\gamma t^2$; if $r=0$, regard $B$ as the zero function on the zero-dimensional domain of $B$. Thus $A$ and $B$ are positive away from the origin on their respective spaces in every case. Both are homogeneous of weighted degree one under the corresponding restrictions of \eqref{eq:low-dilation}. Their weighted Euler identities imply that their gradients have no nonzero zeros. The domain of $A$ has dimension $(k-1)+(\ell-1)=n-r-2$, and the domain of $B$ has dimension $(r-1)+1=r$. Apply Theorem~\ref{thm:low-splitting} and Lemma~\ref{lem:anisotropic-Morse}.

If $r=0$, the negative factor is absent; the eigenline is a strict local minimum and has degree $+1$. If $r=1$, $X_Z=\{0\}$ and the only term from the negative factor is $-\gamma t^2/m_0$. If $r=n-2$, then $k=\ell=1$, the positive factor is absent, and the eigenline is a strict local maximum on the $(n-2)$-dimensional reduced projective space. When $n=2$, the reduced projective space is a point, and the assertion follows directly.

Counting the cells by their zero-block size gives the first expression in \eqref{eq:low-polynomial}; the binomial theorem gives the second. Replacing $r$ by $n-2-j$ and using the face coefficients in \eqref{eq:index-count-reduced} gives the reciprocal identity.
\end{proof}

The nonzero local degrees persist under small changes of the edge weights and force a critical point to remain near each uniform-weight eigenline; neither nondegeneracy nor local uniqueness is required.

\begin{theorem}[Persistence and generic eigenvalue separation for $1<p<2$]
\label{thm:low-interval}

Let $n\geq2$, let $J\Subset(1,2)$ be a compact interval, and set $m=\binom n2$. There exists a definable open neighbourhood $V_J\subset(0,\infty)^m$ of $\one$ and a relatively closed definable set
$$
 \mathfrak M^-_{n,J}\subset J\times V_J,
 \qquad
 \dim\mathfrak M^-_{n,J}\leq m,
$$
with the following properties.
\begin{enumerate}[label=\textup{(\roman*)}]
\item Every $(p,a)\in J\times V_J$ has at least $R_n$ $p$-eigenlines, including the constant eigenline.
\item If $(p,a)\notin\mathfrak M^-_{n,J}$, the eigenline set is finite and all its eigenvalues are pairwise distinct. In particular, $\#\sigma_p(a)\geq R_n$.
\item For each fixed $p\in J$, there is a relatively closed definable set
$$
 \mathfrak M^-_{n,p}\subset V_J,\qquad
 \dim\mathfrak M^-_{n,p}\leq m-1.
$$
Outside $\mathfrak M^-_{n,p}$, the eigenline set is finite and its eigenvalues are pairwise distinct. Consequently, $V_J\setminus\mathfrak M^-_{n,p}$ is open, dense, and of full Lebesgue measure in $V_J$, and every weight in this set has at least $R_n$ distinct spectral values.
\item There are positive integer edge weights $A$ on $K_n$ such that, for every $p\in J$, the graph has at least $R_n$ eigenlines and, for all but finitely many $p\in J$, it has at least $R_n$ distinct spectral values.
\end{enumerate}
\end{theorem}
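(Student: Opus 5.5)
The plan combines three ingredients already in place: persistence of nonzero local degree (Theorem~\ref{thm:low-indices}), the definable Maxwell-set theorem (Theorem~\ref{thm:C1-Maxwell}), and an integer-scaling argument.

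\emph{Persistence, part (i).} For each $p\in J$ and each cell $C$, Theorem~\ref{thm:low-indices} gives an isolated critical point $z_C(p)=\Theta_p^{-1}([b_C])$ of $\mathcal R_{p,\one}$ with local degree $(-1)^{\codim C}\ne0$. These points depend continuously on $p$, because $\Theta_p^{-1}$ is a homeomorphism depending continuously on $p$. Since $J$ is compact, I choose pairwise disjoint neighbourhoods $U_C$, independent of $p\in J$ (after splitting $J$ into finitely many subintervals if necessary), such that $\nabla\mathcal R_{p,\one}\ne0$ on $\partial U_C$ for all $p\in J$. Compactness of $J\times\partial U_C$ then gives $\inf\|\nabla\mathcal R_{p,\one}\|>0$ there. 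By \eqref{eq:height-identity}, $\mathcal R_{p,a}=a\cdot\Psi_p$. Lemma~\ref{lem:Psi-C1-family} gives continuity of $(p,a)\mapsto\mathcal R_{p,a}$ in $C^1$, uniformly in $p\in J$, so there is a definable open $V_J\ni\one$ with no boundary zeros for all $(p,a)\in J\times V_J$. Homotopy invariance of degree yields a critical point in each $U_C$. This gives $R_n-1$ distinct reduced critical points, which by Proposition~\ref{prop:reduced-correspondence} are nonconstant eigenlines. Adding the constant eigenline proves (i).

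\emph{Separation, parts (ii) and (iii).} I apply Theorem~\ref{thm:C1-Maxwell} with $S=J^\circ$ (or an open interval slightly larger than $J$), $h_s=0$, and $\Psi_s=\Psi_p$. This requires $\Psi_p$ to be injective (Theorem~\ref{thm:projective-embedding}) and continuous and definable in $(p,x)$ together with its differential, which Lemma~\ref{lem:Psi-C1-family} supplies. The theorem gives a closed definable Maxwell set of dimension at most $1+m-1=m$ in $S\times\mathbb R^m$. I intersect it with $J\times V_J$ to obtain $\mathfrak M^-_{n,J}$. Outside this set the reduced critical set is finite with distinct values, and the constant eigenline contributes the separate value $0$, since nonconstant values are positive. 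Together with (i) this gives $\#\sigma_p(a)\ge R_n$. For (iii) I take $S$ to be a point and $\Psi=\Psi_p$ at fixed $p$, which gives codimension at least one. Definable sets of dimension at most $m-1$ are closed nowhere dense null sets, so the complement is open, dense and of full measure.

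\emph{Integer weights, part (iv).} This is the main obstacle, since genericity must hold for all but finitely many $p$ while the weights stay integral. Eigenlines and the spectrum are invariant under positive scaling of $a$: $\sigma_p(ta)=t\sigma_p(a)$, so distinctness is preserved. It therefore suffices to find a rational $a\in V_J$ whose line $\{a\}\times J$ meets $\mathfrak M^-_{n,J}$ in a finite set, and then clear denominators. Consider the projection $\pi:\mathfrak M^-_{n,J}\to V_J$. The set of $a$ with infinite fibre $\pi^{-1}(a)$ is definable. Because a definable subset of $J$ is infinite exactly when it contains an interval, the fibre-dimension formula shows that this set has dimension at most $m-1$. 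Hence its complement in $V_J$ is open and dense and contains a rational point. For such $a$ the fibre is a finite set of exponents, giving (iv), with part (i) supplying the $R_n$ eigenlines for every $p\in J$.
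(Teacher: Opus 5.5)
Your proposal is correct and follows the paper's proof essentially step for step. You use degree persistence around the sections $z_C(p)=\Theta_p^{-1}([b_C])$ with a boundary gradient bound uniform in $p\in J$; the paper uses $p$-dependent charts centred at $z_C(p)$ over an open interval $J_0\supset J$ instead of your fixed neighbourhoods on finitely many subintervals, which is an inessential difference. You then apply Theorem~\ref{thm:C1-Maxwell} with $h=0$ over such an interval and at fixed $p$, and finally choose a rational weight whose Maxwell fibre is finite.

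One small repair is needed in (iv). A definable set of dimension at most $m-1$ need not be closed, so the complement of your set $E$ of weights with infinite fibre need not be open. As the paper does, replace $E$ by its closure $\overline E^{\,V_J}$, which still has dimension at most $m-1$, and choose the rational weight in the non-empty open set $V_J\setminus\overline E^{\,V_J}$. Similarly, in (iii) the relative closedness of $\mathfrak M^-_{n,p}$ comes from the closure built into Theorem~\ref{thm:C1-Maxwell}, not from the dimension bound.
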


\begin{proof}
Choose an open interval $J_0\Subset(1,2)$ containing $J$. For $C=C_{P,N}\in\Cn$, let
$$
 z_C(p)=\Theta_p^{-1}(b_C)=[\,\overline{\Phi_{p'}(b_{P,N})}\,],
 \qquad p'=\frac p{p-1}.
$$
These are pairwise disjoint definable $C^1$ sections over $J_0$, and the uniform critical set is exactly their union. Using the explicit cell charts from this section, with the two nonzero block levels varying in $p$, choose pairwise disjoint definable $C^1$ charts
$$
 \chi_C:J_0\times\overline B_\rho
 \rightarrow\mathbb RP^{n-2},
 \qquad
 \chi_C(p,0)=z_C(p),
$$
for a common sufficiently small $\rho>0$. Their restrictions to $J\times\overline B_\rho$ have compact, pairwise disjoint images, and for each $p$, the image of $\chi_C(p,\cdot)$ contains no other critical point of the uniformly weighted $K_n$.

When $n=2$, use the unique zero-dimensional chart; its local degree is $+1$.

The local degree at $z_C(p)$ is $(-1)^{\codim C}$ by Theorem~\ref{thm:low-indices}. On the compact union of the chart boundaries, the norm of the differential of the quotient for the uniformly weighted $K_n$ with respect to the chart variables is continuous and strictly positive. It therefore has a positive lower bound uniformly in $p\in J$. Lemma~\ref{lem:Psi-C1-family} gives a uniform $C^1$ bound for the coefficient map $\Psi_p$. Since
$
 \mathcal R_{p,a}-\mathcal R_{p,\one}
 =(a-\one)\cdot\Psi_p,
$
one neighbourhood $V_J$ of $\one$ can be chosen so that the straight-line homotopy in the edge weights has no boundary critical point for any $p\in J$. In these chart coordinates, homotopy invariance gives
$$
 \deg\left(
 \nabla_u\bigl(\mathcal R_{p,a}\circ\chi_C(p,\cdot)\bigr),
 B_\rho,0
 \right)
 =(-1)^{\codim C}\ne0.
$$
The image of each chart therefore contains a critical point. These images are disjoint, so this gives $R_n-1$ nonconstant eigenlines; together with the constant eigenline, this gives $R_n$ eigenlines.

Apply Theorem~\ref{thm:C1-Maxwell} with $S=J_0$, $h=0$, and $\Psi(p,x)=\Psi_p(x)$, and define $\mathfrak M^-_{n,J}$ by intersecting the resulting Maxwell set with $J\times V_J$. This set is relatively closed and has dimension at most $m$. Outside it, the reduced critical set is finite and its critical values are pairwise distinct. The constant value is zero and every nonconstant value is positive, proving \textup{(ii)}.

For fixed $p$, the same theorem with $S$ a point gives
$
 \mathfrak M^-_{n,p}=\mathfrak M(0,\Psi_p)\cap V_J,
$
with the dimension bound in \textup{(iii)}.

For \textup{(iv)}, set
$
 E=\{a\in V_J:\dim(\mathfrak M^-_{n,J})_a\geq1\}.
$
Definable fibre dimension gives $\dim E+1\leq\dim\mathfrak M^-_{n,J}\leq m$. Since definable closure preserves dimension, $V_J\setminus\overline E^{\,V_J}$ is a non-empty open definable set. Choose
$
 a\in\mathbb Q^m\cap
 \bigl(V_J\setminus\overline E^{\,V_J}\bigr).
$
The fibre $(\mathfrak M^-_{n,J})_a$ has dimension at most zero and hence, by definability, is finite. Clearing denominators preserves eigenlines and equalities between eigenvalues and scales every eigenvalue by the same positive factor. The resulting graph, with positive integer edge weights, has the asserted properties.
\end{proof}
\begin{corollary}[Reciprocal Morse relations]
\label{cor:reciprocal-Morse}
Let $n\geq2$, set $d=n-2$, and let
$
 P_n(t)=\sum_{j=0}^{d}t^j
$
be the $\mathbb F_2$-Poincar\'e polynomial of $\mathbb RP^d$. There is a polynomial $Q_n^+(t)\in\mathbb Z_{\geq0}[t]$ such that
\begin{equation}\label{eq:positive-Morse-relation}
 M_n^+(t)=P_n(t)+(1+t)Q_n^+(t).
\end{equation}
For $n\geq3$, define
$
 Q_n^-(t)=t^{d-1}Q_n^+(t^{-1});
$
for $n=2$, set $Q_2^-=0$. Then
\begin{equation}\label{eq:negative-Morse-relation}
 M_n^-(t)=P_n(t)+(1+t)Q_n^-(t).
\end{equation}
Moreover,
$$
 2Q_n^\pm(1)=R_n-n,
 \qquad
 M_n^\pm(-1)=\chi(\mathbb RP^{n-2}).
$$
In particular,
$$
 M_n^-(t)=t^{n-2}M_n^+(t^{-1}),
 \qquad
 Q_n^-(t)=t^{n-3}Q_n^+(t^{-1})\quad(n\geq3).
$$
\end{corollary}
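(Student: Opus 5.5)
The plan is to get \eqref{eq:positive-Morse-relation} from the Morse inequalities for the Morse function $\mathcal R_{p,\one}$, $p>2$, on $\mathbb RP^{d}$ with $\mathbb F_2$ coefficients. A purely algebraic construction of $Q_n^+$ is available as a backup. By Theorem~\ref{thm:exact-morse-polynomial}, $\mathcal R_{p,\one}$ is Morse with Morse polynomial $M_n^+$. The $\mathbb F_2$-Poincar\'e polynomial of $\mathbb RP^d$ is $P_n$. The strong Morse inequalities state precisely that $M_n^+(t)-P_n(t)=(1+t)Q(t)$ for some $Q$ with nonnegative integer coefficients. We take $Q_n^+$ to be this $Q$.

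As an independent check, $M_n^+$ is the face polynomial of the regular CW complex $\Cn$ (Proposition~\ref{prop:sign-cells}), so it is the rank polynomial of a cellular chain complex of $\mathbb RP^d$ over $\mathbb F_2$. The usual argument then applies: choose complements to the cycles and to the boundaries in each degree. Writing $\beta_j$ for the rank of $\partial_{j+1}$, this gives $c_j=b_j+\beta_j+\beta_{j-1}$, and hence $Q_n^+(t)=\sum_j\beta_jt^j\ge0$. Either route works; I would present the cellular one, since it is self-contained. For $n=2$, $M_2^+=1=P_2$ and $Q_2^+=0$.

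For the reciprocal relation with $n\ge3$, I would use the identity $M_n^-(t)=t^{d}M_n^+(t^{-1})$ from Theorem~\ref{thm:low-indices}, together with $t^dP_n(t^{-1})=P_n(t)$. These give
$$M_n^-(t)=P_n(t)+t^d(1+t^{-1})Q_n^+(t^{-1})=P_n(t)+(1+t)\,t^{d-1}Q_n^+(t^{-1}).$$
It remains to check that $t^{d-1}Q_n^+(t^{-1})$ is a polynomial, i.e.\ that $\deg Q_n^+\le d-1$. This holds because $\beta_j=0$ for $j\ge d$, since there are no cells of dimension $d+1$. Equivalently, the top coefficients satisfy $c_d=1+\beta_{d-1}$ and force the degree bound. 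This step is the main point to verify, and it is immediate in the cellular formulation. Nonnegativity of the coefficients is preserved, and the displayed form of $Q_n^-$ is exactly the claimed identity $Q_n^-(t)=t^{n-3}Q_n^+(t^{-1})$. For $n=2$, $M_2^-=1=P_2$.

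For the numerical statements, evaluate at $t=1$. Using $M_n^\pm(1)=R_n-1$ and $P_n(1)=n-1$, we get $2Q_n^\pm(1)=R_n-n$; the reciprocity gives $Q_n^-(1)=Q_n^+(1)$. Evaluating at $t=-1$ kills the $(1+t)$ term, so $M_n^\pm(-1)=P_n(-1)=\chi(\mathbb RP^{n-2})$, which is consistent with \eqref{eq:M-checks}.
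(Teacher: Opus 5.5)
Your proposal is correct. Its second half is exactly the paper's argument: the reversal $M_n^-(t)=t^dM_n^+(t^{-1})$ together with $t^dP_n(t^{-1})=P_n(t)$, followed by evaluation at $t=\pm1$.

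The only divergence is in how $Q_n^+\in\mathbb Z_{\geq0}[t]$ is produced. The paper takes your first route, the $\mathbb F_2$ polynomial Morse inequalities for the Morse function $\mathcal R_{p,\one}$, $p>2$, given by Theorem~\ref{thm:exact-morse-polynomial}. The route you say you would present uses only the cell counts of the regular CW structure $\Cn$ from Proposition~\ref{prop:sign-cells} and the rank decomposition $c_j=b_j+\beta_j+\beta_{j-1}$ of its cellular chain complex. Since $(1+t)Q=M_n^+-P_n$ determines $Q$ uniquely in $\mathbb Z[t]$, both routes yield the same polynomial.

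The cellular route is purely combinatorial and does not need Morse theory for a quotient that is in general only $C^2$. It also identifies $Q_n^+(t)=\sum_j\operatorname{rank}(\partial_{j+1})\,t^j$ for the explicit boundary of Proposition~\ref{prop:cellular-incidence}. The Morse route instead reads the relation directly off the critical points of the eigenvalue problem. That is the viewpoint behind the paper's later question of comparing the Thom--Smale differential with this cellular boundary.

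Finally, the bound $\deg Q_n^+\leq d-1$, which you flag as the main point to verify, is automatic in either route. Since $M_n^+-P_n$ has degree at most $d$, its quotient by $1+t$ has degree at most $d-1$; the paper uses this tacitly.
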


\begin{proof}
For $n=2$, one has $M_2^+=M_2^-=P_2=1$ and $Q_2^+=Q_2^-=0$, so the assertion is immediate. Assume henceforth that $n\geq3$.

The polynomial Morse inequalities over $\mathbb F_2$ for the smooth Morse function $\mathcal R_{p,\one}$, $p>2$, give \eqref{eq:positive-Morse-relation}. Use $M_n^-(t)=t^dM_n^+(t^{-1})$ and $t^dP_n(t^{-1})=P_n(t)$ to obtain
$$
 M_n^-(t)
 =P_n(t)+(1+t)t^{d-1}Q_n^+(t^{-1}),
$$
which proves \eqref{eq:negative-Morse-relation}. Evaluating at $t=1$ and $t=-1$, and using $M_n^\pm(1)=R_n-1$ and $P_n(1)=n-1$, gives the remaining assertions.
\end{proof}

Theorem~\ref{thm:low-interval} already gives generic separation. We now give a direct first-order proof along a rational ray, which yields the subquadratic exponential lower bound and prepares the explicit integer perturbation used below.

\begin{theorem}[The exponential lower bound for $1<p<2$]
\label{thm:low-Rn}
For every fixed $1<p<2$ and every $n\geq2$, there is a connected complete graph with positive integer edge weights and at least
$$
 R_n=\frac{3^n-2^{n+1}+3}{2}
$$
distinct spectral values. Hence $U_n(p)\geq R_n$.
\end{theorem}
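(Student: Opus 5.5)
The plan is to deduce the statement from the persistence and separation results already established for $1<p<2$, and then make the weights integral. Fix $p\in(1,2)$ and take $J=\{p\}$, or any compact interval in $(1,2)$ containing $p$. Theorem~\ref{thm:low-interval} supplies a definable open neighbourhood $V_J$ of $\one$ and, by part~(iii), a relatively closed definable set $\mathfrak M^-_{n,p}\subset V_J$ of dimension at most $m-1$. Outside this set the eigenline set is finite and the eigenvalues are pairwise distinct. By part~(i), every weight in $V_J$ has at least $R_n$ eigenlines: the $R_n-1$ nonconstant ones forced by the nonzero local degrees $(-1)^{\codim C}$ of Theorem~\ref{thm:low-indices}, one in each disjoint chart around $z_C(p)$, together with the constant one.

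I will choose the weight vector as follows. The set $V_J\setminus\mathfrak M^-_{n,p}$ is open and nonempty, so it contains a rational point $a\in\mathbb Q_{>0}^m$. For such $a$, the at least $R_n$ eigenlines have pairwise distinct eigenvalues, so $\#\sigma_p(a)\geq R_n$; the constant eigenline contributes the value $0$. I will then multiply $a$ by a common denominator $T$ to obtain a positive integer vector $A=Ta$. Since $\Delta_{p,Ta}=T\Delta_{p,a}$, eigenlines are unchanged and every eigenvalue is multiplied by $T$, so distinctness and the count are preserved. All weights are positive, so the underlying graph is the connected complete graph $K_n$, and $U_n(p)\geq R_n$ follows.

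The main obstacle is to make the separation step honest, in the form of a direct first-order argument along a rational ray, rather than relying only on the abstract Maxwell-set dimension count. For that version I would take $a=\one+\epsilon\eta$ with rational $\eta$. By the Hellmann--Feynman formula \eqref{eq:edge-hellmann-feynman}, the critical value in the chart around $z_C$ moves to first order as $\lambda_p(C)+\epsilon\,\eta\cdot\Psi_p(z_C)$. The difficulty is that for $1<p<2$ the persisting critical points need not be unique or $C^1$ in $\epsilon$, so this expansion has to be justified through the $C^1$ closeness $\mathcal R_{p,a}-\mathcal R_{p,\one}=\epsilon\,\eta\cdot\Psi_p$ and the smallness of the charts. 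The points $\Psi_p(z_C)$ are distinct by injectivity of $\Psi_p$ (Theorem~\ref{thm:projective-embedding}), so a rational $\eta$ avoiding finitely many hyperplanes separates the values within each collision class of $\mathfrak E_n$. Values from different classes are already separated by a positive gap, which persists for small $\epsilon$. If justifying this expansion turns out to be delicate, I would fall back on the Maxwell-set route in the previous paragraphs, which already suffices.
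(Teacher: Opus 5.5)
Your proof is correct, but your primary route is not the one the paper uses for this theorem. You get separation from part (iii) of Theorem~\ref{thm:low-interval}, that is, from the definable Maxwell-set dimension bound of Theorem~\ref{thm:C1-Maxwell} applied to the injective $C^1$ map $\Psi_p$. You combine this with the degree-persistence count of part (i), then take a rational point of the open dense set $V_J\setminus\mathfrak M^-_{n,p}$ and clear denominators; $n=2$ is covered there as well. There is no circularity: Theorem~\ref{thm:low-interval} rests on Theorem~\ref{thm:low-indices}, Lemma~\ref{lem:Psi-C1-family} and Theorem~\ref{thm:C1-Maxwell}, and the paper itself remarks that it ``already gives generic separation''.

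The paper's proof instead carries out the direct argument you sketch as a fallback. It fixes a rational $v$ off the finitely many hyperplanes $v\cdot(\Psi_p(z_\rho)-\Psi_p(z_\sigma))=0$, $\rho\ne\sigma$. It then resolves the difficulty you identify by working in shrinking anisotropic neighbourhoods $\delta_s(U_\rho)$ with $s=\epsilon^\theta$, $1<\theta<2$:
\begin{itemize}
\item the boundary gradient bound $\|\nabla\widetilde{\mathcal R}_\rho\|\geq c_\rho s^{1/2}$, obtained from the $C^1$ limit in Theorem~\ref{thm:low-splitting}, dominates the $O(\epsilon)$ perturbation because $\epsilon=o(s^{1/2})$, so the local degree persists in the shrinking neighbourhood;
\item inside it the unperturbed quotient differs from $\lambda_\rho$ by only $O(s)=o(\epsilon)$, which gives the first-order expansion without any $C^1$ branch or Hellmann--Feynman formula.
\end{itemize}

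Your route is shorter and reuses established machinery, but it is purely existential: a codimension-one Maxwell set does not prevent a prescribed ray through $\one$ from lying inside it. The paper's route shows that every rational direction off explicit hyperplanes works for all small $\epsilon$. That is what yields the explicit integer family $A_e(Q)=Q+T^e$ in Corollary~\ref{cor:integer-both-sides}.

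One correction to your sketch of the direct route. The equal-value classes to be split are not only the collisions recorded by $\mathfrak E_n$. For every $p$ they also include the whole permutation orbit of each block type, for example the $\binom n2$ eigenlines of type $(1,1,n-2)$. So the direction must avoid the hyperplanes for all pairs of distinct eigenlines with equal base value, which is why the paper simply takes all pairs.
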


\begin{proof}
For $n=2$, the unit-weight graph $K_2$ has the two spectral values $0$ and $2^{p-1}$, while $R_2=2$. Assume henceforth that $n\geq3$. Let $z_\rho=[x_\rho^0]$ run over the $R_n-1$ nonconstant eigenlines of the uniformly weighted $K_n$, let $r_\rho$ be the size of its zero block, let $x_\rho^0$ be the representative fixed by the local chart, and let $\lambda_\rho=\mathcal R_{p,\one}(z_\rho)$. Since $\Psi_p$ is injective, choose $v\in\mathbb Q^{\binom n2}$ outside the finitely many hyperplanes
\begin{equation}\label{eq:low-separating-direction}
 v\cdot\bigl(\Psi_p(z_\rho)-\Psi_p(z_\sigma)\bigr)=0,
 \qquad \rho\ne\sigma,
\end{equation}
and set $a_\epsilon=\one+\epsilon v$.

Choose the pairwise disjoint local charts used in the proof of Theorem~\ref{thm:low-interval}. In the chart at $z_\rho$ set
$$
 U_\rho=\{u:\varrho(u)<1\},\qquad
 \widehat B_{\rho,s}
 =\{[x_\rho^0+\delta_su]:u\in U_\rho\}.
$$
Let $\widetilde{\mathcal R}_\rho(w)=\mathcal R_{p,\one}([x_\rho^0+w])$ denote the quotient in this affine chart.
The $C^1$ convergence in Theorem~\ref{thm:low-splitting}, together with compactness of $\partial U_\rho=\{\varrho=1\}$, gives constants $c_\rho,s_\rho>0$ for which the norm of the differential with respect to $u$ of the rescaled quotient is bounded below whenever $0<s\leq s_\rho$. If $L_s=D\delta_s$, then, for $u\in\partial U_\rho$,
$$
 D_u\left[s^{-1}
 \bigl(\widetilde{\mathcal R}_\rho(\delta_su)-\lambda_\rho\bigr)\right]
 =s^{-1}L_s^{\mathsf T}
 \nabla\widetilde{\mathcal R}_\rho(\delta_su).
$$
Since $\|L_s\|\leq s^{1/2}$ for $0<s<1$,
\begin{equation}\label{eq:low-boundary-gradient}
 \|\nabla\widetilde{\mathcal R}_\rho(w)\|\geq c_\rho s^{1/2}
 \quad\bigl(w\in\delta_s(\partial U_\rho)\bigr),
 \qquad0<s\leq s_\rho.
\end{equation}
When $r_\rho=0$, $\|L_s\|=s^{1/p}$ and the sharper lower bound is of order $s^{(p-1)/p}$, which is still at least a constant times $s^{1/2}$.

The coefficient map is $C^1$ for every $p>1$, so $\nabla(v\cdot\Psi_p)$ is bounded on these fixed charts. Choose
$
 s_\epsilon=\epsilon^\theta$ with $1<\theta<2$. 
For $\epsilon>0$ small enough, $s_\epsilon\leq\min_\rho s_\rho$ and $\epsilon=o(s_\epsilon^{1/2})$. Equation \eqref{eq:low-boundary-gradient} shows that the straight-line family $\mathcal R_{p,\one}+t\epsilon\,v\cdot\Psi_p$, $0\leq t\leq1$, has no boundary critical point. Homotopy invariance of Brouwer degree and Lemma~\ref{lem:anisotropic-Morse} therefore show that the perturbed quotient has the same nonzero local degree $(-1)^{r_\rho}$ on $\widehat B_{\rho,s_\epsilon}$. The local degree of a gradient is invariant under changes of coordinates: if $y=h(x)$ is a coordinate transition, then
$
 \nabla_y(f\circ h^{-1})(h(x))
 =Dh(x)^{-\mathsf T}\nabla_xf(x);
$
the domain transition and the inverse-transpose target map have the same orientation sign, whose product is $+1$. The perturbed quotient therefore has a critical point $z_\rho(\epsilon)$ in that shrinking neighbourhood.

The expansion in Theorem~\ref{thm:low-splitting} also gives, uniformly on the shrinking neighbourhood,
$$
 \left|
 \mathcal R_{p,\one}(z_\rho(\epsilon))-\lambda_\rho
 \right|=O(s_\epsilon)=o(\epsilon).
$$
Since $z_\rho(\epsilon)\to z_\rho$,
\begin{equation}\label{eq:low-first-order-values}
 \mathcal R_{p,a_\epsilon}(z_\rho(\epsilon))
 =\lambda_\rho+
 \epsilon\,v\cdot\Psi_p(z_\rho)+o(\epsilon).
\end{equation}
Pairs with different base values remain separated. For pairs with the same base value, the coefficient of $\epsilon$ in their difference is nonzero by \eqref{eq:low-separating-direction}. Since there are finitely many pairs, all selected values are distinct for every sufficiently small positive $\epsilon$. Choose such an $\epsilon$ rational and small enough that every weight is positive. Clearing denominators preserves distinctness and gives positive integer weights. The constant eigenline gives the additional value zero, so the connected complete graph has at least $R_n$ distinct values.
\end{proof}

The cellular model also makes the reversal of local degrees across $p=2$ topologically explicit. For a cell $C_{P,N}$, let
$$
 \widehat C_{P,N}^{\,p}=\Theta_p^{-1}(C_{P,N}),\qquad
 \widehat b_{P,N}^{\,p}=\Theta_p^{-1}([b_{P,N}]).
$$

\begin{proposition}[Cellular incidence and reversal of degrees]
\label{prop:cellular-incidence}
For every $p>1$, the cells $\widehat C_{P,N}^{\,p}$ form a regular CW decomposition of $\Pj(V_n)$. Over $\mathbb F_2$, its cellular boundary is
\begin{equation}\label{eq:cellular-boundary}
 \partial[P\mid N]
 =\sum_{\substack{i\in P\\|P|>1}}[P\setminus\{i\}\mid N]
 +\sum_{\substack{j\in N\\|N|>1}}[P\mid N\setminus\{j\}].
\end{equation}
The integral cellular boundary is as follows. For each unordered label, choose the ordered representative $(P,N)$ for which $\min(P\cup N)\in P$, let
$$
 P=\{p_1<\cdots<p_k\},\qquad N=\{n_1<\cdots<n_\ell\},
$$
and orient $\overline C_{P,N}\cong\Delta^{k-1}\times\Delta^{\ell-1}$ by the product orientation. Transfer this orientation to $\overline{\widehat C_{P,N}^{\,p}}$ through $\Theta_p^{-1}$. For non-empty disjoint $A,B$, let $\operatorname{can}(A,B)$ be $(A,B)$ if $\min(A\cup B)\in A$ and $(B,A)$ otherwise, and set
$$
 \epsilon(A,B)=
 \begin{cases}
 1,&\operatorname{can}(A,B)=(A,B),\\
 (-1)^{(|A|-1)(|B|-1)},&\operatorname{can}(A,B)=(B,A).
 \end{cases}
$$
Then, with brackets denoting the oriented generator belonging to the canonical representative,
\begin{align}
 \partial_{\mathbb Z}[P\mid N]
 ={}&\mathbf1_{\{k>1\}}\sum_{a=1}^k
 (-1)^{a-1}\epsilon(P\setminus\{p_a\},N)
 [\operatorname{can}(P\setminus\{p_a\},N)]\notag\\
 &+\mathbf1_{\{\ell>1\}}\sum_{b=1}^{\ell}
 (-1)^{k+b-2}\epsilon(P,N\setminus\{n_b\})
 [\operatorname{can}(P,N\setminus\{n_b\})].
 \label{eq:integral-cellular-boundary}
\end{align}
For $p>2$, the Morse index of $\widehat b_{P,N}^{\,p}$ is $\dim C_{P,N}$. At $p=2$, the local critical group of the logarithmic model $\mathcal H_n$ at $b_C=[b_{P,N}]$ is concentrated in degree $\dim C_{P,N}$, whereas for $1<p<2$ the local critical group at $\widehat b_{P,N}^{\,p}$ is concentrated in degree $\codim C_{P,N}$. Thus $M_n^+$ is the face polynomial of this decomposition and $M_n^-$ is obtained from it by reversing degrees.
\end{proposition}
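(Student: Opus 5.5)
The proof splits into a transport step, a cellular-chain computation and a bookkeeping step. The CW structure on $\Pj(V_n)$ comes for free from Proposition~\ref{prop:Theta} and Proposition~\ref{prop:sign-cells}. $\Theta_p$ is a homeomorphism, so the characteristic maps $\Delta^{k-1}\times\Delta^{\ell-1}\to\overline C_{P,N}\subset\Pj(H)$ composed with $\Theta_p^{-1}$ are characteristic maps for $\widehat C^{\,p}_{P,N}$. They remain homeomorphisms onto the closures, with boundaries the unions of the pulled-back face cells. Hence the decomposition is regular, and its incidence data coincide with those of $\Cn$. In particular the cellular boundary over any coefficient ring is the same for every $p$, and it suffices to compute it for $\Cn$ on $\Pj(H)$.

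For a regular CW complex, incidence numbers are $\pm1$ exactly on codimension-one faces and $0$ otherwise. Over $\mathbb F_2$, \eqref{eq:cellular-boundary} is therefore just the face criterion of Proposition~\ref{prop:sign-cells} restricted to codimension one. Such faces arise by deleting one index from $P$ when $|P|>1$, or one from $N$ when $|N|>1$; deleting the sole element of a block leaves the arrangement.

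For the integral formula, I would use the product orientation. The boundary of $\Delta^{k-1}\times\Delta^{\ell-1}$ is $\partial\Delta^{k-1}\times\Delta^{\ell-1}+(-1)^{k-1}\Delta^{k-1}\times\partial\Delta^{\ell-1}$, and the standard simplex boundary contributes $(-1)^{a-1}$ for deleting the $a$th vertex $p_a$. This gives the signs $(-1)^{a-1}$ and $(-1)^{k-1+b-1}=(-1)^{k+b-2}$. The face map for deleting $p_a$ is the characteristic map of the ordered pair $(P\setminus\{p_a\},N)$ with its own product orientation, with vertices still in increasing order.

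That ordered pair may fail to be canonical: the minimum of $P\cup N$ may have been $p_1$, now deleted, so that it lies in $N$. In that case the same projective cell is parametrised by $(N,P\setminus\{p_a\})$ via $(u,v)\mapsto(v,u)$, since $[(u,-v)]=[(v,-u)]$ after a global sign change. The swap $\Delta^{|A|-1}\times\Delta^{|B|-1}\to\Delta^{|B|-1}\times\Delta^{|A|-1}$ has orientation sign $(-1)^{(|A|-1)(|B|-1)}$, which is the factor $\epsilon(A,B)$. Deletion from $N$ never changes the minimum unless $N$ contained it, which the canonical choice of $(P,N)$ excludes, so $\epsilon=1$ automatically there, consistent with the formula. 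The main point requiring care is exactly this orientation consistency. The swap must be orientation-compatible with the chosen product orientation on the canonical generator, and the projective identification must introduce no further sign, since it is a homeomorphism of closed cells commuting with the characteristic maps. I would verify this by writing both parametrisations explicitly in the normalisation \eqref{eq:cell-normalization}.

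The remaining assertions are bookkeeping. The Morse index for $p>2$ is Corollary~\ref{cor:uniform-barycentres}. The logarithmic critical groups are Corollary~\ref{cor:log-indices}, and those for $1<p<2$ are Theorem~\ref{thm:low-indices}, with $r=\codim C_{P,N}$ by \eqref{eq:cell-product-dimension}. Counting cells by dimension gives $M_n^+$ as the face polynomial, by Theorem~\ref{thm:exact-morse-polynomial}. The identity $M_n^-(t)=t^{n-2}M_n^+(t^{-1})$ from \eqref{eq:low-polynomial} expresses the degree reversal.
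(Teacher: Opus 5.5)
Your proposal is correct and follows the paper's proof essentially step for step. You transport the regular CW structure of $\Cn$ through the homeomorphism $\Theta_p^{-1}$, identify facets with single deletions, take the signs $(-1)^{a-1}$ and $(-1)^{k+b-2}$ from the product-boundary formula, account for a block swap by the factor-transposition sign $(-1)^{(|A|-1)(|B|-1)}$, and cite the index and critical-group results. Your two additional observations are also correct and make explicit what the paper leaves implicit: the transition between the $(A,B)$ and $(B,A)$ parametrisations is exactly $(u,v)\mapsto(v,u)$ with no further sign, and deletions from $N$ never require a swap.
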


\begin{proof}
The global homeomorphism $\Theta_p^{-1}$ carries the regular CW decomposition of Proposition~\ref{prop:sign-cells} to the displayed cells, preserving its face poset and characteristic maps. A facet of $\Delta^{|P|-1}\times\Delta^{|N|-1}$ is obtained by deleting exactly one element of $P$ or $N$, provided the corresponding block remains non-empty. Every such facet occurs once, which gives \eqref{eq:cellular-boundary} over $\mathbb F_2$.

For the integral formula, the usual product-boundary convention gives, before replacing each face by its canonical representative,
$$
 \sum_{a=1}^k(-1)^{a-1}[P\setminus\{p_a\}\mid N]
 +(-1)^{k-1}\sum_{b=1}^{\ell}(-1)^{b-1}[P\mid N\setminus\{n_b\}],
$$
with the first sum omitted when $k=1$ and the second when $\ell=1$. If this replacement swaps the two sign blocks, the characteristic map is obtained by transposing simplex factors, whose orientation sign is $(-1)^{(|A|-1)(|B|-1)}$. This proves \eqref{eq:integral-cellular-boundary}; reduction modulo two recovers \eqref{eq:cellular-boundary}.

The superquadratic index statement is Theorem~\ref{thm:exact-morse-polynomial}, the $p=2$ statement for the logarithmic model is Corollary~\ref{cor:log-indices}, and the subquadratic statement is Theorem~\ref{thm:low-indices}. The corresponding generating functions are therefore the face polynomial and its degree reversal.
\end{proof}

\begin{proof}[Proof of Theorem~\ref{thm:intro-cellular-transition}]
Propositions~\ref{prop:Theta} and~\ref{prop:sign-cells} establish the duality map and cell structure, and Corollary~\ref{cor:uniform-barycentres} identifies the barycentres. The superquadratic indices, face polynomial, and ordering follow from Proposition~\ref{prop:uniform-rays-inertia}, Theorem~\ref{thm:exact-morse-polynomial}, and Proposition~\ref{prop:face-order}.

Theorem~\ref{thm:log-blowup}, Theorem~\ref{thm:H-critical-set}, and Corollary~\ref{cor:log-indices} give the $C^1$ blow-up limit and the critical points, critical groups, and local degrees of the logarithmic model. Theorem~\ref{thm:low-indices} gives the subquadratic critical groups and the degree-reversed polynomial, while Proposition~\ref{prop:cellular-incidence} gives the cellular boundary and reversal of degrees.
\end{proof}

\section{Caustics and critical-value Maxwell sets for complete graphs}
\label{sec:morse-chambers}

Local perturbations give separated critical values near the uniform weights. We now describe the global parameter sets on which critical points become degenerate or distinct critical values collide. For $C^2$ height functions, degeneracy is governed by the critical values of the normal-bundle projection, giving a caustic alongside the Maxwell set. Let $M$ be a compact $C^2$ definable manifold, let $\Psi:M\to\mathbb R^N$ be a $C^2$ definable embedding, and set $f_c(x)=c\cdot\Psi(x)$. Its normal bundle, viewed as an incidence manifold, is
\begin{equation}\label{eq:normal-incidence}
 \mathcal N_\Psi
 =\{(x,c)\in M\times\mathbb R^N:
 c\perp D\Psi_x(T_xM)\}.
\end{equation}

\begin{theorem}[Definable height-function discriminants]
\label{thm:height-discriminants}
There are closed definable sets $\Cau(\Psi),\Maxw(\Psi)\subset\mathbb R^N$, each of dimension at most $N-1$, such that:
\begin{enumerate}[label=\textup{(\roman*)}]
\item $f_c$ is Morse for $c\notin\Cau(\Psi)$;
\item on each connected component of $\mathbb R^N\setminus\Cau(\Psi)$, the number of critical points and their Morse-index multiset are constant;
\item if $c\notin\Cau(\Psi)\cup\Maxw(\Psi)$, the critical values of $f_c$ are pairwise distinct;
\item away from the caustic, the Maxwell set is locally a finite union of embedded $C^1$ hypersurfaces (one for each colliding pair of local critical branches), and globally the union of the two discriminants admits a finite definable Whitney stratification.
\end{enumerate}
Both $\mathbb R^N\setminus\Cau(\Psi)$ and $\mathbb R^N\setminus(\Cau(\Psi)\cup\Maxw(\Psi))$ have finitely many connected components.
\end{theorem}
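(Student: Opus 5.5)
The plan is to realise both discriminants through the endpoint map of the normal incidence manifold. Since $\Psi$ is a $C^2$ definable embedding, $\mathcal N_\Psi$ in \eqref{eq:normal-incidence} is a definable $C^1$ submanifold of $M\times\mathbb R^N$ of dimension $N$: it is the total space of the normal bundle, and it is cut out by the conditions $\langle c, D\Psi_x(e_i(x))\rangle=0$ with respect to a definable local frame. The critical points of $f_c$ are exactly the $x$ with $(x,c)\in\mathcal N_\Psi$. Let $\pi:\mathcal N_\Psi\to\mathbb R^N$, $\pi(x,c)=c$. I will define $\Cau(\Psi)$ as the set of critical values of $\pi$. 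A direct computation in a chart shows that $D\pi$ at $(x,c)$ fails to be surjective if and only if the Hessian $c\cdot D^2\Psi_x$ restricted to $T_xM$ is degenerate. This gives (i). The set $\Cau(\Psi)$ is definable, and it is closed because $M$ is compact, so $\pi$ is proper and the degenerate locus is closed. The definable Sard theorem, in the same o-minimal framework as Lemma~\ref{lem:finite-critical-values}, gives $\dim\Cau(\Psi)\le N-1$. Since $\pi$ is only $C^1$, I rely on the o-minimal Sard theorem, which holds for definable $C^1$ maps.

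For (ii), off the caustic $\pi$ is a proper submersion between manifolds of equal dimension. It is therefore a finite covering over each component of $\mathbb R^N\setminus\Cau(\Psi)$, which gives a constant number of critical points. Along a covering branch the Hessian stays nondegenerate and varies continuously, so the Morse index is locally constant and hence constant along branches. This yields a constant index multiset.

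For (iii) and (iv), I define the Maxwell set as the closure of the collision set
\[
\{c\notin\Cau(\Psi):\exists x\ne y,\ (x,c),(y,c)\in\mathcal N_\Psi,\ f_c(x)=f_c(y)\}.
\]
It is definable and closed, and (iii) holds by construction. For the dimension bound I argue locally off the caustic, which also gives (iv). Near $c_0\notin\Cau(\Psi)$ there are finitely many $C^1$ branches $x_k(c)$, and the values $\lambda_k(c)=c\cdot\Psi(x_k(c))$ have gradient $\nabla\lambda_k=\Psi(x_k(c))$ by the envelope argument used in Theorem~\ref{thm:C1-Maxwell}. Injectivity of $\Psi$ gives $\nabla(\lambda_j-\lambda_k)\ne0$, so each collision locus $\{\lambda_j=\lambda_k\}$ is an embedded $C^1$ hypersurface. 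The collision set is therefore locally a finite union of hypersurfaces off the caustic, and it has dimension at most $N-1$. Taking the closure preserves dimension, and adding the caustic only contributes dimension at most $N-1$. The alternative is to invoke Theorem~\ref{thm:C1-Maxwell} with $S$ a point and $h=0$ for the dimension statement directly.

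Finally, the finite definable Whitney stratification of $\Cau(\Psi)\cup\Maxw(\Psi)$ is the standard existence theorem for Whitney stratifications of definable sets in o-minimal structures. Finiteness of the number of components of the two complements follows from Proposition~\ref{prop:ominimal-facts}(i). The main obstacle I expect is regularity bookkeeping: with $\Psi$ only $C^2$, the incidence manifold and $\pi$ are only $C^1$. I have to check that both the Hessian characterisation of $\operatorname{rank}D\pi$ and the definable Sard theorem are valid at this regularity. I would handle this by working in definable $C^2$ charts of $M$ and writing $\mathcal N_\Psi$ explicitly as the image of the trivialised normal bundle.
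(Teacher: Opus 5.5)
Your proposal is correct and follows the paper's proof essentially step for step. You take the caustic to be the critical values of the proper normal-bundle projection, identify $\ker D\pi$ with the Hessian kernel, apply definable Sard, use the finite covering off the caustic, and use the Hellmann--Feynman gradient $\Psi(x_i)-\Psi(x_j)\ne0$ for the local hypersurface structure. There is one variation, which is harmless: you close up the collision set only over off-caustic parameters and read the dimension bound off the local hypersurface picture, whereas the paper takes $\Maxw(\Psi)=\mathfrak M(0,\Psi)$ from Theorem~\ref{thm:C1-Maxwell}. Either choice yields (iii) and (iv). Your regularity worry is also unnecessary, since $\pi$ is a $C^1$ map between manifolds of equal dimension, where the classical $C^1$ Sard theorem already gives measure zero and hence dimension at most $N-1$ for the definable caustic.
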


\begin{proof}
A critical point of $f_c$ is exactly a point of $\mathcal N_\Psi$. The normal-bundle projection
$$
 \pi:\mathcal N_\Psi\rightarrow\mathbb R^N,\qquad(x,c)\mapsto c,
$$
is proper because $M$ is compact. In a local chart on $M$, the normal bundle is described by
$
 (D\Psi(x))^{\mathsf T}c=0.
$
A tangent vector $(\xi,\dot c)$ at $(x,c)$ therefore satisfies
$$
 \operatorname{Hess}_x(f_c)\,\xi
 +(D\Psi(x))^{\mathsf T}\dot c=0.
$$
The kernel of the derivative of the projection $\pi(x,c)=c$ consists of the pairs with $\dot c=0$, and is consequently identified with $\ker\operatorname{Hess}_x(f_c)$. Since the normal bundle and the coefficient space both have dimension $N$, $D\pi$ is invertible if and only if the critical point of $f_c$ is nondegenerate. Let $\Cau(\Psi)$ be the image under $\pi$ of its critical locus. This locus is closed and definable; since $\pi$ is proper, its image is closed. Definable Sard gives $\dim\Cau(\Psi)\leq N-1$. Above each component of the complement, $\pi$ is a proper local diffeomorphism and hence a finite covering. This proves the constancy assertion, including the indices.

Theorem~\ref{thm:C1-Maxwell}, with $S$ a point and $h=0$, gives a closed definable Maxwell set of dimension at most $N-1$, with generic finiteness of the critical set and pairwise distinct critical values. Away from the caustic, label the local critical branches by $x_1(c),\ldots,x_s(c)$. For $i\ne j$ set $\delta_{ij}(c)=f_c(x_i(c))-f_c(x_j(c))$. Differentiation along the critical branches gives
\begin{equation}\label{eq:abstract-HF}
 D\delta_{ij}(c)[v]
 =v\cdot\bigl(\Psi(x_i(c))-\Psi(x_j(c))\bigr).
\end{equation}
The gradient is nonzero because $\Psi$ is injective. Thus every non-empty zero set is a $C^1$ hypersurface. Taking $\Maxw(\Psi)=\mathfrak M(0,\Psi)$, definable Whitney stratification and the finiteness of definably connected components complete the proof.
\end{proof}

\begin{theorem}[Discriminants for definable families of embeddings]
\label{thm:height-family}
Let $I$ be a definable $C^1$ manifold and $\Psi:I\times M\to\mathbb R^N$ be definable. Assume that every $\Psi_s$ is a $C^2$ embedding and that, in local charts, $(s,x)\mapsto D_x\Psi_s(x)$ is $C^1$. There are closed definable caustic and Maxwell sets of codimension at least one in $I\times\mathbb R^N$. Off the caustic the critical-point count and index multiset of $c\cdot\Psi_s$ are constant on each connected component of the caustic complement; off the union of the caustic and the Maxwell set, the critical values are pairwise distinct.
\end{theorem}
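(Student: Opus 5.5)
The plan is to run the proof of Theorem~\ref{thm:height-discriminants} fibrewise over $I$, now working with the parametrised incidence manifold
$$
 \mathcal N=\{(s,x,c)\in I\times M\times\mathbb R^N:\ c\perp D_x\Psi_s(T_xM)\}.
$$
In local charts this set is the zero set of $(s,x,c)\mapsto D_x\Psi_s(x)^{\mathsf T}c$. That map is $C^1$ by hypothesis, and it is a submersion in the $c$-variables because each $D_x\Psi_s(x)$ is injective ($\Psi_s$ is an embedding). Hence $\mathcal N$ is a definable $C^1$ manifold of dimension $\dim I+N$. Consider the projection $\pi:\mathcal N\to I\times\mathbb R^N$, $(s,x,c)\mapsto(s,c)$. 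It is proper since $M$ is compact, and its fibres are exactly the critical sets of $f_{s,c}=c\cdot\Psi_s$.

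The first step is to identify the critical locus of $\pi$. Differentiating the defining equation gives tangent vectors $(\dot s,\xi,\dot c)$ subject to
$$
 \operatorname{Hess}_x f_{s,c}\,\xi+D_x\Psi_s(x)^{\mathsf T}\dot c+\partial_s\bigl(D_x\Psi_s(x)^{\mathsf T}\bigr)c\,\dot s=0 .
$$
The kernel of $D\pi$ consists of the vectors with $\dot s=0$ and $\dot c=0$, so it is $\ker\operatorname{Hess}_x f_{s,c}$. Since the dimensions agree, $D\pi$ is invertible exactly at nondegenerate critical points. I will define the caustic as the $\pi$-image of the degeneracy locus. That locus is definable and closed, so its image is closed by properness, and definable Sard for the $C^1$ map $\pi$ gives codimension at least one. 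Over each component of the complement, $\pi$ is a proper local homeomorphism, hence a finite covering. The critical points therefore vary continuously, so the count is constant. The Morse indices are constant along each branch because the Hessian is continuous in $(s,x,c)$ and stays nondegenerate.

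For the Maxwell set I will apply Theorem~\ref{thm:C1-Maxwell} directly, with $S=I$, $h=0$ and the family $\Psi$. Every $\Psi_s$ is injective, and the continuity and definability of $D_x\Psi_s$ in $(s,x)$ are part of the hypotheses. That theorem gives a closed definable set of dimension at most $\dim I+N-1$, off which the critical values are pairwise distinct. Intersecting its complement with the complement of the caustic gives the final assertion.

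I expect the main obstacle to be the caustic step, specifically the regularity of $\mathcal N$. The Hessian is only continuous in $s$, so I need to check that $\mathcal N$ really is a $C^1$ manifold and that the identification of $\ker D\pi$ with $\ker\operatorname{Hess}$ stays valid. If the $C^1$ hypothesis on $D_x\Psi_s$ turns out to be insufficient for Sard, I will instead fall back on the dimension argument used in Theorem~\ref{thm:C1-Maxwell}. That argument only needs definability of the degeneracy set in $I\times\mathbb R^N$ together with the fact that, for each fixed $s$, its fibre has dimension at most $N-1$ by Theorem~\ref{thm:height-discriminants}. Definable fibre dimension then bounds the total dimension by $\dim I+N-1$.
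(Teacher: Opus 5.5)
Your proposal is correct and follows essentially the same route as the paper. The universal incidence cut out by $D_x\Psi_s(x)^{\mathsf T}c=0$ is a $C^1$ manifold of dimension $\dim I+N$, since the map is a submersion in $c$. Its proper projection has kernel $\ker\operatorname{Hess}_xf_{s,c}$. Definable Sard and the finite-covering argument then give the caustic, and the argument of Theorem~\ref{thm:C1-Maxwell} with $h=0$ gives the Maxwell set.

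The obstacle you anticipate does not arise. The joint $C^1$ hypothesis on $D_x\Psi_s$ already makes $\mathcal N$ a $C^1$ manifold and $\pi$ a definable $C^1$ map, so definable Sard applies directly. Your fibre-dimension fallback via Theorem~\ref{thm:height-discriminants} would also work.
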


\begin{proof}
In local coordinates on $M$, the universal critical incidence is the zero set of
$
 F(s,c,x)=(D_x\Psi_s(x))^{\mathsf T}c.
$
Its derivative in $c$ is surjective because $D_x\Psi_s(x)$ is injective. Thus the incidence is a definable $C^1$ manifold of dimension $\dim I+N$. The projection to $I\times\mathbb R^N$ is proper, since the inverse image of a compact parameter set is a closed subset of the product of that set with the compact manifold $M$. The normal-bundle calculation in Theorem~\ref{thm:height-discriminants} identifies the critical values of this projection with the parameters at which the height function has a degenerate critical point. Definable Sard gives the codimension bound, and over each component of the complement the projection is a finite covering. The argument used in the proof of Theorem~\ref{thm:C1-Maxwell}, with $h=0$, gives the closed Maxwell set and generic distinctness of the critical values. Away from the caustic, the calculation leading to \eqref{eq:abstract-HF} applies with $s$ fixed, and its derivative in the $c$-directions is still \eqref{eq:abstract-HF}.
\end{proof}

\subsection{Discriminants for complete graphs}

Let $m=\binom n2$, $\mathcal A_n=(0,\infty)^m$, and $a^0=(1,\ldots,1)$.

\begin{theorem}[Complete-graph discriminants at a fixed exponent]
\label{thm:global-complete-chamber}
Fix $n\geq2$ and $p>2$. There are closed definable sets
$$
 \Cau_{n,p},\Maxw_{n,p}\subset\mathcal A_n,\qquad
 \dim\Cau_{n,p},\dim\Maxw_{n,p}\leq m-1,
$$
with the following properties, where $\Omega_{n,p}$ denotes the connected component of $\mathcal A_n\setminus\Cau_{n,p}$ containing $a^0$.
\begin{enumerate}[label=\textup{(\roman*)}]
\item If $a\notin\Cau_{n,p}$, every nonconstant eigenline is regular.
\item On each component of $\mathcal A_n\setminus\Cau_{n,p}$, the number of nonconstant eigenlines and their reduced Morse-index multiset are constant.
\item For every $a\in\Omega_{n,p}$, the quotient has $R_n-1$ nonconstant eigenlines and Morse polynomial $M_n^+(t)$. Including the constant eigenline, the corresponding graph has $R_n$ eigenlines.
\item If $a\in\Omega_{n,p}\setminus\Maxw_{n,p}$, all $R_n$ eigenvalues are pairwise distinct.
\end{enumerate}
Away from the caustic, the loci on which two critical values coincide are locally embedded hypersurfaces, and the union of the caustic and Maxwell set has a finite definable stratification.
\end{theorem}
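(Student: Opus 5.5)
The plan is to apply Theorem~\ref{thm:height-discriminants} to the embedding $\Psi=\Psi_p:\mathbb RP^{n-2}\to\mathbb R^m$. By Theorem~\ref{thm:projective-embedding}, this map is a $C^2$ definable embedding for $p>2$. The height identity \eqref{eq:height-identity} gives $\mathcal R_{p,a}=a\cdot\Psi_p$. Proposition~\ref{prop:reduced-correspondence} identifies nonconstant eigenlines with critical points of $\mathcal R_{p,a}$. So we set $\Cau_{n,p}=\Cau(\Psi_p)\cap\mathcal A_n$ and $\Maxw_{n,p}=\Maxw(\Psi_p)\cap\mathcal A_n$. These are relatively closed definable subsets of the open orthant, of dimension at most $m-1$.

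Part (i) follows from Theorem~\ref{thm:height-discriminants}(i) combined with the nondegeneracy criterion of Proposition~\ref{prop:index-shift}: a reduced critical point is nondegenerate exactly when the eigenline is regular. Part (ii) is Theorem~\ref{thm:height-discriminants}(ii) restricted to components of $\mathcal A_n\setminus\Cau_{n,p}$. Each such component lies in a single component of $\mathbb R^m\setminus\Cau(\Psi_p)$, because $\mathcal A_n$ is open.

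For (iii), we must check that $a^0\notin\Cau_{n,p}$. This holds because $\mathcal R_{p,\one}$ is Morse by Theorem~\ref{thm:exact-morse-polynomial}, and the normal-bundle computation shows the caustic is exactly the set of weights with a degenerate critical point. At $a^0$ the critical count is $R_n-1$ and the index multiset is recorded by $M_n^+$. Constancy on $\Omega_{n,p}$ transfers both to every $a\in\Omega_{n,p}$. Adding the constant eigenline, which has eigenvalue $0$ and is the unique eigenline at that value, gives $R_n$ eigenlines.

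For (iv), Theorem~\ref{thm:height-discriminants}(iii) gives pairwise distinct reduced critical values off $\Cau\cup\Maxw$. Every nonconstant eigenvalue is positive by Proposition~\ref{prop:reduced-correspondence}, so it differs from $0$. The final sentence is Theorem~\ref{thm:height-discriminants}(iv), intersected with $\mathcal A_n$.

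The only point needing care is that Theorem~\ref{thm:height-discriminants} requires a $C^2$ definable embedding of a compact $C^2$ definable manifold, and its caustic argument uses the Hessian in charts. I would verify this from Theorem~\ref{thm:projective-embedding} for fixed $p>2$. This is the only place where $p>2$ is used; for $p<2$ the map $\Psi_p$ is merely $C^1$.
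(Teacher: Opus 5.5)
Your proposal is correct and follows the paper's proof essentially verbatim. You apply Theorem~\ref{thm:height-discriminants} to $\Psi_p$ via $\mathcal R_{p,a}=a\cdot\Psi_p$, use Theorem~\ref{thm:exact-morse-polynomial} to place $a^0$ off the caustic, transfer the count and Morse polynomial by the finite covering, and separate the zero eigenvalue from the positive nonconstant ones. Two small quibbles: a component of $\mathcal A_n\setminus\Cau_{n,p}$ lies in one component of $\mathbb R^m\setminus\Cau(\Psi_p)$ simply because it is connected, not because $\mathcal A_n$ is open; and $p>2$ is also used through Proposition~\ref{prop:index-shift} and Theorem~\ref{thm:exact-morse-polynomial}, not only for the $C^2$ regularity of $\Psi_p$.
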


\begin{proof}
Apply Theorem~\ref{thm:height-discriminants} to $\Psi_p$ and use \eqref{eq:height-identity}. Theorem~\ref{thm:exact-morse-polynomial} shows that the uniform weight vector $a^0$ lies outside the caustic. The finite covering over the component containing it makes the critical-point count and Morse polynomial constant there. The constant value is zero, while every nonconstant value is positive, so it cannot coincide with a nonconstant critical value.
\end{proof}

\begin{theorem}[Joint exponent--weight discriminants]
\label{thm:joint-chamber}
There are closed definable caustic and Maxwell sets of codimension at least one in $(2,\infty)\times\mathcal A_n$. The complement of the caustic has finitely many connected components. On each connected component of the caustic complement, the critical-point count and Morse-index multiset are constant. On each component of the complement of the two discriminants, the quotient is Morse and its critical values are pairwise distinct. The parameter curve
$
 \{(p,a^0):p>2\}
$
lies in one component of the caustic complement, denoted $\Omega_n^{\mathrm{unif}}$. For every $(p,a)$ in this component, the corresponding quotient has $R_n-1$ nonconstant eigenlines and reduced Morse polynomial $M_n^+(t)$; away from the joint Maxwell set, all critical values are distinct.
\end{theorem}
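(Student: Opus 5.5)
I will prove this by applying Theorem~\ref{thm:height-family} with parameter manifold $I=(2,\infty)$, compact manifold $M=\mathbb RP^{n-2}$, coefficient space $\mathbb R^m$, and family of embeddings $\Psi(p,x)=\Psi_p(x)$. I then restrict the resulting caustic and Maxwell sets from $(2,\infty)\times\mathbb R^m$ to the open definable subset $(2,\infty)\times\mathcal A_n$. The identity \eqref{eq:height-identity}, $\mathcal R_{p,a}=a\cdot\Psi_p$, together with Proposition~\ref{prop:reduced-correspondence}, turns statements about critical points of $a\cdot\Psi_p$ into statements about nonconstant eigenlines.

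\textbf{Hypotheses of Theorem~\ref{thm:height-family}.} Theorem~\ref{thm:projective-embedding} gives these directly.
\begin{itemize}
\item Each $\Psi_p$ with $p>2$ is a $C^2$ definable embedding.
\item The family is jointly definable.
\item In smooth projective charts, $(p,z)\mapsto D_z\Psi_p(z)$ is locally $C^1$.
\end{itemize}
The theorem therefore produces the following on $(2,\infty)\times\mathbb R^m$:
\begin{itemize}
\item closed definable caustic and Maxwell sets, each of codimension at least one;
\item constancy of the critical-point count and index multiset on each component of the caustic complement;
\item pairwise distinct critical values off the union of the two sets.
\end{itemize}
Intersecting with $(2,\infty)\times\mathcal A_n$ keeps the sets relatively closed and definable and does not raise their dimension. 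The complement of the caustic is definable, so it has finitely many connected components by Proposition~\ref{prop:ominimal-facts}(i). On the complement of both discriminants every critical point is nondegenerate, so the quotient is Morse there.

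\textbf{The uniform-weight curve.} For each $p>2$, Theorem~\ref{thm:exact-morse-polynomial} shows that $\mathcal R_{p,\one}$ is Morse. Hence the whole curve $\{(p,a^0):p>2\}$ avoids the caustic, since the caustic is exactly the set of parameters with a degenerate critical point. The curve is connected, so it lies in a single component $\Omega_n^{\mathrm{unif}}$ of the caustic complement. The constancy statement then transfers the data at any point $(p,a^0)$ to all of $\Omega_n^{\mathrm{unif}}$: there are $R_n-1$ critical points and the index multiset gives the Morse polynomial $M_n^+$. By Proposition~\ref{prop:reduced-correspondence}, these are the $R_n-1$ nonconstant eigenlines.

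\textbf{Distinctness of values.} Away from the joint Maxwell set, distinct reduced critical points have distinct values. The constant eigenline has value $0$, while every nonconstant value is positive by Proposition~\ref{prop:reduced-correspondence}. So all $R_n$ eigenvalues are distinct, exactly as in the proof of Theorem~\ref{thm:global-complete-chamber}.

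\textbf{Main obstacle.} The step needing most care is the regularity input to Theorem~\ref{thm:height-family}. Its proof needs the universal incidence $\{(D_x\Psi_p(x))^{\mathsf T}a=0\}$ to be a $C^1$ manifold, with the projection's critical locus matching Hessian degeneracy. This in turn needs joint $C^1$ dependence of $D_x\Psi_p$ on $(p,x)$. That dependence relies on Lemma~\ref{lem:p-centre}'s joint continuity of $Dc_p$ and $D^2\mathfrak m_p$ and of the $p$-derivatives for $p>2$, including at vanishing coordinates and edge differences. There $|t|^{p-2}\log|t|$ must extend continuously, which holds because $p-2>0$.

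I would first verify this chartwise through the formulas \eqref{eq:mp-centre-derivative}, \eqref{eq:p-centre-p-derivative} and \eqref{eq:mp-p-derivative}, which Theorem~\ref{thm:projective-embedding} already asserts. A secondary point is properness of the projection over $(2,\infty)\times\mathcal A_n$. This follows because $M$ is compact and the parameter set is open, so inverse images of compact parameter sets are compact.
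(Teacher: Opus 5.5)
Your proposal is correct and follows the paper's proof. You apply Theorem~\ref{thm:height-family} to $(p,[\bar x])\mapsto\Psi_p([\bar x])$, observe that the uniform-weight curve avoids the caustic because every nonconstant eigenline of the uniformly weighted $K_n$ is regular for $p>2$, and conclude from connectedness of the curve. The paper cites Proposition~\ref{prop:uniform-rays-inertia} for the regularity step, while you cite Theorem~\ref{thm:exact-morse-polynomial}, which rests on it; your remarks on restricting to $(2,\infty)\times\mathcal A_n$, on the zero value of the constant eigenline, and on the hypotheses supplied by Theorem~\ref{thm:projective-embedding} only make explicit what the paper leaves implicit.
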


\begin{proof}
Apply Theorem~\ref{thm:height-family} to $(p,[\bar x])\mapsto\Psi_p([\bar x])$. Every point of this parameter curve is outside the caustic by Proposition~\ref{prop:uniform-rays-inertia}. The curve is connected and therefore lies in one connected component of the caustic complement.
\end{proof}

\begin{corollary}[Uniform neighbourhood over a compact exponent interval]\label{cor:compact-p-tube}
If $J\Subset(2,\infty)$ is compact, there is a definable neighbourhood $V_J\subset\mathcal A_n$ of $a^0$ such that for every $(p,a)\in J\times V_J$, the reduced quotient has exactly $R_n-1$ critical points; including the constant eigenline, the graph has exactly $R_n$ eigenlines. All nonconstant eigenlines are regular, and the reduced Morse polynomial is $M_n^+(t)$.
\end{corollary}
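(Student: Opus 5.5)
The plan is to deduce the corollary from Theorem~\ref{thm:joint-chamber} by a compactness argument in the joint parameter space $(2,\infty)\times\mathcal A_n$. The component $\Omega_n^{\mathrm{unif}}$ of the complement of the joint caustic is an open set, since the caustic is closed. It contains the compact segment $J\times\{a^0\}$. Every point of that segment has a product neighbourhood $I_p\times B(a^0,r_p)$ contained in $\Omega_n^{\mathrm{unif}}$. A finite subcover of $J$ by the intervals $I_p$ then gives a common radius $r=\min r_p>0$ with $J\times B(a^0,r)\subset\Omega_n^{\mathrm{unif}}$. Equivalently, one can use the positive distance from the compact segment to the closed caustic. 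I would take $V_J=B(a^0,r)\cap\mathcal A_n$, which is an open ball and hence definable.

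For every $(p,a)\in J\times V_J$, Theorem~\ref{thm:joint-chamber} then says the reduced quotient $\mathcal R_{p,a}$ has $R_n-1$ critical points and reduced Morse polynomial $M_n^+(t)$. The count is exact because, over each component of the caustic complement, the projection from the critical incidence is a finite covering. Its degree equals the count at $(p,a^0)$, namely $M_n^+(1)=R_n-1$ by Theorem~\ref{thm:exact-morse-polynomial}. Being off the caustic means every reduced critical point is nondegenerate. Proposition~\ref{prop:index-shift} then converts nondegeneracy into regularity of the corresponding eigenline, that is, $\ker L=\mathbb Rx$.

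To pass from reduced critical points to eigenlines I would invoke Proposition~\ref{prop:reduced-correspondence}. It gives a bijection between nonconstant eigenlines and reduced critical points for positive edge weights, so there are exactly $R_n-1$ nonconstant eigenlines. The constant eigenline is the unique additional eigenline, with eigenvalue $0$, because the complete graph with positive weights is connected. This gives exactly $R_n$ eigenlines in total.

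The only delicate point is to check that the hypotheses of Theorem~\ref{thm:height-family} behind Theorem~\ref{thm:joint-chamber} hold on the whole open strip $(2,\infty)$. These hypotheses are that each $\Psi_p$ is a $C^2$ embedding with $D_x\Psi_p$ being $C^1$ jointly in $(p,x)$. Theorem~\ref{thm:projective-embedding} supplies exactly this for $p>2$. Once that is granted, the argument is pure point-set compactness. No Maxwell-set information is needed here, since the corollary concerns only counts and regularity, not distinctness of values.
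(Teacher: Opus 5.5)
Your proposal is correct and matches the paper's proof. The paper likewise places the compact set $J\times\{a^0\}$ inside the open component $\Omega_n^{\mathrm{unif}}$ of Theorem~\ref{thm:joint-chamber}, extracts a product neighbourhood $J\times V_J\subset\Omega_n^{\mathrm{unif}}$, shrinks $V_J$ to a definable open ball, and reads off the count and the Morse polynomial from that theorem; your appeals to Propositions~\ref{prop:index-shift} and~\ref{prop:reduced-correspondence} only spell out regularity and the eigenline bijection, which the paper leaves implicit.
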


\begin{proof}
Since $J\times\{a^0\}$ is compact and lies in the open component $\Omega_n^{\mathrm{unif}}$, there is a neighbourhood $V_J$ of $a^0$ such that $J\times V_J\subset\Omega_n^{\mathrm{unif}}$. Shrinking $V_J$ to a definable open ball gives the assertion.
\end{proof}

\begin{corollary}[One integer-weighted graph on an exponent interval]
\label{cor:integer-interval}
For every compact $J\Subset(2,\infty)$ there are positive integer edge weights on $K_n$ for which the graph has exactly $R_n$ $p$-eigenlines, with reduced Morse polynomial $M_n^+$, for every $p\in J$. The weights can be chosen so that the resulting graph has pairwise distinct eigenvalues for all but finitely many $p\in J$.
\end{corollary}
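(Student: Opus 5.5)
The plan follows the template of Theorem~\ref{thm:low-interval}(iv), with Corollary~\ref{cor:compact-p-tube} supplying the Morse structure. First, Corollary~\ref{cor:compact-p-tube} gives a definable open neighbourhood $V_J$ of $a^0$ such that for every $(p,a)\in J\times V_J$ the reduced quotient $\mathcal R_{p,a}$ is Morse. It has exactly $R_n-1$ critical points with reduced Morse polynomial $M_n^+$, and the graph has exactly $R_n$ eigenlines, the constant one included. Any weight vector in $V_J$ therefore has the first asserted property simultaneously for all $p\in J$. Second, multiplying all weights by a positive constant $\mu$ multiplies $\mathcal R_{p,a}$ by $\mu$. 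This changes neither the critical points, nor the Morse indices, nor which critical values coincide. So it suffices to find a rational $a\in V_J$ with the separation property and then clear denominators.

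For separation, take the joint Maxwell set of Theorem~\ref{thm:joint-chamber}, which is closed, definable and of codimension at least one in $(2,\infty)\times\mathcal A_n$. Alternatively, apply Theorem~\ref{thm:C1-Maxwell} directly with $S$ an open interval $J_0\Supset J$ inside $(2,\infty)$, $h=0$ and $\Psi(p,x)=\Psi_p(x)$. The hypotheses hold by Theorem~\ref{thm:projective-embedding}: each $\Psi_p$ is injective, and $D_x\Psi_p$ depends continuously and definably on $(p,x)$. Let $\mathfrak M_J$ be the intersection of this set with $J\times V_J$, so that $\dim\mathfrak M_J\leq m$. Outside $\mathfrak M_J$ the nonconstant critical values are pairwise distinct. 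The zero value from the constant eigenline is always separate, since all nonconstant values are positive.

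Next define
$$
 E=\{a\in V_J:\dim(\mathfrak M_J)_a\geq1\}.
$$
By definable fibre-dimension additivity, $\dim E+1\leq\dim\mathfrak M_J\leq m$, so $\dim E\leq m-1$. Because definable closure preserves dimension, $V_J\setminus\overline E^{\,V_J}$ is a nonempty open set. Choose a rational point $a$ in it. The fibre $(\mathfrak M_J)_a\subset J$ is then definable of dimension at most $0$, hence finite. So for all but finitely many $p\in J$ the $R_n$ eigenvalues are pairwise distinct. Scaling $a$ to clear denominators gives positive integer weights, and both conclusions survive the scaling.

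The main obstacle is justifying the fibre-dimension step: the joint Maxwell set has codimension $\geq1$ in the $(m+1)$-dimensional parameter space, yet its $a$-fibres are generically finite. This is exactly the definable dimension count used in Theorem~\ref{thm:low-interval}(iv), and the structure of the argument transfers verbatim. The only extra care is to ensure $V_J$ is definable and open, which Corollary~\ref{cor:compact-p-tube} provides.
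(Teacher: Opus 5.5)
Your proposal is correct and follows essentially the same argument as the paper. Corollary~\ref{cor:compact-p-tube} supplies $V_J$, on which the exact eigenline count and the Morse polynomial $M_n^+$ hold for all of $J\times V_J$. The definable fibre-dimension bound on the joint Maxwell set in $J\times V_J$ then yields a rational weight vector whose exceptional exponent set is finite. Clearing denominators, which preserves critical points, indices and value coincidences, completes the proof exactly as in the paper.
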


\begin{proof}
The joint Maxwell set in $J\times V_J$ has dimension at most $m$. Let $E\subset V_J$ be the definable set of weights $a$ for which the fibre of the Maxwell set over $a$ has positive dimension. Definable fibre dimension gives $\dim E\leq m-1$, and definable closure preserves dimension. Hence $V_J\setminus\overline E$ is a non-empty open definable set and contains a rational point. Choose $a\in\mathbb Q^m\cap(V_J\setminus\overline E)$. The fibre of the Maxwell set over this $a$ has dimension at most zero and hence, by definability, is finite. Clearing denominators multiplies all weights by the same positive scalar; this preserves eigenlines, indices, and equalities between eigenvalues while scaling the eigenvalues.
\end{proof}

\begin{corollary}[Local chamber near the uniform weights]\label{cor:old-local-from-global}
For every $p>2$, a sufficiently small neighbourhood of $a^0$ lies in $\Omega_{n,p}$. After shrinking this neighbourhood, the local Maxwell set is a finite union of the embedded $C^1$ hypersurfaces determined by pairs of local critical-point branches.
\end{corollary}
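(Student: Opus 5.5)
The argument has two parts: first show that a fixed neighbourhood of $a^0$ lies in the caustic complement, and then read off the local structure of the Maxwell set from the Hellmann--Feynman formula \eqref{eq:abstract-HF}.

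\emph{Step 1: a neighbourhood of $a^0$ in $\Omega_{n,p}$.} Theorem~\ref{thm:global-complete-chamber} constructs $\Cau_{n,p}$ as a closed subset of $\mathcal A_n$. By Theorem~\ref{thm:exact-morse-polynomial}, $a^0$ lies outside it, since $\mathcal R_{p,\one}$ is Morse. Hence $\mathcal A_n\setminus\Cau_{n,p}$ is open and contains $a^0$, so there is a small open ball $B\ni a^0$ contained in that complement. The ball is connected and contains $a^0$, so it lies in the component $\Omega_{n,p}$. For an independent check, one can also argue directly. The $R_n-1$ critical points of $\mathcal R_{p,\one}$ are nondegenerate, and $\Psi_p$ is $C^2$ by Theorem~\ref{thm:projective-embedding}. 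The implicit function theorem applied to $D(a\cdot\Psi_p)=0$ then gives $C^1$ branches $x_1(a),\dots,x_s(a)$ of nondegenerate critical points near the $z_C$. Compactness of $\mathbb RP^{n-2}$, together with the positive lower bound on $\|D\mathcal R_{p,\one}\|$ away from small neighbourhoods of the $z_C$, excludes any further critical points once $a$ is close to $a^0$.

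\emph{Step 2: the local Maxwell set.} On $B$, the critical points are exactly the branches $x_i(a)$, with $s=R_n-1$, by Theorem~\ref{thm:global-complete-chamber}(iii). Two distinct critical values can only coincide at a pair of nonconstant branches, because the constant eigenline has value $0$ and every nonconstant value is positive. Set $\delta_{ij}(a)=a\cdot\Psi_p(x_i(a))-a\cdot\Psi_p(x_j(a))$. This is $C^1$, and by \eqref{eq:abstract-HF} its gradient is $\Psi_p(x_i(a))-\Psi_p(x_j(a))$, which is nonzero because $x_i(a)\ne x_j(a)$ and $\Psi_p$ is injective. Each zero set $\{\delta_{ij}=0\}\cap B$ is therefore empty or an embedded $C^1$ hypersurface. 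The collision set $\mathfrak C(0,\Psi_p)\cap B$ equals the finite union $\bigcup_{i<j}\{\delta_{ij}=0\}$.

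This finite union is relatively closed in $B$, being a finite union of zero sets of continuous functions. It follows that
$$
\Maxw_{n,p}\cap B=\overline{\mathfrak C}\cap B=\bigcup_{i<j}\{\delta_{ij}=0\}.
$$
The only delicate point is exactly this identification of the relative closure with the union: a limit of collision parameters lying inside $B$ must itself be a collision. That holds because the branches are continuous on all of $B$ and exhaust the critical set there. After shrinking $B$ once more, if needed, to keep the branches defined on its closure, the stated description of the Maxwell set follows.
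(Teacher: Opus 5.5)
Your proposal is correct and follows essentially the same route as the paper. There the corollary comes from two results. First, Theorem~\ref{thm:global-complete-chamber}: the uniform weight $a^0$ gives a Morse quotient, so it lies off the closed caustic, and a small ball about it lies in $\Omega_{n,p}$. Second, part~(iv) of Theorem~\ref{thm:height-discriminants}, whose proof is exactly your Hellmann--Feynman computation \eqref{eq:abstract-HF} combined with injectivity of $\Psi_p$.

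Your explicit check that the relative closure of the collision set in the ball equals $\bigcup_{i<j}\{\delta_{ij}=0\}$ is a detail the paper leaves implicit. It holds because the branches exhaust the critical set on the ball. Your remark about the constant eigenline is unnecessary: that eigenline never enters the reduced height function on $\mathbb RP^{n-2}$.
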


\subsection{Generic distinctness of eigenvalues for complete signed graphs}

The reduced unsigned problem uses only the edge coordinates of $\Psi_p$. For complete signed graphs with potentials, the enlarged coefficient map below supplies the corresponding separation mechanism on the full projective space. Fix $p>1$, a signature $\varepsilon=(\varepsilon_{ij})_{i<j}$ on $K_n$, and positive vertex measures $\nu_i$. Recall that
$
 \cM_{p,\nu}(x)=\sum_i\nu_i|x_i|^p
$
and
\begin{equation}\label{eq:signed-coefficient-map}
 \Xi_{p,\varepsilon,\nu}([x])
 =\frac{\bigl((|x_i|^p)_i,\,(|x_i-\varepsilon_{ij}x_j|^p)_{i<j}\bigr)}{\cM_{p,\nu}(x)}.
\end{equation}

\begin{theorem}[Coefficient maps for complete signed graphs]
\label{thm:signed-coefficient-embedding}
The map
$$
 \Xi_{p,\varepsilon,\nu}:\mathbb{R}P^{n-1}
 \rightarrow\mathbb R^{n+m}
$$
is a $C^1$ definable embedding for every $p>1$, and is $C^2$ for $p>2$. Consequently, outside a closed definable subset of codimension at least one in $\mathbb R^n\times\mathbb R^m$, the displayed homogeneous linear-form quotient associated with the complete signed graph
$$
 [x]\mapsto
 \frac{\sum_{i<j}a_{ij}|x_i-\varepsilon_{ij}x_j|^p+\sum_i b_i|x_i|^p}{\cM_{p,\nu}(x)}
$$
has finitely many eigenlines with pairwise distinct eigenvalues. The same conclusion holds on an open dense subset of $\mathbb R^n\times(0,\infty)^m$, corresponding to arbitrary real potentials and positive edge weights. If $p>2$, exclusion of the additional caustic makes the quotient Morse, with pairwise distinct critical values.
\end{theorem}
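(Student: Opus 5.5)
The proof has two parts. First, show that $\Xi=\Xi_{p,\varepsilon,\nu}$ is an injective $C^1$ (resp.\ $C^2$) immersion. Then invoke Theorem~\ref{thm:C1-Maxwell} with $S$ a point, $h=0$ and $\Psi=\Xi$, together with Theorem~\ref{thm:height-discriminants} when $p>2$.

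Regularity and definability come first, and they are routine. The numerator entries $|x_i|^p$ and $|x_i-\varepsilon_{ij}x_j|^p$ are $C^1$ for $p>1$ and $C^2$ for $p>2$. The denominator $\cM_{p,\nu}$ is positive and equally regular on the sphere. Each entry is even and degree-zero homogeneous, so $\Xi$ descends to $\mathbb RP^{n-1}$ and is definable in $\R_{\exp}$.

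For injectivity, I normalise representatives by $\cM_{p,\nu}(x)=\cM_{p,\nu}(y)=1$. Then $\Xi([x])=\Xi([y])$ gives $|x_i|=|y_i|$ for all $i$ and $|x_i-\varepsilon_{ij}x_j|=|y_i-\varepsilon_{ij}y_j|$ for all $i<j$. Squaring the second family and subtracting the known squares $x_i^2=y_i^2$, $x_j^2=y_j^2$ yields $x_ix_j=y_iy_j$ for all $i\ne j$; this step uses $\varepsilon_{ij}^2=1$, and it is exactly why the graph is taken to be complete. Together with $x_i^2=y_i^2$ this gives $xx^{\mathsf T}=yy^{\mathsf T}$, so $y=\pm x$ and $[x]=[y]$.

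For the immersion, I take $h$ tangent to the normalised sphere $\{\cM_{p,\nu}=1\}$. Along $h$ the derivative of the denominator vanishes, so $D\Xi[h]=0$ forces two families of equations. On the support of $x$, the coordinate entries give $p\Phi_p(x_i)h_i=0$, hence $h_i=0$ wherever $x_i\neq0$. The edge entries give $\Phi_p(x_i-\varepsilon_{ij}x_j)(h_i-\varepsilon_{ij}h_j)=0$. Now let $z$ be a vertex with $x_z=0$, and pick $i$ with $x_i\ne0$, which exists since $x\ne0$. Completeness provides the edge $\{i,z\}$, where $x_i-\varepsilon_{iz}x_z=x_i\ne0$, so $h_z=\varepsilon_{iz}h_i=0$. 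Thus $h=0$. Since $\mathbb RP^{n-1}$ is compact, the injective immersion is an embedding; for $1<p\le 2$ topological embedding alone suffices for what follows.

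With the embedding in hand, the quotient equals $(b,a)\cdot\Xi([x])$. Eigenlines are the critical points of this function on $\mathbb RP^{n-1}$ by Proposition~\ref{prop:eigen-critical}, and the eigenvalues are the critical values. Theorem~\ref{thm:C1-Maxwell} then provides a closed definable set of dimension at most $n+m-1$ outside which the critical set is finite and the critical values are pairwise distinct. Intersecting its complement with the open set $\mathbb R^n\times(0,\infty)^m$ leaves an open dense set, since a lower-dimensional closed definable set has empty interior. For $p>2$, $\Xi$ is a $C^2$ embedding, and adding the caustic of Theorem~\ref{thm:height-discriminants} makes the quotient Morse as well.

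The main obstacle is the immersion check at points with vanishing coordinates. Gradients of $|t|^p$ vanish there, and the argument depends on completeness: every zero coordinate must be joined by an edge to a nonzero coordinate.
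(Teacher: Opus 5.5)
Your proposal is correct and follows the paper's proof essentially step for step. You normalise by $\cM_{p,\nu}=1$ and recover the Gram matrix $xx^{\mathsf T}$ for injectivity. You kill the tangent vector on the support of $x$ via the coordinate components, and off the support via an edge to a nonzero coordinate, for the immersion. You then apply Theorem~\ref{thm:C1-Maxwell} with $S$ a point and zero base function, and Theorem~\ref{thm:height-discriminants} to remove the caustic when $p>2$.
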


\begin{proof}
Normalise by $\cM_{p,\nu}(x)=1$. Equality of the two vectors gives all $|x_i|$ and all quantities $|x_i-\varepsilon_{ij}x_j|$. Taking $2/p$ powers,
$$
 2\varepsilon_{ij}x_ix_j
 =x_i^2+x_j^2-|x_i-\varepsilon_{ij}x_j|^2.
$$
Thus the entire rank-one Gram matrix $xx^{\mathsf T}$ is recovered, so $y=\pm x$. This proves injectivity.

For immersion, let $h$ be tangent to the mass sphere and suppose every component differential vanishes. Coordinate components give $h_i=0$ when $x_i\ne0$. If $x_i=0$, choose $j$ with $x_j\ne0$; the edge component for $(i,j)$ gives $h_i-\varepsilon_{ij}h_j=h_i=0$. Hence $h=0$, so the descended map is an immersion. Since $\mathbb RP^{n-1}$ is compact and the map is injective, it is a $C^1$ embedding. Only the $C^2$ conclusion uses $p>2$. Apply Theorem~\ref{thm:C1-Maxwell} with $S$ a point, $h=0$, and coefficient vector $(b,a)$, and intersect the open dense complement of its Maxwell set with $\mathbb R^n\times(0,\infty)^m$. When $p>2$, apply Theorem~\ref{thm:height-discriminants} as well and exclude the caustic.
\end{proof}

Near the uniformly weighted unsigned graph, the generic separation can be realised by an explicit one-parameter family of integer edge weights.

\begin{proposition}[Concrete integer weights near the uniformly weighted $K_n$]
\label{prop:explicit-integer}
Fix $n\geq2$ and $p>2$, and order the $m=\binom n2$ edges as $e=0,\ldots,m-1$. There is an integer $T\geq2$ such that, for every sufficiently large integer $Q$, the weights
\begin{equation}\label{eq:explicit-integer-weights}
 A_e(Q)=Q+T^e
\end{equation}
produce exactly $R_n$ eigenlines. The $R_n-1$ nonconstant eigenlines are regular, have the prescribed index distribution, and carry pairwise distinct eigenvalues.
\end{proposition}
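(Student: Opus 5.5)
The plan is to scale by $Q$. The weights $A_e(Q)=Q+T^e$ give the same eigenlines as $Q^{-1}A(Q)=\one+\epsilon v$, where $\epsilon=1/Q$ and $v=(T^e)_e$. Equalities between eigenvalues are also preserved, since all eigenvalues are multiplied by the same positive factor. So it suffices to work with weights $\one+\epsilon v$ and to show two things for small $\epsilon>0$. First, the point $\one+\epsilon v$ lies in the chamber $\Omega_{n,p}$. Second, the $R_n-1$ nonconstant critical values are pairwise distinct.

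The first claim follows from Corollary~\ref{cor:old-local-from-global}. A neighbourhood of $a^0$ lies in $\Omega_{n,p}$, so for $\epsilon$ small Theorem~\ref{thm:global-complete-chamber}(iii) gives exactly $R_n-1$ regular nonconstant eigenlines with Morse polynomial $M_n^+$. Together with the constant eigenline this gives $R_n$ eigenlines. At $a^0$ every critical point is nondegenerate (Theorem~\ref{thm:exact-morse-polynomial}). The implicit function theorem, applied in the $C^2$ setting of Theorem~\ref{thm:projective-embedding}, then yields $C^1$ branches $z_\rho(a)$ through the barycentre eigenlines $z_\rho=\Theta_p^{-1}(b_\rho)$. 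By the Hellmann--Feynman formula \eqref{eq:edge-hellmann-feynman},
$$
 \lambda_\rho(\one+\epsilon v)=\lambda_\rho+\epsilon\,v\cdot\Psi_p(z_\rho)+O(\epsilon^2).
$$
For pairs with different base values $\lambda_\rho\ne\lambda_\sigma$, the values stay separated for small $\epsilon$. For colliding pairs, we need $v\cdot(\Psi_p(z_\rho)-\Psi_p(z_\sigma))\ne0$. Nonconstant values are positive, so none of them coincides with $0$.

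The main obstacle is choosing $T$ so that the specific vector $v=(T^e)$ avoids the finitely many hyperplanes orthogonal to the nonzero vectors $w_{\rho\sigma}=\Psi_p(z_\rho)-\Psi_p(z_\sigma)$. These vectors are nonzero by injectivity of $\Psi_p$. For a fixed nonzero $w\in\mathbb R^m$, the function $T\mapsto\sum_e w_eT^e$ is a nonzero polynomial of degree at most $m-1$, so it has at most $m-1$ real roots. There are finitely many pairs, so all but finitely many integers $T\geq2$ avoid every root, and we choose such a $T$. (One could instead argue by domination: with $T$ large, the top edge index where $w$ is nonzero dominates the sum. That route requires an explicit bound, whereas root counting does not.)

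With $T$ fixed, choose $\epsilon_0$ so small that three conditions hold for $0<\epsilon<\epsilon_0$: the neighbourhood of Corollary~\ref{cor:old-local-from-global} contains $\one+\epsilon v$; the branches are defined and exhaust the critical set there; and the $O(\epsilon^2)$ remainders, which are uniform over the finitely many branches, are dominated by $\epsilon\min|v\cdot w_{\rho\sigma}|/2$ for colliding pairs and by $\min|\lambda_\rho-\lambda_\sigma|/2$ for the others. Then every integer $Q>1/\epsilon_0$ works, which completes the proof.
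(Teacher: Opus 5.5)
Your proposal is correct and follows essentially the same route as the paper. Both arguments rescale to $\one+Q^{-1}(1,T,\ldots,T^{m-1})$ and use the chamber around $a^0$ for the exact eigenline count, regularity and indices. Both then choose $T$ off the finitely many real roots of the nonzero polynomials $(\Psi_p(z_\rho)-\Psi_p(z_\sigma))\cdot(1,T,\ldots,T^{m-1})$, which are nonzero by Hellmann--Feynman and injectivity of $\Psi_p$; your version only makes the $\epsilon$-bookkeeping more explicit.
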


\begin{proof}
At $a=a^0$, for each pair of critical branches whose values collide, define
$$
 g_{\rho\sigma}
 =\nabla_a\lambda_\rho(a^0)-\nabla_a\lambda_\sigma(a^0).
$$
The Hellmann--Feynman formula identifies this vector with $\Psi_p(z_\rho)-\Psi_p(z_\sigma)$, which is nonzero by injectivity. Hence
$
 P_{\rho\sigma}(T)
 =g_{\rho\sigma}\cdot(1,T,\ldots,T^{m-1})
$
is a nonzero polynomial. Choose an integer $T\geq2$ which is not a zero of any of these finitely many polynomials. Along $a(t)=a^0+t(1,T,\ldots,T^{m-1})$, the difference between any pair of branches whose values coincide at $t=0$ has nonzero derivative there, and every pair whose values are distinct at $t=0$ remains separated for small $t>0$. For all sufficiently large $Q$,
$
 (A_e(Q)/Q)_e=a(1/Q)
$
lies sufficiently close to $a^0$ along this curve. Scaling all weights by $Q$ preserves the critical points, regularity, and indices, and multiplies every eigenvalue by $Q$.
\end{proof}

Below $p=2$, the same explicit family works, with local degree replacing Morse nondegeneracy.

\begin{corollary}[Integer weights for $p\ne2$]
\label{cor:integer-both-sides}
Fix $n\geq2$ and $p>1$ with $p\ne2$, and order the $m=\binom n2$ edges as $e=0,\ldots,m-1$. There is an integer $T\geq2$ such that, for every sufficiently large integer $Q$, the complete graph with edge weights
\begin{equation}\label{eq:integer-both-sides}
 A_e(Q)=Q+T^e
\end{equation}
has at least $R_n$ distinct spectral values.

If $p>2$, the graph has exactly $R_n$ eigenlines; every nonconstant eigenline is regular, and the reduced index distribution is $M_n^+$. If $1<p<2$, the conclusion is the stated lower bound on distinct spectral values.
\end{corollary}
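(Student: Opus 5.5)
The case $p>2$ is Proposition~\ref{prop:explicit-integer}: there the weights $A_e(Q)=Q+T^e$ already give exactly $R_n$ eigenlines, regular with reduced index distribution $M_n^+$ and pairwise distinct eigenvalues. The constant eigenline contributes the value $0$ and all other values are positive, so there are $R_n$ distinct spectral values. The remaining work is the case $1<p<2$, where I will run the same construction but replace Morse nondegeneracy by the local-degree persistence argument from the proof of Theorem~\ref{thm:low-Rn}.

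For $1<p<2$, I first need to place the curve near the uniform weights. Dividing by $Q$ preserves eigenlines and value coincidences, so I rescale to $a(t)=a^0+t(1,T,\ldots,T^{m-1})$ with $t=1/Q$, where $A_e(Q)/Q=a(1/Q)$. Take $v=(1,T,\ldots,T^{m-1})$. I must choose $T$ so that $v$ avoids the finitely many hyperplanes \eqref{eq:low-separating-direction}, namely $v\cdot(\Psi_p(z_\rho)-\Psi_p(z_\sigma))\ne0$ for $\rho\ne\sigma$. Each of these expressions is a polynomial in $T$ of the form $g_{\rho\sigma}\cdot(1,T,\ldots,T^{m-1})$. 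The vector $g_{\rho\sigma}$ is nonzero because $\Psi_p$ is injective (Theorem~\ref{thm:projective-embedding}), so each polynomial is nonzero and has finitely many roots. I therefore choose an integer $T\geq2$ avoiding all these roots.

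Next I repeat the argument of Theorem~\ref{thm:low-Rn} with $\epsilon=t$ and this direction $v$. On the shrinking anisotropic neighbourhoods $\widehat B_{\rho,s_\epsilon}$, with $s_\epsilon=\epsilon^\theta$ and $1<\theta<2$, the boundary gradient bound \eqref{eq:low-boundary-gradient} controls the perturbation $\epsilon\,v\cdot\Psi_p$, since $\nabla(v\cdot\Psi_p)$ is bounded. Homotopy invariance then keeps the local degree at $(-1)^{r_\rho}\ne0$, which produces a critical point $z_\rho(\epsilon)$ in each neighbourhood. The value expansion \eqref{eq:low-first-order-values} separates all pairs for every sufficiently small $\epsilon>0$. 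That proof only used that $v$ avoids the hyperplanes \eqref{eq:low-separating-direction} and that $\epsilon$ is small, not that $v$ is rational or that $\epsilon$ is a specific value. Hence it holds for all $\epsilon=1/Q$ with $Q$ large. The neighbourhoods are pairwise disjoint, so this yields $R_n-1$ distinct positive reduced critical values. Proposition~\ref{prop:reduced-correspondence} converts these into $R_n-1$ distinct positive spectral values, and together with $0$ this gives at least $R_n$.

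The main point to verify is that the smallness threshold in Theorem~\ref{thm:low-Rn} is uniform over a whole range $\epsilon\in(0,\epsilon_0)$ rather than just a sequence. This holds because the $o(\epsilon)$ errors in \eqref{eq:low-first-order-values} come from uniform $C^1$ bounds on fixed charts. Finally, multiplying back by $Q$ scales every eigenvalue by the same positive factor, so distinctness is preserved.
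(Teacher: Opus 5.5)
Your proposal is correct and follows essentially the same route as the paper. The case $p>2$ is Proposition~\ref{prop:explicit-integer}. For $1<p<2$ you choose $T$ off the real roots of the nonzero polynomials $(\Psi_p(z_\rho)-\Psi_p(z_\sigma))\cdot(1,T,\ldots,T^{m-1})$, then run the degree-persistence and $o(\epsilon)$ value-expansion argument of Theorem~\ref{thm:low-Rn} with $v=(1,T,\ldots,T^{m-1})$ and $\epsilon=Q^{-1}$. The only addition in the paper is that it disposes of $n=2$ separately (values $0$ and $2^{p-1}(Q+1)$), because the chart argument of Theorem~\ref{thm:low-Rn} is written for $n\geq3$; in your setup that case is vacuous anyway, since there are no pairs to separate.
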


\begin{proof}
The case $p>2$ is Proposition~\ref{prop:explicit-integer}. Suppose that $1<p<2$. For $n=2$, the weights consist only of $A_0(Q)=Q+1$, and the two values $0$ and $2^{p-1}(Q+1)$ prove the assertion. Assume $n\geq3$.

Let $z_\rho$ range over the nonconstant eigenlines of the uniformly weighted $K_n$ and set
$
 g_{\rho\sigma}
 =\Psi_p(z_\rho)-\Psi_p(z_\sigma)\in\mathbb R^m\setminus\{0\}.
$
For each pair,
$
 P_{\rho\sigma}(T)
 =g_{\rho\sigma}\cdot(1,T,\ldots,T^{m-1})
$
is a nonzero polynomial. Choose an integer $T\geq2$ outside the finite union of their real root sets. In the proof of Theorem~\ref{thm:low-Rn}, take
$$
 v=(1,T,\ldots,T^{m-1}),\qquad \epsilon=Q^{-1}.
$$
Degree persistence and the uniform $o(\epsilon)$ estimate in \eqref{eq:low-first-order-values} show that the selected values are distinct for every sufficiently small $\epsilon>0$, and hence for every sufficiently large $Q$. Clearing the common denominator changes $1+Q^{-1}T^e$ into \eqref{eq:integer-both-sides} and scales all values by the same positive factor.
\end{proof}

\section{Extremal spectral growth}\label{sec:growth}
For fixed $p>1$, define
$$
 B_n(p)=\max\{\#\sigma_p(d):d\in D_n\}.
$$
Define $U_n(p)$ by restricting this maximum to unsigned, zero-potential, unit-measure weighted graphs. These maxima exist because the possible cardinalities form non-empty bounded subsets of $\mathbb N$. Define
$$
 B_n^{\mathrm{opt}}
 =\max\{\#\sigma_p(d):(d,p)\in\Theta_n\}
 =\max_{p>1}B_n(p).
$$
The uniform bound in Theorem~\ref{thm:intro-main}\ref{part:intro-uniform} shows that the possible cardinalities are bounded, so the last maximum also exists.

Fix $p>2$. For the unsigned $p$-Laplacian on a complete graph with edge weights $a$, zero potential, and unit vertex measures, let
$$
 \mathcal F_{p,a}(x,\lambda)
 =
 \left(\sum_{j\ne i}a_{ij}\Phi_p(x_i-x_j)-\lambda\Phi_p(x_i)
 \right)_{i=1}^n.
$$
At a normalised eigenpair, let
$
 L_{p,a,x,\lambda}=D_x\mathcal F_{p,a}(x,\lambda).
$
Homogeneity gives $L_{p,a,x,\lambda}x=0$. We call a nonconstant eigenline \emph{regular} when
$
 \ker L_{p,a,x,\lambda}=\R x.
$
For a regular eigenline, define its \emph{Morse index on $\mathbb RP^{n-1}$} by
$$
 \indm_{\mathrm{full}}(x,\lambda)
 =
 \indm_{-}
 \left(L_{p,a,x,\lambda}\big|_{x^\perp}
 \right).
$$
This is independent of the normalised representative. Up to the positive factor $p/\cM_{p,\mathbf 1}(x)$, the displayed restriction is the Hessian of the Rayleigh quotient on $\Sph^{n-1}$. When the same eigenline is represented as a critical point of the reduced quotient on $\Pj(V_n)$, Proposition~\ref{prop:index-shift} gives
$$
 \indm_{\mathrm{full}}(x,\lambda)
 =1+\indm_{\mathrm{red}}([\bar x]).
$$

\begin{theorem}[Exact eigenline count near the uniformly weighted $K_n$]
\label{thm:complete-local-unfolding}
Fix $n\geq2$ and $p>2$, and set
$$
 m=\binom n2,\qquad
 a^0=(1)_{1\leq i<j\leq n},\qquad
 R_n=\frac{3^n-2^{n+1}+3}{2}.
$$
There exists a neighbourhood $V\subset(0,\infty)^m$ of $a^0$ and a relatively closed set $\mathcal D\subset V$, which is a finite union of embedded $C^1$ hypersurfaces, with the following properties.

\begin{enumerate}[label=\textup{(\roman*)}]
\item For every $a\in V$, the unsigned $p$-Laplacian on $K_n$ with edge weights $a$, zero potential, and unit vertex measures has exactly $R_n$ eigenlines. The constant eigenline is its only eigenline at eigenvalue zero, and all $R_n-1$ nonconstant eigenlines are regular.

\item For every $j=1,\ldots,n-1$, precisely
\begin{equation}\label{eq:complete-index-distribution}
 \binom n{j+1}(2^j-1)
\end{equation}
of the nonconstant eigenlines have Morse index $j$ on $\mathbb RP^{n-1}$.

\item For every $a\in V\setminus\mathcal D$, all $R_n$ eigenlines carry pairwise distinct eigenvalues. In particular, the full spectrum has cardinality $R_n$. The set $V\setminus\mathcal D$ is open, dense, and of full Lebesgue measure in $V$.
\end{enumerate}

Rational points of $V\setminus\mathcal D$ lie arbitrarily close to $a^0$. Clearing denominators gives positive integer edge weights for which the exact eigenline count, regularity, Morse-index distribution, and distinctness of the eigenvalues remain valid.
\end{theorem}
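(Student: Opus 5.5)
The plan is to assemble the statement from the fixed-exponent discriminant theory (Theorem~\ref{thm:global-complete-chamber} and Corollary~\ref{cor:old-local-from-global}), the correspondence between eigenlines and reduced critical points (Proposition~\ref{prop:reduced-correspondence}), and the index shift (Proposition~\ref{prop:index-shift}).

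\emph{Choice of $V$ and proof of (i).} Fix $p>2$. By Theorem~\ref{thm:exact-morse-polynomial}, $a^0$ lies outside the caustic $\Cau_{n,p}$. Since the caustic is closed, there is an open ball $V\ni a^0$ with $V\subset\Omega_{n,p}$. Corollary~\ref{cor:old-local-from-global} allows us to shrink $V$ further so that, after relabelling, the $R_n-1$ reduced critical points are given by $C^1$ branches $z_\rho(a)$ defined on all of $V$. These branches exist because the normal-bundle projection is a finite covering over the simply connected ball $V$. On $V$ the reduced quotient $\mathcal R_{p,a}$ has exactly $R_n-1$ critical points, all nondegenerate, with Morse polynomial $M_n^+$. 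Proposition~\ref{prop:reduced-correspondence} turns these into exactly $R_n-1$ nonconstant eigenlines. The constant eigenline is the unique eigenline at eigenvalue zero, because pairing the eigen-equation with $x$ shows that every nonconstant eigenvalue is positive for positive weights. This gives $R_n$ eigenlines in total. Nondegeneracy of a reduced critical point is equivalent to $\ker L=\mathbb Rx$ by Proposition~\ref{prop:index-shift}, so each nonconstant eigenline is regular.

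\emph{Proof of (ii).} The number of reduced critical points of index $q$ is $c_{n,q}=\binom n{q+2}(2^{q+1}-1)$, from \eqref{eq:index-count-reduced}, and this count is constant on $\Omega_{n,p}$. The index shift \eqref{eq:index-shift} gives full index $j=q+1$, and substituting yields $\binom n{j+1}(2^j-1)$ for $1\le j\le n-1$. The only point to check is that the linearisation $L_{p,a,x,\lambda}$ used here agrees with the operator $L$ in Proposition~\ref{prop:index-shift}. It does, since both are $D_x(\Delta_{p,a}x-\lambda\Phi_p(x))$.

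\emph{Proof of (iii), the main step.} Define
\[
\mathcal D=\bigcup_{\rho\ne\sigma}\{a\in V:\lambda_\rho(a)=\lambda_\sigma(a)\},\qquad \lambda_\rho(a)=a\cdot\Psi_p(z_\rho(a)).
\]
Each $\lambda_\rho$ is $C^1$, and by the Hellmann--Feynman formula \eqref{eq:edge-hellmann-feynman} we have $\nabla_a(\lambda_\rho-\lambda_\sigma)=\Psi_p(z_\rho(a))-\Psi_p(z_\sigma(a))$. This gradient is nonzero because $\Psi_p$ is injective (Theorem~\ref{thm:projective-embedding}). Each coincidence set is therefore an embedded $C^1$ hypersurface, or empty, and it is relatively closed. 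There are finitely many pairs, so $\mathcal D$ is a finite union of such hypersurfaces. A finite union of $C^1$ hypersurfaces has measure zero and is nowhere dense, so $V\setminus\mathcal D$ is open, dense and of full measure. The value zero cannot collide with a nonconstant value, since those are positive. Hence for $a\in V\setminus\mathcal D$ all $R_n$ eigenvalues are distinct and $\#\sigma_p=R_n$.

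The main obstacle is to obtain globally defined branches on $V$ rather than only local ones, since that is what lets $\mathcal D$ be written as a finite union of hypersurfaces. I would settle this with the covering argument over the ball given above, or, failing that, by shrinking $V$ via the implicit function theorem at the $R_n-1$ nondegenerate critical points of $\mathcal R_{p,a^0}$ together with compactness.

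\emph{Rational and integer weights.} Because $V\setminus\mathcal D$ is open and dense, rational points of it lie arbitrarily close to $a^0$. Multiplying all weights by a common positive integer multiplies $\mathcal R_{p,a}$ by that scalar. This preserves the critical points, their nondegeneracy and indices, and all equalities and inequalities between eigenvalues, so the integer-weighted graph has all the stated properties.
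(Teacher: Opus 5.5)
Your proposal is correct and follows essentially the same route as the paper. Both proofs choose a connected neighbourhood $V$ of $a^0$ inside $\Omega_{n,p}$, read off the count, regularity and reduced indices from Theorem~\ref{thm:global-complete-chamber}, shift the indices via Proposition~\ref{prop:index-shift}, and take $\mathcal D$ to be the Maxwell hypersurfaces coming from Hellmann--Feynman and the injectivity of $\Psi_p$. The one addition in your version is that you trivialise the finite covering over the ball to get global $C^1$ branches, so that $\mathcal D$ is literally the collision set; this spells out what the paper packages into Corollary~\ref{cor:old-local-from-global} when it sets $\mathcal D=V\cap\Maxw_{n,p}$.
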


\begin{proof}
Choose a sufficiently small connected neighbourhood $V\subset\Omega_{n,p}$ of the uniform weight vector $a^0$. Theorem~\ref{thm:global-complete-chamber} then gives exactly $R_n-1$ nonconstant reduced critical points for every $a\in V$, all regular, with reduced Morse polynomial $M_n^+(t)$. The positively weighted complete graph has the constant eigenline as its unique eigenline at eigenvalue zero, so the total eigenline count is $R_n$.

A point of reduced index $q$ has Morse index $q+1$ on $\mathbb RP^{n-1}$ by Proposition~\ref{prop:index-shift}. Taking the coefficient of $t^{j-1}$ in \eqref{eq:index-count-reduced} gives
$$
 \binom n{j+1}(2^j-1),\qquad j=1,\ldots,n-1,
$$
which proves the index formula.

By Corollary~\ref{cor:old-local-from-global}, after shrinking $V$ further, the Maxwell set in $V$ is a finite union of embedded $C^1$ hypersurfaces. Set
$
 \mathcal D=V\cap\Maxw_{n,p}.
$
Theorem~\ref{thm:global-complete-chamber} shows that all critical values are distinct on $V\setminus\mathcal D$. The constant value is zero and every nonconstant value is positive, so no constant and nonconstant values coincide. The complement of this finite hypersurface union is open, dense, and of full Lebesgue measure.

Rational points are dense in $V\setminus\mathcal D$. Multiplying a positive rational weight vector by a common denominator leaves all eigenlines and indices unchanged and multiplies all eigenvalues and linearisations by the same positive scalar. This gives the asserted positive integer weights.
\end{proof}

\begin{corollary}[The two-sided extremal discontinuity at $p=2$]
\label{cor:p2-transition}
For every $n\geq1$,
$$
 U_n(2)=B_n(2)=n.
$$
By contrast, for every $p>1$ with $p\ne2$,
$$
 U_n(p)\leq B_n(p),
 \qquad
 U_n(p)\geq\frac{3^n-2^{n+1}+3}{2}.
$$
Consequently, for every $n\geq3$,
$$
 \liminf_{\substack{p\to2\\p\ne2}}U_n(p)
 \geq\frac{3^n-2^{n+1}+3}{2}
 >n=U_n(2),
$$
and the same conclusion holds with $B_n(p)$ in place of $U_n(p)$. Thus $U_n$ and $B_n$ are discontinuous at $p=2$ for every $n\geq3$.
\end{corollary}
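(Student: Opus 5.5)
The corollary has three parts: the value at $p=2$, the bounds for $p\ne2$, and the resulting discontinuity.

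\emph{The case $p=2$.} Take any admissible $d$. The energy $\cE_{2,a,\varepsilon,b}$ is a quadratic form $x^{\mathsf T}Ax$ with $A$ symmetric, and $\cM_{2,\nu}(x)=x^{\mathsf T}Nx$ with $N=\operatorname{diag}(\nu)$ positive definite. The eigenproblem \eqref{eq:gradient-eigen} is then the generalised symmetric pencil $Ax=\lambda Nx$. This is equivalent to the symmetric matrix $N^{-1/2}AN^{-1/2}$, which has at most $n$ distinct eigenvalues. Hence $B_n(2)\leq n$, and trivially $U_n(2)\leq B_n(2)$.

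For the lower bound we need an unsigned, zero-potential, unit-measure graph with $n$ distinct Laplacian eigenvalues. The path is the natural choice: its spectrum $2-2\cos(\pi j/n)$, $j=0,\ldots,n-1$, is simple. Proposition~\ref{prop:p2-perfect} also supplies generic weights on $K_n$ with simple spectrum on $\one^\perp$, to which the eigenvalue $0$ is added. Either way $U_n(2)\geq n$. For $n=1$ both spectra are $\{0\}$, consistent with this count.

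\emph{The case $p\ne2$.} The inequality $U_n(p)\leq B_n(p)$ holds because the unsigned class is a subclass of $D_n$. The lower bound $U_n(p)\geq R_n$ comes from Corollary~\ref{cor:integer-both-sides} for $n\geq2$, or equivalently from Theorems~\ref{thm:low-Rn} and~\ref{thm:complete-local-unfolding}. For $n=1$ we have $R_1=(3-4+3)/2=1$, and the single vertex realises one value.

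\emph{The discontinuity.} For $n\geq3$ we need $R_n>n$. This holds at $n=3$, since $R_3=(27-16+3)/2=7$. It also follows from the identity $2Q_n^+(1)=R_n-n$ in Corollary~\ref{cor:reciprocal-Morse}, combined with a direct growth comparison. Since the bound $U_n(p)\geq R_n$ holds for every $p\ne2$, the $\liminf$ statement is immediate, and it transfers to $B_n$ through $B_n\geq U_n$.

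The only step requiring care is the pencil reduction at $p=2$ for signed data with negative potentials. There one has to observe that $\Phi_2$ is the identity, so each signed edge term $a_{ij}(x_i-\varepsilon_{ij}x_j)^2$ is quadratic and $A$ is symmetric. No positivity of $A$ is needed.
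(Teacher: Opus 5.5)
Your proof is correct and follows essentially the same route as the paper. The ingredients are the same: the symmetric pencil bound at $p=2$, the path with simple Laplacian spectrum, and Theorems~\ref{thm:complete-local-unfolding} and~\ref{thm:low-Rn} (which you reach through Corollary~\ref{cor:integer-both-sides}) for $p\ne2$. Your sandwich $n\le U_n(2)\le B_n(2)\le n$ just replaces the paper's separate edgeless witness, with pairwise distinct ratios $b_i/\nu_i$, for $B_n(2)$.

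The only loose point is the strict inequality $R_n>n$. Nonnegativity of $Q_n^+$ gives only $R_n\geq n$. Strictness follows from $R_3=7$ together with $R_{n+1}-R_n=3^n-2^n>1$ for $n\geq3$.
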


\begin{proof}
The assertion for $n=1$ is immediate. At $p=2$ the graph equation is a symmetric generalised matrix eigenproblem with positive-definite diagonal mass matrix, so it has at most $n$ distinct eigenvalues. Equality for $B_n(2)$ is attained by taking no edges and choosing the ratios $b_i/\nu_i$ pairwise distinct. Equality for $U_n(2)$ is attained by a path, whose ordinary Laplacian has simple spectrum. The assertion for $p>2$ is Theorem~\ref{thm:complete-local-unfolding}, and the assertion for $1<p<2$ is Theorem~\ref{thm:low-Rn}.
\end{proof}

This discontinuity concerns the extremal function, whose optimising graph data may depend on $p$. The mechanism behind this discontinuity is already visible in the uniformly weighted $K_n$. At $p=2$, the three-level vectors considered above lie in the single $(n-1)$-dimensional eigenspace with eigenvalue $n$, and a linear perturbation can produce at most $n$ distinct values. For $p>2$, one has $\Phi_p'(0)=0$, whereas $\Phi_2'(0)=1$; the corresponding three-level eigenlines are regular and can be separated simultaneously by perturbing the edge weights. For $1<p<2$, the anisotropic analysis gives a nonzero local degree at every nonconstant eigenline of the uniformly weighted $K_n$. The logarithmic functional $\mathcal H_n$ is the first-order model for the transition as $p\to2$ from either direction.

The Krasnosel'skii genus construction supplies at most $n$ distinct values, so most of the spectrum produced above lies outside its min--max sequence.

\begin{corollary}[Eigenvalues outside the Krasnosel'skii genus min--max sequence]
\label{cor:nonkrasnoselskii-growth}
For every $n\geq2$ and every fixed $p>1$ with $p\ne2$, there is a connected complete graph with positive integer edge weights having at least
\begin{equation}\label{eq:nonkrasnoselskii-count}
 R_n-n
 =
 \frac{3^n-2^{n+1}+3-2n}{2}
\end{equation}
distinct eigenvalues that do not occur among the Krasnosel'skii genus min--max values. In particular, the number of such values is
$$
 \left(\frac12+o(1)\right)3^n.
$$

More generally, let $J\Subset(1,2)$ or $J\Subset(2,\infty)$ be compact. A complete graph with positive integer edge weights can be chosen so that, for every $p\in J$ outside a finite set, at least $R_n-n$ of its spectral values do not belong to the $n$-term Krasnosel'skii genus min--max sequence. If $J\Subset(2,\infty)$, the graph has exactly $R_n$ eigenlines and reduced Morse polynomial $M_n^+$ for every $p\in J$.
\end{corollary}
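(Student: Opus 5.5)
The plan is to count spectral values not covered by the genus construction. The Krasnosel'skii genus min--max sequence consists of $n$ numbers $\lambda_1\leq\cdots\leq\lambda_n$, so as a set it has at most $n$ distinct values. If a graph has at least $R_n$ distinct spectral values, then at least $R_n-n$ of them lie outside that set.

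For the first assertion, fix $p>1$ with $p\ne2$. Corollary~\ref{cor:integer-both-sides} supplies a connected complete graph with positive integer weights $A_e(Q)=Q+T^e$ having at least $R_n$ distinct spectral values; for $1<p<2$ one may equally cite Theorem~\ref{thm:low-Rn}. Subtracting the at most $n$ min--max values gives \eqref{eq:nonkrasnoselskii-count}. The asymptotic $(1/2+o(1))3^n$ follows because $2^{n+1}$ and $2n$ are $o(3^n)$.

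For the interval version with $J\Subset(2,\infty)$, I will use Corollary~\ref{cor:integer-interval}. It provides positive integer weights such that for every $p\in J$ there are exactly $R_n$ eigenlines with reduced Morse polynomial $M_n^+$, and such that the eigenvalues are pairwise distinct for all but finitely many $p\in J$. Away from that finite set there are $R_n$ distinct values, and the counting argument gives $R_n-n$ values outside the genus sequence.

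For $J\Subset(1,2)$, Theorem~\ref{thm:low-interval}(iv) provides integer weights giving at least $R_n$ distinct spectral values for all but finitely many $p\in J$, and the same subtraction applies.

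The only point needing care is that the genus min--max values are defined at each $p$ separately and depend on $p$. The argument never compares them across exponents: at each fixed $p$ it only uses that there are at most $n$ of them. So the finite exceptional set comes solely from the value-collision sets in the cited results, and the conclusion holds pointwise in $p$ outside that set. I do not expect a genuine obstacle here; the whole proof is the counting argument combined with the earlier constructions.
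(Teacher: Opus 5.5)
Your proposal is correct and is essentially the paper's proof: the genus min--max sequence contains at most $n$ distinct values, and subtracting these from the $R_n$ distinct spectral values supplied by Corollary~\ref{cor:integer-both-sides} (fixed $p$), Corollary~\ref{cor:integer-interval} ($J\Subset(2,\infty)$) and Theorem~\ref{thm:low-interval} ($J\Subset(1,2)$) gives the claim. The paper also writes out the min--max definition on the unit $p$-sphere and notes that these values are eigenvalues, but, as you observe, only the bound of at most $n$ distinct values is used.
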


\begin{proof}
On the unit $p$-sphere
$$
 S_p=\left\{x\in\mathbb R^n:\sum_i|x_i|^p=1\right\},
$$
let $\gamma$ denote the Krasnosel'skii genus and define
$$
 \lambda_j^{\mathrm K}(a)
 =\inf_{\substack{C\subset S_p\ \text{compact},\ C=-C\\\gamma(C)\geq j}}\ \sup_{x\in C}E_{p,a}(x),
 \qquad j=1,\ldots,n.
$$
These are eigenvalues and form the $n$-term Krasnosel'skii genus min--max sequence, counted with variational multiplicity; see \cite{DeiddaPuttiTudisco2023} and \cite[Section~3.2.1]{DeiddaTudiscoZhang2025}. It therefore contains at most $n$ distinct spectral values.

For fixed $p\ne2$, Corollary~\ref{cor:integer-both-sides} gives a connected complete graph with positive integer edge weights and at least $R_n$ distinct spectral values. At least $R_n-n$ of them do not occur in the Krasnosel'skii genus min--max sequence. If $J\Subset(2,\infty)$, use Corollary~\ref{cor:integer-interval}; if $J\Subset(1,2)$, use Theorem~\ref{thm:low-interval}. In either case, the graph so obtained has at least $R_n$ distinct values for every $p\in J$ outside a finite set. At most $n$ of them belong to the Krasnosel'skii genus min--max sequence, proving the interval assertion.
\end{proof}

\begin{corollary}[Persistence under variation of the exponent]
\label{cor:complete-p-persistence}
Fix $n\geq2$ and $p_0>2$. There are positive integer edge weights $A=(A_{ij})$ on $K_n$ and an open interval $J\subset(2,\infty)$ containing $p_0$ such that, for every $p\in J$, the corresponding weighted complete graph has exactly $R_n$ $p$-eigenlines, whose eigenvalues are pairwise distinct. All nonconstant eigenlines are regular, and for every $j=1,\ldots,n-1$, exactly
$$
 \binom n{j+1}(2^j-1)
$$
of them have Morse index $j$ on $\mathbb RP^{n-1}$.
\end{corollary}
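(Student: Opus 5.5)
The plan is to fix weights at $p_0$ and then use openness of the relevant chamber in the joint exponent--weight space. By Theorem~\ref{thm:complete-local-unfolding} at $p=p_0$, there is a rational weight vector $a\in V\setminus\mathcal D$ near $a^0$. Clearing denominators gives integer weights $A$, and $\#\sigma_{p_0}(A)=R_n$, with all nonconstant eigenlines regular and the stated index distribution. Since scaling does not affect eigenlines, indices or value coincidences, it is enough to work with $a=A/Q$, which lies in $\Omega_{n,p_0}$ and outside $\Maxw_{n,p_0}$.

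The first step is persistence of the eigenline count and indices. The joint caustic of Theorem~\ref{thm:joint-chamber} is closed in $(2,\infty)\times\mathcal A_n$, and $(p_0,a)$ lies in the component $\Omega_n^{\mathrm{unif}}$. Indeed, $(p_0,a)$ and $(p_0,a^0)$ are joined by a segment inside the small neighbourhood $V\subset\Omega_{n,p_0}$, and $(p_0,a^0)$ is on the uniform curve. That fibrewise path avoids the joint caustic, because the joint caustic's fibre at fixed $p$ consists of weights with degenerate critical points, which is the fixed-$p$ caustic. The complement of the joint caustic is open, so some interval $J_1\ni p_0$ satisfies $J_1\times\{a\}\subset\Omega_n^{\mathrm{unif}}$. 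For $p\in J_1$ the quotient therefore has $R_n-1$ regular nonconstant critical points with polynomial $M_n^+$. Adding the constant eigenline and using Proposition~\ref{prop:index-shift} to shift indices gives the count $\binom n{j+1}(2^j-1)$ in full index $j$.

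The second step is distinctness of the values, which I expect to be the main point. At $p_0$ the $R_n-1$ critical points are nondegenerate. The family $(p,z)\mapsto\mathcal R_{p,a}$ is $C^2$ in $z$, with Hessian jointly continuous by Theorem~\ref{thm:projective-embedding}. The implicit function theorem applied to $D_z\mathcal R_{p,a}=0$ then gives continuous branches $z_\rho(p)$ near $p_0$, and these exhaust all critical points because the count stays $R_n-1$. The values $\lambda_\rho(p)=\mathcal R_{p,a}(z_\rho(p))$ are continuous in $p$ and pairwise distinct at $p_0$. They are also positive, while the constant eigenline contributes $0$. Shrinking to an open interval $J\subset J_1$ keeps all pairwise differences nonzero. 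This avoids any need to control the joint Maxwell set directly; only continuity of the finitely many branches is used.
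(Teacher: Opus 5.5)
Your proof is correct and follows essentially the paper's argument. You choose rational weights near $a^0$ off the $p_0$-Maxwell set, use openness of the joint caustic complement to keep the eigenline count and indices for $p$ near $p_0$, keep the value gaps nonzero by continuity of finitely many branches (you via the implicit function theorem, the paper via the finite covering), and then clear denominators.

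The only difference is the order of choices: the paper first fixes a compact $J_0\ni p_0$ and the uniform neighbourhood $V_{J_0}$ from Corollary~\ref{cor:compact-p-tube}, and only then picks $a^*$. One small repair: $V$ is only known to be connected, so join $a$ to $a^0$ by a path in $V$ rather than a segment. Alternatively, simply note that $(p_0,a)$ lies off the joint caustic because all its critical points are nondegenerate.
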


\begin{proof}
Choose a compact interval $J_0\Subset(2,\infty)$ containing $p_0$ in its interior. Corollary~\ref{cor:compact-p-tube} gives a neighbourhood $V_{J_0}$ of $a^0$ on which the exact eigenline count, regularity, and reduced index distribution hold for every $p\in J_0$. At the fixed exponent $p_0$, the Maxwell set is a finite union of hypersurfaces near $a^0$. We may therefore choose a positive rational vector
$
 a^*\in V_{J_0}\setminus\Maxw_{n,p_0}.
$
Corollary~\ref{cor:compact-p-tube} gives $R_n$ eigenlines for every $(p,a)\in J_0\times V_{J_0}$, of which the $R_n-1$ nonconstant eigenlines are regular, while the choice of $a^*$ makes their eigenvalues pairwise distinct at $p_0$. Along $J_0\times\{a^*\}$, the finite covering yields finitely many continuous eigenvalue branches near $p_0$. Their pairwise gaps remain nonzero on a smaller open interval $J$ about $p_0$.

Clearing the denominators of $a^*$ multiplies every eigenvalue and linearisation by the same positive constant for every $p$. It preserves the exact eigenline count, regularity, the stated Morse-index distribution, and distinctness of the eigenvalues, and gives the asserted positive integer weights.
\end{proof}

At $p=4$, the lower construction and the tensor upper bound have the same exponential rate.

\begin{corollary}[Sharp exponential growth constant at $p=4$]
\label{cor:fourth-order-growth}
One has
$$
 \log U_n(4)=n\log3+O(\log n),\qquad
 \log B_n(4)=n\log3+O(\log n),
$$
and hence the two limits in \eqref{eq:intro-fourth-order-rate} are equal to $3$.
\end{corollary}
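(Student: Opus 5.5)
The plan is to sandwich both quantities between the lower bound $R_n$ and the even-exponent tensor upper bound, and then take logarithms. The chain of inequalities is
$$
 R_n\leq U_n(4)\leq B_n(4)\leq n\,3^{n-1}.
$$
I would justify each link separately.

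For the lower bound, Corollary~\ref{cor:integer-both-sides} (equivalently Theorem~\ref{thm:complete-local-unfolding}) at $p=4$ produces an unsigned, zero-potential, unit-measure complete graph with positive integer weights and at least $R_n$ distinct spectral values, so $U_n(4)\geq R_n$. The middle inequality $U_n(4)\leq B_n(4)$ is immediate, since the unsigned class is a subclass of the admissible data $D_n$.

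For the upper bound, Remark~\ref{rem:even-tensor-bound} with $p=4$ gives $\#\sigma_4(d)\leq n(4-1)^{n-1}=n3^{n-1}$ for every admissible $d$. Taking the maximum over $d$ yields $B_n(4)\leq n3^{n-1}$.

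Taking logarithms, the upper bound gives $\log B_n(4)\leq n\log3+\log n-\log3$. For the lower side, note that
$$
 R_n=\tfrac12 3^n\bigl(1-2(2/3)^n+3\cdot3^{-n}\bigr),
$$
and the bracket tends to $1$ and is bounded below by a positive constant for $n\geq2$ (indeed $R_n\geq1$ always, and the bracket is at least $1/3$ for $n\geq2$, say). Hence $\log R_n\geq n\log3-O(1)$. Combining the two sides gives $\log U_n(4)$ and $\log B_n(4)$ both equal to $n\log3+O(\log n)$.

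Dividing by $n$ and letting $n\to\infty$ then shows that both $U_n(4)^{1/n}$ and $B_n(4)^{1/n}$ tend to $3$, which is \eqref{eq:intro-fourth-order-rate}.

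The only step requiring care is that the Ding--Wei count applies uniformly to all signed data with real potentials. This is already handled in Remark~\ref{rem:even-tensor-bound}, since regularity of the tensor pencil rests only on positivity of $\nu$. Beyond that the argument is bookkeeping, and I expect no genuine obstacle.
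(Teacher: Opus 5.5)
Your proposal is correct and follows essentially the same route as the paper. The paper likewise combines the lower bound $U_n(4)\geq R_n$ (via Corollary~\ref{cor:p2-transition}, which rests on the same complete-graph construction you cite) and the trivial inclusion $U_n(4)\leq B_n(4)$ with the tensor-pencil bound $B_n(4)\leq n3^{n-1}$ from \eqref{eq:intro-even-tensor}, and then takes logarithms or $n$th roots.
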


\begin{proof}
Combine Corollary~\ref{cor:p2-transition} with \eqref{eq:intro-even-tensor} and take logarithms or $n$th roots.
\end{proof}

\begin{corollary}[The exact value of $U_3(p)$]
\label{cor:ordinary-small-growth}
For every $p>1$,
$$
 U_3(p)=
 \begin{cases}
 7,&p\neq2,\\
 3,&p=2.
 \end{cases}
$$
Moreover, $U_4(p)\geq26$ for every $p\ne2$, and
$$
 \lim_{\substack{p\to2\\p\ne2}}U_3(p)=7>3=U_3(2).
$$
\end{corollary}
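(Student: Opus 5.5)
The statement to prove is Corollary~\ref{cor:ordinary-small-growth}: $U_3(p)=7$ for $p\ne2$, $U_3(2)=3$, and $U_4(p)\geq26$. The lower bounds and the value at $p=2$ come directly from earlier results. Corollary~\ref{cor:p2-transition} gives $U_3(2)=3$ and $U_n(p)\geq R_n$ for $p\ne2$. Here $R_3=(27-16+3)/2=7$ and $R_4=(81-32+3)/2=26$. The limit statement follows from the equality $U_3(p)=7$ for $p\ne2$. The only real work is therefore the upper bound $U_3(p)\leq7$ for every $p\ne2$ and every unsigned zero-potential unit-measure graph on three vertices.

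The plan for the upper bound is to count reduced critical points on $\mathbb RP^1$. By Proposition~\ref{prop:reduced-correspondence} (and its analogue for non-complete connected graphs), the nonconstant eigenlines correspond to critical points of $\mathcal R_{p,a}=a\cdot\Psi_p$ on the circle $\Pj(V_3)\cong\mathbb RP^1$. The value $0$ contributes one further spectral value. So it suffices to show that $f=a\cdot\Psi_p$ has at most six distinct critical values on this circle. Disconnected graphs, or graphs with some $a_{ij}=0$, are handled separately.
\begin{itemize}
\item With fewer than two edges the problem is essentially $K_2$ plus isolated vertices, which gives very few values.
\item For a path on three vertices, the tree result \cite[Theorem~3.7]{DeiddaPuttiTudisco2023} says the full spectrum equals the variational spectrum, so there are at most three values.
\end{itemize}
Hence one may assume $a_{ij}>0$ for all three edges.

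The circle is divided by the coordinate cell decomposition $\mathcal C_3$. It has three vertices, where one coordinate of the $p$-centred representative vanishes (the $(1,1,1)$ cells), and three open edges, where the signs are $(+,+,-)$ up to permutation and global sign.

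On each open arc I parametrise the $p$-centred representative explicitly. Take for instance $x=(s,t,-1)$ rescaled so that $\Phi_p(s)+\Phi_p(t)=1$, with $s,t>0$. Inside the arc every difference $x_i-x_j$ has a fixed sign, except the single difference $x_1-x_2$, which vanishes at the midpoint. I then write the critical-point equation of Proposition~\ref{prop:reduced-correspondence} as a one-variable equation $g(s)=0$. The key step is to show that $g$ has at most one zero on each half of the arc, hence at most two per arc. That gives at most six nonconstant critical points on the open arcs, together with possible critical points at the three vertices.

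To make this work, I would use the level-comparison inequality from Lemma~\ref{lem:uniform-level-collapse}: the superadditivity or subadditivity of $u\mapsto u^q$ with $q=p-1$. The aim is to show that $g$ is strictly monotone on each half-arc, or at least changes sign at most once there. This is the main obstacle, since the weights are non-uniform and the convexity argument must survive unequal coefficients.

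With that in place, I would close the count by Morse/degree parity on the circle. Critical points of a function on $\mathbb RP^1$ alternate between local maxima and minima in the nondegenerate case, and this alternation persists via local degrees. Combining the alternation with the sign information of $g$ at the arc endpoints should show that a vertex critical point and two interior critical points on the adjacent arcs cannot coexist. That yields at most $6$ reduced critical points in total, hence $\#\sigma_p\leq7$.

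If this pointwise monotonicity fails, my fallback is an algebraic route for even $p$ via the tensor bound $n(p-1)^{n-1}$. That bound is too weak for general $p$, so I would instead push the convexity estimate using the entropy-type derivative identities of Proposition~\ref{prop:uniform-entropy}, taken one arc at a time.
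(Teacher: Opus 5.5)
\textbf{There is a genuine gap in the upper bound.} Your reductions are fine. The lower bounds $U_3(p)\geq R_3=7$ and $U_4(p)\geq R_4=26$, and the value $U_3(2)=3$, do come from Corollary~\ref{cor:p2-transition}, that is, from Theorems~\ref{thm:complete-local-unfolding} and~\ref{thm:low-Rn}. Supports with a missing edge are handled by the forest theorem. The problem is $U_3(p)\leq7$. Your key step says that $g$ has at most one zero on each half of an open arc, so each arc carries at most two eigenlines. This is false.

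Take $p=3$, zero potential, unit measures and $(a_{12},a_{13},a_{23})=(100,101,102)$. On the arc $x_1<0<x_2,x_3$, write $x=(-c,t,1)$ with $c^2=1+t^2$ (your centring), and put $s=c(t-1)/(c+t)$. As in the proof of Theorem~\ref{thm:balanced-triangle}, this is an increasing bijection $(0,\infty)\to(-1,\infty)$, with $s=0$ exactly at your midpoint $x_2=x_3$. Eliminating $\lambda$ between the equations at vertices $2$ and $3$ gives the equation below; the equation at vertex $1$ then follows by summation.
\[
 101(1+s)^2-102\,|s|\,s-100=0,\qquad -1<s<\infty.
\]
For $s=-u<0$ this reads $203u^2-202u+1=0$, which has two roots $u\approx0.0050$ and $u\approx0.9901$ in $(0,1)$. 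For $s>0$ it reads $1+202s-s^2=0$, with the root $s\approx202$.

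So this arc carries three eigenlines, two of them on the half $x_2<x_3$. On that half the equation is strictly convex in $u$, positive at both ends, and simply dips below zero. The other two arcs carry two and one, so the total is six, but the per-arc counts are $(3,2,1)$. Geometrically, when the uniform weights are perturbed, the three vertex critical points of the uniformly weighted triangle move into the open arcs, and one arc can receive two of them. Hence no per-arc monotonicity or single-sign-change estimate can hold. Your concluding parity step cannot repair the count either: alternation of maxima and minima on a circle constrains only the parity of the number of critical points, not its size.

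\textbf{What is missing is a pairing between adjacent arcs}, because the bound six holds only globally. The paper obtains $U_3(p)\leq7$ from Theorem~\ref{thm:balanced-triangle}, since an unsigned graph with zero potential and unit measures is balanced with $b_i=0\cdot\nu_i$. That proof uses exactly your arc parametrisation. It produces the functions $F_{X,Y,Z},F_{Y,Z,X},F_{Z,X,Y}$ on three cyclically glued copies of $(-1,\infty)$, and then counts their zeros together in Lemma~\ref{lem:cyclic-root}.

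In that lemma, three facts fix every count except a single number $m\in\{0,1,2\}$, which occurs twice:
\begin{enumerate}
\item convexity (or concavity) on the negative half;
\item a derivative with at most one zero on the positive half;
\item the signs at the endpoints.
\end{enumerate}
The identity $F_{Y,Z,X}\bigl(u/(1-u)\bigr)=0\iff F_{X,Y,Z}\bigl(-(1-u)\bigr)=0$, and its concave analogue for $0<q<1$, matches the extra roots on one arc with the roots on the cyclically next arc. This gives $2+2m\leq6$. Ties and glued endpoints are treated separately, with each glued endpoint counted once. Without such a cross-arc identity, your plan does not close.
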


\begin{proof}
For $p>2$, Theorem~\ref{thm:complete-local-unfolding} gives $U_3(p)\geq R_3=7$. For $1<p<2$, Theorem~\ref{thm:low-Rn} gives the same lower bound. The balanced-triangle bound in Theorem~\ref{thm:balanced-triangle}, proved in Appendix~\ref{app:small-graphs}, gives $U_3(p)\leq7$ whenever $p\ne2$.

At $p=2$ the problem is a symmetric $3\times3$ matrix eigenproblem, so $U_3(2)\leq3$; the unit-weight path has eigenvalues $0,1,3$, proving equality. Theorem~\ref{thm:complete-local-unfolding} for $p>2$ and Theorem~\ref{thm:low-Rn} for $1<p<2$ give $U_4(p)\geq R_4=26$. The asserted jump follows.
\end{proof}

\begin{proof}[Proof of Theorem~\ref{thm:intro-growth-chambers}]
Corollary~\ref{cor:p2-transition} proves the assertion at $p=2$. Theorem~\ref{thm:complete-local-unfolding} and Theorem~\ref{thm:low-Rn} give the lower bound for every $p\ne2$, and Corollaries~\ref{cor:fourth-order-growth} and~\ref{cor:integer-both-sides} give the growth constant at $p=4$ and the integer-weight construction.

Part~\textup{(iv)} follows from Theorem~\ref{thm:joint-chamber} and Corollary~\ref{cor:integer-interval}, and part~\textup{(v)} is Theorem~\ref{thm:low-interval}. Theorems~\ref{thm:tangent-family} and~\ref{thm:tangent-Maxwell} prove part~\textup{(vi)}.
\end{proof}

\section{Consequences and open directions}\label{sec:discussion}

The full spectrum of a finite graph is finite for every $1<p<\infty$, with a bound depending only on the number of vertices. Definable triviality gives a finite definable partition of the coefficient--exponent family on which the spectra and eigenvector sets are topologically trivial.

For unsigned complete graphs the structure is more explicit. The map $\Psi_p$ embeds $\mathbb RP^{n-2}$ so that the reduced Rayleigh quotients are height functions. The projectivised $L^p$-duality map pulls the coordinate-hyperplane arrangement back to a regular CW decomposition indexed by pairs $[P\mid N]$ of disjoint non-empty subsets of $[n]$, modulo interchange of $P$ and $N$. For $p\ne2$, the inverse images of the cell barycentres are precisely the nonconstant eigenlines of the uniformly weighted $K_n$. For $p>2$, their Morse indices equal the cell dimensions; for $1<p<2$, their critical groups are concentrated in the complementary cell codimensions. At $p=2$, the local critical group of the logarithmic model at each barycentre is supported in the degree equal to the corresponding superquadratic Morse index. The two Morse polynomials are therefore
$$
 M_n^+(t)=\frac12\sum_{\substack{r,s\geq1\\r+s\leq n}}
 \binom{n}{r,s,n-r-s}t^{\,r+s-2},
 \qquad
 M_n^-(t)=t^{\,n-2}M_n^+(t^{-1}),
$$
and the integral cellular boundary is explicit.

Consequently,
$$
 R_n-1=M_n^+(1)=M_n^-(1)=\frac{3^n-2^{n+1}+1}{2},
$$
so $U_n(p)\geq R_n$ for every $p\neq2$, whereas $U_n(2)=B_n(2)=n$. At $p=4$,
$$
 \frac{3^n-2^{n+1}+3}{2}
 \leq U_n(4)\leq B_n(4)\leq n3^{\,n-1}.
$$
Thus both $U_n(4)$ and $B_n(4)$ have exponential growth constant $3$; the exact polynomial prefactors remain open.

For the uniformly weighted $K_n$, nonzero spectral values are determined by unordered pairs $(k,\ell)$ of positive block sizes with $k+\ell\leq n$. There are $|\mathcal I_n|=\lfloor n^2/4\rfloor$ such pairs. Their values are distinct outside the finite exceptional set $\mathfrak E_n$, while the number of nonconstant eigenlines is always $R_n-1$ for $p\ne2$. At $p=3$, the value depends only on $k\ell$; for every $n\geq5$, different pairs can therefore give the same value. The resulting cardinality is governed by the multiplication-table problem and is $n^{2-o(1)}$ but $o(n^2)$.

For a graph with $n$ vertices and $m$ positive-weight edges, the real-exponent Pfaffian argument bounds the sum of the rational Betti numbers of all normalised eigenvector sets by $\exp(O((n+m)^2))$. At a fixed rational exponent $p=A/B$, the semialgebraic estimate improves the logarithm of the bound to
$$
 O\bigl((n+m)\log((n+m)D)\bigr),
 \qquad D=2\max\{B,A-B,1\}.
$$
Incorporating graph incidence, sparsity, and fewnomial structure may sharpen both estimates.

The endpoint theorems give Hausdorff limits of the normalised finite-$p$ eigenpair sets and prove finiteness of the Clarke critical-value sets of the limiting quotients on the sphere. Their relation to intrinsic graph $1$- and $\infty$-eigenvalue theories depends on the normalisation and set-valued eigen-equation. For the uniformly weighted $K_n$, Corollary~\ref{cor:uniform-endpoint-limits} gives $\#\Lambda_1(K_n)=\lfloor n/2\rfloor+1$, $\#\Lambda_\infty(K_n)=2$, $\#\mathcal L_{\infty,n}=R_n$, and the displayed exact formula for $\#\mathcal L_{1,n}$; both projective limit sets have cardinality $3^{\,n-o(n)}$. Related endpoint theories include Chang's graph $1$-Laplacian, the $p\downarrow1$ limits of first Dirichlet eigenfunctions studied by Ge, Hua, and Jiang, and the graph $\infty$-eigenpairs of Deidda, Burger, Putti, and Tudisco \cite{Chang2016,GeHuaJiang2021,DeiddaBurgerPuttiTudisco2026}.

Several directions remain.
\begin{enumerate}
\item Determine the exact values of $U_n(p)$ and $B_n(p)$ for $p\ne2$, beginning with the remaining unrestricted small-$n$ cases and the factor-of-order-$n$ gap between the present lower and upper bounds at $p=4$.
\item Determine the exceptional collision set $\mathfrak E_n$---the exponents at which two distinct block-size pairs give the same spectral value for the uniformly weighted $K_n$---together with its cardinality, arithmetic structure, and behaviour as $n\to\infty$.
\item Beyond the balanced three-vertex case treated in Appendix~\ref{app:small-graphs}, describe the bifurcation set and its complementary parameter regions for $1<p<2$. In particular, determine uniqueness of the critical point persisting near each eigenline of the uniformly weighted $K_n$, the total eigenline count after perturbation, and a stratification of the parameters at which local uniqueness or the eigenline count changes.
\item For unweighted simple graphs, graphs of degree bounded independently of $n$, and graphs whose positive integer edge weights are bounded by a constant independent of $n$, determine the exponential limsup and liminf of the extremal spectral cardinality and whether they coincide. Sharpen the Pfaffian and rational-exponent bounds by exploiting graph incidence and fewnomial structure.
\item For the signed weighted problems with potentials considered here, identify precisely when the endpoint Clarke critical-value sets agree with intrinsic graph $1$- and $\infty$-eigenvalue spectra. Specify in each comparison the normalisation and the set-valued endpoint eigen-equation.
\item For $p>2$, construct a Morse--Smale pseudo-gradient whose unstable manifolds are exactly the arrangement cells, and construct a corresponding topological model for $1<p<2$. For such a cell-compatible construction, compare the Thom--Smale differential with the explicit integral cellular boundary and describe the resulting cancellations in the Morse complex.
\end{enumerate}

The appendices prove the parameter-dependent, topological, and endpoint results stated above, extend the effective bounds to matrix and linear-form systems, and complete the low-dimensional analysis.

\appendix
\section{Definable families, spectral branches, and eigenvector sets}
\label{app:families}

Fix $d=(a,\varepsilon,b,\nu)$. The spectrum as $p$ varies is encoded by the definable set
\begin{equation}\label{eq:p-spectral-set}
 \mathcal{S}(d)
 =\{(p,\lambda):p>1,\ \lambda\in\sigma_p(d)\}
\end{equation}
This set is the fibre of $\mathcal{S}_n$ over $d$, and its vertical fibres are finite.

\begin{theorem}[Definable spectral branches]\label{thm:intro-branches}
There exists a finite set $P\subset(1,\infty)$ such that, on every connected component $I$ of $(1,\infty)\setminus P$, one has
$$
 \sigma_p(a,\varepsilon,b,\nu)
 =\{\lambda_{1,I}(p)<\cdots<\lambda_{r_I,I}(p)\},\qquad p\in I,
$$
where the functions $\lambda_{k,I}:I\to\R$ are continuous and definable in $\R_{\exp}$. In particular, the number of distinct eigenvalues is constant on $I$, and it can change at only finitely many exponents.
\end{theorem}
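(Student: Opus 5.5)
The plan is to deduce the statement directly from Proposition~\ref{prop:ominimal-facts}(iii), applied to the definable set $\mathcal S(d)$ of \eqref{eq:p-spectral-set}; this is the graph-specific instance of Corollary~\ref{cor:abstract-branches}.

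First I will verify the hypotheses. By \eqref{eq:total-eigenpairs}--\eqref{eq:total-spectrum}, $\mathcal S_n$ is definable in $\R_{\exp}$, and this uses joint definability of $\Phi(t,p)$ in base and exponent via \eqref{eq:phi-def}. Fixing the data $d$ is the same as intersecting with the definable set $\{d\}\times(1,\infty)\times\R$ and projecting, so $\mathcal S(d)\subset(1,\infty)\times\R$ is definable. For each $p$, its vertical fibre is $\sigma_p(d)$, which is finite by Theorem~\ref{thm:intro-main}\ref{part:intro-finite}.

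Next I will apply Proposition~\ref{prop:ominimal-facts}(iii) with $I=(1,\infty)$ and $X=\mathcal S(d)$. This yields a finite set $P\subset(1,\infty)$ with the following property: on each component $I'$ of $(1,\infty)\setminus P$, the set $X\cap(I'\times\R)$ is a disjoint union of graphs of continuous definable functions $f_1<\cdots<f_r$ defined on all of $I'$. Setting $\lambda_{k,I'}=f_k$ and $r_{I'}=r$ gives
\[
\sigma_p(d)=\{\lambda_{1,I'}(p)<\cdots<\lambda_{r_{I'},I'}(p)\}\qquad\text{for every }p\in I'.
\]
The cardinality $r_{I'}$ is therefore constant on $I'$, so it can change only at the finitely many points of $P$. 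Since $(1,\infty)\setminus P$ has finitely many components, there are finitely many such intervals.

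I expect no real obstacle here. The only point needing care is to check that $\mathcal S(d)$ is genuinely definable as a set in the variables $(p,\lambda)$ with $p$ varying. This holds because $\operatorname{pow}$ and $\Phi$ are definable as two-variable functions in $\R_{\exp}$, and not merely for each fixed exponent; that is exactly why the structure $\R_{\exp}$, rather than a polynomially bounded one, is used. The proof also relies on the cell-decomposition argument behind Proposition~\ref{prop:ominimal-facts}(iii). That argument gives functions defined on whole components, because one refines by the finitely many endpoints of the projected one-dimensional cells, and I would cite it as stated.
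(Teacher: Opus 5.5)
Your proposal is correct and matches the paper's proof. The paper likewise applies Proposition~\ref{prop:ominimal-facts}(iii) to the definable set $\mathcal S(d)$ of \eqref{eq:p-spectral-set}, whose vertical fibres are finite by Theorem~\ref{thm:intro-main}\ref{part:intro-finite}, and then orders the resulting continuous definable graphs increasingly.
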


For graph data $d=(a,\varepsilon,b,\nu)$, set
$$
 \operatorname{Bif}_{\#}(d)
 =
 \{p>1:q\mapsto\#\sigma_q(d)\text{ is not locally constant at }p\}.
$$

\begin{theorem}[Uniform bounds for spectral branches]
\label{thm:intro-uniform-branches}
For every $n\geq1$ there are non-effective constants $C_n,L_n<\infty$ such that every admissible fixed datum $d$ on $n$ vertices satisfies
$$
 \#\operatorname{Bif}_{\#}(d)\leq C_n.
$$
Moreover, there is an exceptional set $P_d\subset(1,\infty)$ with $\#P_d\leq L_n$ such that the ordered representation in Theorem~\ref{thm:intro-branches} holds on every component of its complement. The spectral set $\mathcal S(d)$ is a union of at most $L_n$ sets, each the graph of a continuous definable function on a point or an interval.
\end{theorem}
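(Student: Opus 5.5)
The plan is to apply uniform finiteness and the one-parameter branch decomposition to the total spectral set $\mathcal S_n\subset\Theta_n\times\mathbb R$ of \eqref{eq:total-spectrum}, regarded as a definable family over the data space $D_n$ with fibre variables $(p,\lambda)$. For each $d$, the fibre $\mathcal S(d)\subset(1,\infty)\times\mathbb R$ is definable and has finite vertical fibres by Theorem~\ref{thm:intro-main}\ref{part:intro-finite}. It therefore has dimension at most one. The goal is to bound, uniformly in $d$, the number of cells in a cell decomposition of $\mathcal S(d)$ adapted to the projection to $p$, together with the number of exceptional exponents.

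First I will fix the family version of cell decomposition. Apply o-minimal cell decomposition in $\mathbb R_{\exp}$ to the definable set $\mathcal S_n\subset D_n\times(1,\infty)\times\mathbb R$, using a decomposition compatible with the coordinate projections; equivalently, use the uniform version of cell decomposition for definable families. This produces a finite number $L$ of cells with the following property: for every $d$, the fibre over $d$ of each cell is a cell in $(1,\infty)\times\mathbb R$. Each such fibre cell is a point, a vertical segment, a graph of a continuous definable function over an open interval, or a two-dimensional band. Because the vertical fibres of $\mathcal S(d)$ are finite, only points and graphs over intervals can occur. Hence $\mathcal S(d)$ is a union of at most $L$ sets, each a graph over a point or an interval, and this proves the last assertion with $L_n=L$.

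Next, let $P_d$ consist of the $p$-projections of the zero-dimensional fibre cells together with the endpoints in $(1,\infty)$ of the interval projections of the one-dimensional fibre cells. Then $\#P_d\le 3L$, and after enlarging $L_n$ this is at most $L_n$. On each component $I$ of $(1,\infty)\setminus P_d$, every one-dimensional fibre cell either lies over all of $I$ or misses it. The spectrum over $I$ is therefore a finite union of continuous definable graphs over $I$, and these graphs are pairwise disjoint because they are distinct cells. Continuous functions on an interval with pairwise disjoint graphs are strictly ordered, so they can be listed as $\lambda_{1,I}<\cdots<\lambda_{r_I,I}$. This gives Theorem~\ref{thm:intro-branches} with a uniform bound.

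Finally, $q\mapsto\#\sigma_q(d)$ is constant on each component of $(1,\infty)\setminus P_d$, so $\operatorname{Bif}_\#(d)\subset P_d$ and $C_n=L_n$ works. The main obstacle is the uniformity in the first step. Proposition~\ref{prop:ominimal-facts} as stated gives only uniform finiteness and a fixed-fibre branch decomposition, not a cell decomposition uniform in the parameter $d$. To handle this, I would invoke the standard fact that cell decomposition of $\mathcal S_n$ itself restricts to cell decompositions of every fibre. If a more elementary route is preferred, I would instead apply uniform finiteness, Proposition~\ref{prop:ominimal-facts}(ii), to the definable family over $D_n$ of points $p$ at which $\mathcal S(d)$ fails to be locally a union of disjoint continuous graphs, together with the number of such graphs. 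Either way, some care is needed to check that the definability of this exceptional set is uniform in $d$.
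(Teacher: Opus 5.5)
Your proposal is correct and follows essentially the same route as the paper. Both take a cylindrical cell decomposition of $\mathcal S_n$ in the coordinate order $d,p,\lambda$; finiteness of the vertical fibres leaves only points and graphs over intervals. The fixed number of cells then bounds $\#P_d$ and the number of graph pieces uniformly in $d$, and $\operatorname{Bif}_{\#}(d)\subset P_d$ follows. The paper additionally proves the $C_n$ bound separately, by applying uniform finiteness to the relative boundaries of the definable level sets $\{(d,p):\#\sigma_p(d)=k\}$. Your direct deduction $C_n=L_n$ from $\operatorname{Bif}_{\#}(d)\subset P_d$ is equally valid.
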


\begin{proof}[Proof of Theorem~\ref{thm:intro-branches}]
Apply Proposition~\ref{prop:ominimal-facts}(iii) to the definable set \eqref{eq:p-spectral-set}. After removing finitely many exponents, the restriction over each remaining component is the disjoint union of the graphs of a fixed finite number of continuous definable functions, which may be ordered increasingly.
\end{proof}

\begin{proof}[Proof of Theorem~\ref{thm:intro-uniform-branches}]
After replacing $\mathfrak B_n^{\mathrm{unif}}$ by its ceiling, define, for $0\leq k\leq\mathfrak B_n^{\mathrm{unif}}$,
$$
 K_k=\{(d,p)\in D_n\times(1,\infty):\#\sigma_p(d)=k\}.
$$
Each $K_k$ is definable: one quantifies $k$ ordered, pairwise distinct values in the fibre of \eqref{eq:total-spectrum}, and asserts that they exhaust that fibre. Let $\partial_pK_k$ denote the set of $(d,p)$ for which $p$ belongs to the relative boundary of the one-dimensional fibre $(K_k)_d$, and set
$$
 \mathcal B_n=
 \bigcup_{k=0}^{\mathfrak B_n^{\mathrm{unif}}}\partial_pK_k.
$$
The condition that $p$ lie in the boundary of $(K_k)_d$ relative to $(1,\infty)$ is first-order definable. Moreover, $(\mathcal B_n)_d=\operatorname{Bif}_{\#}(d)$. Indeed, each $(K_k)_d$ is a finite union of points and intervals, so its relative boundary is finite. Hence $(\mathcal B_n)_d$ is finite, and uniform finiteness applied to $\mathcal B_n$ gives $C_n$.

For the stronger assertion, take a cylindrical cell decomposition compatible with the total spectral set \eqref{eq:total-spectrum}, in the coordinate order
$
 d, p,\lambda.
$
Because every $(d,p)$-fibre is finite, a cell contained in $\mathcal S_n$ cannot contain an interval in the $\lambda$-direction; each such cell is therefore the graph of a definable function of $(d,p)$. The decomposition induces a cylindrical decomposition of the $(d,p)$-base. Let $P\subset D_n\times(1,\infty)$ be the union of the cells that are graphs of single $p$-values over cells of $D_n$. For fixed $d$, the fibre $P_d$ is precisely the collection of point cells separating the open interval cells in the induced one-dimensional decomposition, so the fixed decomposition bounds $\#P_d$ uniformly. On each component of $(1,\infty)\setminus P_d$, the spectral cells restrict to disjoint continuous definable graphs ordered by their $\lambda$ coordinate. Their number is constant there, so $\operatorname{Bif}_{\#}(d)\subset P_d$. The fixed total number of cells also bounds, uniformly in $d$, the number of spectral branches with interval domains. Including the spectral points over the domains $p\in P_d$ and enlarging the bound if necessary gives $L_n$ and the asserted decomposition of $\mathcal S(d)$.
\end{proof}

\begin{corollary}[Eventual constancy of the spectral count]
\label{cor:eventual-count}
For fixed $d$, there exist $1<p_-<p_+<\infty$ such that $\#\sigma_p(d)$ is constant on $(1,p_-)$ and constant on $(p_+,\infty)$. More generally, there is a finite set outside of which the spectral count is locally constant on $(1,\infty)$.
\end{corollary}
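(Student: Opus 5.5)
This corollary follows from Theorem~\ref{thm:intro-branches}. The statement in its second sentence ("there is a finite set outside of which the spectral count is locally constant") is exactly that theorem, or equivalently the finiteness of $\operatorname{Bif}_{\#}(d)$ in Theorem~\ref{thm:intro-uniform-branches}. So the work is to extract the two end intervals from it.

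Fix $d$ and apply Theorem~\ref{thm:intro-branches} to obtain a finite exceptional set $P\subset(1,\infty)$. Since $P$ is finite, there are two cases.
\begin{itemize}
\item If $P$ is nonempty, set $p_-=\min P$ and $p_+=\max P$.
\item If $P$ is empty, choose any $1<p_-<p_+$.
\end{itemize}
In both cases $(1,p_-)$ and $(p_+,\infty)$ are connected components of $(1,\infty)\setminus P$, or are contained in the single component $(1,\infty)$.

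On each such component $I$, Theorem~\ref{thm:intro-branches} represents $\sigma_p(d)$ as $r_I$ strictly ordered continuous definable branches. Hence $\#\sigma_p(d)=r_I$ for every $p\in I$, and the count is constant on $(1,p_-)$ and on $(p_+,\infty)$.

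Two small points need checking.
\begin{itemize}
\item If $P$ is a single point, the inequality $p_-<p_+$ fails. In that case I replace $p_+$ by any larger number. A subinterval of a component still carries a constant count, so nothing is lost.
\item The phrase "locally constant outside a finite set" follows directly: every $p\notin P$ lies in an open component $I$ on which the count is constant.
\end{itemize}

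There is no real obstacle here. The substantive input, namely the definable cell decomposition applied to the set $\mathcal S(d)$ with finite vertical fibres, is already provided by Proposition~\ref{prop:ominimal-facts}(iii) through Theorem~\ref{thm:intro-branches}.
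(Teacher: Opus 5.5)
Your proposal is correct and follows the paper's own argument: the corollary is deduced directly from Theorem~\ref{thm:intro-branches} and the finiteness of its exceptional set $P$. Your choice $p_-=\min P$, $p_+=\max P$, with the adjustments when $P$ has at most one point, supplies exactly the bookkeeping that the paper's one-line proof leaves implicit.
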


\begin{proof}
This follows from Theorem~\ref{thm:intro-branches} and the finiteness of its exceptional set.
\end{proof}

\begin{corollary}[Definably varying graph data]\label{cor:definable-data}
Let $I\subset(1,\infty)$ be an interval, and suppose
$$
 p\mapsto (a(p),\varepsilon(p),b(p),\nu(p))
$$
is definable in $\R_{\exp}$, with admissible coefficients for every $p\in I$. Then the corresponding spectral set is a finite union of continuous definable branches away from finitely many points of $I$.
\end{corollary}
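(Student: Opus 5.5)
The plan is to reduce to the one-parameter branch decomposition of Proposition~\ref{prop:ominimal-facts}(iii), in the same way as the proof of Theorem~\ref{thm:intro-branches}. Define
$$
 \mathcal S_I=\{(p,\lambda):p\in I,\ \lambda\in\sigma_p(a(p),\varepsilon(p),b(p),\nu(p))\}\subset I\times\R .
$$
This set is the pullback of the total spectral set $\mathcal S_n$ of \eqref{eq:total-spectrum} under the map
$$
 (p,\lambda)\mapsto\bigl((a(p),\varepsilon(p),b(p),\nu(p)),p,\lambda\bigr).
$$
The first step is definability. The graph of $p\mapsto d(p)$ is definable by hypothesis, and $\mathcal S_n$ is definable in $\R_{\exp}$. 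So $\mathcal S_I$ is defined by the formula ``there exists $d$ with $(p,d)$ in the graph and $(d,p,\lambda)\in\mathcal S_n$''. That formula involves only a projection and an intersection, and both preserve definability.

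The second step is to verify that vertical fibres are finite. Admissibility for every $p\in I$ places $(d(p),p)\in\Theta_n$. Hence the fibre over $p$ is $\sigma_p(d(p))$, which is finite by Theorem~\ref{thm:intro-main}\ref{part:intro-finite}. By Theorem~\ref{thm:intro-main}\ref{part:intro-uniform} it is even bounded by $\mathfrak B_n^{\mathrm{unif}}$, although that bound is not needed.

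The third step applies Proposition~\ref{prop:ominimal-facts}(iii) to $\mathcal S_I$. It yields a finite set $P\subset I$ such that, on each component $J$ of $I\setminus P$, the set $\mathcal S_I\cap(J\times\R)$ is a disjoint union of graphs of continuous definable functions $f_1<\cdots<f_r$. These functions are the asserted branches, and the conclusion follows.

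The main point to check is that the signature $\varepsilon(p)$, which takes values in $\{\pm1\}$, is genuinely covered by the hypothesis. A definable map into a finite set is piecewise constant with finitely many jumps, so this causes no difficulty. Still, the proof is cleanest if it works directly with the graph of $d(\cdot)$ inside $D_n$, and that is what the argument above does, so no separate case split is required. I expect no substantive obstacle beyond this bookkeeping. The only care needed is that $I$ is an arbitrary interval, possibly open or unbounded, and Proposition~\ref{prop:ominimal-facts}(iii) is stated for exactly such intervals.
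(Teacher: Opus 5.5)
Your proposal is correct and matches the paper's proof. The paper also pulls back the total spectral set $\mathcal S_n$ of \eqref{eq:total-spectrum} along the definable curve $p\mapsto(d(p),p)$ and applies Proposition~\ref{prop:ominimal-facts}(iii), with finite vertical fibres supplied by Theorem~\ref{thm:intro-main}\ref{part:intro-finite}. Your extra checks on definability of the pullback and on the $\{\pm1\}$-valued signature are sound but not needed.
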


\begin{proof}
Pull back $\mathcal{S}_n$ along the definable parameter curve and use Proposition~\ref{prop:ominimal-facts}(iii).
\end{proof}

\begin{example}[The convention $a_{ij}(p)=\omega_{ij}^p$]
For fixed nonnegative edge weights $\omega_{ij}$ and fixed signature, the map $p\mapsto\omega_{ij}^p$ is definable by \eqref{eq:pow-def}. Therefore Corollary~\ref{cor:definable-data} applies to the weighted graph convention used in \cite{DeiddaTudiscoZhang2025}.
\end{example}

\begin{remark}[Regularity of branches]
By the monotonicity and $C^r$ cell-decomposition theorems, each branch in Theorem~\ref{thm:intro-branches} can be further partitioned into finitely many intervals on which it is $C^r$, for any prescribed finite $r$, and is either constant or strictly monotone. This gives a finite piecewise-$C^r$ description of every branch.
\end{remark}

The preceding branch decomposition controls the spectral values; we next lift it to the topology of the corresponding normalised eigenvector sets.

\subsection{Topology of normalised eigenvector sets}\label{sec:topology}

For admissible $d=(a,\varepsilon,b,\nu)$, define
$$
 E(d,p,\lambda)
 =\{x\in\Sph^{n-1}:(d,p,x,\lambda)\in\mathcal Z_n\}.
$$
This compact definable set may be singular, disconnected, or positive-dimensional. For $\eta\in\{-1,0,1\}^n\setminus\{0\}$, the corresponding sign stratum is defined by $\sgn(x_i)=\eta_i$ for every $i$. The corresponding partition into strong nodal domains is the component partition of the graph with vertex set $\{i:\eta_i\ne0\}$ and edges
\begin{equation}\label{eq:signed-nodal-edges}
 a_{ij}>0,\qquad \eta_i\varepsilon_{ij}\eta_j>0.
\end{equation}
Thus each prescribed sign vector, and each prescribed partition into strong nodal domains, is determined by finitely many definable conditions.

\begin{theorem}[Uniform topological triviality]\label{thm:eigenset-hardt}
For every $n\geq1$ there is a non-effective constant $T_n<\infty$ with the following properties. For each admissible $d$ there is a set $P_d\subset(1,\infty)$, $\#P_d\leq T_n$, such that on every component $I$ of its complement
$$
 \sigma_p(d)=\{\lambda_1(p)<\cdots<\lambda_r(p)\},
$$
with continuous definable functions $\lambda_k$. For every $p_0\in I$ and $1\leq k\leq r$ there is a definable homeomorphism over $I$,
\begin{equation}\label{eq:eigenset-trivialization}
 I\times E(d,p_0,\lambda_k(p_0))
 \rightarrow
 \{(p,x):p\in I,\ x\in E(d,p,\lambda_k(p))\}.
\end{equation}
It may be chosen compatible with every sign stratum and every partition into strong nodal domains. The analogous statement holds for the projective eigenvector sets $\overline E(d,p,\lambda_k(p))$ under the realisation $[x]\mapsto xx^{\mathsf T}\in\mathcal P_n$, compatibly with the images of the two antipodal sign strata indexed by $\eta$ and $-\eta$. Taking the finite disjoint union over $k$ gives a trivialisation of the full normalised eigenpair family over $I$.

After enlarging $T_n$, the spectral set $\mathcal S(d)$ is a union of at most $T_n$ graphs over point or interval domains, and the normalised eigenvector set and its projectivisation are definably trivial over each such domain. Among all choices of $d,p,\lambda$, at most $T_n$ definable homeomorphism types of these sets occur. In particular, their dimensions, numbers of connected components, rational Betti numbers, and Euler characteristics admit bounds depending only on $n$.
\end{theorem}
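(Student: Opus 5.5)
The plan is to apply definable Hardt triviality (Proposition~\ref{prop:ominimal-facts}(iv)) once, to the total normalised eigenpair family, parametrised jointly by data and exponent. By Theorem~\ref{thm:intro-uniform-branches} and the cylindrical decomposition in its proof, $\mathcal S_n\subset D_n\times(1,\infty)\times\R$ is a finite union of cells, each the graph of a continuous definable function $\lambda_c$ over a base cell $B_c\subset D_n\times(1,\infty)$. Set
$$
 \mathcal W=\{(d,p,\lambda,x):(d,p,x,\lambda)\in\mathcal Z_n\}\subset\mathcal S_n\times\Sph^{n-1},
$$
and let $f:\mathcal W\to\mathcal S_n$ be the projection $(d,p,\lambda,x)\mapsto(d,p,\lambda)$. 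It is continuous and definable, and its fibre over $(d,p,\lambda)$ is $E(d,p,\lambda)$. As prescribed subsets take the finitely many definable sets $A_\eta=\{\sgn(x_i)=\eta_i\ \forall i\}$, and for each $\eta$ and each partition $\pi$ of $\{i:\eta_i\neq0\}$ the set where the component partition of the graph \eqref{eq:signed-nodal-edges} equals $\pi$. The latter depends on $d$ only through the signs of $a_{ij}$ and $\varepsilon_{ij}$, so it is a finite Boolean combination of definable conditions. Hardt triviality then gives a finite definable partition $\mathcal S_n=\bigsqcup_\alpha Y_\alpha$ with trivialisations compatible with all of these sets. For the projective version we run the same argument with $f$ replaced by the map on $\{(d,p,\lambda,xx^{\mathsf T})\}\subset\mathcal S_n\times\mathcal P_n$, using the images of $A_\eta\cup A_{-\eta}$.

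Next we refine. We take a cylindrical decomposition of $D_n\times(1,\infty)\times\R$, in the order $(d,p,\lambda)$, compatible with every $Y_\alpha$, with the $\lambda$-graphs of $\mathcal S_n$, and with the sets $K_k$. For fixed $d$ the base cells restrict to finitely many points and open intervals in $p$, and their number is bounded uniformly in $d$ by the number of cells. We define $P_d$ to be the set of these point cells, which gives $T_n$. On a component $I$ of the complement, each branch $\lambda_k$ comes from a single cell graph, and that graph lies in a single $Y_\alpha$. Over the curve $\{(d,p,\lambda_k(p)):p\in I\}\subset Y_\alpha$, the restricted Hardt trivialisation, composed with the definable homeomorphism $p\mapsto(d,p,\lambda_k(p))$ of $I$ onto the curve, yields \eqref{eq:eigenset-trivialization} with base fibre taken at $p_0$. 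It is compatible with the strata because it restricts from a compatible one. The disjoint union over $k$ then trivialises the whole eigenpair family, since the branches are disjoint closed graphs over $I$. Over point domains the statement is vacuous.

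The homeomorphism types are finite in number because each $Y_\alpha$ carries a single fibre type, so at most $\#\{\alpha\}$ types occur, and we enlarge $T_n$ to cover this. Dimensions, $b_0$, rational Betti numbers and Euler characteristics are homeomorphism invariants, and each fibre is a compact definable set, hence triangulable with finite homology, so all of these quantities are bounded.

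The main obstacle is the first step: checking that the nodal-partition strata really are definable uniformly in $d$, so that the prescribed subsets form a finite family. This reduces to noting that the edge condition \eqref{eq:signed-nodal-edges} takes only finitely many combinatorial values. The remaining care is in making sure that each branch over $I$ sits inside one Hardt piece, which is exactly what the compatibility built into the cylindrical refinement is there to guarantee.
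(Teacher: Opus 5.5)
Your proposal is correct and follows essentially the paper's route: compatible Hardt triviality for the projection $(d,p,x,\lambda)\mapsto(d,p,\lambda)$ from $\mathcal Z_n$ to $\mathcal S_n$ (and for its projective image), with the sign strata and nodal-partition sets as prescribed subsets, then a cylindrical refinement in the order $d,p,\lambda$ restricted to fixed $d$. The one point to state explicitly is that the cylindrical refinement must be compatible with both the spherical and the projective Hardt partitions, so that a single $P_d$ serves both; the paper does this by taking a common refinement.
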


\begin{proof}
Apply compatible Hardt triviality, Proposition~\ref{prop:ominimal-facts}(iv), to
$$
 \rho:\mathcal Z_n\rightarrow\mathcal S_n,\qquad
 (d,p,x,\lambda)\mapsto(d,p,\lambda),
$$
together with all sign strata and the finitely many subsets specifying every component partition in \eqref{eq:signed-nodal-edges}. Refine the resulting finite partition of $\mathcal S_n$ by a cylindrical cell decomposition in the coordinate order $d,p,\lambda$. Since the $(d,p)$-spectral fibres are finite, no cell in $\mathcal S_n$ contains an interval in the $\lambda$-direction; every cell is the graph of a definable function of $(d,p)$. Restricting these cells and their Hardt trivialisations to fixed $d$ gives \eqref{eq:eigenset-trivialization} after removing the finitely many projected cell boundaries. The number of boundaries and graph pieces is bounded by the fixed global decomposition, uniformly in $d$.

For the quotient, take the definable image of $\mathcal Z_n$ under $(d,p,x,\lambda)\mapsto(d,p,xx^{\mathsf T},\lambda)$ and repeat the argument, compatibly with the images of the unions of the two antipodal sign strata indexed by $\eta$ and $-\eta$. A common refinement gives one $P_d$. The global Hardt partitions have only finitely many cells, so only finitely many fibre homeomorphism types occur. Taking the maximum of these finitely many bounds defines $T_n$.
\end{proof}

\begin{corollary}[Topology for definably varying graph data]
\label{cor:definable-topology}
Let $I\subset(1,\infty)$ be an interval, and suppose that
$$
 p\mapsto d(p)=(a(p),\varepsilon(p),b(p),\nu(p))
$$
is definable in $\R_{\exp}$, with admissible coefficients for every $p\in I$. There is a finite set $P\subset I$ such that, on every component $J$ of $I\setminus P$,
$$
 \sigma_p(d(p))=\{\lambda_1(p)<\cdots<\lambda_r(p)\}
$$
with continuous definable branches. For every $k$, both families
$$
 \{(p,x):p\in J,\ x\in E(d(p),p,\lambda_k(p))\}
$$
and their projective quotients are definably trivial over $J$. The first trivialisation may be chosen compatible with all sign strata and all partitions into strong nodal domains.
\end{corollary}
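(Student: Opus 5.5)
The plan is to pull the single curve $p\mapsto d(p)$ back through the global construction of Theorem~\ref{thm:eigenset-hardt}. That theorem was stated for a fixed datum, so I will rerun its argument over the one-dimensional base $I$ rather than quote it.

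First, I form the pulled-back sets
$$
 \mathcal Z^I=\{(p,x,\lambda):p\in I,\ (d(p),p,x,\lambda)\in\mathcal Z_n\},\qquad
 \mathcal S^I=\{(p,\lambda):\exists x,\ (p,x,\lambda)\in\mathcal Z^I\}.
$$
These are definable in $\R_{\exp}$, because $p\mapsto d(p)$ is definable and $\mathcal Z_n$ is definable. Each vertical fibre $\mathcal S^I_p=\sigma_p(d(p))$ is finite by Theorem~\ref{thm:intro-main}\ref{part:intro-finite}. Proposition~\ref{prop:ominimal-facts}(iii) then gives a finite set $P_1\subset I$ such that, on each component of $I\setminus P_1$, the set $\mathcal S^I$ is a disjoint union of ordered graphs of continuous definable functions $\lambda_1<\cdots<\lambda_r$. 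This is the argument of Corollary~\ref{cor:definable-data}.

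Second, I apply compatible Hardt triviality, Proposition~\ref{prop:ominimal-facts}(iv), to the continuous projection $\rho:\mathcal Z^I\to\mathcal S^I$, $(p,x,\lambda)\mapsto(p,\lambda)$. The prescribed subsets are the sign strata $\{\sgn x_i=\eta_i\}$ and the sets specifying each partition into strong nodal domains via \eqref{eq:signed-nodal-edges}. There are finitely many such sets, each definable uniformly in $p$ because the edge conditions $a_{ij}(p)>0$, $\eta_i\varepsilon_{ij}(p)\eta_j>0$ are definable in $p$. The theorem yields a finite definable partition of $\mathcal S^I$ into pieces over which $\rho$ is definably trivial, compatibly with the prescribed subsets.

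The main point is to convert these trivial pieces into trivialisations over whole intervals $J$. I refine the Hardt partition by a cell decomposition of $\mathcal S^I\subset I\times\R$. Since the fibres in $\lambda$ are finite, each cell is either a point or the graph of a continuous function over an open interval. I then add to $P_1$ the finitely many $p$-coordinates of the point cells and the endpoints of the projected interval cells, obtaining $P$. On a component $J$ of $I\setminus P$, each branch graph $\{(p,\lambda_k(p)):p\in J\}$ is a single cell inside one Hardt piece. Restricting that trivialisation and identifying the graph with $J$ through $p$ gives the required homeomorphism
$$
 J\times E(d(p_0),p_0,\lambda_k(p_0))\to\{(p,x):p\in J,\ x\in E(d(p),p,\lambda_k(p))\},
$$
over $J$ and compatible with the strata. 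I expect this bookkeeping to be the only delicate step: one has to check that restricting a trivialisation over a piece $Y_\alpha$ to a sub-cell stays a trivialisation, which it does since it is fibrewise over $Y_\alpha$.

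Finally, for the projective quotients I repeat the argument for the definable image of $\mathcal Z^I$ under $(p,x,\lambda)\mapsto(p,xx^{\mathsf T},\lambda)$. Here compatibility is only required with the images of the unions of antipodal strata, and I take the common refinement of the two finite exceptional sets.
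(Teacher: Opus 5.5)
Your proposal is correct and follows essentially the same route as the paper. Both pull back $\rho$ and its projective image under $x\mapsto xx^{\mathsf T}$ along the definable curve $p\mapsto(d(p),p)$, apply compatible Hardt triviality with the sign-stratum and strong-nodal-partition subsets, and finish with a one-dimensional cell refinement; you simply spell out the bookkeeping (restricting the Hardt trivialisations to the branch graphs over each component $J$) that the paper's two-line proof leaves implicit.
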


\begin{proof}
Pull back the map $\rho$ and its projective image along the definable curve $p\mapsto(d(p),p)$. Compatible Hardt triviality, followed by a one-dimensional cell refinement, gives the assertion.
\end{proof}

\begin{remark}[Antipodes, weak nodal domains, and switching]
The realisation $[x]\mapsto xx^{\mathsf T}\in\mathcal P_n$ yields the projective trivialisation. Only finitely many patterns of weak nodal domains, in the sense of \cite[Definition~2.8]{GeLiuZhang2023}, can occur. Applying Hardt triviality compatibly with the definable set on which each pattern occurs gives the same conclusion for these patterns. If $\tau_i\in\{-1,1\}$, replace $\varepsilon_{ij}$ by $\tau_i\varepsilon_{ij}\tau_j$ and $x_i$ by $\tau_i x_i$. The corresponding families of eigenvector sets are definably homeomorphic, while \eqref{eq:signed-nodal-edges} is unchanged. Thus the topological trivialisation is switching invariant on each interval of the decomposition.
\end{remark}

\subsection{Global definable stratification}\label{sec:global-stratification}

The preceding one-parameter decompositions are restrictions of a finite definable partition of the full parameter space.
For $\theta=(d,p)\in\Theta_n$, let $E(\theta,\lambda)=E(d,p,\lambda)$.

\begin{theorem}[Global definable stratification]
\label{thm:global-stratification}
For every $n\geq1$ there is a finite definable cell partition
$
 \Theta_n=\bigsqcup_\alpha C_\alpha
$
such that, for every $\alpha$, there are continuous definable functions
$$
 \lambda_{\alpha,1}<\cdots<\lambda_{\alpha,r_\alpha}
 \quad\hbox{on }C_\alpha
$$
with
$$
 \sigma_p(d)=
 \{\lambda_{\alpha,1}(d,p),\ldots,\lambda_{\alpha,r_\alpha}(d,p)\}
 \qquad ((d,p)\in C_\alpha).
$$
After a finite refinement, for every $k$ and every base point $\theta_\alpha\in C_\alpha$ there is a definable homeomorphism over $C_\alpha$,
$$
 \mathcal T_{\alpha,k}:
 C_\alpha\times\mathbb S^{n-1}
 \rightarrow C_\alpha\times\mathbb S^{n-1},
$$
which carries the family
$
 \{(\theta,x):x\in
 E(\theta,\lambda_{\alpha,k}(\theta))\}
$
onto
$
 C_\alpha\times
 E(\theta_\alpha,\lambda_{\alpha,k}(\theta_\alpha)).
$
The homeomorphism may be chosen compatible with any prescribed finite family of definable subsets, including all sign strata and all partitions into strong nodal domains. The analogous statement holds under the projector realisation $[x]\mapsto xx^{\mathsf T}\in\mathcal P_n$ of $\mathbb{R}P^{n-1}$.
\end{theorem}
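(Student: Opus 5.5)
The plan is to lift the one-parameter argument of Theorem~\ref{thm:eigenset-hardt} to the whole parameter space $\Theta_n=D_n\times(1,\infty)$, using the definable sets $\mathcal Z_n\subset\Theta_n\times\Sph^{n-1}\times\R$ and $\mathcal S_n\subset\Theta_n\times\R$ from \eqref{eq:total-eigenpairs}--\eqref{eq:total-spectrum}.

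\emph{Step 1: the spectral branches.} Take a cylindrical cell decomposition of $\Theta_n\times\R$, in the coordinate order $(\theta,\lambda)$, compatible with $\mathcal S_n$. Each fibre $(\mathcal S_n)_\theta=\sigma_p(d)$ is finite by Theorem~\ref{thm:intro-main}\ref{part:intro-finite}, so no cell contained in $\mathcal S_n$ has an interval in the $\lambda$-direction. Every such cell is therefore the graph of a continuous definable function over a base cell $C_\alpha$. Over a fixed $C_\alpha$, the graphs in a cylindrical decomposition are pairwise disjoint and ordered by $\lambda$. Listing those contained in $\mathcal S_n$ gives $\lambda_{\alpha,1}<\cdots<\lambda_{\alpha,r_\alpha}$ with $\sigma_p(d)=\{\lambda_{\alpha,k}(\theta)\}$ for every $\theta\in C_\alpha$.

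\emph{Step 2: trivialising the eigenvector sets.} Fix $\alpha$ and $k$, and let $\Gamma_{\alpha,k}$ be the graph of $\lambda_{\alpha,k}$. The set
$$
 Y_{\alpha,k}=\{(\theta,x):x\in E(\theta,\lambda_{\alpha,k}(\theta))\}\subset C_\alpha\times\Sph^{n-1}
$$
is definable, since it is the image of $\mathcal Z_n\cap(\Gamma_{\alpha,k}\times\Sph^{n-1})$ under forgetting $\lambda$. Apply Proposition~\ref{prop:ominimal-facts}(iv) to the projection $C_\alpha\times\Sph^{n-1}\to C_\alpha$, which is continuous and definable. Require compatibility with $Y_{\alpha,k}$, with $Y_{\alpha,k}\cap(C_\alpha\times A)$ for each prescribed definable subset $A$ (the sign strata and the strong nodal partition sets from \eqref{eq:signed-nodal-edges}), and with any other prescribed finite family of definable subsets. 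This yields a finite definable partition of $C_\alpha$, and over each piece a definable homeomorphism onto the product with the fibre over a base point, carrying the subsets along.

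\emph{Step 3: refinement.} Take a common refinement of these partitions over all $\alpha,k$, and refine again into cells. This is finite because there are finitely many $\alpha$ and each $r_\alpha$ is bounded. The branch functions restrict to the new cells. The Hardt homeomorphism, restricted to a subcell, stays a trivialisation over that subcell, but the base point must lie in the subcell. To handle this, compose the restricted trivialisation with the fibre homeomorphism between the old base fibre and the fibre over the new base point $\theta_\alpha$; that homeomorphism is itself supplied by the trivialisation. The result is the map $\mathcal T_{\alpha,k}$ of the statement.

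\emph{Projective case.} Repeat Steps 2 and 3 with $\Sph^{n-1}$ replaced by $\mathcal P_n$ and $Y_{\alpha,k}$ replaced by its image under $x\mapsto xx^{\mathsf T}$, compatibly with the images of the unions of the $\pm\eta$ sign strata. Then take one more common refinement.

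\emph{Main obstacle.} The one point needing care is the form of the trivialisation. Proposition~\ref{prop:ominimal-facts}(iv) gives a homeomorphism $f^{-1}(Y)\to Y\times f^{-1}(y)$ from the total space of the projection, namely all of $C_\alpha\times\Sph^{n-1}$. So it trivialises the ambient product while carrying $Y_{\alpha,k}$ onto $C_\alpha\times E(\theta_\alpha,\lambda_{\alpha,k}(\theta_\alpha))$, which is exactly the required ambient self-homeomorphism over $C_\alpha$. It remains to check that this map is fibre-preserving, i.e.\ a map over $C_\alpha$; that holds by construction of Hardt triviality.
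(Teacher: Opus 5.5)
Your proposal is correct and follows essentially the same route as the paper: a cylindrical decomposition of the total spectral set in the order $(\theta,\lambda)$ gives the ordered branches, and compatible Hardt triviality for the product projection $C_\alpha\times\mathbb S^{n-1}\to C_\alpha$ then trivialises the whole ambient product together with the branch eigenvector families and the prescribed subsets, followed by a common refinement and the same construction on the projector image. The only differences are cosmetic. The paper trivialises all branch families $A_{\alpha,k}$ simultaneously with one Hardt homeomorphism per base cell, whereas you treat each branch separately, which the statement permits. You also spell out the re-basing at an arbitrary $\theta_\alpha$ in a subcell, which the paper leaves implicit.
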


\begin{proof}
Apply cylindrical cell decomposition to the total spectral set in the coordinate order $(d,p),\lambda$. After refining the induced partition of $\Theta_n$, each spectral cell is the graph of a continuous definable function over an entire base cell.

Fix a resulting base cell $C_\alpha$. For each spectral branch, set
$$
 A_{\alpha,k}
 =\{(\theta,x)\in C_\alpha\times\mathbb S^{n-1}:x\in E(\theta,\lambda_{\alpha,k}(\theta))\}.
$$
These are definable subsets of the ambient product. Apply compatible definable Hardt triviality to the product projection
$
 \pi_\alpha:C_\alpha\times\mathbb S^{n-1}\rightarrow C_\alpha,
$
with the finite family consisting of all $A_{\alpha,k}$, all sign strata, and all subsets encoding the partitions into strong nodal domains. After a finite refinement of $C_\alpha$, the Hardt homeomorphism trivialises the whole fibre $\mathbb S^{n-1}$ and every one of these subsets simultaneously. It is therefore an ambient homeomorphism of pairs of the form asserted in the theorem. Taking the common refinement over the finitely many base cells proves the sphere statement.

For the projective assertion, apply the same construction to the definable image under $x\mapsto xx^{\mathsf T}$, compatibly with the images of the unions of antipodal sign strata indexed by $\eta$ and $-\eta$. This gives the claimed definable topological trivialisation under the projector realisation.
\end{proof}

\section{Endpoint limits and Clarke critical values}
\label{app:endpoints}

We determine the endpoint limits of the full finite-$p$ spectra and eigenpair sets, and then prove finiteness of the Clarke critical-value sets of the limiting quotients restricted to the sphere.
\begin{corollary}[Endpoint Hausdorff limits of the full spectra]
\label{cor:endpoint-Hausdorff}
Fix graph data $d=(a,\varepsilon,b,\nu)$ on $n$ vertices. Set
$$
 \beta=\min_i\frac{b_i}{\nu_i},
 \qquad
 \widetilde b_i=b_i-\beta\nu_i\geq0,
 \qquad
 \delta_i=\sum_{j\ne i}a_{ij},
$$
and define the set of shifted $p$-th roots of the spectral values by
$$
 \widehat\sigma_p(d)
 =
 \{(\lambda-\beta)^{1/p}:\lambda\in\sigma_p(d)\}.
$$
There are non-empty finite sets
$$
 \Lambda_1(d)\subset\R,\qquad
 \Lambda_\infty(d)\subset[0,2]
$$
such that, for the Hausdorff distance between non-empty compact subsets of $\R$,
$$
 d_H\bigl(\sigma_p(d),\Lambda_1(d)\bigr)\rightarrow0
 \qquad(p\downarrow1),
$$
and
$$
 d_H\bigl(\widehat\sigma_p(d),\Lambda_\infty(d)\bigr)
 \rightarrow0
 \qquad(p\to\infty).
$$
Moreover,
$$
 \#\Lambda_1(d),\ \#\Lambda_\infty(d)
 \leq\widehat{\mathfrak B}_n.
$$

For $x\ne0$, define
$$
 Q_{1,d}(x)=
 \frac{\displaystyle
 \sum_{i<j}a_{ij}|x_i-\varepsilon_{ij}x_j|
 +\sum_i b_i|x_i|}
 {\displaystyle\sum_i\nu_i|x_i|}
$$
and
$$
 R_{\infty,d}(x)=
 \frac{
 \max\!\left(
 \{\,|x_i-\varepsilon_{ij}x_j|:a_{ij}>0\,\}
 \cup
 \{\,|x_i|:\widetilde b_i>0\,\}
 \cup\{0\}\right)}
 {\max_i|x_i|}.
$$
Then
$$
 \min\Lambda_1(d)=\min_{x\ne0}Q_{1,d}(x),
 \qquad
 \max\Lambda_1(d)=\max_{x\ne0}Q_{1,d}(x),
$$
and
$$
 \min\Lambda_\infty(d)=\min_{x\ne0}R_{\infty,d}(x),
 \qquad
 \max\Lambda_\infty(d)=\max_{x\ne0}R_{\infty,d}(x).
$$
In particular,
$$
 \Lambda_1(d)\subseteq
 \left[
 \beta,\
 \beta+\max_i\frac{\delta_i+\widetilde b_i}{\nu_i}
 \right],
$$
while
$$
 \max\Lambda_\infty(d)=
 \begin{cases}
 2,&a_{ij}>0\text{ for some }i<j,\\
 1,&a\equiv0\text{ and }\widetilde b\not\equiv0,\\
 0,&a\equiv0\text{ and }\widetilde b\equiv0.
 \end{cases}
$$
\end{corollary}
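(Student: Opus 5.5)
The plan is to treat both endpoints through the one-parameter branch decomposition for fixed data $d$, and then identify the extreme limit values by uniform convergence of Rayleigh quotients. By Proposition~\ref{prop:potential-shift} we may write $\sigma_p(d)=\beta+\sigma_p(\widetilde d)$ with $\widetilde b\geq0$.

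First, a $p$-uniform a priori bound. From $|x_i-\varepsilon_{ij}x_j|^p\leq2^{p-1}(|x_i|^p+|x_j|^p)$ we get
$$
 \beta\leq\lambda\leq\beta+\max_i\frac{2^{p-1}\delta_i+\widetilde b_i}{\nu_i}
$$
for every $\lambda\in\sigma_p(d)$. This is bounded as $p\downarrow1$ and tends to the stated interval. For the shifted roots it gives $0\leq(\lambda-\beta)^{1/p}\leq2\,(\max_i(\delta_i+\widetilde b_i)/\nu_i)^{1/p}$, whose $\limsup$ is at most $2$.

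Next, existence of the limits. Apply Theorem~\ref{thm:intro-branches}, that is Proposition~\ref{prop:ominimal-facts}(iii), to the definable set $\mathcal S(d)$. For $p\downarrow1$, apply it to the image of $\mathcal S(d)$ under $(p,\lambda)\mapsto(1/p,(\lambda-\beta)^{1/p})$, which is definable in $\R_{\exp}$, near $1/p=0$. On an interval $(1,p_-)$, respectively $(p_+,\infty)$, the sets are unions of graphs of $r\leq\widehat{\mathfrak B}_n$ continuous definable functions $f_1<\cdots<f_r$, with $r\geq1$ because spectra are non-empty. By the monotonicity theorem each $f_k$ is eventually monotone, and it is bounded by the first step, so it has a finite limit. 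Let $\Lambda_1(d)$ and $\Lambda_\infty(d)$ be the sets of these limits. Hausdorff convergence is then immediate, since each fibre is exactly $\{f_1,\ldots,f_r\}$ and every $f_k$ converges. The cardinality is at most $r\leq\widehat{\mathfrak B}_n$ by Theorem~\ref{thm:intro-main}, and $\Lambda_\infty(d)\subset[0,2]$ by the first step.

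Then, the extreme values. The minimum and maximum of the quotient $\cQ_p$ on the sphere are critical values, hence $\min\sigma_p(d)=\min\cQ_p$ and $\max\sigma_p(d)=\max\cQ_p$. By Hausdorff convergence, $\min\Lambda_1=\lim\min_{\Sph^{n-1}}\cQ_p$, and similarly for the maximum. Since $|t|^p\to|t|$ uniformly on bounded sets and $\sum_i\nu_i|x_i|^p$ is bounded below on the sphere uniformly for $p$ near $1$, we get $\cQ_p\to Q_{1,d}$ uniformly on $\Sph^{n-1}$, and min and max pass to the limit. At infinity, write
$$
 \bigl(\cQ_p-\beta\bigr)^{1/p}
 =\frac{\|(a_{ij}^{1/p}|x_i-\varepsilon_{ij}x_j|)_{a_{ij}>0},\,(\widetilde b_i^{1/p}|x_i|)_{\widetilde b_i>0}\|_{\ell^p}}{\|(\nu_i^{1/p}x_i)\|_{\ell^p}}.
$$
Since fixed positive weights satisfy $w^{1/p}\to1$, and $\|v\|_{\ell^\infty}\leq\|v\|_{\ell^p}\leq N^{1/p}\|v\|_{\ell^\infty}$ for a vector $v$ of length $N$, this converges to $R_{\infty,d}$ uniformly on the sphere. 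An empty numerator family gives $0$, matching the convention in $R_{\infty,d}$. The $p$-th root is increasing, so it commutes with min and max.

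Finally, the value of $\max R_{\infty,d}$. If some $a_{ij}>0$, the triangle inequality gives $R_{\infty,d}\leq2$, and $x_i=1$, $x_j=-\varepsilon_{ij}$ attains $2$. Otherwise $R_{\infty,d}\leq1$, with equality at a coordinate vector $e_i$ having $\widetilde b_i>0$, or $R_{\infty,d}\equiv0$ when $\widetilde b\equiv0$. The main obstacle is the uniformity of the $\ell^p\to\ell^\infty$ convergence at infinity. The numerator and denominator each converge uniformly, and the denominator is bounded below by $\min_i\nu_i^{1/p}\max_i|x_i|\geq c>0$ on the sphere for large $p$, which gives uniform convergence of the quotient.
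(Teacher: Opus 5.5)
Your proposal is correct and follows essentially the same route as the paper: the $2^{p-1}$ a priori bound, definable bounded branches near each endpoint via Proposition~\ref{prop:ominimal-facts}(iii) (with ordering preserved by the strictly increasing map $\lambda\mapsto(\lambda-\beta)^{1/p}$), and identification of the extreme limit values through uniform convergence of $\cQ_p$ to $Q_{1,d}$ and of $\widetilde Q_p^{1/p}$ to $R_{\infty,d}$ on the sphere. Two small remarks: the sentence introducing the map $(p,\lambda)\mapsto(1/p,(\lambda-\beta)^{1/p})$ should say ``for $p\to\infty$'' rather than ``for $p\downarrow1$'', and your explicit treatment of the cases $a\equiv0$ in the formula for $\max\Lambda_\infty(d)$ is slightly more complete than the paper's, which argues only the positive-edge case.
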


\begin{proof}
For $p$ sufficiently close to $1$, the branch theorem and Corollary~\ref{cor:eventual-count} represent the spectrum by a fixed number of ordered continuous definable functions
$$
 \lambda_1(p)<\cdots<\lambda_r(p).
$$
For every eigenvalue, the Rayleigh quotient and $|u-v|^p\leq2^{p-1}(|u|^p+|v|^p)$ give
\begin{equation}\label{eq:endpoint-rayleigh-bound}
 \beta\leq\lambda_j(p)
 \leq
 \beta+\max_i
 \frac{2^{p-1}\delta_i+\widetilde b_i}{\nu_i}.
\end{equation}
Each branch is bounded and definable as $p\downarrow1$, and therefore has a finite one-sided limit. Taking the set of these branch limits gives $\Lambda_1(d)$ and Hausdorff convergence. Collisions at the endpoint can only reduce cardinality, so $\#\Lambda_1(d)\leq r\leq\widehat{\mathfrak B}_n$.

For $p$ sufficiently large, invoke the branch theorem separately and let
$$
 \sigma_p(d)=
 \{\lambda^\infty_1(p)<\cdots<\lambda^\infty_s(p)\}.
$$
For these branches, define
$
 \rho^\infty_j(p)=(\lambda^\infty_j(p)-\beta)^{1/p}.
$
These functions are definable and, by \eqref{eq:endpoint-rayleigh-bound}, satisfy
$$
 0\leq\rho^\infty_j(p)
 \leq
 \left(
 \max_i\frac{2^{p-1}\delta_i+\widetilde b_i}{\nu_i}
 \right)^{1/p}.
$$
They are bounded and hence have finite limits as $p\to\infty$. Their limit set is $\Lambda_\infty(d)$, and the same branch argument proves Hausdorff convergence and the cardinality bound.

It remains to identify the extreme points. On $\Sph^{n-1}$ one has uniform convergence
$$
 \cQ_{p,a,\varepsilon,b,\nu}\rightarrow Q_{1,d}
 \qquad(p\downarrow1).
$$
The minimum and maximum of the finite-$p$ Rayleigh quotient are eigenvalues, so uniform convergence gives the two assertions concerning $\Lambda_1(d)$.

After the spectral shift, set
\begin{equation}\label{eq:shifted-rayleigh}
 \widetilde Q_p(x)=
 \frac{\displaystyle
 \sum_{i<j}a_{ij}|x_i-\varepsilon_{ij}x_j|^p
 +\sum_i\widetilde b_i|x_i|^p}
 {\displaystyle\sum_i\nu_i|x_i|^p}.
\end{equation}
The elementary convergence of finite weighted $p$-norms to the maximum norm is uniform on $\Sph^{n-1}$ and gives
$$
 \widetilde Q_p(x)^{1/p}\rightarrow R_{\infty,d}(x)
 \qquad(p\to\infty)
$$
uniformly in $x$. Since the extreme values of $\widetilde Q_p$ are spectral values, this proves the corresponding assertions for $\Lambda_\infty(d)$. Moreover, $|x_i-\varepsilon_{ij}x_j|\leq2\max_h|x_h|$, and equality can be realised on any positive-weight edge by taking $x_i=1$ and $x_j=-\varepsilon_{ij}$. This proves the displayed formula for $\max\Lambda_\infty(d)$.
\end{proof}

\begin{remark}
Corollary~\ref{cor:endpoint-Hausdorff} describes the Hausdorff limits of the entire finite-$p$ spectrum. Comparison with a set-valued endpoint spectrum depends on the chosen nonsmooth eigenproblem. Relevant frameworks include the graph $1$-Laplacian of \cite{Chang2016}, the $p\downarrow1$ limits of first Dirichlet eigenfunctions in \cite{GeHuaJiang2021}, and the graph $\infty$-eigenpairs of \cite{DeiddaBurgerPuttiTudisco2026}.
\end{remark}

To pass from spectral values to full eigenpair sets, we use a compactness principle for definable families.

\begin{lemma}[Definable Hausdorff limits]
\label{lem:definable-Hausdorff}
Let $(K,d)$ be a compact definable metric space with definable metric, let $I=(a,b)$, where the endpoints may be extended real numbers, and let $X\subset I\times K$ be definable. If every fibre $X_t$ is non-empty and compact, then there are non-empty compact definable sets $X_{a+},X_{b-}\subset K$ such that
$$
 X_t\rightarrow X_{a+}\quad(t\downarrow a),\qquad
 X_t\rightarrow X_{b-}\quad(t\uparrow b)
$$
in Hausdorff distance.
\end{lemma}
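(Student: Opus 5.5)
The plan is to reduce to the one-dimensional structure of definable families and read off the limit directly. I treat the endpoint $a$; the endpoint $b$ follows by the reflection $t\mapsto -t$. If $a=-\infty$, I first compose with a definable homeomorphism of $I$ onto a bounded interval, which does not affect Hausdorff convergence as $t\downarrow a$.

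\textbf{Candidate limit.} Take the closure $\overline X\subset\overline I\times K$ and define
$$
 X_{a+}=\{y\in K:(a,y)\in\overline X\},
$$
the fibre of the closure over the endpoint. This set is definable because closures and fibres of definable sets are definable. It is compact, being a closed subset of the compact space $K$. It is non-empty because, for $t_j\downarrow a$, the points $y_j\in X_{t_j}$ have a convergent subsequence, and its limit lies in the fibre. Equivalently, $X_{a+}$ is the Kuratowski lower limit along every sequence $t_j\downarrow a$. The task is to upgrade this to Hausdorff convergence, which requires two one-sided statements.

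\textbf{Step 1: limit points of $X_t$ lie in $X_{a+}$.} I want $\sup_{y\in X_t}d(y,X_{a+})\to0$. If this fails, there are $\epsilon>0$, $t_j\downarrow a$ and $y_j\in X_{t_j}$ with $d(y_j,X_{a+})\geq\epsilon$. A subsequential limit of the $y_j$ lies in $X_{a+}$ by the definition of $X_{a+}$, which contradicts the distance bound. This step does not use definability.

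\textbf{Step 2: every point of $X_{a+}$ is approximated from every $X_t$.} This is the main obstacle, since for non-definable families the upper and lower limits can differ. I would control it through the definable function
$$
 g(t)=\sup_{y\in X_{a+}}d(y,X_t).
$$
This function is definable because $d$ is definable and the supremum and infimum are first-order. It is bounded by $\operatorname{diam}K$. By the monotonicity theorem, $g$ is monotone near $a$, so $\lim_{t\downarrow a}g(t)=L$ exists.

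Suppose $L>0$. For each $t$ near $a$, definable choice selects $y(t)\in X_{a+}$ with $d(y(t),X_t)\geq L/2$. The curve $y(t)$ is definable in the compact set $K$, so by monotonicity in coordinates (or the curve selection lemma) it has a limit $y^*=\lim_{t\downarrow a}y(t)$ in $X_{a+}$. Since $y^*\in X_{a+}$, there are $t_j\downarrow a$ and $z_j\in X_{t_j}$ with $z_j\to y^*$. Then
$$
 d(y(t_j),X_{t_j})\leq d(y(t_j),y^*)+d(y^*,z_j)\to0,
$$
which contradicts the lower bound $L/2$. Hence $L=0$.

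The delicate point is exactly here: the definable curve $y(t)$ converges along the whole one-sided limit rather than only along a subsequence, and this is what lets us evaluate it on the sequence $t_j$ supplied by $y^*$. If this point needs care, I would instead argue that the function $t\mapsto d(y^*,X_t)$ is definable, hence has a limit, and that this limit must be $0$ because it is $0$ along the subsequence $t_j$. Combining Steps 1 and 2 gives Hausdorff convergence.
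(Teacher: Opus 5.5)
Your argument is correct, but it takes a different route from the paper's. The paper never names the limit set in advance. It works with the distance functions $d_t=\dist(\cdot,X_t)$ and uses one-variable monotonicity at each fixed $z\in K$ to get a pointwise limit $d_*$. Since the family is uniformly $1$-Lipschitz on the compact $K$, this upgrades to uniform convergence. The paper then recovers the limit set from $d_*$ via compactness of the hyperspace of compact subsets and the isometric embedding $A\mapsto d(\cdot,A)$; definability comes from definability of $d_*$.

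You instead identify the limit explicitly as $X_{a+}=\overline X\cap(\{a\}\times K)$, the Kuratowski upper limit. Definability, compactness and non-emptiness are then immediate, and Step~1 is pure compactness. All the o-minimal input sits in Step~2, which you obtain from monotonicity of $g$, definable choice, and existence of limits of bounded definable curves.

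What each route buys: yours gives a concrete description of the limit and avoids hyperspace compactness. The paper's avoids definable choice and curves in $K$, uses only monotonicity of real functions of $t$, and stays intrinsic to the metric.

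Your coordinatewise-limit step tacitly needs $K\subset\R^N$ with $d$ inducing the Euclidean topology on $K$; this holds in the paper's applications. Your fallback argument removes that assumption. The function $t\mapsto d(y^*,X_t)$ is definable, so it converges, and its limit is $0$ along the sequence witnessing $y^*\in X_{a+}$. Hence any subsequential limit of the $y(t_j)$ already yields the contradiction. That fallback is essentially the paper's pointwise-limit idea.

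One expository correction: by definition $X_{a+}$ is the Kuratowski \emph{upper} limit. The claim that it is also the lower limit along every sequence $t_j\downarrow a$ is not available when you state it; it is exactly what Steps~1 and~2 prove.
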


\begin{proof}
For $z\in K$ set $d_t(z)=\dist(z,X_t)$. The function $(t,z)\mapsto d_t(z)$ is definable, and the family is uniformly $1$-Lipschitz. For each fixed $z$, the function $t\mapsto d_t(z)$ is bounded and definable, and therefore has a unique one-sided limit $d_*(z)$. The limit is definable and $1$-Lipschitz. If the convergence were not uniform, there would be $t_j\downarrow a$ and $z_j\to z$ for which $|d_{t_j}(z_j)-d_*(z_j)|$ stays bounded away from zero. The $1$-Lipschitz bounds and pointwise convergence at $z$ give a contradiction. Thus $d_t\to d_*$ uniformly.

The hyperspace of non-empty compact subsets of $K$ is compact, and $A\mapsto d(\,\cdot\,,A)$ is an isometric embedding for the Hausdorff metric. The uniform limit is therefore the distance function of a unique non-empty compact set. That set is definable from the definable limit function. The proof at the other endpoint is the same.
\end{proof}

\begin{theorem}[Hausdorff limits of the full eigenpair sets]
\label{thm:eigenpair-endpoints}
Fix graph data $d=(a,\varepsilon,b,\nu)$ and set
$$
 \beta=\min_i\frac{b_i}{\nu_i},\qquad
 \widetilde b_i=b_i-\beta\nu_i,\qquad
 \delta_i=\sum_{j\ne i}a_{ij}.
$$
Let $\widetilde Q_p$ be the shifted quotient in \eqref{eq:shifted-rayleigh}. Define
$$
 \mathcal E_p(d)=
 \{(x,\lambda)\in\mathbb S^{n-1}\times\mathbb R:(x,\lambda)\text{ is a }p\text{-eigenpair}\}
$$
and
$$
 \widehat{\mathcal E}_p(d)=
 \{(x,\rho):(x,\beta+\rho^p)\in\mathcal E_p(d),\ \rho\geq0\}.
$$
There are non-empty compact definable sets
$$
 \mathcal E^{\rm lim}_1(d)
 \subset\mathbb S^{n-1}\times\mathbb R,\qquad
 \mathcal E^{\rm lim}_\infty(d)
 \subset\mathbb S^{n-1}\times[0,2]
$$
such that
$$
 \mathcal E_p(d)\rightarrow\mathcal E^{\rm lim}_1(d)
 \quad(p\downarrow1),\qquad
 \widehat{\mathcal E}_p(d)\rightarrow
 \mathcal E^{\rm lim}_\infty(d)\quad(p\to\infty)
$$
in Hausdorff distance. The same assertions hold after projectivising the eigenvectors. Projection to the value coordinate gives the spectral limit sets in Corollary~\ref{cor:endpoint-Hausdorff}.
\end{theorem}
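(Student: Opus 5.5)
The plan is to apply Lemma~\ref{lem:definable-Hausdorff} to suitable definable families over a one-sided interval in $p$. The target space must be compact, so the first task is a uniform bound on the value coordinate.

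\textbf{Step 1: compactness near $p=1$.} Fix a bounded interval $I=(1,2)$. By \eqref{eq:endpoint-rayleigh-bound} every $p$-eigenvalue with $p\in I$ lies in the fixed compact interval
\[
K_\lambda=\bigl[\beta,\ \beta+\max_i(2\delta_i+\widetilde b_i)/\nu_i\bigr].
\]
Consider the family
\[
X=\{(p,x,\lambda):p\in I,\ (x,\lambda)\in\mathcal E_p(d)\}\subset I\times(\mathbb S^{n-1}\times K_\lambda).
\]
It is the fibre over $d$ of the definable set $\mathcal Z_n$ intersected with $p\in I$, so it is definable in $\R_{\exp}$. Each fibre $\mathcal E_p(d)$ is non-empty, because the minimum of the Rayleigh quotient is attained. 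It is also closed, since the eigen-equation is continuous in $(x,\lambda)$, and hence compact. With the Euclidean metric on $K=\mathbb S^{n-1}\times K_\lambda$, which is a compact definable metric space, Lemma~\ref{lem:definable-Hausdorff} gives the Hausdorff limit $\mathcal E^{\rm lim}_1(d)$ as $p\downarrow1$, and this limit is compact and definable.

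\textbf{Step 2: compactness as $p\to\infty$.} Take $I=(2,\infty)$ and the reparametrised family $\widehat{\mathcal E}_p(d)$. For an eigenpair, $\rho^p=\lambda-\beta=\widetilde Q_p(x)$. The bound above gives
\[
\rho\leq\Bigl(\max_i\frac{2^{p-1}\delta_i+\widetilde b_i}{\nu_i}\Bigr)^{1/p},
\]
and this is bounded uniformly for $p\geq2$ by some constant $M$. So every fibre lies in $\mathbb S^{n-1}\times[0,M]$. The map $(p,\lambda)\mapsto(\lambda-\beta)^{1/p}$ is definable via \eqref{eq:pow-def}, so the family is definable. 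Its fibres are non-empty and compact, being continuous bijective images of the compact $\mathcal E_p(d)$. Lemma~\ref{lem:definable-Hausdorff} then gives $\mathcal E^{\rm lim}_\infty(d)$.

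To see that this limit lies in $\mathbb S^{n-1}\times[0,2]$, project to the $\rho$ coordinate. Projection is $1$-Lipschitz for the Hausdorff distance, so the image of the limit is the limit of the images $\widehat\sigma_p(d)$. By Corollary~\ref{cor:endpoint-Hausdorff} that limit is $\Lambda_\infty(d)\subset[0,2]$. The same projection argument at $p\downarrow1$ shows that the value-coordinate images of the two limit sets are exactly $\Lambda_1(d)$ and $\Lambda_\infty(d)$.

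\textbf{Step 3: projective versions.} Compose with the continuous definable map $x\mapsto xx^{\mathsf T}$ into the compact set $\mathcal P_n$. Two routes work:
\begin{itemize}
\item apply Lemma~\ref{lem:definable-Hausdorff} again to the image families, which are definable with non-empty compact fibres;
\item or note that a uniformly continuous map on a compact space carries Hausdorff-convergent sequences to Hausdorff-convergent images.
\end{itemize}

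\textbf{Main obstacle.} The main obstacle is confirming that every hypothesis of Lemma~\ref{lem:definable-Hausdorff} holds, and in particular that the ambient $K$ is a single compact set serving for all $p$ in the interval. This is exactly why the uniform eigenvalue bound \eqref{eq:endpoint-rayleigh-bound} and the $\rho$-reparametrisation are needed: without the reparametrisation the $\lambda$ values grow like $2^{p}$ and escape every compact set. Definability of the $\rho$-family, which needs the two-variable power map, and non-emptiness of each fibre are the remaining points to check, and both are routine.
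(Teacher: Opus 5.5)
Your proposal is correct and follows the paper's proof: the uniform value bounds from \eqref{eq:endpoint-rayleigh-bound} place both families in fixed compact cylinders, Lemma~\ref{lem:definable-Hausdorff} supplies the limits, and continuity of projectivisation and of the value projection gives the remaining assertions. The one minor difference is how you get $\mathcal E^{\rm lim}_\infty(d)\subset\mathbb S^{n-1}\times[0,2]$: you project onto $\Lambda_\infty(d)$ using Corollary~\ref{cor:endpoint-Hausdorff}, whereas the paper uses the uniform convergence $\widetilde Q_p^{1/p}\to R_{\infty,d}$, which also shows that $\rho=R_{\infty,d}(x)$ at every limit point.
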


\begin{proof}
Every fibre $\mathcal E_p(d)$ is non-empty: the minimum and maximum of the $C^1$ Rayleigh quotient on $\mathbb S^{n-1}$ are eigenvalues. It is closed, and the value coordinate equals the Rayleigh quotient, so the fibre is compact.

Fix $p_0>1$. For $1<p\leq p_0$, \eqref{eq:endpoint-rayleigh-bound} gives
$$
 \beta\leq\lambda\leq
 \beta+\max_i
 \frac{2^{p_0-1}\delta_i+\widetilde b_i}{\nu_i}.
$$
The normalised eigenpair family is definable and lies in the resulting fixed compact cylinder. Lemma~\ref{lem:definable-Hausdorff} therefore gives the limit as $p\downarrow1$.

After the spectral shift, every transformed eigenpair satisfies
\begin{equation}\label{eq:root-eigenpair-identity}
 \rho=(\lambda-\beta)^{1/p}
 =\widetilde Q_p(x)^{1/p}.
\end{equation}
Set
$$
 C=\max\left\{1,\max_i\frac{\delta_i+\widetilde b_i}{\nu_i}\right\}.
$$
For $p\geq p_0$, \eqref{eq:endpoint-rayleigh-bound} implies
$$
 0\leq\rho\leq
 2^{1-1/p}C^{1/p}
 \leq2\max\{1,C^{1/p_0}\}.
$$
Thus the definable family $\widehat{\mathcal E}_p(d)$ also lies in a fixed compact cylinder, and the lemma gives a Hausdorff limit as $p\to\infty$. The uniform convergence $\widetilde Q_p^{\,1/p}\to R_{\infty,d}$, together with \eqref{eq:root-eigenpair-identity}, shows that every limiting point $(x,\rho)$ satisfies
$
 \rho=R_{\infty,d}(x)\in[0,2].
$
Hence $\mathcal E^{\rm lim}_\infty(d)\subset\mathbb S^{n-1}\times[0,2]$, as claimed.

Projectivisation of the sphere and projection to the value coordinate are continuous on the compact ambient cylinders. They therefore preserve Hausdorff convergence and give the remaining assertions.
\end{proof}

\begin{corollary}[Exact endpoint limits for the uniformly weighted $K_n$]
\label{cor:uniform-endpoint-limits}
Let $n\geq2$, and write $\mathbf1_S$ for the indicator of $S\subset[n]$. Let $\mathcal Z_p^{\rm proj}(K_n)$ be the set of all projective eigenpairs $([x],\lambda)$ of the uniformly weighted $K_n$, and set
$$
 \widehat{\mathcal Z}_p^{\rm proj}(K_n)
 =\{([x],\lambda^{1/p}):([x],\lambda)\in\mathcal Z_p^{\rm proj}(K_n)\}.
$$
Then, in the Hausdorff metric on the indicated compact projective cylinders,
$$
 \mathcal Z_p^{\rm proj}(K_n)
 \rightarrow\mathcal L_{1,n}
 \qquad(p\downarrow1),
$$
where, in each of the two families indexed by $(P,N)$ below, $(P,N)$ and $(N,P)$ determine the same element,
\begin{align*}
 \mathcal L_{1,n}
 ={}&\{([\one],0)\}\\
 &\cup\{([\mathbf1_S],n-|S|):1\leq|S|<n/2\}\\
 &\cup\{([\mathbf1_P-\mathbf1_N],n-|P|):P\cap N=\varnothing,\ |P|=|N|\geq1\},
\end{align*}
and
$$
 \widehat{\mathcal Z}_p^{\rm proj}(K_n)
 \rightarrow\mathcal L_{\infty,n}
 \qquad(p\to\infty),
$$
where
$$
 \mathcal L_{\infty,n}
 =\{([\one],0)\}
 \cup\{([\mathbf1_P-\mathbf1_N],2):P,N\ne\varnothing,\ P\cap N=\varnothing\}.
$$
Consequently the spectral limit sets in Corollary~\ref{cor:endpoint-Hausdorff} are
\begin{equation}\label{eq:uniform-endpoint-spectra}
 \Lambda_1(K_n)
 =\{0\}\cup\{n-j:1\leq j\leq\lfloor n/2\rfloor\},
 \quad
 \Lambda_\infty(K_n)=\{0,2\}.
\end{equation}
In particular,
$$
 \#\Lambda_1(K_n)=\lfloor n/2\rfloor+1,
 \qquad
 \#\Lambda_\infty(K_n)=2,
$$
while
\begin{align*}
 \#\mathcal L_{1,n}
 &=1+\sum_{1\leq j<n/2}\binom nj+\frac12\sum_{j=1}^{\lfloor n/2\rfloor}\binom nj\binom{n-j}{j},\\
 \#\mathcal L_{\infty,n}&=R_n.
\end{align*}
More precisely, for $1\leq j\leq\lfloor n/2\rfloor$,
\begin{equation}\label{eq:uniform-endpoint-fibres}
 \#\{([x],\lambda)\in\mathcal L_{1,n}:\lambda=n-j\}
 =\mathbf1_{\{2j<n\}}\binom nj
 +\frac12\binom nj\binom{n-j}{j},
\end{equation}
and
$$
 \#\{([x],\lambda)\in\mathcal L_{\infty,n}:\lambda=2\}
 =R_n-1.
$$
Moreover,
$$
 \#\mathcal L_{1,n}
 \sim\frac{\sqrt3}{4\sqrt{\pi n}}\,3^n,
 \qquad
 \#\mathcal L_{\infty,n}
 \sim\frac12\,3^n.
$$
Thus the uniform endpoint spectra have only $O(n)$ values, while their projective eigenpair limit sets have cardinality $3^{\,n-o(n)}$.
\end{corollary}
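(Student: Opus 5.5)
The plan is to work entirely from the explicit eigenline classification and take limits branch by branch. For $p\ne2$, Lemma~\ref{lem:uniform-level-collapse} and Proposition~\ref{prop:uniform-rays-inertia} say that $\mathcal Z_p^{\rm proj}(K_n)$ is the finite set consisting of $([\one],0)$ and the points $([x_{P,N}(p)],\lambda_{k,\ell,r}(p))$. Here $x_{P,N}(p)=\ell^{1/(p-1)}\one_P-k^{1/(p-1)}\one_N$ and $\lambda_{k,\ell,r}(p)=(k^{1/q}+\ell^{1/q})^q+r$ with $q=p-1$. The index set of labels is fixed and finite. So for each label it suffices to compute the limits of the projective point and of the value (or of $\lambda^{1/p}$). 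The Hausdorff limit of the whole set is then the set of these branch limits, since a finite family of convergent points converges in Hausdorff distance to the set of its limits.

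For $p\downarrow1$, take $k\le\ell$ and rescale $x_{P,N}(p)$ by $\ell^{-1/q}$. This gives $\one_P-(k/\ell)^{1/q}\one_N$.
\begin{itemize}
\item If $k<\ell$, the rescaled vector tends to $\one_P$, the block of smaller size $j=k<n/2$. By \eqref{eq:entropy-endpoint-limits} the value tends to $n-k$.
\item If $k=\ell$, the vector is already $\one_P-\one_N$ and the value $2^qk+r$ tends to $n-k$.
\end{itemize}
Every $S$ with $1\le|S|<n/2$ arises this way, because any $N$ of size $\ell>|S|$ disjoint from $S$ exists whenever $|S|<n/2$. This gives $\mathcal L_{1,n}$.

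For $p\to\infty$, the vector tends to $\one_P-\one_N$, because $k^{1/q},\ell^{1/q}\to1$. The value satisfies $\lambda^{1/p}\to2$: indeed $\lambda^{1/p}=(2^q\cdot 2^{-q}F+r)^{1/p}$, and \eqref{eq:entropy-endpoint-limits} gives $2^{-q}F\to\sqrt{k\ell}$, hence a limit of $2$. The constant eigenline stays at value $0$ throughout. Distinct labels $(P,N)$ give distinct projective points, so $\#\mathcal L_{\infty,n}=R_n$, with $R_n-1$ of the points at value $2$.

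The spectra \eqref{eq:uniform-endpoint-spectra} follow by projection.

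For the counts:
\begin{itemize}
\item Points $[\one_S]$ with $|S|=j<n/2$ number $\binom nj$.
\item Balanced pairs with $|P|=|N|=j$ number $\tfrac12\binom nj\binom{n-j}{j}$.
\end{itemize}
This gives \eqref{eq:uniform-endpoint-fibres} and the total for $\#\mathcal L_{1,n}$. The caveat is the distinct-points requirement: $[\one_S]$ is never of the form $[\one_P-\one_N]$ with $N\ne\varnothing$, and $[\one_S]$ differs from $[\one]$ for $|S|<n$.

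The main obstacle is the asymptotic $\#\mathcal L_{1,n}\sim\frac{\sqrt3}{4\sqrt{\pi n}}3^n$. The term $\sum_{j<n/2}\binom nj$ is only $O(2^n)$, so only the trinomial sum $\frac12\sum_j\binom{n}{j,j,n-2j}$ matters. I would evaluate it by a local central limit theorem for the multinomial distribution with probabilities $(1/3,1/3,1/3)$. The terms $\binom{n}{j,j,n-2j}$ are the multinomial masses at $(j,j,n-2j)$, scaled by $3^n$. Summing over the diagonal $j_1=j_2$ restricts a two-dimensional Gaussian with covariance $n\bigl(\begin{smallmatrix}2/9&-1/9\\-1/9&2/9\end{smallmatrix}\bigr)$ to the line where $j_1-j_2=0$.

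The difference $j_1-j_2$ has variance $n(2/9+2/9+2/9)=2n/3$. Its density at $0$ is therefore $1/\sqrt{2\pi\cdot2n/3}=\sqrt3/(2\sqrt{\pi n})$, so the sum is $\sim3^n\sqrt3/(2\sqrt{\pi n})$. Halving gives the claimed constant $\frac{\sqrt3}{4\sqrt{\pi n}}$. The statement $\#\mathcal L_{\infty,n}=R_n\sim3^n/2$ is immediate.
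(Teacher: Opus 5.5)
Your proposal is correct and follows essentially the same route as the paper: you take branchwise limits of the explicit three-level eigenlines $\ell^{1/q}\mathbf1_P-k^{1/q}\mathbf1_N$, deduce Hausdorff convergence from the fixed finite label set, and count indicator and balanced points in the same way. The only difference is cosmetic: you obtain $\sum_j\binom nj\binom{n-j}{j}\sim\frac{\sqrt3}{2\sqrt{\pi n}}\,3^n$ from the local central limit theorem for the walk with steps uniform on $\{-1,0,1\}$, whereas the paper writes this sum as $\frac1{2\pi}\int_{-\pi}^{\pi}(1+2\cos t)^n\,dt$ and applies Laplace's method, which is the same computation.
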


\begin{proof}
For a label $(P,N,Z)$, set $k=|P|$, $\ell=|N|$, $r=|Z|$, and $q=p-1$. A representative of the corresponding eigenline of the uniformly weighted $K_n$ is
$
 x_{P,N}(p)=\ell^{1/q}\mathbf1_P-k^{1/q}\mathbf1_N.
$
After interchanging $P$ and $N$, assume $k\leq\ell$. If $k<\ell$, division by $\ell^{1/q}$ gives
$$
 [x_{P,N}(p)]\rightarrow[\mathbf1_P]
 \qquad(p\downarrow1).
$$
If $k=\ell$, the eigenline is already $[\mathbf1_P-\mathbf1_N]$. Proposition~\ref{prop:uniform-entropy} gives the limiting value $n-k$ in both cases. This produces exactly $\mathcal L_{1,n}$.

As $p\to\infty$, both $k^{1/q}$ and $\ell^{1/q}$ tend to one, so
$
 [x_{P,N}(p)]\rightarrow[\mathbf1_P-\mathbf1_N].
$
Moreover,
$
 2^{1-p}\lambda_{k,\ell,r}(p)\rightarrow\sqrt{k\ell}>0,
$
and therefore $\lambda_{k,\ell,r}(p)^{1/p}\to2$. The constant eigenline has value zero. This proves the asserted limit set at infinity.

There are only finitely many labels $[P\mid N]$, so convergence of every labelled branch is equivalent to Hausdorff convergence of the full finite sets. Projecting to the value coordinate gives \eqref{eq:uniform-endpoint-spectra}. The count at infinity is $\#\mathcal L_{\infty,n}=1+\#\mathcal C_n=R_n$, with the additional point given by the constant eigenline. At the lower endpoint, the first sum counts indicator eigenlines and the second counts unordered pairs of disjoint equal-size sets. The same decomposition at a fixed value $n-j$ gives \eqref{eq:uniform-endpoint-fibres}; every nonconstant point of $\mathcal L_{\infty,n}$ has value $2$.

For the asymptotics, set
$$
 \tau_n=[z^0](1+z+z^{-1})^n
 =\sum_{j=0}^{\lfloor n/2\rfloor}\binom nj\binom{n-j}{j}.
$$
The balanced part of $\mathcal L_{1,n}$ is $(\tau_n-1)/2$, while the indicator part is at most $2^n$. Fourier inversion gives
$$
 \tau_n=\frac1{2\pi}\int_{-\pi}^{\pi}(1+2\cos t)^n\,dt.
$$
Since $1+2\cos t=3-t^2+O(t^4)$ at the unique point of maximal modulus, Laplace's method yields
$$
 \tau_n\sim\frac{\sqrt3}{2\sqrt{\pi n}}\,3^n.
$$
This proves the first asymptotic; the second follows from $R_n=(3^n-2^{n+1}+3)/2$.
\end{proof}

The Hausdorff limits describe limits of finite-$p$ eigenpairs but do not select an intrinsic nonsmooth endpoint eigenproblem. We therefore turn to the Clarke critical values of the limiting quotients. If $M$ is a $C^1$ manifold, $x=\phi(u)$ in a $C^1$ chart $\phi:U\to M$, and $f$ is locally Lipschitz, define
$$
 \partial_M^Cf(x)
 =(D\phi_u^*)^{-1}\partial^C(f\circ\phi)(u)
 \subset T_x^*M.
$$
The Clarke chain rule for $C^1$ changes of coordinates \cite[Chapter~2]{Clarke1990} makes this definition independent of the chart. In particular, $0\in\partial_M^Cf(x)$ if and only if $0\in\partial^C(f\circ\phi)(u)$.

\begin{theorem}[Finiteness of Clarke critical values for definable functions]
\label{thm:clarke-values}
Let $M$ be a compact definable $C^1$ manifold and let $f:M\to\mathbb R$ be definable and locally Lipschitz. Its set of Clarke critical values on $M$
$$
 \operatorname{CV}_C(f)
 =\{f(x):0\in\partial^C_Mf(x)\}
$$
is finite. More generally, in a definable family of locally Lipschitz functions on $M$, the cardinalities of these sets are uniformly bounded.
\end{theorem}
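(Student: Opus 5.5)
The plan is to mirror the proof of Lemma~\ref{lem:finite-critical-values}, replacing the classical chain rule by a definable nonsmooth substitute. The Clarke critical set $K=\{x\in M:0\in\partial^C_Mf(x)\}$ is definable. Indeed, the Clarke subdifferential of a locally Lipschitz definable function in a definable chart is the closed convex hull of limits of gradients taken at nearby points of differentiability. The differentiability locus and the gradient are first-order definable, and convex hulls in $\mathbb R^d$ need only $d+1$ points by Carath\'eodory. Hence the incidence $\{(x,\xi):\xi\in\partial^Cf(x)\}$ is definable, and so is $K$. Since $\partial^C$ is outer semicontinuous with closed values and $M$ is compact, $K$ is closed and therefore compact. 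By Proposition~\ref{prop:ominimal-facts}(i), $K$ has finitely many definably connected components.

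Next I would show that $f$ is constant on each component of $K$. Two points of a component can be joined by a definable path $\gamma:[0,1]\to K$. After a finite partition, $\gamma$ is $C^1$ on each open subinterval, and $f\circ\gamma$ is definable and locally Lipschitz, hence differentiable off a further finite set. The key ingredient is the projection formula, or chain rule along curves, for definable locally Lipschitz functions. Take a $C^1$ Whitney stratification of $M$ compatible with $f$ and with $\gamma([0,1])$, so that $f$ is $C^1$ on each stratum. For $x$ in a stratum $S$ and $\xi\in\partial^C f(x)$, the restriction of $\xi$ to $T_xS$ equals $d(f|_S)(x)$. This is the Bolte--Daniilidis--Lewis--Shiota projection formula, and its proof works in any o-minimal structure.

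Refining the partition of $[0,1]$, each open piece of $\gamma$ lies in a single stratum $S$, so $\gamma'(t)\in T_{\gamma(t)}S$. Choosing $\xi=0\in\partial^Cf(\gamma(t))$ then gives
$$
(f\circ\gamma)'(t)=d(f|_S)(\gamma(t))[\gamma'(t)]=\langle0,\gamma'(t)\rangle=0.
$$
Continuity across the finitely many breakpoints shows that $f(\gamma(0))=f(\gamma(1))$. Thus $\operatorname{CV}_C(f)$ has at most as many elements as $K$ has components, so it is finite. The chart-independence remark in the text lets us work locally in charts, and a definable atlas together with compactness keeps everything definable.

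For the family statement, let $f_\theta$ depend definably on $\theta$ in a definable parameter set $\Theta$. The set $\{(\theta,\lambda):\exists x,\ 0\in\partial^C f_\theta(x),\ f_\theta(x)=\lambda\}$ is definable by the same Carath\'eodory description applied fibrewise, with limits taken only in $x$. Its fibres are finite by the first part, so Proposition~\ref{prop:ominimal-facts}(ii) gives a uniform bound. I expect the main obstacle to be the projection formula step, namely justifying that Clarke stationarity forces the derivative along the stratum to vanish. I would cite the definable Whitney/projection formula rather than reprove it, noting that only the $(a)$-regularity and the $C^1$ restriction of $f$ to strata are needed.
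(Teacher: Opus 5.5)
Your argument is correct. For the family statement it is the same as the paper's: the fibrewise Clarke subdifferential is definable via limiting gradients and Carath\'eodory, and uniform finiteness then applies.

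For a single function the routes differ. The paper reproves nothing here. It covers $M$ by chart closures $\overline V_j\Subset U_j$ and extends $f\circ\phi_j|_{\overline V_j}$ by $+\infty$, which gives a definable lower semicontinuous function on Euclidean space agreeing locally with $f\circ\phi_j$ on $V_j$. It then quotes \cite[Corollary~9(ii)]{BolteDaniilidisLewisShiota2007} directly. You instead rederive that corollary from its main ingredient, the projection formula, using the path argument of Lemma~\ref{lem:finite-critical-values}. This makes the nonsmooth result a visible analogue of the smooth lemma and avoids the $+\infty$ extension.

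The cost is that you must state the projection formula with the right hypothesis. It needs a Whitney stratification of the \emph{graph} of $f$, whose projected strata carry $f$ as a $C^1$ function. A Whitney stratification of $M$ on whose strata $f$ is $C^1$ is not enough, so your closing remark that ``only $(a)$-regularity and the $C^1$ restriction of $f$'' are needed is false if read on the domain. For example, $f(x,y)=\sqrt{x^2+y^2}-\sqrt{x^2+y^4}$ is semialgebraic and locally Lipschitz, and it vanishes on $\{y=0\}$. It is analytic on each stratum of the Whitney stratification $\{y>0\},\{y=0\},\{y<0\}$. Yet its gradients along $x=y^{3/2}$, $y\downarrow0$, converge to $(-1,1)\in\partial^Cf(0,0)$, whose tangential component at the origin is $-1\neq0=d(f|_{\{y=0\}})(0)$. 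With a definable Whitney stratification of $\operatorname{graph}f$, which always exists, your argument goes through as written.
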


\begin{proof}
Choose finitely many definable $C^1$ charts $\phi_j:U_j\to M$ and definable open sets $V_j\Subset U_j$ whose images cover $M$. The function $f\circ\phi_j$ is definable and locally Lipschitz on $U_j$. Restrict it to $\overline V_j$ and extend it by $+\infty$ outside $\overline V_j$. The resulting function on Euclidean space is definable and lower semicontinuous because $\overline V_j\Subset U_j$ and $f\circ\phi_j$ is continuous on $\overline V_j$. At every point of $V_j$ it agrees locally with $f\circ\phi_j$, so its Clarke subdifferential agrees there with that of $f\circ\phi_j$. By \cite[Corollary~9(ii)]{BolteDaniilidisLewisShiota2007}, only finitely many Clarke critical values are attained at points of $V_j$. The sets $\phi_j(V_j)$ cover $M$, so their finite union contains $\operatorname{CV}_C(f)$.

Now let $F:T\times M\to\mathbb R$ be a definable family whose fibres $F_t$ are locally Lipschitz. In charts, the limiting gradients in the fibre over $t$ are obtained by closing, with $t$ fixed, the definable graph of $\nabla_xF$ over the set where $F$ is differentiable with respect to $x$ \cite[Chapter~2]{Clarke1990}. The graph of $\partial_x^CF_t(x)$ is the fibrewise convex hull of this closure. By Carath\'eodory's theorem, the condition $0\in\partial_x^CF_t(x)$ can be expressed using at most $\dim M+1$ limiting gradients and therefore has a finite first-order description. Thus the graph of the fibrewise Clarke subdifferential is definable. Consequently
$$
 \mathcal V
 =\{(t,F(t,x)):x\in M,\ 0\in\partial_M^CF_t(x)\}
$$
is definable. Every fibre $\mathcal V_t$ is finite by the first part, so o-minimal uniform finiteness gives a common bound for all $t$.
\end{proof}

\begin{corollary}[Endpoint Clarke critical values]\label{cor:endpoint-clarke}
The Clarke critical-value sets of the restrictions of $Q_{1,d}$ and $R_{\infty,d}$ to $\mathbb S^{n-1}$ are finite, uniformly in the graph data in fixed dimension.
\end{corollary}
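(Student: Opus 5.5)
The plan is to apply Theorem~\ref{thm:clarke-values}, with $M=\mathbb S^{n-1}$, to the definable family of restrictions $Q_{1,d}|_{\mathbb S^{n-1}}$ and $R_{\infty,d}|_{\mathbb S^{n-1}}$, parametrised by the graph data $d\in D_n$.

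\textbf{Setting up the family.} Take $T=D_n$, which is definable. The maps $(d,x)\mapsto Q_{1,d}(x)$ and $(d,x)\mapsto R_{\infty,d}(x)$ on $D_n\times\mathbb S^{n-1}$ are definable, in fact semialgebraic. This is because absolute values and finite maxima are first-order expressible.

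For $R_{\infty,d}$ one more point needs care: the index set of the maximum depends on $d$. The edges enter only when $a_{ij}>0$, and the vertex terms only when $\widetilde b_i>0$. I would encode this as a maximum over all pairs and vertices, using an indicator factor that is itself definable in $d$. One option is to write $\max$ over $(i,j)$ of $\mathbf 1_{\{a_{ij}>0\}}|x_i-\varepsilon_{ij}x_j|$, which is legitimate because all the terms are nonnegative. Similarly, $\widetilde b_i=b_i-\beta\nu_i$, where $\beta=\min_i b_i/\nu_i$ is a definable function of $d$.

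\textbf{Local Lipschitz property.} Next I would check that each fibre is locally Lipschitz on the compact manifold $\mathbb S^{n-1}$.

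For $Q_{1,d}$, both the numerator and the denominator are Lipschitz, being finite sums of absolute values of linear forms. The denominator satisfies $\sum_i\nu_i|x_i|\geq\min_i\nu_i\cdot\|x\|_2/\sqrt n>0$ on the sphere. A quotient of Lipschitz functions with denominator bounded away from zero is Lipschitz.

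For $R_{\infty,d}$, the numerator is a maximum of finitely many Lipschitz functions, including the constant $0$, so it is Lipschitz. The denominator satisfies $\max_i|x_i|\geq n^{-1/2}$ on the sphere, so the same argument applies.

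\textbf{Conclusion.} The first part of Theorem~\ref{thm:clarke-values} gives finiteness of each Clarke critical-value set. The family part of the theorem gives a bound uniform in $d\in D_n$ for each fixed $n$.

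The only mild obstacle is the Clarke subdifferential on the manifold. Here $\partial^C_M$ is defined via charts, and the family statement of Theorem~\ref{thm:clarke-values} already handles definability of the fibrewise Clarke graph. The remaining input is therefore just the definability of the family, which is settled by the indicator encoding above.
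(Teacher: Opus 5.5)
Your proposal is correct and matches the paper's proof: both treat the two endpoint quotients as definable families in the graph data, check fibrewise local Lipschitz continuity via the lower bounds on $\sum_i\nu_i|x_i|$ and $\max_i|x_i|$ on the sphere, and invoke the family part of Theorem~\ref{thm:clarke-values}. Your indicator encoding of the $d$-dependent index set in $R_{\infty,d}$ is a helpful detail that the paper leaves implicit. Your bound $\min_i\nu_i/\sqrt n$ is weaker than the paper's $\min_i\nu_i$, but it is still valid and suffices.
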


\begin{proof}
The two endpoint quotients form definable families in the graph data. For each fixed datum they are locally Lipschitz on the Euclidean sphere: their numerators are finite sums or finite maxima of absolute values of linear forms, while
$$
 \sum_i\nu_i|x_i|\geq\min_i\nu_i>0,
 \qquad
 \max_i|x_i|\geq n^{-1/2}
 \quad(x\in\mathbb S^{n-1}).
$$
The family assertion in Theorem~\ref{thm:clarke-values} therefore gives finiteness and a uniform bound depending only on the dimension.
\end{proof}

\begin{proof}[Proof of Corollary~\ref{cor:intro-global-consequences}]
The global stratification assertion is Theorem~\ref{thm:global-stratification}. The endpoint conclusions are Theorem~\ref{thm:eigenpair-endpoints} and Corollary~\ref{cor:endpoint-clarke}.
\end{proof}

\section{Linear-form and matrix \texorpdfstring{$p$}{p}-eigenvalue problems}
\label{app:matrix}

The same effective argument applies to Rayleigh quotients whose numerator and denominator are $p$-homogeneous sums of absolute values of linear forms. Let $A\in\R^{m\times n}$ and $C\in\R^{\ell\times n}$, let $\mu\in\R^m$ and $\nu\in(0,\infty)^\ell$, and suppose that $C$ is injective. Define
\begin{align}
 F_{A,\mu,p}(x)
 &=\sum_{r=1}^m\mu_r|(Ax)_r|^p,\label{eq:matrix-numerator}\\
 G_{C,\nu,p}(x)
 &=\sum_{s=1}^{\ell}\nu_s|(Cx)_s|^p.\label{eq:matrix-mass}
\end{align}
The corresponding generalised $p$-eigenvalue problem for the matrix pair $(A,C)$ is
\begin{equation}\label{eq:matrix-eigen}
 A^{\mathsf T}\bigl(\mu\od\Phi_p(Ax)\bigr)
 =\lambda C^{\mathsf T}\bigl(\nu\od\Phi_p(Cx)\bigr),
 \qquad x\neq0.
\end{equation}
The numerator coefficients may have either sign. Injectivity of $C$ makes the denominator in the associated Rayleigh quotient positive away from the origin.

\begin{theorem}[Effective bounds for matrix $p$-eigenvalue problems]
\label{thm:matrix-finite}
For every $A,C,\mu,\nu$ as above and every $p>1$, let $\Sigma_{A,C,\mu,\nu,p}$ be the set of real $\lambda$ for which \eqref{eq:matrix-eigen} has a nonzero solution. Then
\begin{equation}\label{eq:matrix-explicit-bound}
 \#\Sigma_{A,C,\mu,\nu,p}
 \leq\mathfrak B(m+\ell)
 \leq3^{m+\ell}2^{2(m+\ell)^2+m+\ell+2}\bigl(4(m+\ell)+3\bigr)^{2(m+\ell)}.
\end{equation}
In particular, the logarithm of the bound is $O((m+\ell)^2)$.

If
$$
 \begin{gathered}
 E_{A,C}(p,\lambda)
 =\{x\in\Sph^{n-1}:\eqref{eq:matrix-eigen}\text{ holds}\},\\
 \overline E_{A,C}(p,\lambda)
 =\pi_{\rm proj}\bigl(E_{A,C}(p,\lambda)\bigr),
 \end{gathered}
$$
then
$$
 \sum_{\lambda\in\Sigma_{A,C,\mu,\nu,p}}b_0\bigl(\overline E_{A,C}(p,\lambda)\bigr)
 \leq
 \sum_{\lambda\in\Sigma_{A,C,\mu,\nu,p}}b_0\bigl(E_{A,C}(p,\lambda)\bigr)
 \leq\mathfrak B(m+\ell).
$$
\end{theorem}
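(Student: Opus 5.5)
The plan is to reduce Theorem~\ref{thm:matrix-finite} directly to Theorem~\ref{thm:linear-form-effective}. We take the $K=m+\ell$ linear forms to be the rows of $A$ followed by the rows of $C$: $L_r=(Ax)_r$ for $1\leq r\leq m$ and $L_{m+s}=(Cx)_s$ for $1\leq s\leq\ell$. The numerator coefficients are $c=(\mu,0)\in\R^{m+\ell}$, and the denominator coefficients are $\gamma=(0,\nu)\in[0,\infty)^{m+\ell}$. With these choices $F_{c,p}=F_{A,\mu,p}$ and $G_{\gamma,p}=G_{C,\nu,p}$. Moreover, $\sum_r c_r\Phi_p(L_r(x))L_r$ is exactly the covector $A^{\mathsf T}(\mu\od\Phi_p(Ax))$, and similarly for the denominator side. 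So \eqref{eq:linear-form-eigen} coincides with \eqref{eq:matrix-eigen}, and $\Sigma_{L,c,\gamma,p}=\Sigma_{A,C,\mu,\nu,p}$ and $E_L(p,\lambda)=E_{A,C}(p,\lambda)$.

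Next we verify the kernel hypothesis. The forms with $\gamma_r>0$ are exactly the rows of $C$, since every $\nu_s>0$. Their common kernel is $\ker C=\{0\}$ because $C$ is injective. Zero rows of $A$ or $C$, and repeated rows, are allowed, because Theorem~\ref{thm:linear-form-effective} explicitly permits repeated and zero forms in the indexed family. The numerator coefficients $\mu_r$ may have either sign, as that theorem also allows. Applying the theorem therefore gives $\#\Sigma\leq\mathfrak B(m+\ell)$ together with the chain of component inequalities $\sum b_0(E/\{\pm1\})\leq\sum b_0(E)\leq\mathfrak B(m+\ell)$.

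It remains to identify $E/\{\pm1\}$ with $\overline E_{A,C}(p,\lambda)=\pi_{\rm proj}(E)$. The eigenvector set $E$ is closed in the sphere, hence compact, and it is antipodally invariant because \eqref{eq:matrix-eigen} is odd in $x$. The map $\pi_{\rm proj}(x)=xx^{\mathsf T}$ has fibres $\{\pm x\}$. It therefore induces a continuous bijection from the compact space $E/\{\pm1\}$ onto the Hausdorff space $\overline E$, which is a homeomorphism. This is the same argument as in the discussion preceding Corollary~\ref{cor:effective-eigenset-components}.

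Finally, the explicit upper bound in \eqref{eq:matrix-explicit-bound} follows as in the proof of Theorem~\ref{thm:intro-main}\ref{part:intro-effective}, with $N_n$ replaced by $K=m+\ell$. We bound each summand $2^{2q^2+q+2}(4q+3)^{2q}$ by its value at $q=K$ and use $\sum_q\binom Kq2^q\leq3^K$. The logarithm is then visibly $O(K^2)$.

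There is no genuine obstacle here. The only point needing care is checking that the problem is correctly encoded: the zero coefficients must be placed so that numerator and denominator share a single form family, and the kernel condition must be taken over the support of $\gamma$, which is where injectivity of $C$ is used.
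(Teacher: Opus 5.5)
Your proposal is correct and is essentially the paper's own proof. The paper likewise applies Theorem~\ref{thm:linear-form-effective} to the $m+\ell$ row functionals of $A$ and $C$ with numerator coefficients $(\mu,0)$ and denominator coefficients $(0,\nu)$, and uses injectivity of $C$ for positivity of the denominator. It then obtains the closed form from $\sum_{q}\binom{m+\ell}{q}2^q\leq3^{m+\ell}$, with the projective identification being the antipodal-quotient homeomorphism you describe.
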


\begin{proof}
Apply Theorem~\ref{thm:linear-form-effective} to the $m$ row functionals of $A$ and the $\ell$ row functionals of $C$, assigning numerator coefficient $\mu_r$ and denominator coefficient zero to each row of $A$, and numerator coefficient zero and denominator coefficient $\nu_s$ to each row of $C$. The denominator is positive because $C$ is injective. This gives the first inequality in \eqref{eq:matrix-explicit-bound} and the component estimate. The displayed closed-form estimate follows from $\sum_{q=1}^{m+\ell}\binom{m+\ell}{q}2^q\leq3^{m+\ell}$ exactly as in Theorem~\ref{thm:intro-main}\ref{part:intro-effective}.
\end{proof}

\begin{remark}[A bound using only the active row functionals]
Discarding zero rows and rows of $A$ with $\mu_r=0$ replaces $m+\ell$ by the number of active nonzero row functionals. Repeated forms may be retained or combined, and the rows of $A$ may be linearly dependent.
\end{remark}

\begin{remark}[Graph eigenproblems as linear-form systems]
Taking the forms $x_i$ and $x_i-\varepsilon_{ij}x_j$ recovers the numerator of the signed weighted graph Rayleigh quotient. The numerator coefficients are $b_i$ and $a_{ij}$, while the denominator coefficients are $\nu_i$ on the coordinate forms and zero on the edge forms. Thus Theorem~\ref{thm:linear-form-effective} contains arbitrary real graph potentials without a preliminary spectral shift.
\end{remark}

\begin{corollary}[Spectral branches for matrix $p$-eigenvalue problems]
\label{cor:matrix-branches}
For fixed $A,C,\mu,\nu$, the full spectrum of \eqref{eq:matrix-eigen}, as $p$ varies in $(1,\infty)$, admits the finite-branch decomposition of Theorem~\ref{thm:intro-branches}. The same conclusion holds for fixed matrix dimensions when $A(p),C(p),\mu(p),\nu(p)$ vary definably on an interval $I\subset(1,\infty)$, provided $\nu(p)\in(0,\infty)^\ell$ and $C(p)$ remains injective.
\end{corollary}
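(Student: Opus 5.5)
The plan is to reduce Corollary~\ref{cor:matrix-branches} to the definable-family machinery already used for graphs, in the style of the proof of Theorem~\ref{thm:intro-branches}. First fix the matrix dimensions $m,\ell,n$ and consider the parameter space
$$
 \Theta=\{(A,C,\mu,\nu,p):A\in\R^{m\times n},\ C\in\R^{\ell\times n},\ \mu\in\R^m,\ \nu\in(0,\infty)^\ell,\ C\ \text{injective},\ p>1\}.
$$
This set is semialgebraic, since injectivity is the nonvanishing of some maximal minor, and hence definable in $\R_{\exp}$. I would then form the total spectral set
$$
 \mathcal S=\{(\theta,\lambda):\exists x\in\Sph^{n-1}\ \text{solving \eqref{eq:matrix-eigen}}\}.
$$
It is definable because the coordinatewise map $\Phi(t,p)$ of \eqref{eq:phi-def} is definable jointly in $(t,p)$, and existential quantification over the sphere preserves definability.

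Next I would check that every fibre $\mathcal S_\theta$ is finite. By Theorem~\ref{thm:matrix-finite}, or directly by Theorem~\ref{thm:definable-homogeneous} applied to $F_{A,\mu,p}$ and $G_{C,\nu,p}$, the fibre is in fact bounded by $\mathfrak B(m+\ell)$. To apply Theorem~\ref{thm:definable-homogeneous}, one needs $G_{C,\nu,p}>0$ off the origin, which follows from injectivity of $C$ and positivity of $\nu$. One also needs the eigen-equation \eqref{eq:matrix-eigen} to be exactly $\nabla F=\lambda\nabla G$ up to the common factor $p$, which is a direct differentiation as in \eqref{eq:energy-gradient}.

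For fixed data, restrict $\mathcal S$ to the definable curve $p\mapsto(A,C,\mu,\nu,p)$, $p\in(1,\infty)$. The result is a definable subset of $(1,\infty)\times\R$ with finite vertical fibres. Proposition~\ref{prop:ominimal-facts}(iii) then gives a finite exceptional set $P$ and, on each component of the complement, finitely many ordered continuous definable branches, which is exactly the decomposition of Theorem~\ref{thm:intro-branches}.

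For the definably varying case, pull $\mathcal S$ back along the definable curve $p\mapsto(A(p),C(p),\mu(p),\nu(p),p)$, $p\in I$. The hypotheses $\nu(p)\in(0,\infty)^\ell$ and injectivity of $C(p)$ guarantee that this curve lies in $\Theta$, so the pullback is a definable set with finite fibres. Proposition~\ref{prop:ominimal-facts}(iii) applies again, exactly as in Corollary~\ref{cor:definable-data}.

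I do not expect a serious obstacle. The only point needing care is that the definability of $\mathcal S$ is joint in $p$. This is why the structure $\R_{\exp}$, rather than a polynomially bounded one, is used, together with the two-variable map \eqref{eq:phi-def}.
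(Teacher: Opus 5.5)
Your proposal is correct and follows essentially the paper's argument. The total spectral set is definable in $\R_{\exp}$, and its fibres are finite (the paper cites Theorem~\ref{thm:linear-form-effective}, which your appeal to Theorem~\ref{thm:matrix-finite} or Theorem~\ref{thm:definable-homogeneous} matches). Proposition~\ref{prop:ominimal-facts}(iii) is then applied both directly and after pullback along the definable parameter curve.
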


\begin{proof}
The total eigenpair and spectral sets are definable in $\R_{\exp}$. Their spectral fibres are finite by Theorem~\ref{thm:linear-form-effective}, so Proposition~\ref{prop:ominimal-facts}(iii) applies, also after pullback along a definable parameter curve.
\end{proof}

\section{Small graphs}
\label{app:small-graphs}

For two vertices the sharp bound is elementary and much smaller than the uniform estimate \eqref{eq:explicit-Bn}.

\begin{theorem}[The sharp two-vertex bound]\label{thm:two-vertex}
Let $n=2$, $a=a_{12}$, and $\varepsilon=\varepsilon_{12}$. For every $p>1$ and arbitrary admissible $b$ and $\nu$,
$
 \#\sigma_p(a,\varepsilon,b,\nu)\leq2.
$
If $a=0$, then
$$
 \sigma_p(a,\varepsilon,b,\nu)
 =\left\{\frac{b_1}{\nu_1},\frac{b_2}{\nu_2}\right\}.
$$
If $a>0$, there are exactly two distinct eigenvalues, each with one eigenline. Consequently $B_2^{\mathrm{opt}}=2$.
\end{theorem}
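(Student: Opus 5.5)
The plan is to reduce to a one-dimensional problem on the projective line and then count critical points directly. First I apply switching (Remark~\ref{rem:conventions}) with $\tau=(1,\varepsilon)$, which lets me assume $\varepsilon=1$. The case $a=0$ is immediate. The equations decouple into $b_i\Phi_p(x_i)=\lambda\nu_i\Phi_p(x_i)$, so any eigenvector has some nonzero coordinate $x_i$, which forces $\lambda=b_i/\nu_i$. The coordinate vectors realise both values, so the spectrum is exactly $\{b_1/\nu_1,b_2/\nu_2\}$ and has at most two elements.

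Next, assume $a>0$. By Proposition~\ref{prop:graph-critical}, eigenlines are the critical points of $\cQ$ on $\mathbb RP^1$. I will use the affine coordinate $x=(1,t)$ together with the separate point $[0:1]$. On a coordinate axis, say $x=(1,0)$, the second equation reads $a\Phi_p(0-1)=-a\neq0$, while its right side is $0$. So the coordinate axes are not eigenlines. Any eigenvector therefore has $x_1x_2\neq0$, and I normalise it to $x=(1,t)$ with $t\neq0$. The eigen-equation then becomes the pair
$$
a\Phi_p(1-t)+b_1=\lambda\nu_1,\qquad -a\Phi_p(1-t)+b_2\Phi_p(t)=\lambda\nu_2\Phi_p(t).
$$
I eliminate $\lambda$ to get $g(t):=\nu_1 a\Phi_p(1-t)+\nu_2 a\Phi_p(1-t)/\Phi_p(t)=\nu_2 b_1-\nu_1 b_2$ (after rearranging with the appropriate sign; the precise constant does not matter). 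The claim is that $g$, as a function of $t$ on each of $(-\infty,0)$ and $(0,\infty)$, takes each value exactly once in total, or at least that the total number of solutions is exactly two. Writing $s=\Phi_p(t)$ and $u=\Phi_p(1-t)$ makes the structure clearer. The expression $\nu_1 u+\nu_2 u/s$ is a function of $t$, and I expect it to be strictly monotone on each interval $(-\infty,0)$, $(0,1)$ and $(1,\infty)$, with limits that together yield exactly two preimages of any real constant.

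The main obstacle is this monotonicity and limit analysis. Here is how I would check it. On $(-\infty,0)$ we have $u>0$ and $s<0$. On $(0,1)$ both $u$ and $s$ are positive. On $(1,\infty)$ we have $u<0<s$. The function $u/s=\Phi_p((1-t)/t)=\Phi_p(1/t-1)$ is decreasing on both $(-\infty,0)$ and $(0,\infty)$, and $\Phi_p(1-t)$ is decreasing everywhere. So $g=a(\nu_1\Phi_p(1-t)+\nu_2\Phi_p(1/t-1))$ is strictly decreasing on $(-\infty,0)$, running from $+\infty$ to $-\infty$ ($\nu_1 u\to+\infty$ as $t\to-\infty$, and $\nu_2\Phi_p(1/t-1)\to-\infty$ as $t\to0^-$). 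On $(0,\infty)$ it is likewise strictly decreasing, from $+\infty$ to $-\infty$. Each constant therefore has exactly two solutions $t_-<0<t_+$. These give two eigenlines, and since $t\mapsto t^{-1}$ does not identify them, they are distinct points of $\mathbb RP^1$. Because $\cQ$ is a nonconstant function on the circle (its two values on a coordinate axis and a generic point differ) and has exactly two critical points, those critical points must be its maximum and minimum. Their values are therefore distinct, which gives exactly two eigenvalues, each with one eigenline. Finally, $B_2^{\mathrm{opt}}=2$ follows because every datum has at most two eigenvalues and the datum $a>0$ attains two.
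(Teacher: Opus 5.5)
Your argument is correct, and its core is the paper's. You switch to $\varepsilon=1$, exclude zero coordinates, set $t=x_2/x_1$, and eliminate $\lambda$. You then show that the resulting scalar function is a strictly decreasing bijection of each of $(-\infty,0)$ and $(0,\infty)$ onto $\mathbb R$, so there are exactly two roots $t_-<0<t_+$.

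Two steps differ slightly from the paper.
\begin{enumerate}
\item Your identity $\Phi_p(1-t)/\Phi_p(t)=\Phi_p(1/t-1)$ gives monotonicity on the whole positive half-line at once. The paper treats $(0,1]$ and $[1,\infty)$ separately.
\item You obtain distinctness of the two eigenvalues topologically: two critical points of a nonconstant function on $\mathbb RP^1$ must be its maximum and minimum. The paper instead observes that $\lambda(t)=(a\Phi_p(1-t)+b_1)/\nu_1$ is strictly decreasing, so $\lambda(t_-)>\lambda(t_+)$.
\end{enumerate}
The paper's version is more direct. Yours needs nonconstancy of $\mathcal{Q}$, which you support only by an unverified remark about ``a generic point''. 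Nonconstancy does follow at once from what you proved: a constant quotient would make every line critical, including the coordinate axes, which you showed are not eigenlines.

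Three small corrections to tidy up.
\begin{enumerate}
\item The correctly eliminated equation is $a\bigl(\nu_2\Phi_p(1-t)+\nu_1\Phi_p(1/t-1)\bigr)=\nu_1b_2-\nu_2b_1$. You interchanged $\nu_1$ and $\nu_2$ and flipped the sign of the constant. This is harmless, because your monotonicity and limit argument works for any positive weights.
\item $[1:t_-]\ne[1:t_+]$ simply because $t_-\ne t_+$ in the affine chart; the remark about $t\mapsto t^{-1}$ is not needed.
\item The converse, that each root gives an eigenpair, should be stated explicitly. Define $\lambda$ by the first equation and reverse the elimination, which is legitimate since $\Phi_p(t)\ne0$.
\end{enumerate}
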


\begin{proof}
The assertion for $a=0$ follows from the two diagonal equations. Suppose $a>0$. Switching allows us to take $\varepsilon=1$. Neither coordinate of an eigenvector can vanish: if $x_1=0$, for example, the first equation gives $a\Phi_p(-x_2)=0$.

Set
$$
 t=\frac{x_2}{x_1},\qquad q=p-1,\qquad
 \rho=\frac{\nu_2}{\nu_1},\qquad
 \delta=\frac{b_2-\rho b_1}{a}.
$$
Eliminating $\lambda$ from the two eigenvalue equations gives
\begin{equation}\label{eq:two-vertex-H}
 H(t)=\delta,\qquad
 H(t)=\Phi_p(1-t)\left(\frac1{\Phi_p(t)}+\rho\right),\qquad t\ne0.
\end{equation}
For $t=-s<0$,
$$
 H(-s)=\rho(1+s)^q-(1+s^{-1})^q.
$$
This is strictly increasing in $s$ from $-\infty$ to $+\infty$; hence $H$ is a strictly decreasing bijection on $(-\infty,0)$. On the positive half-line,
$$
 H(t)=
 \begin{cases}
 ((1-t)/t)^q+\rho(1-t)^q,&0<t\leq1,\\
 -((t-1)/t)^q-\rho(t-1)^q,&t\geq1.
 \end{cases}
$$
It decreases from $+\infty$ to zero and then to $-\infty$. Thus \eqref{eq:two-vertex-H} has precisely two roots $t_-<0<t_+$. Conversely, each root gives an eigenpair with
$$
 \lambda(t)=\frac{a\Phi_p(1-t)+b_1}{\nu_1}.
$$
This function is strictly decreasing, so the two eigenvalues are distinct. The two eigenlines in the original signed coordinates are represented by $(1,\varepsilon t_-)$ and $(1,\varepsilon t_+)$. The choice $a=1$, $b=0$, and $\nu=(1,1)$ gives the spectrum $\{0,2^{p-1}\}$, proving sharpness.
\end{proof}

The forest theorem of \cite[Theorem~3.7]{DeiddaPuttiTudisco2023}, together with switching, already implies that a signed three-vertex graph whose support graph is acyclic has at most three distinct eigenvalues. The following sharp bound applies to balanced triangles satisfying $b_i/\nu_i=\beta$, independent of $i$, and is attained for every $p\ne2$.

Recall that a signature on a triangle is balanced if $\varepsilon_{12}\varepsilon_{23}\varepsilon_{31}=1$, or, equivalently, if it can be switched to the unsigned signature.

\begin{theorem}[The sharp bound for balanced three-vertex graphs with proportional potential]
\label{thm:balanced-triangle}
Let $p>1$ and let $n=3$. Suppose the signature is balanced and
$$
 b_i=\beta\nu_i,\qquad i=1,2,3,
$$
for some $\beta\in\R$. Then
$$
 \#\sigma_p(a,\varepsilon,b,\nu)\leq7.
$$
If $p=2$, the stronger bound
$$
 \#\sigma_2(a,\varepsilon,b,\nu)\leq3
$$
holds. If $p\neq2$ and all three edge weights are positive, there are at most six nonconstant eigenlines. The bound seven is attained in the unsigned class for every $p\ne2$; at $p=2$, the sharp unsigned bound is three. See Corollary~\ref{cor:ordinary-small-growth}.
\end{theorem}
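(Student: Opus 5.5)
First I reduce the data. By switching (Remark~\ref{rem:conventions}) a balanced signature becomes unsigned, and Proposition~\ref{prop:potential-shift} with $b_i=\beta\nu_i$ sets $b=0$. So it suffices to treat the unsigned triangle with zero potential, weights $a_{12},a_{13},a_{23}\ge0$ and positive vertex measures $\nu$. The case $p=2$ is a symmetric $3\times3$ generalised eigenproblem with positive-definite diagonal mass matrix, so it has at most three eigenvalues. If some $a_{ij}=0$, the support graph is a forest. The forest theorem \cite[Theorem~3.7]{DeiddaPuttiTudisco2023} then says the variational spectrum exhausts the full spectrum, giving at most three values (and the two-vertex/isolated-vertex cases are immediate from Theorem~\ref{thm:two-vertex}). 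It therefore remains to treat $p\ne2$ with all three weights positive and show there are at most six nonconstant eigenlines. The constant eigenline is the only one at eigenvalue $0$, so this yields at most seven values.

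Next I classify nonconstant eigenlines by their zero pattern. Summing the vertex equations gives $\sum_i\nu_i\Phi_p(x_i)=0$ (each edge term cancels by oddness), so every nonconstant eigenvector has both signs. Up to global sign, there are two patterns.
\begin{itemize}
\item \emph{One zero coordinate, say $x_k=0$.} The $k$th equation gives $a_{ik}\Phi_p(x_i)+a_{jk}\Phi_p(x_j)=0$, which fixes the ratio $x_j/x_i<0$. The remaining two equations must then be compatible. So each vertex contributes at most one eigenline, for at most three in total.
\item \emph{No zero coordinate, one vertex of sign opposite to the other two.} There are three choices of the isolated vertex $k$.
\end{itemize}
It suffices to show that each choice of $k$ in the second pattern gives at most one eigenline, for a total of at most $3+3=6$. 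Normalise $x_k=-1$ and $x_i=s$, $x_j=t$ with $s,t>0$. Eliminating $\lambda$ between the three equations leaves two equations in $(s,t)$: the mass-balance relation $\nu_i s^{q}+\nu_j t^{q}=\nu_k$ with $q=p-1$, and one more relation. I would parametrise the mass-balance curve monotonically, say by $s$, so that $t$ is decreasing, and show the remaining equation is strictly monotone along it.

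The main obstacle is establishing that strict monotonicity for all positive weights and measures and all $q\ne1$. The natural candidate for the second relation is the difference of the $i$ and $j$ equations divided by $\nu_i\Phi_p(x_i)$ and $\nu_j\Phi_p(x_j)$, which gives a function of the form
\[
 G(s,t)=\frac{a_{ik}(1+s)^q+a_{ij}\Phi_p(s-t)}{\nu_i s^q}-\frac{a_{jk}(1+t)^q+a_{ij}\Phi_p(t-s)}{\nu_j t^q}.
\]
I would try to show $G$ is strictly decreasing in $s$ along the curve, in the spirit of the two-vertex function $H$ in Theorem~\ref{thm:two-vertex}. The difficulty lies in the edge term $\Phi_p(s-t)$, whose sign changes. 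If direct monotonicity fails, my fallback is a degree and Morse-count argument. A second solution in the same sign chamber would force a degenerate configuration, which one then excludes by the superadditivity comparison used in Lemma~\ref{lem:uniform-level-collapse}.

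Attainment of the bound follows from Corollary~\ref{cor:ordinary-small-growth}.
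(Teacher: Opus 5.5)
Your reductions are fine and match the paper: switching, the spectral shift, the forest case, $p=2$, and the constant eigenline as the only one at eigenvalue $0$. (For the mass balance you also need $\lambda>0$ for nonconstant eigenvectors, but that is minor.)

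\textbf{The gap.} Your argument rests on the claim that each of the three open sign chambers contains at most one eigenline, i.e.\ that $G$ is strictly monotone along the mass-balance curve. This claim is false, so neither the monotonicity route nor your fallback can work.

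\textbf{A counterexample.} Take unit vertex measures, $p>2$, and three pairwise distinct positive weights close to $1$. A two-line computation from your own zero-coordinate analysis shows the following. With unit measures, an eigenvector with $x_k=0$ exists only if $a_{ik}=a_{jk}$: the two remaining equations reduce to $a_{ij}(1+\rho)^q(a_{ik}-a_{jk})=-a_{ik}(a_{ik}-a_{jk})$ with $\rho^q=a_{ik}/a_{jk}$. So no eigenline has a zero coordinate. Yet Theorem~\ref{thm:complete-local-unfolding} gives exactly six nonconstant eigenlines for such weights. Hence all six lie in the three open chambers, and some chamber contains at least two. In fact, the table in Lemma~\ref{lem:cyclic-root}, together with Proposition~\ref{prop:triangle-regions}, shows that for $X>Y>Z$ and $m=2$ the chambers contain $3$, $2$ and $1$ eigenlines. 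Geometrically, the eigenlines of type $(1,-1,0)$ of the uniformly weighted triangle lie on chamber walls, and a generic perturbation pushes them into the open chambers, two of them into the same chamber. The same phenomenon occurs for $1<p<2$ (Theorem~\ref{thm:low-interval} and the $0<q<1$ table).

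\textbf{What is missing.} The bound six is not "$3+3$". It is a global count in which zero-coordinate eigenlines and extra roots inside chambers trade off against each other, so you need a coupling across chambers. The paper does this as follows.
\begin{enumerate}
\item In the chamber $x_i<0<x_j,x_k$, normalise $x_k=1$, $x_j=t$, $x_i=-c$, and substitute $s=c(t-1)/(c+t)$. This reduces the eigen-equations bijectively to the scalar equation $Y(1+s)^q-X\Phi_p(s)-Z=0$ on $(-1,\infty)$, with $X=a_{jk}\nu_i$, $Y=a_{ik}\nu_j$, $Z=a_{ij}\nu_k$.
\item The compactified endpoints of these intervals are exactly the one-zero-coordinate eigenlines, each shared by two adjacent chambers. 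Gluing them gives a circle.
\item Convexity (or concavity) on $(-1,0)$ and at most one critical point on $(0,\infty)$ control the roots on each half-interval.
\item The pairing identity links the extra roots of $F_{X,Y,Z}$ on $(-1,0)$ to those of $F_{Y,Z,X}$ on $(0,\infty)$.
\item A case analysis of ties then gives at most six zeros on the glued circle.
\end{enumerate}
Some argument of this coupled kind is the idea your proposal lacks.
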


The proof uses the following elementary root count. Set
$$
 F_{A,B,C}(s)=B(1+s)^q-A\Phi_{q+1}(s)-C,
 \qquad -1<s<\infty,
$$
where $A,B,C>0$ and $q>0$. At the right endpoint set $r=s/(1+s)$ and multiply the equation by $(1-r)^q$. The compactified equation is
\begin{equation}\label{eq:cyclic-compactification}
 B-Ar^q-C(1-r)^q=0,
 \qquad 0\leq r\leq1.
\end{equation}
Thus the left endpoint is a zero precisely when $A=C$, and the right endpoint is a zero precisely when $A=B$.

\begin{lemma}[Cyclic root lemma]\label{lem:cyclic-root}
Let $q>0$, $q\neq1$, and $X,Y,Z>0$. Consider
$$
 F_{X,Y,Z},\qquad F_{Y,Z,X},\qquad F_{Z,X,Y}
$$
on three copies of $(-1,\infty)$. Compactify as in \eqref{eq:cyclic-compactification}, and glue the right endpoint of each copy to the left endpoint of the next copy in the displayed cyclic order. If a glued endpoint is counted only once, the three functions have at most six distinct zeros.
\end{lemma}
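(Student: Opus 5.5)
The plan is a sign-and-shape count for each of the three functions separately, followed by a cyclic parity argument that prevents all three from being large at once. Throughout, $q>0$, $q\ne1$, $A,B,C>0$, and $F=F_{A,B,C}$.

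\emph{Step 1: at most three zeros each.} On $(-1,\infty)$,
$$
 F'(s)=q\bigl(B(1+s)^{q-1}-A|s|^{q-1}\bigr).
$$
A critical point satisfies $\bigl(|s|/(1+s)\bigr)^{q-1}=B/A$. The ratio $|s|/(1+s)$ is strictly monotone on $(-1,0]$ and on $[0,\infty)$, so $F$ has at most one critical point in each half. Hence $F$ has at most three zeros in the open interval. The same count applies to the compactified equation \eqref{eq:cyclic-compactification}; this is where the endpoint zeros (those with $A=C$ or $A=B$) enter.

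\emph{Step 2: parity from boundary data.} I would record the three boundary signs
$$
 \lim_{s\downarrow-1}F=A-C,\qquad F(0)=B-C,\qquad F(s)\sim(B-A)s^q\ (s\to\infty).
$$
When no endpoint zero occurs ($A\ne B$, $A\ne C$), the number of interior zeros counted with multiplicity is odd exactly when $\sgn(A-C)\ne\sgn(B-A)$. That happens precisely when $A$ is the strict maximum or the strict minimum of $\{A,B,C\}$. Apply this to the cyclic family $F_{X,Y,Z}$, $F_{Y,Z,X}$, $F_{Z,X,Y}$, whose leading coefficients are $X$, $Y$, $Z$. If $X,Y,Z$ are distinct, exactly two of the functions have odd count (the ones led by the maximum and by the minimum) and the third has even count. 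By Step 1 the even one has at most $2$ zeros and the odd ones have $1$ or $3$.

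\emph{Step 3: the main obstacle.} It remains to show that at most one odd function can reach three zeros. Then the total is at most $3+1+2=6$. I would do this with a convexity/shape analysis on the two halves.

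For $q>1$, $F$ is convex on $(-1,0)$, since $F''=q(q-1)\bigl(B(1+s)^{q-2}+A|s|^{q-2}\bigr)>0$ there. On $(0,\infty)$, $F'$ changes sign once, according to $B$ versus $A$.

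Three zeros need a zero on one side together with two zeros on the other, arranged so that the critical points produce a local max–min pair. I expect this to force a definite type: for $q>1$ three zeros should require $A$ to be the maximum (and, dually, the minimum for $q<1$). I would check this by placing the two critical points relative to $0$ using $F(0)=B-C$ and the sign of $F'(0)=qB>0$. Since $F'(0)>0$, a max–min pair must straddle $0$ in a prescribed order, and that order dictates which of $A>B$ or $A<B$ can occur. This step is the hardest part of the lemma, and I would try it first on the compactified form in the $r$-variable.

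Given this, for fixed $q$ only the function led by the maximum (or only the one led by the minimum) can have three zeros. The other odd function then has exactly one.

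\emph{Step 4: degenerate orderings and gluing.} When two of $X,Y,Z$ coincide, endpoint zeros appear: $A=C$ gives a zero at the left endpoint and $A=B$ at the right. Each such zero is exactly a glued point shared by two consecutive copies. To handle these cases, I would perturb $(X,Y,Z)$ to distinct values. Any zero of the degenerate configuration that does not sit at a glued endpoint is a sign change or a multiple root, so it persists (counted by multiplicity) under small perturbation. A glued endpoint zero, counted once, is absorbed into a neighbouring copy, so the count cannot exceed the generic bound of six.
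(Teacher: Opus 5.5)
\textbf{Generic case.} For distinct $X,Y,Z$ your route differs from the paper's and can be made to work. The paper writes out root tables for $X>Y>Z$ and uses the pairing identities \eqref{eq:cyclic-pair-convex}--\eqref{eq:cyclic-pair-concave} to get the exact count $2+2m$. You instead use endpoint-sign parity to cap the function led by the middle value at two zeros, and a type argument to allow only one function three zeros. That gives $3+1+2$ without the pairing identities or any ordering reduction.

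Step~3, the crux, is only conjectured, though. Your hint $F'(0)=qB>0$ holds only for $q>1$: for $0<q<1$, $F$ is not differentiable at $0$ and $F'\to-\infty$ there, which your Rolle count in Step~1 should also address. The step closes in a few lines:
\begin{itemize}
\item For $q>1$, the critical point in $(-1,0)$ is a minimum by convexity, and the one in $(0,\infty)$ is a maximum that exists only if $A>B$.
\item Three zeros counted with multiplicity need both critical points, so $F$ is decreasing--increasing--decreasing. This forces $A-C>0$ and $B-A<0$, i.e.\ $A$ is the strict maximum.
\item For $0<q<1$ the pattern reverses, and $A$ must be the strict minimum.
\item The other odd function (led by the minimum when $q>1$, by the maximum when $0<q<1$) has at most one critical point, hence exactly one zero.
\end{itemize}

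\textbf{The genuine gap is Step~4.} A tangential zero does not persist under perturbation, with or without multiplicity: it may split or vanish. So the number of zeros of a tied configuration is not bounded by that of nearby generic ones, and your reduction fails.

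Concretely, take $q=2$, $X=Y=1$, $Z=\tfrac12$. Then $F_{X,Y,Z}(-u)=2(u-\tfrac12)^2$ and $F_{Y,Z,X}(s)=-\tfrac12(s-1)^2$ for $s>0$. The glued point between these two copies is a zero (since $X=Y$), and $F_{Z,X,Y}(0)=0$. A direct check shows these are all the zeros: four in total.

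Now perturb to $X=1>Y=1-\epsilon>Z=\tfrac12-\epsilon$ with small $\epsilon>0$. This places $Z$ below the minimum $XY/(X+Y)$ of $Yu^2+X(1-u)^2$, and the perturbed system has exactly two zeros: both tangential zeros disappear. The bound $4\le6$ still holds here, but your argument does not prove it.

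What does persist is every sign-changing zero. This includes the glued endpoint zeros: near a glued point the two one-sided leading terms are of order $\min\{1,q\}$ and have opposite signs, which you should verify rather than assert. Touching zeros must be handled either by a perturbation shown to split each of them, or directly. The paper does the latter: it treats $X>Y=Z$ and $X=Y>Z$ for $q>1$ and for $0<q<1$, together with $X=Y=Z$, and counts roots using the same convexity facts and pairing identities.
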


\begin{proof}
On the negative half write $s=-u$, $0<u<1$. Then
\begin{equation}\label{eq:cyclic-negative}
 F_{A,B,C}(-u)=B(1-u)^q+Au^q-C.
\end{equation}
This is strictly convex when $q>1$ and strictly concave when $0<q<1$. On the positive half,
\begin{equation}\label{eq:cyclic-derivative}
 F'_{A,B,C}(s)
 =q\{B(1+s)^{q-1}-As^{q-1}\},
\end{equation}
so the derivative has at most one zero. Together with the endpoint signs, these convexity properties and the fact that the derivative has at most one zero give the root counts below.

The collection of three functions is invariant under cyclic permutation of $X,Y,Z$, and reversal is governed by the identity
$$
 (1+s)^qF_{A,C,B}\left(-\frac{s}{1+s}\right)
 =-F_{A,B,C}(s).
$$
We may therefore arrange $X\geq Y\geq Z$.

Suppose first that $X>Y>Z$. If $q>1$, the numbers of roots on the two open half-intervals are
$$
\begin{array}{ccc}
 &(-1,0)&(0,\infty)\\[2pt]
 F_{X,Y,Z}&m&1\\
 F_{Y,Z,X}&0&m\\
 F_{Z,X,Y}&1&0
\end{array}
\qquad (m\in\{0,1,2\}).
$$
The two occurrences of $m$ are equal. For $0<u<1$,
\begin{equation}\label{eq:cyclic-pair-convex}
\begin{aligned}
 F_{Y,Z,X}\left(\frac{u}{1-u}\right)=0
 &\quad\Longleftrightarrow\quad Z=Yu^q+X(1-u)^q\\
 &\quad\Longleftrightarrow\quad F_{X,Y,Z}(-(1-u))=0.
\end{aligned}
\end{equation}
The total is $2+2m\leq6$.

For $0<q<1$ the corresponding table is
$$
\begin{array}{ccc}
 &(-1,0)&(0,\infty)\\[2pt]
 F_{X,Y,Z}&0&1\\
 F_{Y,Z,X}&m&0\\
 F_{Z,X,Y}&1&m
\end{array}
\qquad (m\in\{0,1,2\}),
$$
and the possible additional roots are paired by
\begin{equation}\label{eq:cyclic-pair-concave}
\begin{aligned}
 F_{Z,X,Y}\left(\frac{u}{1-u}\right)=0
 &\quad\Longleftrightarrow\quad X=Zu^q+Y(1-u)^q\\
 &\quad\Longleftrightarrow\quad F_{Y,Z,X}(-(1-u))=0.
\end{aligned}
\end{equation}
Again the total is at most six.

It remains to include ties and compactified endpoints. In the following table, ``interior'' means the whole interval $(-1,\infty)$, including $s=0$, and $e$ is the number of distinct glued endpoint zeros:
$$
\begin{array}{ccccc}
\text{ordering}&q&
 \multicolumn{1}{c|}{\text{interior roots in cyclic order}}&e&\text{total}
 \\[2pt]
X>Y=Z&q>1&(3,1,1)&1&6\\
X>Y=Z&0<q<1&(1,m,m)&1&2+2m\leq6\\
X=Y>Z&q>1&(m,m,1)&1&2+2m\leq6\\
X=Y>Z&0<q<1&(1,1,3)&1&6
\end{array}
$$
Here again $m\in\{0,1,2\}$, and the paired entries follow from \eqref{eq:cyclic-pair-convex}--\eqref{eq:cyclic-pair-concave}. We verify the remaining entries directly.

Suppose $X>Y=Z$. The function $F_{X,Y,Y}$ always vanishes at $s=0$. When $q>1$, its restriction to $(-1,0)$ is strictly convex, initially decreases from zero, and ends at $X-Y>0$, while its restriction to $(0,\infty)$ initially increases from zero and has negative sign at the compactified right endpoint. It therefore has one further zero on each open half-interval. The function $F_{Y,Y,X}$ has one positive zero and $F_{Y,X,Y}$ has one negative zero. Their common compactified endpoint is counted once, giving the row $(3,1,1)$ and $e=1$. When $0<q<1$, concavity reverses the first conclusion: $F_{X,Y,Y}$ has only its zero at $s=0$. The $m$ roots of $F_{Y,Y,X}$ lie on the negative half-interval, whereas those of $F_{Y,X,Y}$ lie on the positive half-interval; they are paired by \eqref{eq:cyclic-pair-concave}. This gives $(1,m,m)$ and the same single glued endpoint.

Now suppose $X=Y>Z$. For $q>1$, the $m$ roots of $F_{X,X,Z}$ lie on the negative half-interval, whereas those of $F_{X,Z,X}$ lie on the positive half-interval; the reversal identity pairs them. The function $F_{Z,X,X}$ has only its zero at $s=0$, and the shared endpoint of the first two functions contributes $e=1$. Hence the row is $(m,m,1)$. When $0<q<1$, the function $F_{X,X,Z}$ has exactly one positive zero and $F_{X,Z,X}$ exactly one negative zero. The function $F_{Z,X,X}$ vanishes at $s=0$. On the negative half,
$$
 F_{Z,X,X}(-u)
 =X(1-u)^q+Zu^q-X
 =Zu^q-qXu+O(u^2)>0
$$
for small $u>0$, whereas its left-endpoint sign is $Z-X<0$. On the positive half,
$$
 F_{Z,X,X}(s)
 =X(1+s)^q-Zs^q-X
 =qXs-Zs^q+O(s^2)<0
$$
for small $s>0$, whereas its compactified right-endpoint sign is $X-Z>0$. Since the derivative has at most one zero on each half-interval, there is exactly one additional zero on each. Thus the last row is $(1,1,3)$ with $e=1$.

If $X=Y=Z$, each interval has the one interior zero $s=0$, and all three glued endpoints are zeros. There are no others, since
$$
 (1-u)^q+u^q<1,\qquad (1+s)^q-s^q>1\qquad(q>1),
$$
for $0<u<1$ and $s>0$, with both inequalities reversed when $0<q<1$. Thus the total is again six.
\end{proof}

\begin{proof}[Proof of Theorem~\ref{thm:balanced-triangle}]
Switching makes the graph unsigned, and Proposition~\ref{prop:potential-shift} reduces the proof to $b=0$; the original spectrum is obtained by adding $\beta$. If an edge is absent, the support graph is a forest, and the result follows from \cite[Theorem~3.7]{DeiddaPuttiTudisco2023}. We may therefore suppose all three weights are positive.

Set $q=p-1$. The constant vectors give the zero eigenvalue. Every nonconstant eigenvalue is positive. Summing the three eigenvalue equations therefore gives
\begin{equation}\label{eq:balanced-mean-zero}
 \nu_1\Phi_p(x_1)+\nu_2\Phi_p(x_2)+\nu_3\Phi_p(x_3)=0.
\end{equation}
Suppose first that no coordinate vanishes. Up to the antipodal action there is a unique negative coordinate. In the sign region $x_i<0<x_j,x_k$, scale so that $x_k=1$ and let
$$
 t=x_j>0,\qquad c=-x_i>0.
$$
Equation \eqref{eq:balanced-mean-zero} becomes
\begin{equation}\label{eq:balanced-c}
 \nu_i c^q=\nu_j t^q+\nu_k.
\end{equation}
Set
$$
 U=a_{ij}(c+t)^q,\qquad V=a_{ik}(c+1)^q,\qquad
 W=a_{jk}\Phi_p(t-1).
$$
The equations at $i,j,k$, respectively, are
\begin{equation}\label{eq:balanced-UVW}
 U+V=\lambda\nu_i c^q,\qquad
 U+W=\lambda\nu_jt^q,\qquad
 V-W=\lambda\nu_k.
\end{equation}
Eliminating $\lambda$ between the last two equations and using \eqref{eq:balanced-c} gives
\begin{equation}\label{eq:balanced-scalar}
 Y(1+s)^q-X\Phi_p(s)-Z=0,\qquad -1<s<\infty,
\end{equation}
where
$$
 s=\frac{c(t-1)}{c+t},\qquad
 X=a_{jk}\nu_i,\quad Y=a_{ik}\nu_j,\quad Z=a_{ij}\nu_k,
$$
and $1+s=t(c+1)/(c+t)$.

The change of variables is bijective. Equation~\eqref{eq:balanced-c} defines an increasing function $c=c(t)$, and differentiation gives $tc'(t)<c(t)$. Moreover,
$$
 s'(t)(c+t)^2=c(c+1)+t(t-1)c'(t).
$$
For $t\geq1$, the right-hand side is positive. If $0<t<1$, then
$$
 t(1-t)c'(t)<c(1-t)<c(c+1),
$$
so it is again positive. The endpoint limits are $-1$ and $+\infty$, so $t\mapsto s$ is a bijection from $(0,\infty)$ to $(-1,\infty)$. Conversely, a root of \eqref{eq:balanced-scalar} determines $t$ and then $c$. Define $\lambda$ by the equation at $k$ in \eqref{eq:balanced-UVW}; \eqref{eq:balanced-scalar} says exactly that the equation at $j$ has the same $\lambda$. The remaining equation follows because the sum of the three coordinates of the unsigned graph Laplacian is zero, while the right sides sum to zero by \eqref{eq:balanced-mean-zero}.

Choose a cyclic orientation and set
$$
 X=a_{23}\nu_1,\qquad Y=a_{31}\nu_2,\qquad Z=a_{12}\nu_3.
$$
The three open sign regions are parametrised by the zeros of
$$
 F_{X,Y,Z},\qquad F_{Y,Z,X},\qquad F_{Z,X,Y}.
$$
Their compactified endpoints are precisely the vectors with one zero coordinate. Direct substitution in the original equations gives the endpoint conditions preceding Lemma~\ref{lem:cyclic-root}; the common endpoint of two adjacent sign regions represents one eigenline, not two. A vector with two zero coordinates cannot satisfy \eqref{eq:balanced-mean-zero}. Thus the three cyclically glued intervals contain every nonconstant eigenline.

If $p\neq2$, Lemma~\ref{lem:cyclic-root} gives at most six nonconstant eigenlines, hence at most seven spectral values after adding the constant eigenline. If $p=2$, the problem is a generalised symmetric $3\times3$ matrix eigenvalue problem and has at most three distinct eigenvalues. These bounds are sharp within the stated class: Theorem~\ref{thm:complete-local-unfolding} gives seven values for $p>2$, Theorem~\ref{thm:low-Rn} gives seven for $1<p<2$, and at $p=2$ the unit-weight path has the three eigenvalues $0,1,3$.
\end{proof}

The same scalar reduction also describes the stable parameter regions. To separate the corresponding critical values, we need local continuation of the simple roots.

\begin{lemma}[Local continuation of simple triangle eigenpairs]
\label{lem:triangle-regular-continuation}
Fix $p>1$. Suppose that, near a solution $(s_0,\eta_0)$, a local scalar parametrisation of the projective eigenpair equation is given by $C^1$ maps $H$ and $\chi$, through
$$
 H(s,\eta)=0,\qquad [x]=\chi(s,\eta).
$$
Assume
$$
 \partial_sH(s_0,\eta_0)\ne0,\qquad
 D_s\chi(s_0,\eta_0)\ne0,
$$
and assume that the corresponding representative has no zero coordinate and that $x_i-\varepsilon_{ij}x_j\ne0$ on every edge. Then this solution continues as a locally unique $C^1$ projective eigenpair branch. Along an edge-weight variation,
\begin{equation}\label{eq:triangle-HF}
 \frac{\partial\lambda}{\partial a_{ij}}
 =\frac{|x_i-\varepsilon_{ij}x_j|^p}{\sum_k\nu_k|x_k|^p}.
\end{equation}
\end{lemma}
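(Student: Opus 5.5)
The argument has two parts: continuation by the scalar implicit function theorem, then a Hellmann--Feynman computation. Throughout, $\eta$ denotes the full parameter vector $(a,b,\nu)$, possibly together with $p$.

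\emph{Continuation.} Since $\partial_sH(s_0,\eta_0)\ne0$ and $H$ is $C^1$, the implicit function theorem gives a neighbourhood $U\times V$ of $(s_0,\eta_0)$ and a unique $C^1$ function $s(\eta)$ on $V$ with
$$
 \{(s,\eta)\in U\times V:H(s,\eta)=0\}=\{(s(\eta),\eta):\eta\in V\}.
$$
Composing with $\chi$ gives a $C^1$ branch $[x(\eta)]=\chi(s(\eta),\eta)$. Because the scalar reduction (as in the proof of Theorem~\ref{thm:balanced-triangle}) parametrises all projective eigenpairs near the base point, this branch is locally unique. The condition $D_s\chi\ne0$ makes $s\mapsto\chi(s,\eta)$ an immersion, hence locally injective. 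So distinct roots give distinct eigenlines, and uniqueness of the root transfers to uniqueness of the eigenline.

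Next I would fix a $C^1$ representative $x(\eta)$, for instance by normalising one coordinate, which is nonzero by hypothesis. The eigenvalue is then
$$
 \lambda(\eta)=\frac{\cE(x(\eta))}{\cM(x(\eta))},
$$
by Proposition~\ref{prop:graph-critical}. By hypothesis all coordinate forms $x_k$ and edge forms $x_i-\varepsilon_{ij}x_j$ are nonzero at the base point, and by continuity they stay nonzero along the branch. On this open sign stratum $\cE$ and $\cM$ are smooth, jointly in $x$ and the coefficients, and even real analytic in $p$. Hence $\lambda$ is $C^1$ in $\eta$.

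\emph{Hellmann--Feynman formula.} Differentiate the quotient $\cQ(x,\eta)$ along the branch by the chain rule:
$$
 \partial_{a_{ij}}\lambda=D_x\cQ\,[\partial_{a_{ij}}x]+\partial_{a_{ij}}\cQ .
$$
The first term vanishes because $x(\eta)$ is a critical point of $\cQ$ for every $\eta$ (Proposition~\ref{prop:eigen-critical}). The explicit partial derivative of $\cQ$ with respect to $a_{ij}$ is $|x_i-\varepsilon_{ij}x_j|^p/\cM_{p,\nu}(x)$. This is \eqref{eq:triangle-HF}. This is the same computation as the one behind \eqref{eq:edge-hellmann-feynman}.

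The only delicate point is the local uniqueness of the projective eigenline, as opposed to uniqueness of the root $s$. I expect this to be the main obstacle. It requires the reduction $[x]=\chi(s,\eta)$ to capture every eigenpair in a neighbourhood, which is exactly what the sign-region bijection $t\mapsto s$ supplies. Combined with injectivity of $\chi(\cdot,\eta)$, this gives the claim. Everything else is the implicit function theorem and the envelope (critical-point) cancellation.
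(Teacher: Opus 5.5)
Your proposal is correct and follows essentially the same route as the paper: the scalar implicit function theorem for the root, $D_s\chi\ne0$ to make this a genuine local projective parametrisation, and nonvanishing coordinate and edge forms to keep $\Phi_p$ (hence $\cE$ and $\cM$) $C^1$ along the branch even for $1<p<2$, followed by the criticality cancellation that gives the Hellmann--Feynman formula. As you note, local uniqueness of the eigenline depends on the given parametrisation capturing every nearby eigenpair; the paper likewise takes this as part of the hypothesis.
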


\begin{proof}
The scalar implicit-function theorem gives a unique $C^1$ root $s=s(\eta)$. Since $D_s\chi(s_0,\eta_0)\ne0$, this gives a valid local projective parametrisation. After shrinking the parameter neighbourhood, these conditions persist and $\Phi_p$ is $C^1$ there even when $1<p<2$. Thus the corresponding root is the locally unique $C^1$ eigenpair branch. Differentiating the Rayleigh quotient along the branch, the term containing the derivative of the eigenvector vanishes by criticality. This gives \eqref{eq:triangle-HF}.
\end{proof}

\begin{proposition}[Eigenline counts for balanced triangles]
\label{prop:triangle-regions}
Let $p>1$, $p\ne2$, and consider a balanced three-vertex datum satisfying $b_i=\beta\nu_i$, with all three edge weights positive. After switching and relabelling the vertices, set
$$
 q=p-1,\qquad
 X=a_{23}\nu_1,\qquad Y=a_{31}\nu_2,\qquad Z=a_{12}\nu_3,
$$
and assume $X>Y>Z>0$. If $q>1$, set
$$
 h_{\min}=
 \left(X^{-1/(q-1)}+Y^{-1/(q-1)}\right)^{1-q}.
$$
The numbers of nonconstant eigenlines are respectively
$$
 6,\ 4,\ 2
 \quad\hbox{according as}\quad
 Z>h_{\min},\ Z=h_{\min},\ Z<h_{\min}.
$$
If $0<q<1$, set
$$
 h_{\max}=
 \left(Z^{1/(1-q)}+Y^{1/(1-q)}\right)^{1-q}.
$$
The corresponding numbers are
$$
 6,\ 4,\ 2
 \quad\hbox{according as}\quad
 X<h_{\max},\ X=h_{\max},\ X>h_{\max}.
$$
Adding the constant eigenline gives total eigenline counts $7$, $5$, or $3$. In either open region separated by the relevant bifurcation hypersurface, all scalar roots are simple. Within either region, a generic sufficiently small perturbation of the edge weights, with $\nu$, $\beta$, and the balanced signature fixed, separates their critical values.
\end{proposition}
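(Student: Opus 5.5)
The plan is to refine the proof of Lemma~\ref{lem:cyclic-root} into exact root counts in the strict case $X>Y>Z$. The reduction in the proof of Theorem~\ref{thm:balanced-triangle} identifies the nonconstant eigenlines with the distinct zeros of $F_{X,Y,Z}$, $F_{Y,Z,X}$, $F_{Z,X,Y}$ on the three cyclically glued compactified intervals. Since the variables are pairwise distinct, the compactified endpoint conditions (which require $A=C$ or $A=B$) never hold. Hence no glued endpoint is a zero, and the count is simply the number of interior zeros.

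By the tables in that proof, the count is $2+2m$, where $m\in\{0,1,2\}$ is the number of zeros of the distinguished paired function.
\begin{itemize}
\item For $q>1$, this is $F_{X,Y,Z}(-u)=Y(1-u)^q+Xu^q-Z$ on $(0,1)$. It is strictly convex and has positive endpoint values $Y-Z$ and $X-Z$.
\item For $0<q<1$, it is $F_{Y,Z,X}(-u)=Z(1-u)^q+Yu^q-X$. It is strictly concave with negative endpoint values $Z-X$ and $Y-X$.
\end{itemize}

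In the convex case, $m=2,1,0$ according as the minimum of $g(u)=Y(1-u)^q+Xu^q$ lies below, at, or above $Z$. The critical point equation is $Y(1-u)^{q-1}=Xu^{q-1}$. Writing $w=u/(1-u)$, this gives $w=(Y/X)^{1/(q-1)}$. Substituting back should give
\[
\min g=\bigl(X^{-1/(q-1)}+Y^{-1/(q-1)}\bigr)^{1-q}=h_{\min},
\]
which is the standard weighted power-mean (Hölder dual) identity; I will verify it by a short computation.

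The concave case is symmetric. The maximum of $Z(1-u)^q+Yu^q$ equals $\bigl(Z^{1/(1-q)}+Y^{1/(1-q)}\bigr)^{1-q}=h_{\max}$, so $m=2,1,0$ according as $X<h_{\max}$, $X=h_{\max}$, $X>h_{\max}$. Adding the constant eigenline then gives totals $7$, $5$, $3$.

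For simplicity of roots in the open regions, the tables show that the remaining unpaired roots are sign changes of functions with at most one derivative zero per half. I will check that these roots are transversal from the derivative formula \eqref{eq:cyclic-derivative} together with the endpoint signs. I will also check that when the minimum is strictly below $Z$ (or the maximum strictly above $X$), the two paired roots of a strictly convex (concave) function are simple, and that the pairing map \eqref{eq:cyclic-pair-convex}--\eqref{eq:cyclic-pair-concave} is a diffeomorphism, so the partner roots are simple as well. The main obstacle I expect is that the $s$-parametrisation in the proof of Theorem~\ref{thm:balanced-triangle} must have nonvanishing derivative in order to apply Lemma~\ref{lem:triangle-regular-continuation}. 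For $t\mapsto s$ this was shown there via $s'(t)>0$. I also need to check that every edge difference is nonzero at these points: $t\ne1$ follows from $s\ne0$, and $s=0$ is a root only in tied cases.

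Finally, for separation I apply Lemma~\ref{lem:triangle-regular-continuation} to get $C^1$ branches with the Hellmann--Feynman gradients \eqref{eq:triangle-HF}. The map $\Xi$ of Theorem~\ref{thm:signed-coefficient-embedding} is injective, and with $\nu$ fixed its edge components for distinct eigenlines cannot all agree (otherwise the coordinate components, whose weighted sum is one, would force equality too). This needs a short check. Granting it, pairwise value differences have nonzero gradients in $a$, so the collision loci are $C^1$ hypersurfaces, and a generic small perturbation avoids them.
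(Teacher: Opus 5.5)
Your counting argument is the paper's. In the strict case no glued endpoint and no $s=0$ point is a zero. The tables of Lemma~\ref{lem:cyclic-root} then give $2+2m$, and $m$ is read off from the minimum of the strictly convex paired function (resp.\ the maximum of the strictly concave one), which equals $h_{\min}$ (resp.\ $h_{\max}$). You also handle points the paper only asserts: transversality of the unpaired roots, transfer of simplicity through the pairing, and $D_s\chi\ne0$ via $s'(t)>0$. The continuation and Hellmann--Feynman step via Lemma~\ref{lem:triangle-regular-continuation} matches the paper as well.

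The one step whose stated justification fails is the distinctness of the gradients. Knowing the three edge components and $\sum_i\nu_i|x_i|^p=1$ does not determine the coordinate components. Equal edge components only give $y=\pm x+c\one$ (an isometry of $\mathbb R$ on the labelled triple). The strictly convex function $f(c)=\sum_i\nu_i|x_i+c|^p$ can take the value $1$ at a second point $c\ne0$, so the normalisation alone does not force $c=0$.

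The missing ingredient is the centring identity \eqref{eq:balanced-mean-zero}. It holds for every nonconstant eigenvector after switching and the spectral shift. It makes $c\mapsto\sum_i\nu_i\Phi_p(\pm x_i+c)$ strictly increasing with its zero at $c=0$. Since $y$ also satisfies the identity, $c=0$ and $y=\pm x$. This is exactly the paper's argument. Once you have it, the detour through injectivity of $\Xi$ in Theorem~\ref{thm:signed-coefficient-embedding} is unnecessary: equal edge components already force the same eigenline.
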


\begin{proof}
Under the scalar reduction in the proof of Theorem~\ref{thm:balanced-triangle}, for $q>1$ the paired roots counted by $m$ in Lemma~\ref{lem:cyclic-root} solve
$$
 Z=Yu^q+X(1-u)^q,\qquad0<u<1.
$$
The right side is strictly convex, has endpoint values $X$ and $Y$, and its minimum is $h_{\min}$. Since $Z<Y<X$, it has two, one, or no roots as $Z$ lies above, on, or below the minimum. The cyclic count is $2+2m$.

For $0<q<1$, the roots counted by $m$ solve
$$
 X=Zu^q+Y(1-u)^q.
$$
The right side is strictly concave, has endpoint values $Y$ and $Z$, and maximum $h_{\max}$. Since $X>Y>Z$, the same trichotomy holds with the displayed inequalities. Away from the unique extremizer, strict convexity or concavity makes the roots simple. In either open region, strict ordering keeps these roots away from the compactified endpoints and from configurations with a zero coordinate or a zero edge difference. Lemma~\ref{lem:triangle-regular-continuation} therefore gives locally unique branches. Along variations of the edge weights, \eqref{eq:triangle-HF} identifies the gradient of each critical value with the vector of normalised $p$-th powers of the absolute differences of the eigenfunction values across the three labelled edges. These gradients are distinct for distinct eigenlines. Indeed, normalise representatives $x$ and $y$ so that $\sum_i\nu_i|x_i|^p=\sum_i\nu_i|y_i|^p=1$. Equality of the gradients gives $|x_i-x_j|=|y_i-y_j|$ on all three edges, so the two labelled triples are related by an isometry of $\mathbb R$: $y=\pm x+c\one$. Both satisfy \eqref{eq:balanced-mean-zero}, whose left-hand side is strictly increasing in $c$; hence $c=0$. Thus the gradients of the finitely many critical-value branches are distinct, and a generic small edge perturbation separates their values.
\end{proof}

\begin{remark}
The proportionality $b_i=\beta\nu_i$ yields, after spectral shift, the constraint \eqref{eq:balanced-mean-zero}. The example in Appendix~\ref{app:nine-triangle} lies outside both this class and the balanced-signature class and has nine spectral values.
\end{remark}

\begin{remark}[Relation to tensor eigenpair counts]
For an orthogonally decomposable symmetric tensor whose decomposition vectors span the ambient space and whose coefficients are nonzero, Robeva's formula \cite[Theorem~2.3]{Robeva2016} explicitly enumerates, by support, the complex projective eigenvectors of $Tx^{d-1}=\lambda x$, the Euclidean-gradient tensor equation underlying E/Z-eigenpairs. At $p=4$ with unit measures, the graph equation is instead the H-eigenvalue equation $Ax^3=\lambda x^{[3]}$; general positive measures give a generalised eigenvalue problem for a pair of symmetric tensors. The H-eigenvalue terminology goes back to \cite{Qi2005}. These tensor counts concern fixed tensor equations, whereas Theorem~\ref{thm:intro-cellular-transition} concerns distinct real critical values under variation of positive graph edge weights.
\end{remark}

\section{An exact nine-value triangle}
\label{app:nine-triangle}

We verify the following nine-value example by exact algebraic computation.

\begin{proposition}[A nine-value triangle]
\label{prop:nine-triangle}
Let $p=4$, let every edge of the triangle have negative signature, and take
$$
 (a_{12},a_{13},a_{23})=(8739145,3903820,8576922),
$$
$$
\begin{gathered}
 b=(-3969860,-8425563,20915687727748),\\
 \nu=(12411,482,20915691304489).
\end{gathered}
$$
This graph has exactly nine eigenlines and nine distinct spectral values. Consequently
$
 B_3^{\mathrm{opt}}\geq9.
$
\end{proposition}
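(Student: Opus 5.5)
At $p=4$ the eigen-equation is polynomial. The plan is to convert the statement into a finite exact algebraic computation, organised by sign strata as in the proof of Theorem~\ref{thm:linear-form-effective}.

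\textbf{Reduction to polynomial systems.} With $\Phi_4(t)=t^3$ and all signatures $\varepsilon_{ij}=-1$, the equations read
$$\sum_{j\ne i}a_{ij}(x_i+x_j)^3+b_ix_i^3=\lambda\nu_ix_i^3,\qquad i=1,2,3.$$
This is a generalised symmetric tensor pencil of order $4$ in dimension $3$. The mass tensor is diagonal and nonsingular, so by Remark~\ref{rem:even-tensor-bound} there are at most $3\cdot3^2=27$ complex eigenvalues counted with multiplicity. Since every quantity is an integer, each step below can be carried out in exact rational arithmetic.

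\textbf{Projective eigenlines.} First, I would treat eigenvectors with a vanishing coordinate. If $x_3=0$, for example, the third equation forces $a_{13}x_1^3+a_{23}x_2^3=0$, which leaves a one-parameter ray. Substituting this ray into the remaining two equations gives a finite check of whether such an eigenline exists. Vectors with two zero coordinates are ruled out directly: the remaining equation at a zero vertex forces the last coordinate to vanish. Second, on the open chart $x_1=1$, I would eliminate $\lambda$ using the cross-multiplied relations
$$\nu_1x_i^3\,F_1=\nu_i\,F_i,\qquad i=2,3,$$
where $F_i$ denotes the left side of the $i$th equation. This produces two bivariate polynomials in $(x_2,x_3)$. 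Their resultant with respect to $x_3$ is a univariate integer polynomial $\mathrm{Res}(x_2)$. I would factor it, or run Sturm sequences on its squarefree part, and isolate its real roots in rational intervals. For each real root $x_2$, I would recover $x_3$ from the gcd of the two specialised polynomials, confirm it is real, and discard spurious resultant roots. Collecting the surviving solutions, together with those from the charts with vanishing coordinates, should give exactly nine real projective eigenlines.

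\textbf{Distinct eigenvalues.} Each eigenline gives a value $\lambda=\mathcal E(x)/\mathcal M(x)$. To certify nine distinct values, I would compute the eliminant polynomial $P(\lambda)$ directly, for instance as the resultant or Gröbner elimination of the system in $(x_2,x_3,\lambda)$ on the chart. I would then isolate its real roots by Sturm's theorem and check that these disjoint isolating intervals correspond one-to-one to the nine eigenlines. A simple root of the squarefree part of $P$ in each interval, together with finiteness of the eigenline set, gives exactly nine distinct spectral values. The inequality $B_3^{\mathrm{opt}}\ge9$ then follows from the definition of $B_3^{\mathrm{opt}}$.

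\textbf{Main obstacle.} The hard step is certifying both completeness and the absence of positive-dimensional or degenerate components. This means ruling out a vanishing resultant and spurious common roots, and it requires huge integer coefficients (note $\nu_3\approx2\cdot10^{13}$), where some eigenvalues may be extremely close. I would first verify that the system has finitely many complex solutions via a Gröbner basis in lex order, which also yields a triangular description. I would then perform real root isolation rigorously with interval arithmetic on that triangular system, checking nonvanishing Jacobians at each root so that every isolated solution is simple.
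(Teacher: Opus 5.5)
Your plan is essentially the paper's proof. The paper works in the chart $x_3=1$ and takes the resultant of the two $\lambda$-eliminated equations, which has degree $27$, is squarefree, and has exactly nine real roots by Sturm. It lifts each root to a unique real point via the penultimate subresultant $A(x)y+C(x)$ with $\gcd(R,A)=1$, and excludes eigenlines on $x_3=0$ by an exact gcd. It then separates the nine values by exact rational interval evaluation of $\lambda$ on the isolating boxes, so your eliminant $P(\lambda)$ is an unnecessary detour: disjoint $\lambda$-enclosures already suffice.

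Two corrections. Your cross-multiplied relation has $\nu_1$ and $\nu_i$ interchanged; in the chart $x_1=1$ it should read $\nu_1F_i=\nu_ix_i^3F_1$. Also, as in the paper, the argument only becomes a proof once the exact computation is run and its outputs (root counts, gcd degrees, disjoint intervals) are recorded, as in the certificate of Remark~\ref{rem:nine-certificate}.
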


\begin{proof}
In the projective chart $x_3=1$, write $(x_1,x_2,x_3)=(x,y,1)$, and define
$$
 L_i=b_i x_i^3+\sum_{j\ne i}a_{ij}(x_i+x_j)^3.
$$
Eliminating $\lambda$ from the first and third, and from the second and third, coordinate equations gives
$$
 F=\nu_3L_1-\nu_1x^3L_3=0,
 \qquad
 G=\nu_3L_2-\nu_2y^3L_3=0.
$$
The exact resultant
$
 R(x)=\operatorname{Res}_y(F,G)\in\mathbb Z[x]
$
has degree $27$ and is square-free. Its Sturm chain has $18$ sign variations at $-\infty$ and $9$ at $+\infty$, so $R$ has exactly nine real roots. The penultimate subresultant is $A(x)y+C(x)$ and $\gcd(R,A)=1$. Hence $A(x_0)\ne0$ at every root $x_0$ of $R$. The subresultant specialisation theorem implies that the first nonzero specialised subresultant is $A(x_0)y+C(x_0)$, the greatest common divisor of $F(x_0,y)$ and $G(x_0,y)$. It has degree one. Thus every real root $x_0$ lifts to the unique real common zero $y=-C(x_0)/A(x_0)$.

The exact certificate in Remark~\ref{rem:nine-certificate} supplies nine disjoint rational $x$-intervals, each with a strict resultant sign change. Since the Sturm count gives nine real roots, these intervals exhaust them. Exact rational interval evaluation of
$$
 \lambda=\frac{b_3+a_{13}(x+1)^3+a_{23}(y+1)^3}{\nu_3}
$$
places the corresponding values, one each, in
$$
\begin{gathered}
 (-0.000003,-0.000002),\quad (0.067,0.068),\quad (0.083,0.084),\\
 (0.955,0.956),\quad (0.973,0.974),\quad (1,1.000000001),\\
 (1.00000002,1.00000003),\quad (1.000001,1.000002),
 (43735,43736).
\end{gathered}
$$
The intervals are pairwise disjoint. On the projective line $x_3=0$, the exact gcd of the third coordinate equation and the equation obtained by eliminating $\lambda$ from the first two coordinate equations is constant in the chart $x_2=1$. The remaining point $[1:0:0]$ fails the two required equations. Hence there are no further eigenlines on $x_3=0$.

Exact rational interval evaluation on the certified isolating boxes also proves that the nine eigenlines are nondegenerate: four are minima, four are saddles, and one is a maximum on $\mathbb RP^2$. The spectral shift $b\mapsto b+17481\nu$ gives the nonnegative integer potential
$
 (212986831,279,365648115381499957)
$
and translates the nine spectral values.
\end{proof}

\begin{remark}[Exact certificate]\label{rem:nine-certificate}
The machine-readable certificate and verification script can be found at \url{https://github.com/MColbrook/finite-graph-p-spectrum-certificate}. The script performs the resultant root count, the unique lifts, the interval comparisons, the exclusion of projective roots at infinity, and the Hessian signature calculations using exact integer and rational arithmetic.
\end{remark}

The exact triangle of Proposition~\ref{prop:nine-triangle} is nondegenerate, so its nine eigenlines persist under variation of the exponent.

\begin{corollary}[Persistence of the nine-value triangle]
\label{cor:nine-open-interval}
For the fixed datum in Proposition~\ref{prop:nine-triangle}, there is an open interval $J\Subset(2,\infty)$ containing $4$ such that, for every $p\in J$, the graph has exactly nine eigenlines and nine distinct spectral values. Four eigenlines have Morse index $0$ on $\mathbb RP^2$, four have index $1$, and one has index $2$. The spectral shift in Proposition~\ref{prop:nine-triangle} produces a nonnegative integer potential and preserves the nine eigenlines, their Morse indices, and the distinctness of their spectral values. In particular,
$$
 B_3(p)\geq9\qquad(p\in J).
$$
\end{corollary}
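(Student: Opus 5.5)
Since the nine eigenlines of the datum at $p=4$ are nondegenerate critical points of the Rayleigh quotient $\cQ_{p,a,\varepsilon,b,\nu}$ on $\mathbb RP^2$, the plan is a smooth continuation in $p$ plus a compactness argument ruling out extra eigenlines. First check that each of the nine eigenlines at $p=4$ has no vanishing coordinate and no vanishing signed edge form $x_i+x_j$. This follows from the certified isolating boxes of Remark~\ref{rem:nine-certificate}, because the boxes can be chosen to avoid these hyperplanes, and the exclusion of roots at infinity handles $x_3=0$. Near such points every term $|L(x)|^p$ is real analytic jointly in $(p,x)$. Hence the quotient is jointly $C^2$ in a neighbourhood of $\{4\}\times\{z_1,\dots,z_9\}$, and its projective gradient in a chart is jointly $C^1$.

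The implicit function theorem applied to the chart gradient at each $z_k$ uses the invertibility of the Hessian certified at $p=4$. It gives $C^1$ branches $z_k(p)$ for $|p-4|<\delta$. By continuity of the Hessian these branches remain nondegenerate with the same Morse indices: four minima, four saddles, one maximum. The values $\lambda_k(p)=\cQ_p(z_k(p))$ are continuous. They lie in nine pairwise disjoint intervals at $p=4$, so they stay pairwise distinct after shrinking $\delta$.

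The main obstacle is to show that no \emph{other} eigenlines appear for $p$ near $4$. I would argue by contradiction using joint continuity of the projective gradient field. The map $(p,x)\mapsto \nabla_x\cQ_p(x)$, restricted tangentially to $\Sph^{2}$, is continuous on $[3,5]\times\Sph^2$, since $\Phi(t,p)$ is continuous and the denominator is positive. Take $p_j\to4$ and critical points $x_j$ outside fixed small neighbourhoods $U_k$ of the $\pm z_k$. After passing to a subsequence, $x_j\to x_*$, and $x_*$ is a critical point at $p=4$ outside $\bigcup U_k$. This contradicts Proposition~\ref{prop:nine-triangle}. Inside each $U_k$, uniqueness comes from the local uniqueness in the implicit function theorem, provided the $U_k$ are chosen within the IFT neighbourhood. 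Therefore each $U_k$ contains exactly one eigenline, which gives exactly nine eigenlines. By Proposition~\ref{prop:graph-critical} the spectrum is exactly $\{\lambda_k(p)\}$, so there are nine distinct values. Take $J=(4-\delta,4+\delta)\Subset(2,\infty)$.

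Finally, Proposition~\ref{prop:potential-shift} shows that the shift $b\mapsto b+17481\nu$ leaves the eigenvector sets and the Hessians on the sphere unchanged. It translates all values by the same constant, so the counts, indices and distinctness are preserved for every $p\in J$. This gives $B_3(p)\geq9$ on $J$.
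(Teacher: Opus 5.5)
Your proposal is correct and follows the paper's proof. Both arguments continue the nine nondegenerate critical points and their indices by the implicit-function theorem, rule out further critical points on the compact complement of small neighbourhoods using joint continuity of the gradient (your sequential argument is equivalent to the paper's uniform lower bound on the gradient there), keep the value gaps nonzero, and use invariance under $b\mapsto b+s\nu$. The only difference is your first step: the certificate does not establish that the nine eigenlines avoid the coordinate and edge hyperplanes, and you do not need it, since for $p>2$ the functions $|t|^p$ and $p\Phi_p(t)$ have jointly continuous derivatives $p(p-1)|t|^{p-2}$ and $\Phi_p(t)(1+p\log|t|)$ (extended by zero at $t=0$), so the quotient depends on $p$ continuously in $C^2$ and its chart gradient is jointly $C^1$ everywhere, as the paper uses.
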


\begin{proof}
On a compact subinterval of $(2,\infty)$, the projective Rayleigh quotient depends continuously in the $C^2$ topology on $p$. The exact rational interval calculations show that all nine critical points at $p=4$ are nondegenerate, with the stated indices. The implicit-function theorem continues them uniquely and preserves their indices.

Choose pairwise disjoint projective neighbourhoods of the nine points. The gradient of the $p=4$ quotient is bounded away from zero on the compact complement. $C^1$ continuity in $p$ excludes additional critical points there when $p$ is sufficiently close to $4$. The finitely many critical-value gaps remain nonzero. Adding $s\nu$ to the potential adds the constant $s$ to the quotient and changes neither critical points, Hessians, nor value gaps.
\end{proof}

\begin{corollary}\label{cor:small-bounds}
The optimal bounds in dimensions at most four satisfy
$$
 B_1^{\mathrm{opt}}=1,\qquad B_2^{\mathrm{opt}}=2,\qquad
 9\leq B_3^{\mathrm{opt}}\leq\widehat{\mathfrak B}_3,\qquad
 26\leq B_4^{\mathrm{opt}}\leq\widehat{\mathfrak B}_4.
$$
Within the class of balanced three-vertex graphs satisfying $b_i=\beta\nu_i$, the sharp bound is seven.
\end{corollary}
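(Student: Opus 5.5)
The statement to prove is Corollary~\ref{cor:small-bounds}, and I would handle each dimension in turn.

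\textbf{The cases $n=1$ and $n=2$.} For $n=1$ there are no edges, and the eigen-equation reads $b_1\Phi_p(x_1)=\lambda\nu_1\Phi_p(x_1)$ with $x_1\ne0$. This forces $\sigma_p=\{b_1/\nu_1\}$, so $B_1^{\mathrm{opt}}=1$. For $n=2$, the equality $B_2^{\mathrm{opt}}=2$ is the final assertion of Theorem~\ref{thm:two-vertex}. That theorem gives $\#\sigma_p\le2$ for every $p>1$ and all data, and it exhibits $\{0,2^{p-1}\}$ as a spectrum attaining two values.

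\textbf{The case $n=3$.} The upper bound $B_3^{\mathrm{opt}}\le\widehat{\mathfrak B}_3$ follows directly from Theorem~\ref{thm:intro-main}\ref{part:intro-effective}, because that bound is uniform in $p$ and in all admissible data. Hence it also bounds the maximum over $\Theta_3$. For the lower bound, Proposition~\ref{prop:nine-triangle} supplies a datum at $p=4$ with exactly nine distinct spectral values. Since $B_3^{\mathrm{opt}}$ is a maximum over all $(d,p)$, this gives $B_3^{\mathrm{opt}}\ge9$.

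\textbf{The case $n=4$.} The upper bound $B_4^{\mathrm{opt}}\le\widehat{\mathfrak B}_4$ again comes from Theorem~\ref{thm:intro-main}. For the lower bound, fix any $p\ne2$, for instance $p=3$. Then Corollary~\ref{cor:ordinary-small-growth}, which rests on Theorem~\ref{thm:complete-local-unfolding} and Theorem~\ref{thm:low-Rn}, gives $U_4(p)\ge R_4=26$. Since $R_4=(81-32+3)/2=26$, and since $U_4(p)\le B_4(p)\le B_4^{\mathrm{opt}}$, we get $B_4^{\mathrm{opt}}\ge26$.

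\textbf{The balanced-triangle class.} The sharp bound of seven is the content of Theorem~\ref{thm:balanced-triangle}. That theorem gives $\#\sigma_p\le7$ throughout the class, and the bound is attained there. Indeed, Theorem~\ref{thm:complete-local-unfolding} and Theorem~\ref{thm:low-Rn} produce unsigned, zero-potential, unit-measure triangles with $R_3=7$ values whenever $p\ne2$, and these triangles belong to the balanced class with $\beta=0$.

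\textbf{Main obstacle.} Each step is a direct citation, so the only point needing care is bookkeeping. One must check that $B_n^{\mathrm{opt}}$ is defined as a maximum over both data and exponent. This justifies using a single favourable $p$ for the lower bounds and the $p$-uniform estimate $\widehat{\mathfrak B}_n$ for the upper bounds.
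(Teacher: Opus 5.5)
Your proposal is correct and matches the paper's own proof: the direct computation for $n=1$, Theorem~\ref{thm:intro-main}\ref{part:intro-effective} for the $p$-uniform upper bounds, and Theorem~\ref{thm:two-vertex}, Proposition~\ref{prop:nine-triangle}, Corollary~\ref{cor:ordinary-small-growth} and Theorem~\ref{thm:balanced-triangle} for the remaining assertions. Your bookkeeping remark that $B_n^{\mathrm{opt}}$ is a maximum over both the data and the exponent is exactly what allows these results to be combined.
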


\begin{proof}
For $n=1$, the eigen-equation is $b_1\Phi_p(x_1)=\lambda\nu_1\Phi_p(x_1)$. Since $x_1\ne0$, one has $\sigma_p(d)=\{b_1/\nu_1\}$ and hence $B_1^{\mathrm{opt}}=1$.

The upper bounds follow from Theorem~\ref{thm:intro-main}\ref{part:intro-effective}. The remaining assertions follow from Theorem~\ref{thm:two-vertex}, Theorem~\ref{thm:balanced-triangle}, Corollary~\ref{cor:ordinary-small-growth}, Proposition~\ref{prop:nine-triangle}, and Theorem~\ref{thm:intro-growth-chambers}(ii).
\end{proof}

\bibliographystyle{amsplain}
\bibliography{finite_graph_p_spectrum}

\end{document}